\documentclass[12pt,leqno]{article}
\usepackage{comment}
\usepackage{amsmath,amssymb,amsfonts,esint,mathtools}
\usepackage{bm}
\usepackage[dvipdfmx]{graphicx}

\mathtoolsset{showonlyrefs=true}
\numberwithin{equation}{section}

\usepackage{amsthm}

\theoremstyle{plain}
\newtheorem{thm}{Theorem}[section]

\newtheorem{lem}{Lemma}[section]
\newtheorem{cor}{Corollary}[section]
\theoremstyle{definition}
\newtheorem{dfn}{Definition}[section]
\newtheorem{rem}{Remark}[section]

\newcommand{\R}{\mathbb{R}}
\newcommand{\N}{\mathbb{N}}
\newcommand{\Z}{\mathbb{Z}}

\allowdisplaybreaks

\renewcommand{\proofname}{\indent\sc Proof.}
\title{\uppercase{on very weak solutions \\ of certain elliptic systems \\ with double phase growth}} 
\author{
\bigskip \\
\textsc{Yoshiki Kaiho} 
}
\date{\today} 
\begin{document}
\maketitle

\renewcommand{\proofname}{Proof}

\begin{abstract}
In this paper, we prove a higher integrability result for very weak solutions of higher-order elliptic systems involving a double phase operator as the principal part. As a model case, we consider 
\begin{equation}
\int_{\Omega} \left( |D^m u|^{p-2}D^m u + a(x)|D^m u|^{q-2}D^m u \right) \cdot D^m \varphi  = 0
\quad
\text{for any }
\varphi \in C_c^{\infty}(\Omega),
\end{equation}
where $n,m \in \mathbb{N},\ n\ge 2,\,1 < p \le q < \infty,\,\Omega \subset \mathbb{R}^n$ is an open set and $a:\Omega \rightarrow [0,\infty)$ is a measurable function. The proof is based on a construction of an appropriate test function by the Lipschitz truncation technique, a deduction of a reverse H\"older inequality and an application of Gehring's lemma.
Our contributions include estimates for weighted mean value polynomials and sharp Sobolev--Poincar\'e-type inequalities for the double phase operator. Our result can be viewed as a generalization with respect to the derivative order, the coefficient function and the growth conditions of the recent paper \cite{BBK} by Baasandorj, Byun and Kim.
\end{abstract}

\section{Introduction}\label{sec:Intro}
\subsection{Background}\label{subsec:bg}
This paper deals with the self-improving property for very weak solutions of higher-order nonlinear elliptic systems with double phase growth. The typical example is the following equation:
\begin{equation}\label{eq:model}
\int_{\Omega} \left( |D^m u|^{p-2}D^m u + a(x)|D^m u|^{q-2}D^m u \right) \cdot D^m \varphi  = 0
\quad
\text{for any }
\varphi \in C_c^{\infty}(\Omega),
\end{equation}
where $n,m \in \mathbb{N},\ n\ge 2,\,1 < p \le q < \infty,\,\Omega \subset \mathbb{R}^n$ is an open set and $a:\Omega \rightarrow [0,\infty)$ is a measurable function. In this paper, a very weak solution refers to a solution whose integrability is lower than that of a standard weak solution (see Definition \ref{def:vws} below). For instance, standard weak solutions to the $p$-Laplace equation belong to the Sobolev space $W^{1,p}$, while very weak solutions typically lie only in $W^{1,p-\varepsilon}$ for some $\varepsilon > 0$. 

A fundamental question is whether a very weak solution automatically improves to a weak solution. For a number of classical operators, this question has a positive answer provided that the integrability of a very weak solution is sufficiently close to that of a weak solution (this means, in the example of the preceding paragraph, the parameter $\varepsilon > 0$ is sufficiently small). For linear elliptic systems, this goes back to the work of Meyers and Elcrat \cite{ME}. 
Results for the $p$-Laplace operator were obtained in \cite{IwS,Le} and their parabolic counterparts were established, for example, in \cite{KL} and \cite{Bo}. In contrast, it has been shown that a very weak solution does not necessarily become a weak solution in general: see \cite{Se} for the linear case and \cite{CR} for the $p$-Laplace operator.

While the results mentioned above deal with operators of standard $p$-growth, over the last several decades, much attention has been directed toward operators with nonstandard growth in the context of regularity theory: the terminology nonstandard growth was introduced by Marcellini \cite{Ma} (for the recent developments, see the survey \cite{MRV} and the references therein). For such classes as well, the higher integrability of very weak solutions has been investigated in several settings: see \cite{BoZ} for the elliptic $p(x)$-Laplacian case, \cite{BoL} for the parabolic case, \cite{ByL} for the Orlicz space framework and \cite{BBK} for the double phase operator.

Let us provide further explanation regarding the double phase operator, which is the other main subject of this paper. 
We first consider the following functional:
\begin{equation}
F(u) = \int_{\Omega} \left(|Du|^p + a(x)|Du|^q \right)\,dx,
\end{equation}
where $1 < p < q < \infty$, $\Omega \subset \R^n$ is an open set, $u \in W^{1,1}(\Omega)$, and $a:\Omega \rightarrow [0,\infty)$ is a measurable function.
Since the function $a$ may vanish for some $x \in \Omega$, the nonlinearity of the integrand varies depending on the spatial variable. Such functionals were first investigated in \cite{Zh} in the context of homogenization theory. The associated Euler-Lagrange equation of $F$ is given by
\begin{equation}
\mathrm{div} \left( |Du|^{p-2}Du + a(x)|Du|^{q-2}Du \right) = 0
\quad
\text{in } 
\Omega.
\end{equation}
Analogous to the functional case, we regard this equation as a nonuniformly elliptic equation. For a more detailed discussion, we refer the reader to \cite[pp.1417--1420]{CoM2}.

For the double phase functional, fundamental regularity results were obtained in \cite{EsLM}. In that paper, the Lavrentiev phenomenon is addressed, and it is shown that the condition 
$$
\frac{q}{p} < 1 + \frac{\alpha}{n}
$$
suffices for the absence of this phenomenon, provided that $a \in C^{0,\alpha}$ for some $\alpha \in (0,1]$ (see also \cite{FoMM} and \cite{BCDeM}). That paper also establishes other basic estimates. Since then, a considerable number of regularity results for weak solutions have been established, including $C^{1,\beta}$-regularity, Calder\'{o}n--Zygmund estimates, and related topics (see \cite{EsLM, CoM, CoM2, BaCM}, for example).

Recently, related problems such as nearly linear growth, variable exponents and parabolic settings have been investigated (see \cite{DeM,RaT,KiKM}, for example).
In particular, Baasandorj, Byun and Kim in \cite{BBK} obtained a result of the following type:\,let $\Omega$ be an open set, $1 < p \le q < \infty$, $\alpha \in (0,1]$, $0 \le a \in C^{0,\alpha}(\Omega)$.
Then there exists $\delta = \delta(n,p,q,a) \in (0,1)$ such that if $u \in W^{1,1}_{loc}(\Omega)$ satisfies 
\begin{equation}
\int_{\Omega_0} \left( |Du|^p + a(x)|Du|^q\right)^{\delta}\ dx < \infty
\quad
\text{for any } 
\Omega_0 \subset \subset \Omega
\end{equation}
and
\begin{equation}
\int_{\Omega} \left( |D u|^{p-2}D u + a(x)|D u|^{q-2}D u \right) \cdot D \varphi\ dx  = 0
\quad
\text{for any } 
\varphi \in C_c^{\infty}(\Omega),
\end{equation}
then
\begin{equation}
\int_{\Omega_0} \left( |Du|^p + a(x)|Du|^q \right)^{\frac{1}{\delta}}\ dx < \infty
\quad
\text{for any } 
\Omega_0 \subset \subset \Omega.
\end{equation}
The aim of this paper is to generalize this type result with respect to the derivative order, the class of the coefficient function $a$ (see Definition \ref{def:Zalpha} below) and the growth condition. After stating our main theorem in Subsection~\ref{subsec:main-thm}, we give the outline of the proof and describe the structure of the paper in Subsection~\ref{subsec:outline}.

\subsection{Main theorem}\label{subsec:main-thm}
Throughout this paper, we assume that $n,m,N \in \mathbb{N},\ n\ge 2$, and $\Omega \subset \mathbb{R}^n$ is an open set. We first provide the definition of the class $\mathcal{Z}^{\alpha}$. This class was introduced in \cite{BCDeM} in the context of the Lavrentiev phenomenon for double phase functionals.
\begin{dfn}\label{def:Zalpha}
Let $\alpha \in (0, \infty)$ and let $\Omega_0 \subset \mathbb{R}^n$ be an open set.
A measurable function $a:\Omega_0 \rightarrow [0,\infty)$ is said to belong to the class $\mathcal{Z}^{\alpha}(\Omega_0)$  
if there exists a constant $C > 0$ such that
\begin{equation}\label{eq:Zalpha}
  a(x) \le  C\left( a(y) + |x - y|^{\alpha} \right)
  \quad \text{for any}\ x, y \in \Omega_0.
\end{equation}
We denote the infimum of such constants $C$ by $[a]_{\alpha,\Omega_0}$ (or simply $[a]_{\alpha}$).
\end{dfn}

\begin{rem}
We first note that for $\alpha \in (0,1]$, $\mathcal{Z}^{\alpha}(\Omega_0)$ strictly includes the set of nonnegative $\alpha$-H\"older continuous functions (see \cite[Remark 1.2.1]{BCDeM}). 
\end{rem}

\begin{rem}\label{rem:a}
Let us also remark that if $a \in \mathcal{Z}^{\alpha}(\Omega)$ for some $\alpha \in (0,\infty)$, there exists a continuous function comparable to the measurable function $a$. More precisely, there exists $\widetilde{a} \in C(\Omega) \cap \mathcal{Z}^{\alpha}(\Omega)$ such that
\begin{equation}
[\widetilde{a}]_{\alpha} \le 2^{\alpha},
\quad
\widetilde{a} \le a \le [a]_{\alpha}\widetilde{a}.
\end{equation}
This assertion follows from a careful examination of the proof of \cite[Remarks 1.2.1--1.2.3]{BCDeM}.
Consequently, we may assume without loss of generality that $a$ is continuous in $\Omega$.
\end{rem}

Here we assume
\begin{equation}\label{assmpt:exponents}
1 < p \le q < \infty,
\quad
\alpha \in (0,\infty),
\quad
a \in \mathcal{Z}^{\alpha}(\Omega)
\quad
\text{and}
\quad
\frac{q}{p} < 1 + \frac{\alpha}{n}.
\end{equation}

In what follows, for $\ell \in \{0,\ldots,m\}$, let 
\begin{equation}
S_{\ell} 
\coloneqq 
\{\sigma = (\sigma_1, \ldots, \sigma_n) \in \Z_{\ge 0}^n: |\sigma| =\sigma_1 + \cdots + \sigma_n= \ell\},
\quad
S 
\coloneqq 
\bigcup_{\ell=0}^{m} S_{\ell}
\end{equation}
denote the sets of all multi-indices of order $\ell$ and of order at most $m$, respectively. 
Furthermore, for a smooth $\R^N$-valued function $u=(u_1,\ldots,u_N)$, we define the $\ell$-th order derivative array
\begin{equation}
D^\ell u 
\coloneqq 
\{\partial_\sigma u_i \}_{\sigma \in S_{\ell}, 1 \le i \le N}
\end{equation}
and its Euclidean norm
\begin{equation}
|D^\ell u| 
\coloneqq \left(\sum_{\sigma \in S_{\ell}}\sum_{i=1}^{N} |\partial_{\sigma} u_i|^2 \right)^{\frac{1}{2}}.
\end{equation}
In particular, $D^1 u = Du$ denotes the standard gradient. We consider $u \in W^{m,1}_{loc}(\Omega;\R^N)$ such that
\begin{equation}\label{eq:main}
    \sum_{\sigma \in S} \int_{\Omega}  A_\sigma(x,u,\ldots,D^m u) \cdot \partial_\sigma \varphi = 0
    \quad
   \text{for any }
   \varphi \in C_c^\infty(\Omega;\R^N),
\end{equation}
where the center dot denotes the standard inner product in $\R^N$ and $A_\sigma:\Omega \times \mathbb{R}^{N \sharp S} \rightarrow \R^N$ 
is a Carath\'{e}odory vector field for each $\sigma \in S$.
We write
\begin{equation}
\xi = \{\xi_{\tau}\}_{\tau \in S} \in \R^{N \sharp S}
\quad
\text{with}
\quad
\xi_{\tau} \in \R^N
\quad
\text{and}
\quad
\xi_m = \{\xi_{\tau}\}_{\tau \in S_m}.
\end{equation}
We assume that there exists $\nu > 0$ such that for a.e.\,$x \in \Omega$ and all $\xi \in \R^{N \sharp S}$,
\begin{equation}\label{eq:coercivity}
    \nu^{-1} \sum_{\sigma \in S_m} A_\sigma(x,\xi) \cdot \xi_{\sigma} \geq  |\xi_m|^p + a(x)|\xi_m|^q - \left( f_p+a(x)f_q \right).
\end{equation}
Moreover,  for a.e.\,$x \in \Omega$, any $\ell \in \{0,\ldots,m\}$ and any $\sigma \in S_{\ell}$, it holds that
\begin{equation}\label{eq:growth}
    \left| A_\sigma(x,\xi) \right| \leq  g_{p,\ell}|\xi_m|^{p-1} + h_{p,\ell} +  a(x)^{\frac{1}{q}} \left( g_{q,\ell}a(x)^{\frac{q-1}{q}}|\xi_m|^{q-1} + h_{q,\ell} \right).
\end{equation}
Here $f_r,\,g_{r, \ell},\,h_{r,\ell}$ ($r \in \{p,q\}$) are nonnegative measurable functions on $\Omega$. 
In order to state the assumptions on these functions, we introduce notation.
For $t \in [1,\infty]$, let $t'$ be the H\"older conjugate of $t$, that is,
\begin{equation}\label{eq:Holder conjugate}
t' \coloneqq 
\left\{ 
  \begin{alignedat}{3}   
    \infty \hspace{25pt} &\text{if}\,t=1, \\
    \frac{t}{t-1} \hspace{12pt} &\text{if}\,1 < t < \infty, \\
    1                  \hspace{25pt} &\text{if}\,t=\infty.
  \end{alignedat} 
  \right.
\end{equation}
In addition, for each $\ell \in \{0, \ldots, m\}$ we define
\begin{equation}\label{eq:pell*}
  (t_{\ell})^* \coloneqq
  \left\{ 
  \begin{alignedat}{2}   
    \frac{nt}{n - \ell t} \hspace{12pt} &\text{if}\,\ell t < n, \\
    \infty                        \hspace{25pt} &\text{otherwise}.
  \end{alignedat} 
  \right.
\end{equation}
In particular, we denote $(t_1)^* = t^*$. With this notation, we assume the following integrability conditions:
\begin{equation}\label{assumpt:functions}
 \left(f_p + af_q \right) \in L^{\beta}_{loc}(\Omega),
 \quad
 g_{r,\ell} \in L^{s_{r,\ell}}_{loc}(\Omega)
 \quad
 \text{and}
 \quad
 h_{r,\ell} \in L^{t_{r,\ell}}_{loc}(\Omega),
\end{equation}
where
\begin{equation}\label{def:exponents}
    \beta > 1,
    \quad
    s_{r,\ell} > \frac{1}{1 - \frac{1}{(r_{m-\ell})^*} - \frac{1}{r'}},
    \quad
    t_{r,\ell} > \frac{1}{1 - \frac{1}{(r_{m-\ell})^*}}
\end{equation}
for each $\ell \in \{0,\ldots,m\}$ and $r \in \{p,q\}$. We set $s_{r,m} = \infty$, so that we may assume $g_{r,m} \equiv 1$.

\begin{rem}\label{rem:growth}
The above condition \eqref{def:exponents} naturally arises to ensure the well-definedness of the system \eqref{eq:main}. We also remark that this condition is precisely the one used in Myers--Elcrat \cite{ME} and Lewis \cite{Le}:\,the former concerns higher integrability results for weak solutions of nonlinear elliptic systems with standard $p$-growth, while the latter deals with very weak solutions.
\end{rem}

\begin{dfn}\label{def:vws}
Let $u \in W^{m,1}_{loc}(\Omega;\R^N)$. If $u$ satisfies
\begin{equation}
\int_{\Omega_0}  \left( |D^m u|^p + a(x)|D^m u|^q \right) < \infty
\quad
\text{for any }
\Omega_0 \subset \subset \Omega
\end{equation}
and \eqref{eq:main}, then $u$ is called a weak solution of \eqref{eq:main}. On the other hand, if there exists $\delta \in (0,1)$ such that
\begin{equation}
\int_{\Omega_0}  \left( |D^m u|^p + a(x)|D^m u|^q \right)^{\delta} < \infty
\quad
\text{for any }
\Omega_0 \subset \subset \Omega
\end{equation}
and $u$ satisfies \eqref{eq:main},
then $u$ is called a very weak solution of \eqref{eq:main} with $\delta$.
\end{dfn}

\begin{rem}
Let us give a remark on the fourth condition in \eqref{assmpt:exponents}. In the regularity theory of the double phase operator, it is standard to assume $\frac{q}{p} \le 1 + \frac{\alpha}{n}$ in order to control the $p$-th power term (see \cite[Proposition 3.1]{CoM}, for example).
However, for the same reason as explained in \cite[Remark 1.3]{BBK}, which treats the case $m=1$, we exclude the borderline case $\frac{q}{p} = 1 + \frac{\alpha}{n}$:\,if we consider a very weak solution of \eqref{eq:main} with some $\delta \in (0,1)$, we only know that
$$
\int_{\Omega_0} (|D^m u|^p + a|D^m u|^q)^\delta \approx \int_{\Omega_0} ( |D^m u|^{p\delta} + a^{\delta}|D^m u|^{q\delta} )
\quad
\text{for any }
\Omega_0 \subset \subset \Omega
$$ 
is finite, rather than having the boundedness of $\int_{\Omega_0} \left( |D^m u|^p + a|D^m u|^q \right)$. Therefore, taking into account the fact that $a^{\delta} \in \mathcal{Z}^{\alpha \delta}$, the standard argument requires
$$
\frac{q}{p} < 1 + \frac{\alpha \delta}{n}\, \left( < 1 + \frac{\alpha}{n} \right).
$$ 
This is the reason why we do not consider the case $\frac{q}{p} = 1 + \frac{\alpha}{n}$.
\end{rem}

\begin{thm}\label{thm:main}
Assume \eqref{assmpt:exponents}, \eqref{eq:coercivity}, \eqref{eq:growth}, \eqref{assumpt:functions} and \eqref{def:exponents}. Then there exists
\begin{equation}
\delta = \delta(n,m,N,p,q,\alpha,[a]_{\alpha},\nu,\beta,s_{p,\ell},s_{q,\ell},t_{p,\ell},t_{q,\ell}) \in (0,1)
\end{equation}
such that if $u \in W^{m,1}_{loc}(\Omega;\R^N)$ is a very weak solution of \eqref{eq:main} with this $\delta$, then it holds that
 \begin{equation}  
 \int_{\Omega_0}  \left( |D^m u|^p + a(x)|D^m u|^q \right)^{\frac{1}{\delta}} < \infty
 \quad
 \text{for any }
 \Omega_0 \subset \subset \Omega.
\end{equation}
In particular, $u$ is a weak solution of \eqref{eq:main}.
\end{thm}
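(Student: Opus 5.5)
The plan follows the Lewis/Kinnunen--Lewis scheme adapted to double phase growth, as in \cite{BBK}, but at order $m$. Write $H(x,\xi)=|\xi|^p+a(x)|\xi|^q$. By Remark~\ref{rem:a} we take $a$ continuous. Fix a ball $B_{2R}=B_{2R}(x_0)\subset\subset\Omega$ with $R\le1$. The goal is a reverse H\"older inequality of the form
\begin{equation}
\fint_{B_R} H(x,D^m u)^{\delta}\le c\left(\fint_{B_{2R}} H(x,D^m u)^{\delta\theta}\right)^{\frac1\theta}+\varepsilon\fint_{B_{2R}}H(x,D^mu)^\delta+c\fint_{B_{2R}} F^{\delta}
\end{equation}
for some $\theta\in(0,1)$, where $F$ collects the data $f_r,g_{r,\ell},h_{r,\ell}$ at a higher integrability exponent. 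Gehring's lemma (with small $\varepsilon$ absorbed by the standard covering/iteration) then upgrades $H(\cdot,D^mu)^\delta$ from $L^1$ to $L^{1+\epsilon_0}_{loc}$. Choosing $\delta$ close enough to $1$ that $\delta(1+\epsilon_0)\ge 1/\delta$ requires $\epsilon_0$ to be independent of $\delta$ as $\delta\to1$. This uniformity must be tracked: every constant must stay bounded for $\delta\in[\delta_0,1)$.

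\emph{Step 1: test function.} Let $\eta\in C_c^\infty(B_{2R})$ be a cutoff. Subtract from $u$ a weighted mean value polynomial $P$ of degree $m-1$ (the paper's announced estimates for such polynomials give $\|D^k(u-P)\|$ controlled by $D^mu$ via Poincar\'e at each order). Set $v=\eta^{m\cdot\text{(large)}}(u-P)$. Since $D^m v\notin L^p$, we apply a higher-order Lipschitz truncation. Let $g=M(|D^m v|+\sum_k R^{k-m}|D^k v|)^{?}$ be a suitable maximal function. On $E_\lambda=\{g\le\lambda\}$, Whitney extension of the $m$-jet (Acerbi--Fusco / Lewis style) gives $v_\lambda\in W^{m,\infty}_0$ with $v_\lambda=v$ on $E_\lambda$ and $|D^kv_\lambda|\le c\lambda$ scaled appropriately. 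The double phase version uses $\lambda$-levels of $g$, where $g$ is the maximal function of $H(x,\cdot)^{\delta}$ or of $|D^mv|+a^{1/q}|D^mv|^{q/p}$, so that on bad sets we have both $|D^mv_\lambda|^p\lesssim\lambda$ and $a|D^mv_\lambda|^q\lesssim\lambda$. This pointwise control of $a$ on Whitney cubes is where $a\in\mathcal Z^\alpha$ enters: $\sup_Q a\le c(\inf_Q a+|Q|^{\alpha/n})$.

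\emph{Step 2: integrate over levels.} Test \eqref{eq:main} with $v_\lambda$ (admissible by density), multiply by $\lambda^{-1-(1-\delta)}$, and integrate in $\lambda\in(0,\infty)$ as in Lewis. On $E_\lambda$, coercivity \eqref{eq:coercivity} produces $\int_{B_R}H(x,D^mu)g^{\delta-1}$, which is $\gtrsim\int H^{\delta}$ minus lower-order terms. Off $E_\lambda$, the growth bound \eqref{eq:growth} combined with $|D^mv_\lambda|\lesssim\lambda$ gives terms of size $(1-\delta)\int H(x,D^m v)^{\delta}$. These are absorbable because of the small factor $1-\delta$, which is exactly the smallness mechanism. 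Lower-order terms $\ell<m$ are handled by H\"older with exponents \eqref{def:exponents} and Sobolev embedding of order $m-\ell$, which is where $(r_{m-\ell})^*$ appears.

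\emph{Step 3: Sobolev--Poincar\'e.} The right-hand side contains $\fint H(x,(u-P)/R^m)^{\delta}$, which must be converted to $(\fint H(x,D^mu)^{\delta\theta})^{1/\theta}$. This is the main obstacle: a sharp double phase Sobolev--Poincar\'e inequality at power $\delta$, since we only know $H^\delta\in L^1$. Following \cite{CoM,BBK}, we split into the $p$-phase ($\inf_{B}a\le K R^\alpha$) and the $(p,q)$-phase. In the former, $a\le cR^\alpha$ on $B$, and $R^\alpha|D^mu|^q$ is bounded via $\|D^mu\|_{L^{p\delta}}^{q-p}R^{\alpha}\lesssim R^{\alpha-n(q-p)/(p\delta)}$. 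This is bounded precisely when $q/p<1+\alpha\delta/n$, which holds for $\delta$ near $1$ by \eqref{assmpt:exponents}. Iterating the Poincar\'e inequality through orders $m-1,\dots,0$ with the weighted polynomial $P$ gives the needed estimate with some $\theta<1$ uniform in $\delta$.
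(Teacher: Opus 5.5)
Your overall architecture (truncation at the levels of a maximal function, testing, integration against $(1-\delta)\lambda^{\delta-2}$, a Sobolev--Poincar\'e step, Gehring with $\delta$-uniform constants) is the paper's. However, Step~3 has a genuine gap, and the same gap underlies the lower-order parts of Steps~1--2.

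\textbf{Step~3.} The single-scale splitting into a $p$-phase and a $(p,q)$-phase from \cite{CoM,BBK} does not work under \eqref{assmpt:exponents}, because there $\alpha\in(0,\infty)$ is arbitrary; for instance $a(x)=|x|^{\alpha}\in\mathcal{Z}^{\alpha}$ for every $\alpha$. In the $p$-phase you replace $a$ by $cR^{\alpha}$. You must then bound $R^{\alpha}\fint_B|(D^{m-1}u-D^{m-1}P)/R|^q$ by $(\fint_B|D^mu|^s)^{q/s}$ with $s<p$, as a reverse H\"older inequality requires. This term cannot be avoided, since it comes from one derivative falling on the cutoff. For $p<n$, the classical Sobolev--Poincar\'e inequality permits this only if $q\le s^*$, i.e.\ essentially $q<p^*$. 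That is automatic when $\alpha\le 1$. It fails, however, for the admissible choice $n=2$, $p=3/2$, $\alpha=10$, $q=7>p^*=6$. Once $a$ has been replaced by its supremum on the ball, the information $a^{1/q}D^mu\in L^{q\delta}$ is lost. Nothing is then left to give $L^q$-integrability of $D^{m-1}u-D^{m-1}P$.

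\textbf{Steps~1--2.} The same issue appears in the lower-order terms. Under the sharp hypotheses \eqref{def:exponents}, the truncation must satisfy $|D^kv_\lambda|\lesssim R^{\ell-k}\lambda^{1/\gamma_{p,\ell}}$ and $a^{1/q}|D^kv_\lambda|\lesssim R^{\ell-k}\lambda^{1/\gamma_{q,\ell}}$ with exponents $\gamma_{r,\ell}$ close to $(r_{m-\ell})^*$. With $\gamma_{r,\ell}=r$, which is all your level function gives, the H\"older relation $\frac{1}{\widehat{s_{r,\ell}}}+\frac{1}{\gamma_{r,\ell}}+\frac{1}{r'}=1$ would force $g_{r,\ell}\in L^\infty$. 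So the level function must contain $a^{\gamma_{q,\ell}\delta_0/q}|D^\ell u|^{\gamma_{q,\ell}\delta_0}$ for some $\delta_0<\delta$, and you need $a^{1/q}D^\ell u\in L^{\gamma_{q,\ell}/\delta_0}_{loc}$. Classical Sobolev embedding of order $m-\ell$ only reaches $(p_{m-\ell})^*$, which in general lies below $(q_{m-\ell})^*$. No bound of $a$ by a constant on a ball supplies the rest.

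\textbf{The missing idea} is the paper's pointwise Riesz-potential inequality, Lemma~\ref{lem:beta}\,(iii):
$a(x)^{1/q}I_1(f)(x)\le c\,I_1(a^{1/q}f)(x)+c\,R^{1+\alpha/q-\beta}I_\beta(f)(x)$, with $\beta=n(\frac1p-\frac1q)+1$.
It uses the gain $|x-y|^{\alpha/q}$ coming from $\mathcal{Z}^\alpha$ at every scale to raise the order of the potential. Since $\frac{np}{n-\beta p}=q^*$, it yields the weighted Sobolev--Poincar\'e inequality of Theorem~\ref{thm:sp} up to the endpoint $q^*$. The dependence on $R$ is explicit and there is no restriction $q<p^*$. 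Reaching the endpoint is what drives the induction on the order in Corollary~\ref{cor:sp}, which gives the lower-order weighted integrability above. It is also what is used in Lemma~\ref{lem:SP}, where $r=q$ equals the endpoint $(q_*)^*$ whenever $q_*>1$.

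\textbf{Two smaller points also need repair.}
\begin{enumerate}
\item $\int_{B_R}H(x,D^mu)\,g^{\delta-1}$ is not $\gtrsim\int_{B_R}H(x,D^mu)^\delta$. The level function dominates $H$ (up to the normalizing power), so the pointwise inequality goes the other way. You need the splitting over $\{H\le\varepsilon G\}$ with $\varepsilon=1-\delta$. In addition, the level function must use an inner power $\delta_0<\delta$ so that $G^\delta\in L^1$; the maximal function of $H^\delta$ itself need not be locally integrable.
\item A generic Whitney extension of the $m$-jet gives no control of $a^{1/q}|D^kv_\lambda|$. The paper instead builds $v_\lambda$ directly from weighted mean value polynomials on the Whitney balls; integration by parts reduces $D^\ell P_i$ to $\fint|D^\ell v|$. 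It then absorbs the $r_i^{\alpha/q}$ error from $\mathcal{Z}^\alpha$ through three devices: Hedberg-type bounds, a fractional maximal term $M_{\beta_\ell}$ built into $G$, and the smallness of $R_0$ in \eqref{eq:R_0}.
\end{enumerate}
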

By a suitable modification, we also have
\begin{cor}\label{cor:main}
Under the same assumptions as in Theorem \ref{thm:main} except for $\beta=1$,
then there exists
\begin{equation}
\delta = \delta(n,m,N,p,q,\alpha,[a]_{\alpha},\nu,s_{p,\ell},s_{q,\ell},t_{p,\ell},t_{q,\ell}) \in (0,1)
\end{equation}
such that if $u \in W^{m,1}_{loc}(\Omega;\R^N)$ is a very weak solution of \eqref{eq:main} with this $\delta$, then it holds that
 \begin{equation}  
 \int_{\Omega_0}  \left( |D^m u|^p + a(x)|D^m u|^q \right) < \infty
 \quad
 \text{for any }
 \Omega_0 \subset \subset \Omega.
\end{equation}
In particular, $u$ is a weak solution of \eqref{eq:main}.
\end{cor}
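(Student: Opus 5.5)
The plan is the one announced in the abstract: Lipschitz truncation adapted to $m$-th order derivatives, then a reverse H\"older inequality with increasing support for $H(x,|D^m u|)^{\delta}$, where $H(x,t)=t^p+a(x)t^q$, and finally Gehring's lemma. By Remark \ref{rem:a} we may assume $a$ is continuous. Fix a ball $B_{2R}=B_{2R}(x_0)\Subset\Omega$ with $R\le 1$. Let $P$ be a weighted mean value polynomial of degree $m-1$ for $u$ on $B_{2R}$, so that the lower derivatives of $v=u-P$ are controlled by $D^m u$ through the Sobolev--Poincar\'e inequalities. Take a cutoff $\eta$ with $\eta\equiv1$ on $B_R$ and $|D^k\eta|\lesssim R^{-k}$, and put $w=\eta^{s}v$ with $s$ large.

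\textbf{Step 1: truncation.} Consider the maximal function $g=M\bigl(\sum_{k\le m}|D^k w|R^{k-m}\bigr)$ and the level set $E_\lambda=\{g\le\lambda\}$. We use the higher-order Lipschitz truncation: Whitney-extend the $(m-1)$-jet of $w$ from $E_\lambda$ to get $w_\lambda\in W^{m,\infty}$ with $|D^m w_\lambda|\lesssim\lambda$ and $w_\lambda=w$ on $E_\lambda$. The test function is $\varphi=w_\lambda$, which is admissible by density. The equation gives
\[
\int_{E_\lambda}\sum_\sigma A_\sigma\cdot\partial_\sigma w=-\int_{E_\lambda^c}\sum_\sigma A_\sigma\cdot\partial_\sigma w_\lambda .
\]
Next multiply by $\lambda^{-1-\varepsilon}$ with $\varepsilon=1-\delta$ and integrate over $\lambda\in(0,\infty)$ (Lewis' trick). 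Fubini turns the left side into $\int \sum_\sigma A_\sigma\cdot\partial_\sigma w\, g^{-\varepsilon}$. The right side becomes $\int (\text{growth terms})\,g^{1-\varepsilon}$, because $|E_\lambda^c|$ integrates against $\lambda^{-\varepsilon}$. The double phase structure enters through a variant of this step. One truncates at the level $H(x,g)\le\Lambda$ rather than $g\le\lambda$, or uses the two-phase splitting of \cite{BBK}: the $p$-phase applies where $a(x_0)\lesssim R^{\alpha}$ (via $\mathcal Z^\alpha$) and the $(p,q)$-phase where $a$ is comparable to $a(x_0)$ on $B_{2R}$. Condition \eqref{eq:Zalpha} gives $a(x)\approx a(x_0)$ up to $R^\alpha$.

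\textbf{Step 2: reverse H\"older.} Coercivity \eqref{eq:coercivity} bounds the left side from below by
\[
\int_{B_R} H(x,|D^m u|)\,g^{-\varepsilon}\ \ge\ c\int_{B_R}H(x,|D^m u|)^{1-\varepsilon}
\]
on the set where $g\approx|D^m u|$, with a correction handled by $g\ge|D^m w|$. The terms where derivatives fall on $\eta$ or on the lower-order $\xi_\ell$ are bounded with \eqref{eq:growth}, H\"older's inequality, and the exponent conditions \eqref{def:exponents}; these are exactly what make $g_{r,\ell}|\xi_m|^{r-1}|D^{m-\ell}\cdot|$ integrable via the Sobolev embedding into $(r_{m-\ell})^*$. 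The $\varepsilon$-error terms of the form $\varepsilon\int H^{1-\varepsilon}$ are absorbed by taking $\varepsilon$ small. The remaining lower-order contributions $R^{k-m}|D^k v|$ are controlled by sharp Sobolev--Poincar\'e inequalities for $H$. In the $q$-phase we use $a\lesssim a(x_0)+R^\alpha$ and $q/p<1+\alpha\delta/n$ to trade the $R^{\alpha\delta}$ factor against the $L^{p\delta}$ bound of $D^m u$. The outcome is
\[
\fint_{B_R}H(x,|D^m u|)^{\delta}\le \theta\fint_{B_{2R}}H^{\delta}+C\Bigl(\fint_{B_{2R}}H^{\delta\vartheta}\Bigr)^{1/\vartheta}+C\fint_{B_{2R}}F^{\delta}
\]
for some $\vartheta<1$, where $F$ collects the data $f,h,g$ and lies in $L^{\gamma}$ with $\gamma>1$ by \eqref{def:exponents}.

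\textbf{Step 3: conclusion.} Gehring's lemma, in Giaquinta--Modica form with a small $\theta$ (removed by the standard iteration lemma), gives $H^{\delta}\in L^{1+\epsilon_0}_{loc}$. The $\epsilon_0$ depends only on the listed data, and $\delta$ is then chosen close to $1$ so that $\delta(1+\epsilon_0)\ge1/\delta$. Without this choice, the higher integrability is iterated finitely many times, each step now as a weak or less-singular solution.

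I expect the main obstacle to be Step 2 in the double phase regime. One needs a Sobolev--Poincar\'e inequality with sharp exponent for $H(x,\cdot)^{\delta}$ applied to $u-P$, including the estimate of the weighted mean value polynomial $P$ when only $H^{\delta}$ is integrable. One also needs the truncated test function's error terms on $E_\lambda^c$ to be bounded by $H(x,\lambda)$-type quantities rather than just $\lambda^p$. I would first try truncating with respect to the modular level $H(x_0,\lambda)$ with frozen $a(x_0)$, and control the deviation of $a$ by $R^\alpha$ using the $\mathcal Z^\alpha$ condition.
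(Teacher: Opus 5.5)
There are two genuine gaps: the endgame ignores the corollary's hypothesis $\beta=1$, and the truncation in Steps 1--2 does not handle the double phase structure.

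Your Step 3 does not prove this corollary, because it relies on an integrability that the corollary does not grant. You assert that $F$ lies in $L^{\gamma}$ with $\gamma>1$ ``by \eqref{def:exponents}''. But the corollary is precisely the case $\beta=1$: the coercivity datum $f_p+af_q$, and hence $F$, is only in $L^1_{loc}$. Only the $g_{r,\ell},h_{r,\ell}$ keep a strict margin. So $f_2=F^{\delta}$ lies in $L^{1/\delta}$ and no better. Lemma \ref{lem:Gehring} then gives $H^{\delta}\in L^{1+\varepsilon}_{loc}$ only for $\varepsilon\le\min\{(1-\kappa)/c^*,\,1/\delta-1\}$. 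The gain is therefore capped by $1/\delta-1$ and depends on $\delta$, not only on the data. Your target $\delta(1+\epsilon_0)\ge 1/\delta$ is unreachable; it is the conclusion of Theorem \ref{thm:main}, which needs $\beta>1$.

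What the corollary requires is showing that the structural threshold reaches this cap. Three facts do it:
\begin{itemize}
\item take $\varepsilon_0=1/\delta-1<1/\delta_0-1$ and use that $c^*$ can be chosen increasing in $\varepsilon_0$, so one $c^*$ serves every $\delta\in[\delta_1,1)$;
\item $\kappa=\widehat{\delta}/\delta\le 2\widehat{\delta}/(1+\delta_0)<1$ uniformly in $\delta$;
\item the constants of the reverse H\"older inequality do not depend on $\delta$.
\end{itemize}
Choosing $\delta$ close to $1$ with $(1-2\widehat{\delta}/(1+\delta_0))/c^*\ge 1/\delta-1$ then gives $H^{\delta}\in L^{1/\delta}_{loc}$, i.e.\ exactly $H\in L^1_{loc}$. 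Given Lemma \ref{lem:Reverse-Holder}, which holds verbatim for $\beta=1$, this is the paper's entire proof. Your iteration fallback needs the same $\delta$-uniformity, which you never address.

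Second, your Steps 1--2 do not yet yield that reverse H\"older inequality, and the remedy you suggest does not work. With a gradient-level $g$ and weight $\lambda^{-1-\varepsilon}$, matching $|D^m u|^{p\delta}$ forces $\varepsilon=p(1-\delta)$; your $\varepsilon=1-\delta$ fits only a modular level. With that choice, the $q$-part of the error on $E_\lambda^c$ produces $\int a\,g^{q-p(1-\delta)}=\int (ag^{q})^{\delta}(ag^{q-p})^{1-\delta}$, which $\int H^{\delta}$ does not control: a single gradient level cannot serve both phases.

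Freezing $a(x_0)$ or splitting into phases does not cure this. In the $p$-phase you only have $a(x)\lambda^{q}\lesssim R^{\alpha}\lambda^{q}$, and $R^{\alpha}\lambda^{q-p}$ is unbounded on the range $\lambda\to\infty$ that the layer-cake integration covers. A jet (Taylor) Whitney extension transports only pointwise data from $E_\lambda$, so it is unsuited to an $a$-weighted bound.

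The missing idea is a direct construction realizing \eqref{eq:principle} at every level; this, not a phase splitting, is also the mechanism of \cite{BBK} for $m=1$. It has three ingredients:
\begin{itemize}
\item a modular level function $G$ with $|D^m u|\le G^{1/p}$ and $a^{1/q}|D^m u|\le G^{1/q}$, with the lower-order quantities at exponents $\gamma_{p,\ell},\gamma_{q,\ell}$;
\item extension by weighted mean value polynomials whose weights are the partition of unity, so that, by integration by parts, derivatives on a Whitney ball are bounded by averages over balls of comparable radius $r_i$;
\item the oscillation of $a$ measured at scale $r_i$ rather than $R$, via $a(x)\le c\,(a(y)+r_i^{\alpha})$. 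Here $r_i^{\alpha/q}$ times the relevant power of those averages is $\le 1$ by \eqref{eq:R_0}, and commuting $a^{1/q}$ with the maximal operators is paid for by the fractional maximal term built into $G$ (Lemma \ref{lem:Hedberg-2}).
\end{itemize}
This is what produces Lemma \ref{lem:derivative}, and hence an error of size $c(1-\delta)\int G^{\delta}$ after integrating in $\lambda$.
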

\begin{rem}
If $a \equiv 0$, Theorem \ref{thm:main} agrees with the result of Lewis \cite{Le}.
On the other hand, if $m=1,\, 0 \le a \in C^{0,\alpha}(\Omega)\,(\alpha \in (0,1])$ and $g_{r,0} = h_{r,0} \equiv 0$ for each $r \in \{p,q\}$, then Corollary \ref{cor:main} coincides with the result of Baasandorj, Byun and Kim \cite{BBK}.
\end{rem}

\subsection{Outline of proof}\label{subsec:outline}

Our proof consists of three steps: first, we construct a suitable test function via Lipschitz truncation. Next, by substituting this test function into the system \eqref{eq:main}, we deduce a reverse H\"older inequality. Finally, applying Gehring's lemma to the reverse H\"older inequality, we obtain the higher integrability result. For a concise discussion of the case $m=1$  and $a \equiv 1$, see \cite[pp.1518-1520]{Le} and \cite[Section 12.3]{KLV}.

In the context of very weak solutions, the Lipschitz truncation method was first introduced by Lewis \cite{Le}. Unlike weak solutions, very weak solutions do not allow us to use test functions involving the solutions themselves due to their lower integrability. To overcome this difficulty, we employ the Lipschitz truncation method. This method enables us to construct a truncated function whose gradient is bounded by a prescribed constant, while preserving weak differentiability. 

In the work of Lewis \cite{Le}, a general extension theorem is used for the construction of a truncated function. However, as explained in \cite[pp.8735-8736]{BBK} for the specific case $m=1$, in order to construct an appropriate truncated function for the double phase operator, we require the truncated function $v_{\lambda}$ to satisfy, for a given $\lambda > 0$
\begin{equation}\label{eq:principle}
|Dv_{\lambda}| \lesssim \lambda^\frac{1}{p},
\quad
a^{\frac{1}{q}}|Dv_{\lambda}| \lesssim \lambda^\frac{1}{q}.
\end{equation}
To the best of our knowledge, there is no general extension theorem preserving these two types of estimates simultaneously. Therefore, we must perform a direct construction.

Based on this principle, we need to generalize the Lipschitz truncation developed in \cite{BBK} to higher orders. In standard higher-order theory, one uses a Taylor polynomial (\cite[Chapter 6]{St}) or a mean value polynomial (\cite{Bo}) for the extension. However, since the former polynomial relies on pointwise information, it is unsuitable for obtaining the second inequality in \eqref{eq:principle} in view of standard estimates of the double phase operator. On the other hand, since the coefficients of the latter polynomial are defined via integral averages, we can easily obtain the second inequality. However, the polynomial contains derivatives of $u$ up to order $m-1$. This implies we would need to control higher derivatives when estimating the lower derivatives of the Lipschitz truncation. This feature is not suitable for our growth condition (cf. \cite{Bo} and \cite{BBK}). To avoid this issue, we replace the standard mean value polynomial with a weighted mean value polynomial (see Subsection 2.3), obtained by weighting the integral average with a cut-off function. With this modification, we lower the derivative order via integration by parts.

After constructing the appropriate Lipschitz truncation, our next concern is the deduction of a reverse H\"older inequality (see Lemma \ref{lem:Reverse-Holder}). In order to prove this inequality, a Sobolev--Poincar\'e inequality associated with the double phase operator is needed. To this end, we establish the following theorem:
\begin{thm}\label{thm:sp-0}
Let $1 < p \le q < n$, $\alpha \in (0,\infty)$ and $\frac{q}{p} \le 1 + \frac{\alpha}{n}$, and let $B=B_R \subset \R^n$ be an open ball of radius $R>0$ and $a \in \mathcal{Z}^{\alpha}(B)$. Moreover, assume  $u \in W^{1,p}(B)$ satisfies 
\begin{equation}
\int_{B} a|Du|^q  < \infty
\quad
\text{and}
\quad
u_{B} = 0.
\end{equation}
Then there exists a constant $c=(n,p,q,\alpha,[a]_{\alpha})$ such that
\begin{equation}\label{eq:sp-0}
\left( \frac{1}{|B|}\int_{B} a^{\frac{q^*}{q}}\left| \frac{u}{R} \right|^{q*} \right)^{\frac{1}{q^*}} 
\le c \left( \frac{1}{|B|}\int_{B} a|Du|^{q} \right)^{\frac{1}{q}}
+ c R^{\frac{\alpha}{q}} \left( \frac{1}{|B|}\int_{B} |Du|^{p} \right)^{\frac{1}{p}}
\end{equation}
Here $q^* = \frac{nq}{n-q}$ and $u_B = \frac{1}{|B|}\int_{B} u$.
\end{thm}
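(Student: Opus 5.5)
The plan is to avoid any case distinction on the size of $a$. Instead I will prove \eqref{eq:sp-0} from a pointwise potential estimate combined with the $\mathcal{Z}^\alpha$ condition, and then apply Hardy--Littlewood--Sobolev (HLS) inequalities. By density and scaling ($x\mapsto x_0+Rx$, under which $a$ becomes $a(x_0+R\,\cdot)$ with $\mathcal{Z}^\alpha$-constant $[a]_\alpha$ and the extra term $R^\alpha$ in \eqref{eq:Zalpha}), it is natural to work on $B$ directly and track the powers of $R$ at the end.

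\textbf{Step 1 (potential bound and splitting of the weight).} Since $u\in W^{1,p}(B)$ and $u_B=0$, the classical estimate on balls gives, for a.e.\ $x\in B$,
\begin{equation}
|u(x)| \le c(n)\int_B \frac{|Du(y)|}{|x-y|^{n-1}}\,dy .
\end{equation}
Multiplying by $a(x)^{1/q}$ and using $a(x)\le [a]_\alpha(a(y)+|x-y|^\alpha)$ together with subadditivity of $t\mapsto t^{1/q}$, I get
\begin{equation}
a(x)^{\frac1q}|u(x)| \le c\int_B \frac{a(y)^{\frac1q}|Du(y)|}{|x-y|^{n-1}}\,dy + c\int_B \frac{|Du(y)|}{|x-y|^{n-1-\frac{\alpha}{q}}}\,dy \eqqcolon c\,(I + II).
\end{equation}

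\textbf{Step 2 (first term).} The first term is the Riesz potential $I_1$ of $a^{1/q}|Du|\chi_B\in L^q$, since $\int_B a|Du|^q<\infty$. Because $q<n$, the HLS inequality $I_1:L^q\to L^{q^*}$ gives
\[
\|I\|_{L^{q^*}(B)}\le c\,\|a^{1/q}Du\|_{L^q(B)}.
\]
After normalizing by $|B|$ and dividing by $R$, this is exactly the first term on the right of \eqref{eq:sp-0}.

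\textbf{Step 3 (second term; the main point).} This is where $q/p\le 1+\alpha/n$ enters, and I expect it to be the delicate step. Define $\theta$ by
\[
\frac1p-\frac{1+\theta}{n}=\frac1{q^*},\qquad\text{i.e.}\qquad \theta=\frac np-\frac nq .
\]
Then $\theta\ge 0$ since $p\le q$, and $\theta\le \alpha/q$ is equivalent to $q/p\le 1+\alpha/n$. For $x,y\in B$ we have $|x-y|\le 2R$, so
\[
|x-y|^{\frac{\alpha}{q}}\le (2R)^{\frac{\alpha}{q}-\theta}|x-y|^{\theta},
\qquad\text{hence}\qquad
II\le cR^{\frac{\alpha}{q}-\theta}\, I_{1+\theta}(|Du|\chi_B).
\]
Since $(1+\theta)p<n$ (indeed $1/p-(1+\theta)/n=1/q^*>0$), HLS gives $I_{1+\theta}:L^p\to L^{q^*}$, and therefore
\[
\|II\|_{L^{q^*}(B)}\le cR^{\frac{\alpha}{q}-\theta}\|Du\|_{L^p(B)}.
\]

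\textbf{Step 4 (bookkeeping).} Dividing by $R\,|B|^{1/q^*}$ and writing $\|Du\|_{L^p(B)}=|B|^{1/p}(\text{avg})^{1/p}$, the powers of $R$ combine as
\[
R^{\frac\alpha q-\theta-1+\frac np-\frac n{q^*}}=R^{\frac{\alpha}{q}},
\]
using $\frac n{q^*}=\frac nq-1$ and the definition of $\theta$. This yields the second term of \eqref{eq:sp-0}. The first term scales similarly, with exponent $-1+\frac nq-\frac n{q^*}=0$. The constants depend only on $n,p,q,\alpha,[a]_\alpha$.

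The only things to double-check are the validity of the potential estimate for $W^{1,p}$ functions, which follows by density, and the endpoint $\theta=\alpha/q$. The latter is harmless because HLS holds at the exact exponent, which is why the borderline $q/p=1+\alpha/n$ is allowed here.
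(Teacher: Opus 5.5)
Your proposal is correct and follows the paper's proof essentially step for step. Your exponent $1+\theta$ is exactly the paper's $\beta=n\bigl(\frac1p-\frac1q\bigr)+1$ from Lemma \ref{lem:beta}, and your remaining steps match the paper's: the pointwise splitting of $a(x)^{1/q}I_1(|Du|)$ via the $\mathcal{Z}^{\alpha}$ condition, the two Hardy--Littlewood--Sobolev bounds $I_1:L^q\to L^{q^*}$ and $I_\beta:L^p\to L^{q^*}$, and the final power counting in $R$ are Lemma \ref{lem:beta}(iii), Lemma \ref{lem:strong-type}, and the proof of Theorem \ref{thm:sp} with $\eta\equiv 1$.
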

We first note that this result coincides with the classical Sobolev--Poincar\'e inequality if $a \equiv 1$ because from its proof one can see that the second term vanishes in this case. In the standard approach (see \cite[Lemma 3.10]{BBK} and \cite[Theorem 2.13]{Ok}, for example), the condition $\alpha \in (0,1]$ and the fact that the exponent, which appears on the left-hand side of \eqref{eq:sp-0}, is lower than $q^*$ are essentially used. Therefore, we can not obtain Theorem \ref{thm:main} by modifying their argument. Recently a very similar result was obtained in \cite[Theorem 3.6]{CD} in a more general form using Sobolev--Musielak spaces. However, in their result, the constant $c$ is dependent on the size of domain. By carefully examining their argument, we can make the dependence on the size of the domain explicit. Instead, we provide an elementary proof specialized for the double phase operator. The key to the proof is a simple inequality involving the Riesz potential (see Lemma \ref{lem:beta}).

At the end of this subsection, we outline the structure of the paper.
Section 2 introduces notation and auxiliary lemmas used throughout the proof of the main theorem.
Section 3 establishes the Sobolev--Poincar\'e inequality for the double phase operator; we also prove Theorem \ref{thm:sp-0} there.
Section 4 is devoted to the proof of the main theorem.
Finally, we provide an elementary proof of Gehring's lemma in the Appendix.

\section{Preliminaries}

Throughout of the paper, we denote by $c$ a general positive constant, possibly varying from line to line.
Moreover, relevant dependencies on parameters will be emphasized using parentheses,  
i.e., 
$$
c = c(n, p, q)
$$
means that $c$ depends on $n, p$ and $q$.

For $x_0 \in \R^n$ and $r>0$, the open ball of radius $r$ centered at $x_0$ is written as
\begin{equation}
B(x_0,r) \coloneqq \{ x \in \mathbb{R}^n : |x - x_0| < r \}.
\end{equation}
When the center or radius is irrelevant or clear from the context,  
we simply write $B_r$ or $B$.

Let $B \subset \mathbb{R}^n$ be a measurable subset with $0 < |B| < \infty$,
and let $f = (f_1,\ldots,f_k): B \to \mathbb{R}^k$ ($k \in \N$) be an integrable map. For a given weight function $\eta \in L^\infty(B) \setminus \{0\}$, we define the weighted average $f$ over $B$ with respect to $\eta$ by
\begin{equation}\label{eq:weight-mean}
f_{B,\eta} \coloneq (f)_{B,\eta} \coloneq \left( \frac{1}{\|\eta\|_{L^1(B)}} \int_B f_1\eta,\ldots, \frac{1}{\|\eta\|_{L^1(B)}} \int_B f_k\eta\right).
\end{equation}
When $\eta \equiv 1$, we simply write $f_B$ or $(f)_B$.

Finally, we denote by $\omega_n$ the volume of the unit ball in $\R^n$. 
\subsection{Some properties with maximal function}\label{subsec:Mf}

Let $f \in L^1_{\text{loc}}(\mathbb{R}^n;\R^k)$ with $k \in \N$ and $0\le\beta<n$. 
The uncentered (fractional) maximal function of $f$ is defined by for every $x \in \R^n$,
\begin{equation} \label{eq:Mfunct}
M_{\beta}(f)(x) \coloneqq \sup_{x \in B(x_0,r)} r^{\beta}\fint_{B(x_0,r)} |f|
\end{equation}
where the supremum is taken over all balls $B(x_0,r) \subset \mathbb{R}^n$ containing the point $x$.
Moreover, we denote $M_0 = M$. For the basic properties of (fractional) maximal function, see \cite[Chapter 1]{KLV}.

Here we recall the following $L^p$-boundedness of maximal function. 
\begin{lem}\label{lem:Mfunct}
Let $1 < s < \infty$. Then there exists $c=c(n,s)>0$ such that
\begin{equation}
    \int_{\mathbb{R}^n} M(f)^s \leq c \int_{\mathbb{R}^n} |f|^s
\end{equation}
for any $f \in L^s(\R^n)$.
Moreover, the constant $c(n,s)$ depends continuously on $s$.
\end{lem}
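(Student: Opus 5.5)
The statement is the Hardy--Littlewood maximal theorem, with the extra claim that the constant depends continuously on $s$. I will prove it by Marcinkiewicz interpolation between a weak $(1,1)$ bound and the trivial $L^\infty$ bound, keeping track of the constants explicitly.

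\textbf{Weak-type bound.} First I prove the weak-type estimate
\begin{equation}
|\{M f > t\}| \le \frac{c_n}{t} \int_{\mathbb{R}^n} |f|, \qquad t>0,
\end{equation}
with $c_n = 5^n$.
\begin{itemize}
\item Fix a compact $K \subset \{Mf > t\}$.
\item Every $x \in K$ lies in some ball $B_x$ with $\fint_{B_x} |f| > t$.
\item By compactness, finitely many such balls cover $K$.
\item The Vitali covering lemma then gives disjoint balls $B_1,\dots,B_k$ among them with $K \subset \bigcup 5B_j$.
\item Hence $|K| \le 5^n \sum_j |B_j| \le 5^n t^{-1} \int |f|$.
\item Inner regularity gives the bound for the whole level set. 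Measurability is not an issue, since $\{Mf>t\}$ is open.
\end{itemize}
The $L^\infty$ bound is immediate: $\|Mf\|_\infty \le \|f\|_\infty$.

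\textbf{Splitting $f$ at each level.} For $t>0$ write $f = f_1 + f_2$, where $f_1 = f\,\chi_{\{|f| > t/2\}}$. Then $|f_2| \le t/2$, so $Mf_2 \le t/2$. Since $M$ is sublinear, this gives
\begin{equation}
\{Mf > t\} \subset \{Mf_1 > t/2\}.
\end{equation}
Applying the weak-type bound to $f_1$,
\begin{equation}
|\{Mf>t\}| \le \frac{2c_n}{t}\int_{\{|f|>t/2\}} |f|.
\end{equation}

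\textbf{Layer-cake and Fubini.} Now
\begin{align}
\int M(f)^s &= s\int_0^\infty t^{s-1}|\{Mf>t\}|\,dt \\
&\le 2c_n s \int_{\mathbb{R}^n} |f| \int_0^{2|f|} t^{s-2}\,dt\,dx \\
&= \frac{2^s c_n s}{s-1}\int_{\mathbb{R}^n} |f|^s .
\end{align}
The inner integral converges because $s>1$.

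\textbf{Continuity in $s$.} This yields the constant $c(n,s) = 5^n 2^s s/(s-1)$, which is explicit and continuous on $(1,\infty)$. For vector-valued $f$, note that $M$ only involves $|f|$, so nothing changes.

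The only delicate point is stating the Vitali covering argument correctly, since the maximal function here is uncentered. Everything else is routine. Alternatively, one may simply cite the classical proof in \cite[Chapter 1]{KLV} and observe that the constant produced there is exactly of the form above.
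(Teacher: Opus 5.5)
Your proof is correct, but it does not follow the paper's route. The paper does not reprove the maximal theorem. It uses the pointwise comparison $M^c(f) \le M(f) \le 2^n M^c(f)$ between the centered and uncentered maximal functions, and then quotes the $L^s$-boundedness of $M^c$ from \cite[Theorem 1.15]{KLV}. Its constant is therefore $2^{ns}$ times the cited one, and continuity in $s$ is never argued; it silently relies on the cited constant being explicit.

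You instead run the weak-$(1,1)$ plus Marcinkiewicz layer-cake argument directly for the uncentered operator. This is natural here: the Vitali covering step never uses that $x$ is the center of the ball, and $\{Mf>t\}$ is automatically open. You obtain the explicit constant $c(n,s)=5^n 2^s s/(s-1)$.

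The paper's route is shorter. Yours is self-contained, avoids the extra factor $2^{ns}$, and actually proves the continuity claim. That claim is exactly what the paper needs in Lemma~\ref{lem:G}, to bound $c(n,\delta/\delta_0)$ uniformly while $\delta/\delta_0$ ranges over the compact interval $\bigl[\tfrac{1+\delta_0}{2\delta_0},\tfrac{1}{\delta_0}\bigr] \subset (1,\infty)$.
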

\begin{proof}
Let us denote by $M^c$ the centered maximal function:
\begin{equation}
M^c(f)(x) \coloneqq \sup_{r>0} \fint_{B(x,r)} |f|
\quad
\text{for any }
x \in \R^n.
\end{equation}
Thanks to the inequality 
$M^c(f)(x) \le M(f)(x) \le 2^nM^c(f)(x)$ for any $x \in \R^n$ and the $L^p$-boundedness for the centered maximal function (\cite[Theorem 1.15]{KLV}), we arrive at the conclusion.
\end{proof}

The following lemma concerns the composition of the maximal function with its fractional counterpart. It will be used in the proof of Lemma \ref{lem:Hedberg-2}.
\begin{lem}\label{lem:composite}
Let $0<\beta<n$. Then there exists $c=c(n,\beta)$ such that
\begin{equation}\label{eq:composite-conc}
M(M_{\beta}(f))(x) \le cM_{\beta}(f)(x)
\end{equation}
for any $x \in \R^n$ and $f \in L^1_{loc}(\R^n)$ with $M_{\beta}(f) \in L^1_{loc}(\R^n)$.
\end{lem}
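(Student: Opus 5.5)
Fix $x\in\R^n$ and an arbitrary ball $B=B(x_0,r)\ni x$. The goal is to show
\[
\fint_{B} M_\beta(f) \le c\, M_\beta(f)(x)
\]
with $c=c(n,\beta)$; taking the supremum over such balls then gives the claim. For each $y\in B$ I split the supremum defining $M_\beta(f)(y)$ according to the size of the competing ball $B'=B(z,\rho)\ni y$.

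\textbf{Large balls, $\rho\ge r$.} Here $B'\cup B\subset B(z,\rho+2r)\subset B(z,3\rho)$, so the ball $B(z,3\rho)$ contains $x$. Hence
\[
\rho^\beta\fint_{B'}|f|\le 3^{n}\rho^\beta\fint_{B(z,3\rho)}|f|\le 3^{n-\beta}(3\rho)^\beta\fint_{B(z,3\rho)}|f|\le 3^{n}M_\beta(f)(x).
\]
This part is therefore bounded pointwise in $y$ by $cM_\beta(f)(x)$, uniformly.

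\textbf{Small balls, $\rho<r$.} Now $B'\subset 3B:=B(x_0,3r)$, so only $g:=f\chi_{3B}$ matters, and this part is at most $M_\beta(g)(y)\le r^\beta M(g)(y)$. I would not use the $L^s$-boundedness of Lemma~\ref{lem:Mfunct} here, since $f$ need not lie in any $L^s$. Instead I use the weak type $(1,1)$ bound for $M$, which implies Kolmogorov's inequality on the ball $B$ for an exponent $\gamma\in(0,1)$:
\[
\fint_B M(g)^\gamma\le c(n,\gamma)\Big(\fint_{3B}|f|\Big)^\gamma .
\]
This only controls a fractional power, not the average $\fint_B M(g)$ itself. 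So I avoid $M(g)$ and instead bound $M_\beta(g)(y)$ by the Riesz potential $I_\beta|g|(y)=\int|g(w)||y-w|^{\beta-n}\,dw$, using $\rho^{\beta}\fint_{B'}|g|\le c\,I_\beta|g|(y)$. Then Fubini gives
\[
\fint_B I_\beta|g|\,dy\le \sup_{w}\fint_B|y-w|^{\beta-n}\,dy\int_{3B}|f|\le c\,r^{\beta-n}\int_{3B}|f|=c\,r^\beta\fint_{3B}|f|,
\]
where I used $\sup_w\fint_B|y-w|^{\beta-n}dy\le c\,r^{\beta-n}$, valid because $\beta>0$ makes the kernel locally integrable. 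Since $x\in B\subset 3B$, the right-hand side is at most $c\,M_\beta(f)(x)$.

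\textbf{Conclusion.} Combining both cases,
\[
\fint_B M_\beta(f)\le c\,M_\beta(f)(x)+c\,r^\beta\fint_{3B}|f|\le c(n,\beta)\,M_\beta(f)(x).
\]
The main obstacle is the small-ball part: a naive bound through $M(g)$ fails because $M$ is not bounded on $L^1$. Passing to the Riesz potential fixes this, and it works precisely because $\beta>0$. This is why the constant depends on $\beta$ and blows up as $\beta\to0$.
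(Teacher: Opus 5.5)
Your argument is correct and is essentially the paper's proof: the same split of radii at $r$, the large balls absorbed into a ball of comparable radius containing $x$, and the small balls handled by Fubini over $B(x_0,3r)$; the Kolmogorov detour is not needed. The differences are cosmetic. You work with uncentered balls directly instead of first passing to centered maximal functions. You also dominate the small-ball supremum by the Riesz potential $I_\beta|g|(y)$, whereas the paper uses the dyadic sum $\sum_{j\ge 1}(2^{-j+1}r)^{\beta-n}\int_{B(y,2^{-j+1}r)}|f|$, which is a discrete version of the same kernel; local integrability of $|y-w|^{\beta-n}$ plays the role of $\sum_j 2^{-j\beta}<\infty$.
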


\begin{proof}
Let us define
\begin{equation}
M^c(f)(x) \coloneqq \sup_{r>0} \fint_{B(x,r)} |f|
\quad
\text{and}
\quad
M^c_{\beta}(f)(x) \coloneqq \sup_{r>0} r^{\beta}\fint_{B(x,r)} |f|.
\end{equation}
Since it holds
\begin{equation}
M^c(f)(x) \le M(f)(x) \le 2^nM^c(f)(x)
\quad
\text{and}
\quad
M_{\beta}^c(f)(x) \le M_{\beta}(f)(x) \le 2^{n-\beta}M_{\beta}^c(f)(x),
\end{equation}
for any $x \in \R^n$, it is enough to verify there exists $c=c(n,\beta)>0$ such that
\begin{equation}
M^c(M^c_{\beta}(f))(x) \le cM^c_{\beta}(f)(x)
\end{equation}
for any $x \in \R^n$. For the rest of the proof,
we denote $M^c = M$ and $M_{\beta}^c = M_{\beta}$.

Fix any $x \in \R^n$ and $r>0$. It suffices to show that
\begin{equation}\label{eq:comosite-goal}
\fint_{B(x,r)} M_{\beta}(f)(y)\, dy \le c(n,\beta)M_{\beta}(f)(x).
\end{equation}
For the left-hand side of \eqref{eq:comosite-goal}, we have
\begin{align}\label{eq:composite-I's}
&\fint_{B(x,r)} M_{\beta}(f)(y)\, dy \\
&= \fint_{B(x,r)} \left( \sup_{s>0}s^\beta \fint_{B(y,s)} |f(z)|\,dz \right) dy \\
&\le \fint_{B(x,r)} \left( \sup_{s>r}s^\beta \fint_{B(y,s)} |f(z)|\,dz \right) dy
+ \fint_{B(x,r)} \left( \sup_{0 < s \le r}s^\beta \fint_{B(y,s)} |f(z)|\,dz \right) dy \\
&\eqqcolon I_1 + I_2.
\end{align}

Firstly, we estimate $I_1$.
If $y \in B(x,r)$ and $s > r$, then we get
\begin{equation}
B(y,s) \subset B(x,s+r) \subset B(x,2s)
\end{equation}
and
\begin{align}
s^\beta \fint_{B(y,s)} |f(z)|\,dz
&= \frac{s^\beta}{|B(y,s)|} \int_{B(y,s)} |f(z)|\,dz \\
&\le \frac{s^\beta}{|B(y,s)|} \int_{B(x,2s)} |f(z)|\,dz \\
&= \frac{2^{n-\beta}(2s)^\beta}{|B(x,2s)|} \int_{B(x,2s)} |f(z)|\,dz
\le 2^{n-\beta}M_{\beta}(f)(x).
\end{align}
Therefore, we obtain
\begin{equation}\label{eq:composite-I_1}
I_1 
\le 2^{n-\beta}\fint_{B(x,r)} M_{\beta}(f)(x)\,dy
= 2^{n-\beta}M_{\beta}(f)(x).
\end{equation}

Next we estimate $I_2$. For any $y \in \R^n$ and $j \in \N$ with $2^{-j}r < s \le 2^{-j+1}r$, we have
\begin{align}
&s^{\beta}\fint_{B(y,s)}|f(z)|\,dz \\
&\le \frac{(2^{-j+1}r)^{\beta}}{|B(y,2^{-j}r)|}\int_{B(y,2^{-j+1}r)}|f(z)|\,dz
= 2^n(2^{-j+1}r)^{\beta-n}(\omega_n)^{-1}\int_{B(y,2^{-j+1}r)}|f(z)|\,dz.
\end{align}
From this inequality and the monotone convergence theorem yield that
\begin{align}\label{eq:composite-I_2}
I_2
&\le \fint_{B(x,r)} \sum_{j=1}^{\infty}\left( \sup_{2^{-j}r < s \le 2^{-j+1}r}s^\beta \fint_{B(y,s)} |f(z)|\,dz \right)\,dy \\
&\le 2^n(\omega_n)^{-1}\fint_{B(x,r)} \sum_{j=1}^{\infty}\left( (2^{-j+1}r)^{\beta-n}\int_{B(y,2^{-j+1}r)}|f(z)|\,dz \right)\,dy \\
&= 2^n(\omega_n)^{-1}\sum_{j=1}^{\infty}(2^{-j+1}r)^{\beta-n} \fint_{B(x,r)}
\left( \int_{B(y,2^{-j+1}r)}|f(z)|\,dz \right)\,dy
\eqqcolon I_2'.
\end{align}
Moreover, for any $j \in \N$ and $y \in B(x,r)$, we have $B(y,2^{-j+1}r) \subset B(x,2r)$ and
\begin{align}
\fint_{B(x,r)} 
\left( \int_{B(y,2^{-j+1}r)}|f(z)|\,dz \right)\,dy
&\le \fint_{B(x,r)} 
\left( \int_{B(x,2r)}|f(z)|\chi_{B(y,2^{-j+1}r)}(z)\,dz \right)\,dy \\
&= 2^n\fint_{B(x,2r)}|f(z)|
\left( \int_{B(x,r)}\chi_{B(z,2^{-j+1}r)}(y)\,dy \right)\,dz \\
&= 2^n\fint_{B(x,2r)}|f(z)||B(x,r) \cap B(z,2^{-j+1}r)|\,dz \\
&\le 2^n(2^{-j+1}r)^{n}\omega_n\fint_{B(x,2r)}|f(z)|\,dz
\end{align}
by Fubini's theorem.
Therefore, we get
\begin{align}\label{eq:composite-I_2'}
I_2'
&\le 4^n\sum_{j=1}^{\infty}(2^{-j+1}r)^{\beta}\fint_{B(x,2r)}|f(z)|\,dz \\
&= 4^n \cdot (2r)^{\beta} \fint_{B(x,2r)}|f(z)|\,dz \sum_{j=1}^{\infty} 2^{-j\beta}
\le 4^nM_{\beta}(f)(x)\sum_{j=1}^{\infty} 2^{-j\beta}. 
\end{align}
Since $\beta > 0$, 
it follows that $\sum_{j=1}^{\infty} 2^{-j\beta} = \frac{1}{2^{\beta}-1}$. Therefore, by \eqref{eq:composite-I_2} and \eqref{eq:composite-I_2'}, we obtain
\begin{equation}\label{eq:composite-I_2-2}
  I_2 \le I_2' \le \frac{4^n}{2^\beta - 1}M_{\beta}(f)(x).
\end{equation}
From \eqref{eq:composite-I's},  \eqref{eq:composite-I_1} and \eqref{eq:composite-I_2-2}, we derive the desired inequality \eqref{eq:comosite-goal}.
\end{proof}

\begin{rem}\label{rem:composite}
In Lemma \ref{lem:composite}, we assume $f \in L^1_{loc}(\Omega)$ with $M_{\beta}(f) \in L^1_{loc}(\R^n)$. This condition is satisfied if $f \in L^t(\R^n)$ for some $t \in [1,\infty)$ and $\beta \le \frac{n}{t}$. Indeed, fix $x \in \R^n$ and $B(x,r)$. It is sufficient to show that
\begin{equation}
  \fint_{B(x,r)}M_{\beta}(f)(y)dy < \infty.
\end{equation}
Moreover, it is enough to verify that the terms $I_1$ and $I_2$ in \eqref{eq:composite-I's} are finite.

First, for $I_1$, Jensen's inequality yields
\begin{align}
  I_1 
  &\le
  \fint_{B(x,r)} \left\{ \sup_{s>r}s^\beta \left( \fint_{B(y,s)} |f(z)|^t\,dz \right)^{\frac{1}{t}} \right\} dy \\
  &= \fint_{B(x,r)} \left\{ \sup_{s>r}s^{\beta-\frac{n}{t}} \left( \int_{B(y,s)} |f(z)|^t\,dz \right)^{\frac{1}{t}} \right\} dy
  \le r^{\beta-\frac{n}{t}}\|f\|_{L^t(\R^n)}.
\end{align}
In the last inequality, we used $f \in L^t(\R^n)$ and $\beta \le \frac{n}{t}$.

On the other hand, for $I_2$, from \eqref{eq:composite-I_2} and \eqref{eq:composite-I_2'}, it follows that
\begin{equation}
  I_2 \le I_2' \le c\sum_{j=1}^{\infty}(2^{-j+1}r)^{\beta}\fint_{B(x,2r)}|f(z)|\,dz < \infty.
\end{equation}
Therefore, we conclude $M_{\beta}(f) \in L^1_{loc}(\R^n)$.
\end{rem}

The next lemma deals with the continuity of (fractional) maximal function and will be used to verify Jordan measurability of certain lower level sets, which are defined in the proof of main theorem (see \eqref{eq:Elambda} and Lemma \ref{lem:Jordan}).
\begin{lem}\label{lem:continuity}
Let $f \in L^t \cap L^{\infty}(\R^n)$ for some $t \in (1,\infty)$ and $\beta \in \left[ 0, \frac{n}{t} \right]$. Moreover, assume $f$ is uniformly continuous in $\R^n$. Then $M_{\beta}(f)$ is also uniformly continuous in $\R^n$.
\end{lem}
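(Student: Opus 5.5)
The plan is to split the supremum defining $M_\beta(f)$ into small radii and large radii, and to show that each piece either is uniformly small or varies uniformly continuously in $x$. Fix $\varepsilon>0$.

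\emph{Small radii.} For $r\le r_0$ we have
\[
r^\beta\fint_{B(x_0,r)}|f|\le r_0^\beta\|f\|_{L^\infty}.
\]
When $\beta>0$, this is below $\varepsilon$ once $r_0$ is small.

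\emph{Large radii.} For $r\ge R_0$, Jensen's inequality gives
\[
r^\beta\fint_{B(x_0,r)}|f|\le c\,r^{\beta-\frac nt}\|f\|_{L^t(\R^n)}.
\]
If $\beta<\frac nt$, this is below $\varepsilon$ for $R_0$ large. In the borderline case $\beta=\frac nt$, I would instead write
\[
r^{\beta-\frac nt}\Bigl(\int_{B(x_0,r)}|f|^t\Bigr)^{\frac1t}
\]
and still need a uniform decay or regularity statement; this is handled below.

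\emph{Intermediate radii.} Here I compare
\[
\phi(x):=\sup_{x\in B(x_0,r)}r^\beta\fint_{B(x_0,r)}|f|
\]
at two points $x,y$ with $|x-y|<h$. If $x\in B(x_0,r)$, then $y\in B(x_0+(y-x),r)$, and the translated average is
\[
r^\beta\fint_{B(x_0,r)}|f(\cdot+(y-x))|.
\]
This differs from the original by at most $r^\beta\,\omega(h)$, where $\omega$ is the modulus of continuity of $|f|$, which is uniformly continuous. Hence
\[
M_\beta(f)(y)\ge M_\beta(f)(x)-\sup_{r}r^\beta\omega(h),
\]
and by symmetry we get the reverse bound.

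The flaw in this translation argument is that $\sup_r r^\beta\omega(h)$ is infinite over all radii. It is finite only after restricting to $r\le R_0$, which is exactly why I need the large-radius cutoff. So I would define truncated maximal functions $M_\beta^{\le R_0}$ and $M_\beta^{\ge R_0}$ and show the following:
\begin{enumerate}
\item $M_\beta^{\le R_0}(f)$ is uniformly continuous with modulus $R_0^\beta\omega(h)$, by the translation argument.
\item $M_\beta^{\ge R_0}(f)$ is itself uniformly continuous. For $r\ge R_0$ and $|x-y|<h$, the translated ball average changes by at most
\[
r^\beta\,\frac{|B(x_0,r)\triangle B(x_0+z,r)|}{|B_r|}\cdot\sup|f|\le c\,r^{\beta-1}h\,\|f\|_\infty .
\]
This is not uniformly bounded in $r$ if $\beta\ge1$. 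I would combine it with the $L^t$ bound instead. The symmetric-difference set has measure about $r^{n-1}h$, so by Hölder,
\[
\int_{\triangle}|f|\le\|f\|_{L^t}\,(r^{n-1}h)^{1/t'},
\]
and the difference is at most $c\,r^{\beta-n}\|f\|_{L^t}(r^{n-1}h)^{1-\frac1t}$. For $\beta\le\frac nt$ the exponent of $r$ is at most $-1+\frac1t<0$, so it is bounded by $c\,R_0^{-1+\frac1t}h^{1-\frac1t}$ uniformly in $r\ge R_0$.
\end{enumerate}
This last estimate also covers the borderline case $\beta=\frac nt$ and makes the earlier large-$r$ smallness step unnecessary: both truncated pieces are uniformly continuous, and the maximum of two uniformly continuous functions is uniformly continuous.

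The main obstacle is this large-radius symmetric-difference estimate. It is where the hypotheses $f\in L^t$ and $\beta\le\frac nt$ enter, and its exponent bookkeeping must be checked carefully. The rest is routine handling of suprema, using $|\sup a-\sup b|\le\sup|a-b|$ after the translation matching of balls.
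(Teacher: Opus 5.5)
Your proof is correct. The small-radius part is the same as the paper's; the large-radius part takes a genuinely different route.

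\textbf{The paper's route.} It first checks that $M_\beta(f)$ is finite. It then writes $M_\beta(f)(x+h)=M_\beta(\tau_h f)(x)$ and uses sublinearity to get $|M_\beta(f)(x+h)-M_\beta(f)(x)|\le M_\beta(\tau_h f-f)(x)$. This single quantity is split at radius $1$:
\begin{itemize}
\item For $r<1$ it uses the uniform continuity of $f$. This is your item 1, since your ball matching is the same comparison of $f$ with $\tau_h f$.
\item For $r\ge 1$ it uses Jensen to get the bound $r^{\beta-\frac nt}\|\tau_h f-f\|_{L^t}$. It then invokes the continuity of translations in $L^t$.
\end{itemize}

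\textbf{Your route.} You write $M_\beta(f)$ as the maximum of two truncated maximal functions. For $r\ge R_0$ you compare the averages over $B(x_0,r)$ and $B(x_0+z,r)$ through their symmetric difference, using $|B(x_0,r)\triangle B(x_0+z,r)|\le c(n)r^{n-1}|z|$ and H\"older.

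\textbf{What each buys.} Your large-radius piece needs neither continuity of $f$ nor the translation lemma, and it gives an explicit H\"older modulus $C\|f\|_{L^t}h^{1-\frac1t}$. The price is the geometric estimate and an essential use of $t>1$: for $t=1$ your exponent produces no power of $h$. The paper's argument is shorter and would also run for $t=1$. However, its large-radius modulus is only qualitative.

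\textbf{Two bookkeeping points.}
\begin{itemize}
\item The uniform bound over $r\ge R_0$ is $cR_0^{\beta-\frac nt-1+\frac1t}\|f\|_{L^t}h^{1-\frac1t}$. Your $cR_0^{-1+\frac1t}h^{1-\frac1t}$ is right (up to the factor $\|f\|_{L^t}$) only when $\beta=\frac nt$ or $R_0\ge 1$. Any finite constant suffices, so this is harmless.
\item The step $|\sup a-\sup b|\le\sup|a-b|$ needs both truncated suprema to be finite. This follows from the bounds $R_0^\beta\|f\|_{L^\infty}$ and $cR_0^{\beta-\frac nt}\|f\|_{L^t}$, which you already wrote down. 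It is the same finiteness check the paper makes at the start.
\end{itemize}
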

\begin{proof}
We first claim 
\begin{equation}\label{eq:continuity-first}
M_{\beta}(f)(x) < \infty
\quad
\text{for any }
x \in \R^n.
\end{equation}
Indeed, noting that $f \in L^t \cap L^{\infty}(\R^n)$ and $\beta \in \left[ 0, \frac{n}{t} \right]$, Jensen's inequality implies
\begin{align}
&M_{\beta}(f)(x) \\
&\le \sup_{x \in B(x_0,r),1 \le r} r^\beta \fint_{B(x_0,r)} |f(y)|\,dy
+ \sup_{x \in B(x_0,r),0<r<1} r^\beta \fint_{B(x_0,r)} |f(y)|\,dy \\
&\le \sup_{x \in B(x_0,r),1 \le r} r^{\beta-\frac{n}{t}} \left( \int_{B(x_0,r)} |f(y)|^t\,dy \right)^{\frac{1}{t}}
+ \sup_{x \in B(x_0,r),0<r<1} r^{\beta} \|f\|_{L^{\infty}(\R^n)} \\
&\le \|f\|_{L^t(\R^n)} + \|f\|_{L^{\infty}(\R^n)}
\end{align}
for any $x \in \R^n$.
Therefore, \eqref{eq:continuity-first} is verified.

Also we observe that 
\begin{equation}\label{eq:continuity-second}
M_{\beta}(f)(x+h) = M_{\beta}(\tau_h f)(x)
\quad
\text{for any }
x,h \in \R^n,
\end{equation}
where $\tau_h f(x)= f(x+h)$.
Indeed, 
\begin{align}
M_{\beta}(f)(x+h)
&= \sup_{x+h \in B(x_0,r)} r^\beta \fint_{B(x_0,r)} |f(y)|\,dy \\
&= \sup_{x \in B(x_0-h,r)} r^\beta \fint_{B(x_0-h,r)} |f(z+h)|\,dz
= M_{\beta}(\tau_h f)(x).
\end{align}

Now we prove the uniformly continuity of $M_{\beta}(f)$. Fix any $\varepsilon > 0$ and $x \in \R^n$. 
The equality \eqref{eq:continuity-second}, \eqref{eq:continuity-first} and the sublinearity of $M_{\beta}$ yield
\begin{align}\label{eq:continuity-J's}
&|M_{\beta}(f)(x+h)-M_{\beta}(f)(x)| \\
&= |M_{\beta}(\tau_h f)(x)-M_{\beta}(f)(x)| \\
&\le M_{\beta}(\tau_h f -f)(x) \\
&\le \sup_{x \in B(x_0,r),0<r<1} r^\beta \fint_{B(x_0,r)} |f(y+h)-f(y)|\,dy \\
&+ \sup_{x \in B(x_0,r),1 \le r} r^\beta \fint_{B(x_0,r)} |f(y+h)-f(y)|\,dy
\eqqcolon J_1 + J_2
\end{align}
for any $h \in \R^n$.

Firstly, we estimate $J_1$ in \eqref{eq:continuity-J's}.
Since $f$ is uniformly continuous in $\R^n$, there exists $\delta_1 = \delta_1(\varepsilon) > 0$ such that
if $x_1, x_2 \in \R^n$ and $|x_1-x_2| < \delta_1$, then $|f(x_1)-f(x_2)| < \frac{\varepsilon}{2}$. Then we derive
\begin{equation}\label{eq:continuity-J_1}
J_1 \le \frac{\varepsilon}{2} \sup_{x \in B(x_0,r),0<r<1} r^\beta \le \frac{\varepsilon}{2}
\end{equation}
if $h \in \R^n$ and $|h| \le \delta_1$.

Secondly, we estimate $J_2$.
By \cite[Lemma 4.3]{Br},
there exists $\delta_2 = \delta_2(\varepsilon) > 0$ such that if $h \in \R^n$ and $|h| < \delta_2$, then $\|\tau_h f - f\|_{L^t(\R^n)} < \frac{\varepsilon}{2}$. Therefore, Jensen's inequality implies
\begin{align}\label{eq:continuity-J_2}
J_2
&\le \sup_{x \in B(x_0,r),1 \le r} r^\beta \fint_{B(x_0,r)} |f(y+h)-f(y)|\,dy  \\
&\le \sup_{x \in B(x_0,r),1 \le r} r^\beta \left( \fint_{B(x_0,r)} |f(y+h)-f(y)|^t\,dy \right)^{\frac{1}{t}} \\
&\le \sup_{x \in B(x_0,r),1 \le r} r^{\beta - \frac{n}{t}}\|\tau_h f - f\|_{L^t(\R^n)} \le \frac{\varepsilon}{2}
\end{align}
if $h \in \R^n$ and $|h| < \delta_2$. In the last inequality, we used $r \ge 1$ and $\beta \in \left[ 0, \frac{n}{t} \right]$. Combining \eqref{eq:continuity-J's}, \eqref{eq:continuity-J_1} and \eqref{eq:continuity-J_2}, we conclude that $M_{\beta}(f)$ is uniformly continuous in $\R^n$. The proof is completed.
\end{proof}

For the rest of this subsection, we prove two lemmas: Lemma \ref{lem:Hedberg-1} is a slightly modified version of the well-known Hedberg type estimates (\cite{He}). On the other hand, Lemma \ref{lem:Hedberg-2} provide the inequality with the product of $a \in \mathcal{Z^\alpha}$ and a maximal function. Both two lemmas are used in Subsection 4.3. In what follows, we fix $B = B_R$. For the sake of readability, we introduce the following two assumptions:
\begin{equation}\label{assmpt:He-1}
\eta \in L^{\infty}(B),
\quad
0 \le \eta \le 1,
\quad
\|\eta\|_{L^1(B)} \ge \left| \frac{B}{2}\right|.
\end{equation}
and 
\begin{equation}\label{assmpt:He-2}
\alpha \in (0,\infty), \quad
a \in \mathcal{Z}^{\alpha}(B).
\end{equation}
In addition, we write for any $f \in L^t(B)\,(t \in (1,\infty])$,
\begin{equation}
M_B(f) = M(f \chi_B)
\quad
\text{and}
\quad
M_B^\ell(f) = M_B^{\ell - 1}(M_Bf)
\quad
\text{for}
\quad
\ell \ge 2.
\end{equation} 
Let us begin with the following lemma.
\begin{lem}\label{lem:Hedberg-1}
Assume \eqref{assmpt:He-1}. Moreover, suppose $u \in W^{\ell,t}(B)\,(t \in (1,\infty])$ and $(D^k u)_{B,\eta} = 0$ for any $k \in \{0,\ldots,\ell-1\}$. Then there exists $c=c(n,\ell)$ such that
\begin{equation}\label{eq:Hedberg-1}
|u(x)| \le cR^{\ell}M_B^{2\ell}(|D^\ell u|)(x)
\quad
\text{for a.e.\,}
x \in B.
\end{equation}
\end{lem}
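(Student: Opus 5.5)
We argue by induction on $\ell$, after first proving the case $\ell=1$: if $v\in W^{1,t}(B)$ and $v_{B,\eta}=0$, then
\[
|v(x)|\le cR\,M_B^2(|Dv|)(x)\quad\text{for a.e. } x\in B .
\]

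\textbf{Base case.} Since $v_{B,\eta}=0$, we write $v(x)=\frac{1}{\|\eta\|_{L^1(B)}}\int_B (v(x)-v(y))\eta(y)\,dy$. Using $0\le\eta\le1$ and $\|\eta\|_{L^1(B)}\ge|B|/2$ from \eqref{assmpt:He-1}, this gives
\[
|v(x)|\le 2\fint_B|v(x)-v(y)|\,dy .
\]
By the standard potential estimate on the convex set $B$ (valid for a.e.\ $x$ after approximating by smooth functions),
\[
\fint_B|v(x)-v(y)|\,dy\le c(n)\int_B\frac{|Dv(z)|}{|x-z|^{n-1}}\,dz .
\]
The Riesz-potential integral is then split dyadically into the annuli $\{2^{-j-1}\cdot 2R\le|x-z|<2^{-j}\cdot 2R\}$. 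Each annulus contributes at most $c\,2^{-j}R\,\fint_{B(x,2^{-j+1}R)}|Dv|\chi_B\le c\,2^{-j}R\,M_B(|Dv|)(x)$, and summing the geometric series gives
\[
|v(x)|\le cR\,M_B(|Dv|)(x).
\]
Since pointwise $f\le Mf$ a.e., the weaker bound $cR\,M_B^2(|Dv|)(x)$ also holds. Keeping $2\ell$ iterations in the statement only provides room for the induction.

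\textbf{Induction step.} Assume the claim for $\ell-1$ and let $u$ satisfy the hypotheses for $\ell$. By the base case applied to $u$,
\[
|u(x)|\le cR\,M_B(|Du|)(x).
\]
Each component $w=\partial_i u_j$ lies in $W^{\ell-1,t}(B)$ and satisfies $(D^k w)_{B,\eta}=0$ for $k\le\ell-2$, because these are entries of $(D^{k+1}u)_{B,\eta}$. The induction hypothesis therefore gives
\[
|Du(x)|\le cR^{\ell-1}M_B^{2\ell-2}(|D^\ell u|)(x)\quad\text{a.e. in } B .
\]
Applying $M_B$ yields
\[
|u(x)|\le cR\,M\bigl(\chi_B\, cR^{\ell-1}M_B^{2\ell-2}(|D^\ell u|)\bigr)(x)\le cR^\ell M_B^{2\ell-1}(|D^\ell u|)(x),
\]
which is bounded by $cR^\ell M_B^{2\ell}(|D^\ell u|)(x)$ since $f\le Mf$ a.e.

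The iterated maximal functions are well defined as finite functions because $t>1$: by Lemma~\ref{lem:Mfunct} each $M_B^k(|D^\ell u|)$ lies in $L^t$ (or $L^\infty$). Monotonicity of $M$ and the fact that the bound is only needed on $B$ justify the step $M_B(\chi_B g)\le M_B(g)$ used above.

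\textbf{Main obstacle.} The delicate step is the base case with a weight $\eta$ instead of the plain mean. One needs the lower bound on $\|\eta\|_{L^1(B)}$ to replace the weighted average by an unweighted one. One also needs the a.e.\ pointwise representation for $W^{1,t}$ functions, obtained from a Lebesgue-point and density argument on the ball. The constant depends only on $n$ and $\ell$: the factor $2$ comes from $\eta$ and the dimensional constants from the potential estimate.
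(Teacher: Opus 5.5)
Your argument is correct. It reaches the lemma by a different route in the base case, and that route gives a sharper bound than the paper's.

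\textbf{Base case.} The paper takes $|u(x)-u_B|\le cR\,M_B(|Du|)(x)$ from \cite[Theorem 3.5]{KLV} as a black box. It then treats the defect separately:
$|u_B-u_{B,\eta}|\le c\fint_B|u-u_B|\le cR\fint_B M_B(|Du|)\le cR\,M_B^2(|Du|)(x)$.
This extra averaging is where the second maximal function, and hence the $2\ell$ in the statement, comes from.

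You instead absorb the weight in one step, $|v(x)|\le 2\fint_B|v(x)-v(y)|\,dy$. You then bound the full oscillation integral by $c\,I_1(|Dv|)(x)$, which is what the proof of Gilbarg--Trudinger's Lemma 7.16 actually yields. The paper proves this same bound as \eqref{eq:sp-1} in Section~\ref{sec:SP}. Hedberg's dyadic splitting then finishes the step.

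\textbf{What this buys.} Your argument is self-contained and gives $|v|\le cR\,M_B(|Dv|)$. Carried through your induction, it yields $|u|\le cR^{\ell}M_B^{\ell}(|D^\ell u|)$. So your remark that the $2\ell$ iterations ``provide room for the induction'' is not accurate: in your argument they are never needed. Padding up to $M_B^{2\ell}$ via $g\le M(g\chi_B)$ a.e.\ on $B$ is harmless. The paper's version is shorter only because it cites the unweighted estimate.

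\textbf{Induction.} Your induction is the mirror image of the paper's: you apply the base case to $u$ and the hypothesis to $Du$, while the paper applies the hypothesis to $u$ and the base case to $D^\ell u$. This makes no difference.

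\textbf{Constant.} Recombining the componentwise bounds for $\partial_i u_j$ costs a factor depending on the number of components. To keep $c=c(n,\ell)$, apply the hypothesis to $Du$ as a vector-valued map; your base case works verbatim in that setting. Then use $|D^{\ell-1}(Du)|\le\sqrt{\min\{n,\ell\}}\,|D^\ell u|$.
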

\begin{proof}
We first consider the case $\ell=1$. 
From \cite[Theorem 3.5]{KLV}, it follows that
\begin{align}\label{eq:He-1-2}
|u(x)-u_B| 
&\le cRM(|Du|\chi_B)(x) \\
&= c(n)RM_B(|Du|)(x)
\quad
\text{for a.e.\,}
x \in B.
\end{align}
Then, by the triangle inequality and the assumption $u_{B,\eta}=0$, we have
\begin{align}\label{eq:He-1-1}
|u(x)| 
&\le |u(x)-u_B| + |u_B-u_{B,\eta}| \\
&\le c(n)RM_B(|Du|)(x) + |u_B-u_{B,\eta}|
\quad
\text{for a.e.\,}
x \in B.
\end{align}

For the second term in \eqref{eq:He-1-1}, using \eqref{assmpt:He-1} and \eqref{eq:He-1-2}, we derive
\begin{align}\label{eq:He-1-3}
|u_B-u_{B,\eta}|
&\le \frac{1}{\|\eta\|_{L^1(B)}}\int_{B}|u(y)-u_B|\eta\, dy \\
&\le c\fint_{B}|u(y)-u_B|\, dy \\
&\le cR\fint_{B} M_B(|Du|)(y) \,dy \\
&= cR\fint_{B} M_B(|Du|)(y)\chi_B(y) \,dy \\
&\le cRM( M_B(|Du|)\chi_B )(x)
= cRM_B^2(|Du|)(x)
\quad
\text{for any }
x \in B.
\end{align}
Observe that for any $f \in L^t(B)$,
\begin{equation}
M_B(f) = M(f \chi_B) \le M( M(f\chi_B) \chi_B) \le M_B^2(f)
\quad
\text{in }
\R^n.
\end{equation}
Therefore, from \eqref{eq:He-1-1} and \eqref{eq:He-1-3}, we obtain
\begin{equation}
|u(x)|
\le cRM_B(|Du|)(x) + cRM_B^2(|Du|)(x)
\le c(n)RM_B^2(|Du|)(x)
\quad
\text{for a.e.\,}
x \in B.
\end{equation}
For general $\ell \in \N$, the assertion follows by induction. Indeed, assume the statement holds for some $\ell \in \N$. Moreover, we suppose $u \in W^{\ell+1,t}(B)$ satisfies $(D^k u)_{B,\eta} = 0$ for each $k \in \{0,\ldots,\ell\}$. Then, the induction hypothesis and the case $\ell=1$ yield
\begin{align}
  |u(x)| 
  &\le cR^{\ell}M_B^{2\ell}(|D^\ell u|)(x) \\
  &=  cR^{\ell}M_{B}^{2\ell-1}\left( M(|D^\ell u|\chi_B) \right)(x) \\
  &\le  cR^{\ell}M_{B}^{2\ell-1}\left( M(M_B^2(|D^{\ell+1}(u)|)\chi_B) \right)(x) \\
  &= c(n,\ell)R^{\ell+1}M_B^{2(\ell+1)}(|D^{\ell+1} u|)(x)
  \quad
\text{for a.e.\,}
x \in B.
\end{align}
This completes the proof.
\end{proof}

Next we show the following lemma.
\begin{lem}\label{lem:Hedberg-2}
Assume \eqref{assmpt:He-2} and $f \in L^t(B)\,(t \in (1,\infty))$.
Moreover, suppose $R \le 1$, $\ell \in \N$, $q \in (1,\infty)$ and $\beta \in \left(0,\min\left\{\frac{\alpha}{q}, \frac{n}{t}\right\} \right]$. Then there exists $c=c(\ell, q,\alpha,[a]_{\alpha},\beta)$ such that
\begin{equation}
a(x)^{\frac{1}{q}}M_B^{\ell}(f)(x)
\le cM_B^{\ell}(a^{\frac{1}{q}}f)(x)
+ cM_{\beta}(M^{\ell-1}(f\chi_B))(x)
\quad
\text{for any }
x \in B.
\end{equation}
Here $M^{\ell}(f)=M(M^{\ell-1}(f))$ for $\ell \ge 2$.
\end{lem}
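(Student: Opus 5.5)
I would argue by induction on $\ell$, after first proving the case $\ell=1$ by a direct splitting of radii. The idea is to compare $a(x)$ with $a(y)$ via \eqref{eq:Zalpha}. On small balls the comparison costs at most $r^{\alpha}$, and this factor is absorbed into a fractional maximal function. On large balls we do not compare at all; we use only $r\gtrsim$ something together with $R\le 1$.

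\emph{Case $\ell=1$.} Fix $x\in B$ and a ball $B(x_0,r)\ni x$. From \eqref{eq:Zalpha}, for every $y\in B(x_0,r)$ we have $a(x)\le [a]_\alpha(a(y)+(2r)^\alpha)$. Hence
\begin{equation}
a(x)^{\frac1q}\fint_{B(x_0,r)}|f|\chi_B
\le c\fint_{B(x_0,r)} a^{\frac1q}|f|\chi_B + c\,r^{\frac{\alpha}{q}}\fint_{B(x_0,r)}|f|\chi_B .
\end{equation}
The first term is bounded by $cM_B(a^{1/q}f)(x)$. For the second term, split according to $r\le 1$ or $r>1$.

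If $r\le1$, then $r^{\alpha/q}\le r^{\beta}$ because $\beta\le\alpha/q$. The second term is then at most $cM_\beta(f\chi_B)(x)$, which is exactly the claim for $\ell=1$, since $M^{0}(f\chi_B)=f\chi_B$.

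If $r>1$, the ball has radius exceeding $R$, because $R\le1$. In this case I would not use $r^{\alpha/q}$ at all. Instead, bound $a(x)$ on $B$ by $[a]_\alpha(a(y)+(2R)^\alpha)$ with $y\in B$, and average over $B\cap B(x_0,r)$. This yields $r^{\beta}\ge1\ge (2R)^{\alpha/q}/2^{\alpha/q}$, so the extra term is again controlled by $c\,r^\beta\fint_{B(x_0,r)}|f|\chi_B\le cM_\beta(f\chi_B)(x)$. Note that $f\chi_B$ is supported in $B$ and $|x-y|\le 2R$. Taking the supremum over balls gives
\begin{equation}
a(x)^{\frac1q}M_B(f)(x)\le cM_B(a^{\frac1q}f)(x)+cM_\beta(f\chi_B)(x).
\end{equation}
In fact the simplest uniform choice is to use $|x-y|\le 2\min\{r,R\}\le 2\min\{r,1\}$ and $\min\{r,1\}^{\alpha/q}\le r^{\beta}$. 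This treats both cases at once.

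\emph{Induction step.} Assume the claim for $\ell$, and write $M_B^{\ell+1}(f)=M_B(g)$ with $g=M_B^{\ell}(f)$. Apply the case $\ell=1$ to $g$ at $x\in B$:
\begin{equation}
a(x)^{\frac1q}M_B(g)(x)\le cM_B(a^{\frac1q}g)(x)+cM_\beta(g\chi_B)(x).
\end{equation}
For the first term, use the induction hypothesis pointwise on $B$ and monotonicity of $M_B$. This gives
\begin{equation}
M_B(a^{\frac1q}g)\le cM_B^{\ell+1}(a^{\frac1q}f)+cM\big(M_\beta(M^{\ell-1}(f\chi_B))\big).
\end{equation}
Lemma \ref{lem:composite} then turns the last term into $cM_\beta(M^{\ell-1}(f\chi_B))$. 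For the second term, $g\chi_B\le M^{\ell}(f\chi_B)$, so it is bounded by $M_\beta(M^{\ell}(f\chi_B))$. Finally, I need $M_\beta(M^{\ell-1}(f\chi_B))\le c M_\beta(M^{\ell}(f\chi_B))$. This holds because $h\le Mh$ a.e. by Lebesgue differentiation, and $M_\beta$ is monotone.

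\emph{Main obstacle.} The main difficulty is justifying the use of Lemma \ref{lem:composite}, which requires $M_\beta(M^{\ell-1}(f\chi_B))\in L^1_{loc}$. Here Remark \ref{rem:composite} applies. Since $f\chi_B\in L^t$, Lemma \ref{lem:Mfunct} gives $M^{\ell-1}(f\chi_B)\in L^t(\R^n)$, and $\beta\le n/t$ holds by assumption. A secondary point is tracking the dependence of the constants: $c$ should depend only on $(\ell,q,\alpha,[a]_\alpha,\beta)$, with a factor of $n$ entering through the maximal-function comparisons.
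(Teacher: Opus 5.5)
Your proposal is correct and follows the paper's proof essentially step for step. The base case uses the $\mathcal{Z}^\alpha$ comparison (the paper bounds $|x-y|^{\alpha/q}\le (2R)^{\alpha/q-\beta}(2r)^{\beta}$ directly, which is the same observation as your uniform $\min\{r,R\}$ bound). The induction step applies the case $\ell=1$ to $M_B^{\ell}(f)$, handles the first term with the induction hypothesis, Lemma~\ref{lem:composite} and Remark~\ref{rem:composite}, and handles the second term by monotonicity. Your remark that the constant also depends on $n$, through Lemma~\ref{lem:composite}, is accurate; the paper's stated list of dependencies omits it.
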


\begin{proof}
We prove by induction on $\ell$. Let us consider the case $\ell=1$. Fix any $x \in B$ and we have
\begin{align}
a(x)^{\frac{1}{q}}M_B(f)(x)
&= a(x)^{\frac{1}{q}}\left(\sup_{x \in B(x_0,r)} \fint_{B(x_0,r)} |f(y)|\chi_B(y) \,dy  \right)\\
&= \sup_{x \in B(x_0,r)} \fint_{B(x_0,r)} a(x)^{\frac{1}{q}}|f(y)|\chi_B(y) \,dy
\eqqcolon I.
\end{align}
Here fix any ball $B(x_0,r)$ containing $x$. Then from \eqref{assmpt:He-2}, $R \le 1$, and $0 < \beta \le \frac{\alpha}{q}$, it follows that
\begin{align}
a(x)^{\frac{1}{q}}
&\le [a]_{\alpha}^{\frac{1}{q}} \left( a(y)^{\frac{1}{q}} + |x-y|^{\frac{\alpha}{q}} \right)\\
&= [a]_{\alpha}^{\frac{1}{q}}\left( a(y)^{\frac{1}{q}} + |x-y|^{\frac{\alpha}{q}-\beta}|x-y|^{\beta} \right)\\
&\le [a]_{\alpha}^{\frac{1}{q}}\left( a(y)^{\frac{1}{q}} + (2R)^{\frac{\alpha}{q}-\beta}(2r)^{\beta} \right)
\le c(q,\alpha,[a]_{\alpha},\beta)\left( a(y)^{\frac{1}{q}} + cr^{\beta} \right)
\end{align}
for any $y \in B(x_0,r) \cap B$.
Therefore, we obtain
\begin{align}
I
&\le c\left(\sup_{x \in B(x_0,r)} \fint_{B(x_0,r)} a(y)^{\frac{1}{q}}|f(y)|\chi_B(y) \,dy \right)
+ c \left(\sup_{x \in B(x_0,r)} \fint_{B(x_0,r)} r^{\beta}|f(y)|\chi_B(y) \,dy \right) \\
&\le cM(a^{\frac{1}{q}}f\chi_{B})(x) + cM_{\beta}(f\chi_{B})(x)
=cM_B(a^{\frac{1}{q}}f)(x) + cM_{\beta}(f\chi_{B})(x),
\end{align}
where $c=c(q,\alpha,[a]_{\alpha},\beta)$. This completes the proof of the case $\ell = 1$.

Next we show the case $\ell = k+1$, assuming the assertion holds for some $k \in \N$.
We fix $x \in B$.
Since $M_B^k(f) \in L^t(B)$ by Lemma \ref{lem:Mfunct}, the case $\ell=1$ implies
\begin{align}\label{eq:Hedberg-2-I's}
a(x)^{\frac{1}{q}}M_B^{k+1}(f)(x)
&= a(x)^{\frac{1}{q}}M_B \left( M_B^{k}(f) \right)(x) \\
&\le cM_B \left( a^{\frac{1}{q}}M_B^{k}(f) \right)(x)
+ cM_\beta \left( M_B^{k}(f)\chi_B\right)(x) \\
&\eqqcolon c(q,\alpha,[a]_{\alpha},\beta) (I_1 + I_2).
\end{align}

Since $M_B^k(f) \le M^k(f\chi_B)$ in $\R^n$, we obtain
\begin{equation}\label{eq:Hedberg-2-I_2}
I_2 \le M_\beta( M^k(f\chi_B))(x).
\end{equation}

For $I_1$, the induction hypothesis yields
\begin{align}\label{eq:Hedberg-2-I_1}
I_1
&= M\left(a^{\frac{1}{q}}M_B^{k}(f)\chi_B\right)(x)\\
&\le M \left( cM_B^{k}(a^{\frac{1}{q}}f)\chi_B
+ cM_{\beta}(M^{k-1}(f\chi_B))\chi_B \right)(x) \\
&\le cM \left( M_B^{k}(a^{\frac{1}{q}}f)\chi_B \right)(x)
+ cM\left(M_{\beta}(M^{k-1}(f\chi_B) \right)(x) \\
&\le cM_B^{k+1} \left( a^{\frac{1}{q}}f \right)(x) 
+ cM_{\beta}\left(M^{k-1}(f\chi_B) \right)(x) \\
&\le  c(k,q,\alpha,[a]_{\alpha},\beta) \left( M_B^{k+1} \left( a^{\frac{1}{q}}f \right)(x) 
+ M_{\beta}\left(M^{k}(f\chi_B) \right)(x) \right).
\end{align}
In the second-to-last inequality, we used Remark \ref{rem:composite}, Lemma \ref{lem:composite} and $\beta \le \frac{n}{t}$.

Combining \eqref{eq:Hedberg-2-I's}, \eqref{eq:Hedberg-2-I_2} and \eqref{eq:Hedberg-2-I_1}, we obtain the conclusion.
\end{proof}

\subsection{Weighted mean value polynomial}\label{subsec:wmvp}
In this subsection, we fix $B=B_R$ and a function $\eta \in L^{\infty}(B) \setminus \{0\}$.
For $u \in W^{m,1}(B)$, there exists a unique polynomial $P=P(B,\eta,u)$ such that for each $\sigma \in S \setminus S_m$,
\begin{equation}\label{eq:wmep-P}
    \deg P \le m-1
    \quad
    \text{and}
    \quad
    (\partial_\sigma u)_{B,\eta} = (\partial_\sigma P)_{B,\eta}
    \quad
    \text{for each }
    \sigma \in S \setminus S_m.
\end{equation}
We refer to $P$ as the weighted mean value polynomial with respect to ($u,B,\eta$).

Here we take another ball $\mathcal{B}$ such that
\begin{equation}\label{eq:mathcalB}
c_1^{-1}R \le \mathrm{radius\,of}\,\mathcal{B} \le c_1R
\quad 
\text{for some }
 c_1=c_1(n)>0
\quad
\text{and} 
\quad
B \cap \mathcal{B} \neq \emptyset.
\end{equation}
We fix any $x_0 \in \mathcal{B}$.

For Lemma \ref{lem:Pi} and \ref{lem:Pioscilation}, we fix some notations with multi-indices. Let us write for $\sigma \in S$,
\begin{equation}
|\sigma|= \sigma_1+\cdots+\sigma_n,
\quad
\sigma ! = \sigma_1!\times \cdots \times\sigma_n !,
\quad
x^{\sigma} = x_1^{\sigma_1}\times \cdots \times x_n^{\sigma_n}.
\end{equation}
Moreover, for $\sigma, \tau \in S$, if $\tau \ge \sigma$, i.e., $\tau_i \ge \sigma_i$ for $i \in \{1,\cdots,n\}$, then we denote $\tau - \sigma \in S$ by
\begin{equation}
\tau - \sigma = \left(\tau_1-\sigma_1,\cdots, \tau_n-\sigma_n\right).
\end{equation}

For each $\sigma \in S \setminus S_m$, let $a_\sigma$ be the coefficient of $(x-x_0)^\sigma$ 
in the expansion of $P$ around the center $x_0$. 
Then following properties are known in the case $\eta \equiv 1$ (see \cite[Lemma 3.4]{Bo}).
Although the difference is not essential, we provide a full proof of Lemma \ref{lem:Pi} for the sake of completeness.
\begin{lem}\label{lem:Pi}
The following statements holds:
\begin{description}
  \item[(i)]For each $\sigma \in S \setminus S_m$,
  \begin{align}
  a_{\sigma} 
  &= \frac{1}{\sigma!}\left\{(\partial_\sigma u)_{B,\eta} 
  - \sum_{\tau \in S \setminus S_m,\tau > \sigma}\frac{\tau!}{(\tau - \sigma)!}a_{\tau}\left((x - x_0)^{\tau - \sigma}\right)_{B,\eta} \right\} \\
  &= \frac{1}{\sigma!}\left(\partial_{\sigma}u - \sum_{\tau \in S \setminus S_m,\tau > \sigma}\frac{\tau!}{(\tau - \sigma)!}a_{\tau}(x - x_0)^{\tau-\sigma}\right)_{B,\eta}.
  \end{align}
  \item[(ii)] $|a_\sigma| \leq c(m,n)\sum_{\mu = \ell}^{m - 1} R^{\mu - \ell}|(D^{\mu}u)_{B,\eta}|$ 
  $\mathrm{for\ each}$ $\sigma \in S_\ell$ $\mathrm{and}$ $\ell \in \{0,\ldots,m-1\}$.
  \item[(iii)] $|D^\ell P| \leq c(m,n)\sum_{\mu = \ell}^{m - 1}R^{\mu - \ell}|(D^{\mu}u)_{B,\eta}|$ 
  $\mathrm{in}$ $B \cup \mathcal{B}$ $\mathrm{for\ each\ } \ell \in \{0,\ldots, m-1\}$.
\end{description}
\end{lem}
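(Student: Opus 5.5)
Write $P(x)=\sum_{\tau\in S\setminus S_m}a_\tau(x-x_0)^\tau$ and differentiate term by term. For $\sigma\le\tau$ we have $\partial_\sigma(x-x_0)^\tau=\frac{\tau!}{(\tau-\sigma)!}(x-x_0)^{\tau-\sigma}$, and the derivative vanishes when $\tau\not\ge\sigma$. Hence
\begin{equation}
\partial_\sigma P(x)=\sigma!\,a_\sigma+\sum_{\tau\in S\setminus S_m,\tau>\sigma}\frac{\tau!}{(\tau-\sigma)!}a_\tau(x-x_0)^{\tau-\sigma}.
\end{equation}
Take the weighted mean $(\cdot)_{B,\eta}$, which is linear and fixes constants, and use the defining property $(\partial_\sigma u)_{B,\eta}=(\partial_\sigma P)_{B,\eta}$ from \eqref{eq:wmep-P}. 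Solving for $a_\sigma$ gives the first expression in (i). The second expression follows from the first by linearity of the weighted average.

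For (ii) I argue by downward induction on $\ell=|\sigma|$, from $\ell=m-1$ down to $0$.

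\begin{description}
\item[Base case $\ell=m-1$.] The sum in (i) is empty, so $|a_\sigma|\le|(D^{m-1}u)_{B,\eta}|$.
\item[Inductive step.] For $\tau>\sigma$ with $|\tau|=\mu>\ell$, we need a bound on $|((x-x_0)^{\tau-\sigma})_{B,\eta}|$. Since $B\cap\mathcal B\ne\emptyset$, $x_0\in\mathcal B$ and the radius of $\mathcal B$ is at most $c_1R$, every $x\in B$ satisfies $|x-x_0|\le 2R+2c_1R=c(n)R$. Hence $|(x-x_0)^{\tau-\sigma}|\le c R^{\mu-\ell}$ on $B$. The weighted average of a bounded function is bounded by its supremum only if $\eta\ge0$. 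I assume this, since in the applications \eqref{assmpt:He-1} gives $0\le\eta\le1$. Without it, I would bound the average by $\|\eta\|_{L^1}^{-1}\int|\cdot||\eta|$, which introduces the ratio $\|\eta\|_{L^1}^{-1}\|\eta\|_{L^1}$; here I read $\|\eta\|_{L^1(B)}$ as the normalizing quantity, so this ratio equals $1$ in the nonnegative case.
\item[Combining.] The induction hypothesis gives $|a_\tau|\le c\sum_{\nu=\mu}^{m-1}R^{\nu-\mu}|(D^\nu u)_{B,\eta}|$. Multiplying by $R^{\mu-\ell}$ yields $c\sum_{\nu\ge\mu}R^{\nu-\ell}|(D^\nu u)_{B,\eta}|$. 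Summing over finitely many $\tau$, with a count depending only on $m,n$, and adding the term $|(\partial_\sigma u)_{B,\eta}|$, which is the $\mu=\ell$ term, proves (ii).
\end{description}

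For (iii), I use the derivative formula above at points $x\in B\cup\mathcal B$. There $|x-x_0|\le c(n)R$: for $x\in\mathcal B$ this holds because both $x$ and $x_0$ lie in $\mathcal B$, and for $x\in B$ it was shown above. So $|\partial_\sigma P(x)|\le c\sum_{\tau\ge\sigma}|a_\tau|R^{|\tau|-\ell}$. Inserting (ii) and collecting the powers of $R$ exactly as in the inductive step gives the bound in (iii), after summing over $\sigma\in S_\ell$.

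None of these steps is a real obstacle. The only point needing care is the geometric bound $|x-x_0|\lesssim R$ uniformly on $B\cup\mathcal B$, together with the nonnegativity of $\eta$, which makes the weighted average of $(x-x_0)^{\tau-\sigma}$ bounded by its supremum.
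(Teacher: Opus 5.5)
Your proof is correct and follows the paper's argument essentially step for step: the derivative formula plus the defining property \eqref{eq:wmep-P} for (i), downward induction on $\ell$ with the bound $|x-x_0|\le c(n)R$ on $B$ for (ii), and substituting (ii) into the derivative formula on $B\cup\mathcal{B}$ for (iii). One correction to your caveat: the nonnegativity of $\eta$ is not needed in (ii) but is tacitly needed in (i). Because the weighted mean is normalized by $\|\eta\|_{L^1(B)}=\int_B|\eta|$, the bound $|f_{B,\eta}|\le\sup_B|f|$ holds for every $\eta\in L^\infty(B)\setminus\{0\}$; in (i), however, the claim that the weighted mean fixes constants requires $\int_B\eta=\|\eta\|_{L^1(B)}$. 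The paper relies on the same tacit assumption in (i), and it holds in all of its applications.
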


\begin{proof}
\textbf{(i)}:\,fix $\sigma \in S \setminus S_m $.
By \eqref{eq:wmep-P} and the definition of $a_{\sigma}$, we have
\begin{align}
    a_{\sigma} 
    &= (a_{\sigma})_{B,\eta} \\
    &= \frac{1}{\sigma!}\left(\partial_{\sigma}P - \sum_{\tau \in S \setminus S_m,\tau > \sigma}\frac{\tau!}{(\tau - \sigma)!}a_{\tau}(x - x_0)^{\tau-\sigma}\right)_{B,\eta} \\
    &= \frac{1}{\sigma!}\left\{(\partial_\sigma P)_{B,\eta} 
  - \sum_{\tau \in S \setminus S_m,\tau > \sigma}\frac{\tau!}{(\tau - \sigma)!}a_{\tau}\left((x - x_0)^{\tau - \sigma}\right)_{B,\eta} \right\} \\
   &= \frac{1}{\sigma!}\left\{(\partial_\sigma u)_{B,\eta} 
  - \sum_{\tau \in S \setminus S_m,\tau > \sigma}\frac{\tau!}{(\tau - \sigma)!}a_{\tau}\left((x - x_0)^{\tau - \sigma}\right)_{B,\eta} \right\} \\
    &= \frac{1}{\sigma!}\left(\partial_{\sigma}u - \sum_{\tau \in S \setminus S_m,\tau > \sigma}\frac{\tau!}{(\tau - \sigma)!}a_{\tau}(x - x_0)^{\tau-\sigma}\right)_{B,\eta}.
\end{align}
\noindent \textbf{(ii)}:\,argue by induction on $\ell$.
If $\ell = m - 1$, then 
we notice 
$a_\sigma = \frac{1}{\sigma!}\left(\partial_{\sigma} u \right)_{B,\eta}$ for each $\sigma \in S_{m - 1}$.
Then the claim is verified in this case.
Let us assume the assertion holds for any $\ell \in \{\ell_0,\ldots,m-1\}$, where $\ell_0 \in \{1,\ldots,m-1\}$.
Then we will show for $\ell = \ell_0 - 1$. Fix any $\sigma \in S_{\ell_0 - 1}$.
Firstly, \textbf{(i)} implies
\begin{equation}\label{eq:Pi-2-1}
   |a_\sigma| 
   \leq |(\partial_\sigma u)_{B,\eta}| 
  + \sum_{\tau \in S \setminus S_m,\tau > \sigma}\frac{\tau!}{(\tau - \sigma)!}|a_{\tau}|\left|\left((x - x_0)^{\tau - \sigma}\right)_{B,\eta}\right|.
\end{equation}
Moreover, from \eqref{eq:mathcalB} and the induction hypothesis, it follows that
\begin{align}\label{eq:Pi-2-2}
    &\sum_{\tau \in S \setminus S_m,\tau > \sigma}\frac{\tau!}{(\tau - \sigma)!}|a_{\tau}|\left|\left((x - x_0)^{\tau - \sigma}\right)_{B,\eta}\right| \\
    &= \sum_{\tau \in S \setminus S_m,\tau > \sigma}\frac{\tau!}{(\tau - \sigma)!}|a_{\tau}|\left|\frac{1}{\|\eta\|_{L^1(B)}}\int_{B}(x - x_0)^{\tau - \sigma}\eta \right| \\
    &\le \sum_{\tau \in S \setminus S_m,\tau > \sigma}\frac{\tau!}{(\tau - \sigma)!}|a_{\tau}|\frac{1}{\|\eta\|_{L^1(B)}}\int_{B}|x - x_0|^{|\tau| - |\sigma|}|\eta|  \\
    &\le c\sum_{\tau \in S \setminus S_m,\tau > \sigma}|a_{\tau}|R^{|\tau| - |\sigma|} \\
    &\leq c\sum_{\tau \in S \setminus S_m,\tau > \sigma}\sum_{\mu = |\tau|}^{m - 1} R^{\mu - |\tau|}|(D^{\mu}u)_{B,\eta}|\cdot R^{|\tau| - |\sigma|} 
    \leq c(m,n)\sum_{\mu = \ell_0}^{m - 1} R^{\mu - \ell_0 + 1}|(D^{\mu}u)_{B,\eta}|.
\end{align}
Combining \eqref{eq:Pi-2-1} and \eqref{eq:Pi-2-2}, the assertion holds for $\ell_0 - 1$.

\noindent \textbf{(iii)}:\,fix $\ell \in \{0,\ldots,m - 1\}$, $\sigma \in S_\ell$.
Using \textbf{(ii)} and \eqref{eq:mathcalB},
we have in $B \cup \mathcal{B}$,
\begin{align}
    |\partial_\sigma P|
    &= \left|\sum_{\tau \in S \setminus S_m,\tau \geq \sigma}\frac{\tau !}{(\tau - \sigma)!}a_{\tau}(x - x_0)^{\tau - \sigma}\right| \\
    &\leq \sum_{\tau \in S \setminus S_m,\tau \geq \sigma}\frac{\tau !}{(\tau - \sigma)!}|a_{\tau}||x - x_0|^{\tau - \sigma} \\
    &\leq c\sum_{\tau \in S \setminus S_m,\tau \geq \sigma}\sum_{\mu = |\tau|}^{m - 1}R^{\mu - |\tau|}|(D^{\mu}u)_{B,\eta}|\cdot (c_1R)^{|\tau| - |\sigma|} 
    \leq c\sum_{\mu = \ell}^{m - 1} R^{\mu - \ell}|(D^{\mu}u)_{B,\eta}|.
\end{align}
The proof is completed.
\end{proof}

\begin{rem}\label{rem:integration by parts}
In addition, we assume 
\begin{equation}\label{eq:wmvp-2}
    \eta \in C_c^{\infty}(B),\quad
    \ |D^\ell \eta| \leq c(m,n)R^{-\ell}
    \quad
    \text{for each }
    \ell \in \{0,\ldots,m\}.
\end{equation} 
Then, using Lemma \ref{lem:Pi} \textbf{(iii)} and integration by parts, we get
\begin{align}\label{eq:integration by parts}
|D^\ell P| 
&\leq c(m,n)\sum_{\mu = \ell}^{m - 1}R^{\mu - \ell}|(D^{\mu}u)_{B,\eta}| \\
&\le  c(m,n)\sum_{\mu = \ell}^{m - 1}R^{\mu - \ell}|(D^{\mu}u \cdot \eta)_{B}| \\
&= c(m,n)\sum_{\mu = \ell}^{m - 1}R^{\mu - \ell}\fint_{B}|D^{\ell}u||D^{\mu-\ell} \eta| 
\le c(m,n)\fint_{B}|D^\ell u|
\quad
\text{in }
B \cup \mathcal{B}
\end{align}
for each $\ell \in \{0,\ldots,m-1\}$. That is, the $\ell$-th derivative of $P$ is estimated by the $\ell$-th derivative of $u$.
As mentioned in Subsection~\ref{subsec:outline}, this estimate is the primary reason for adopting the weighted mean value polynomial over the standard version.
\end{rem}

The next lemma is needed in Subsection 4.3 and Subsection 4.4.
\begin{lem}\label{lem:useful-formula}
Let $u \in W^{m,t}(B)\,(t \in (1,\infty])$. Moreover, assume
\begin{equation}
\eta \in C_c^{\infty}(B),
\quad
\|\eta\|_{L^1(B)} \ge \frac{|B|}{2},
\quad|D^\ell \eta| \leq c(m,n)R^{-\ell}
\quad
\text{for each }\ell \in \{0,\ldots,m\}.
\end{equation} 
Then there exists $c=c(n,m)>0$ such that 
\begin{equation}
\sum_{k = 0}^{\ell}\left| \frac{D^k u(x) - D^k P(x)}{R^{\ell - k}} \right|
\le cM_{B}^{2\ell+1}\left(|D^\ell u|\right)(x)
\end{equation}
for a.e.\,$x \in B$ and each $\ell \in \{0,\ldots,m\}$. Here 
\begin{equation}
M_B(u) = M(u \chi_B),
\quad
M_B^\ell(u) = M_B^{\ell - 1}(M_B u)
\quad
\text{for }
\ell \ge 2.
\end{equation}
\end{lem}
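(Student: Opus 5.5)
The plan is to apply Lemma \ref{lem:Hedberg-1} to the functions $v_k \coloneqq D^k u - D^k P$ for $k \in \{0,\ldots,\ell\}$, using $\ell-k$ as the order of differentiation. The first step is to check the vanishing-mean hypothesis. By the definition \eqref{eq:wmep-P} of the weighted mean value polynomial, $(\partial_\sigma u)_{B,\eta} = (\partial_\sigma P)_{B,\eta}$ for all $|\sigma| \le m-1$. Hence
\begin{equation}
(D^j v_k)_{B,\eta} = (D^{k+j}u - D^{k+j}P)_{B,\eta} = 0
\quad
\text{for } j \in \{0,\ldots,\ell-k-1\},
\end{equation}
since $k+j \le \ell-1 \le m-1$. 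The assumptions on $\eta$ give \eqref{assmpt:He-1} after normalising $0\le\eta\le1$. This normalisation is harmless, because $(\cdot)_{B,\eta}$ is invariant under scaling $\eta$ by a positive constant, and the condition $\|\eta\|_{L^1}\ge|B|/2$ is stated directly. Also, $v_k \in W^{\ell-k,t}(B)$, since $P$ is a polynomial.

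For $k<\ell$, Lemma \ref{lem:Hedberg-1} applied to $v_k$ gives
\begin{equation}
|D^k u - D^k P|(x) \le c R^{\ell-k} M_B^{2(\ell-k)}(|D^\ell u - D^\ell P|)(x).
\end{equation}
The case $k=\ell$ is the trivial bound $|v_\ell| \le |v_\ell| \le M_B(|v_\ell|)$ a.e. in $B$, which holds by Lebesgue differentiation. Dividing by $R^{\ell-k}$ leaves the task of controlling $M_B^{j}(|D^\ell u - D^\ell P|)$ for $j \le 2\ell$.

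When $\ell = m$ we have $D^m P = 0$ and nothing more is needed. When $\ell \le m-1$, Remark \ref{rem:integration by parts} gives the constant bound $|D^\ell P| \le c\fint_B |D^\ell u|$ on $B$. Sublinearity of $M_B$ then yields
\begin{equation}
M_B^j(|D^\ell u - D^\ell P|) \le M_B^j(|D^\ell u|) + c \fint_B |D^\ell u|\, M_B^j(1).
\end{equation}
Here $M_B^j(1) \le 1$, since maximal averages of $\chi_B$ are at most $1$. Moreover, for $x\in B$ we have $\fint_B |D^\ell u| \le M_B(|D^\ell u|)(x)$, because $B$ is a ball containing $x$.

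The remaining step is to unify the different iteration counts into $M_B^{2\ell+1}$. For this I use the monotonicity $M_B^{i}(f) \le M_B^{i+1}(f)$ on $B$, as observed in the proof of Lemma \ref{lem:Hedberg-1}; it follows from Lebesgue differentiation applied to $M_B^i f\,\chi_B$. Every term is thereby bounded by $cM_B^{2\ell+1}(|D^\ell u|)(x)$, and summing over $k$ gives the claim with $c=c(n,m)$.

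The main obstacle, mild as it is, lies in the bookkeeping. One must check that $2(\ell-k)+0 \le 2\ell+1$, and note that the extra iteration is needed to absorb the constant term $\fint_B|D^\ell u|$. One must also make sure that the a.e. inequalities hold on $B$ simultaneously for all $k$, which is fine since there are finitely many.
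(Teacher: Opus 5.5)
Your proof is correct and follows the paper's argument. Like the paper, you apply Lemma~\ref{lem:Hedberg-1} to $D^k u - D^k P$ using the vanishing weighted means from \eqref{eq:wmep-P}, split off $D^\ell P$ by sublinearity, bound it through Remark~\ref{rem:integration by parts} by $c\fint_B|D^\ell u|\le cM_B(|D^\ell u|)(x)$, and absorb everything into $M_B^{2\ell+1}$; you are actually more explicit than the paper about the case $k=\ell$ and about the monotonicity $M_B^i\le M_B^{i+1}$ that unifies the iteration counts. One small correction: rescaling $\eta$ by $1/c(m,n)$ does not preserve $\|\eta\|_{L^1(B)}\ge |B|/2$. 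This is harmless, because the proof of Lemma~\ref{lem:Hedberg-1} only uses $\|\eta\|_{L^\infty}|B|\lesssim\|\eta\|_{L^1(B)}$ together with $\eta\ge 0$ (which is implicit in the paper's setting), so only the constant changes.
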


\begin{proof}
Fix any $\ell \in \{0,\ldots,m\}$ and $k \in \{0,\ldots,\ell\}$. 
By Lemma \ref{lem:Hedberg-1} and \eqref{eq:wmep-P}, we have
\begin{equation}
\left| \frac{D^k u(x) - D^k P(x)}{R^{\ell - k}} \right|
\le cM_{B}^{2\ell}\left(|D^\ell u - D^\ell P|\right)(x)
\quad
\text{for a.e.\,}
x \in B.
\end{equation}
We fix $x$ such that the above inequality holds.
Since $M_{B_{2R}}$ is sublinear, we get
\begin{equation}
M_{B}^{2\ell}\left(|D^\ell u - D^\ell P|\right)(x)
\le M_{B}^{2\ell}\left(|D^\ell u|\right)(x) + M_{B}^{2\ell}\left(|D^\ell P|\right)(x).
\end{equation}
Moreover, \eqref{eq:integration by parts} implies
\begin{equation}
|D^\ell P|(x)
\le c\fint_{B} |D^\ell u|(y)\chi_B(y)\,dy 
\le cM_{B}(|D^\ell u|)(x).
\end{equation}
Therefore, it follows that
\begin{equation}
\left| \frac{D^k u(x) - D^k P(x)}{R^{\ell - k}} \right| \le cM_{B}^{2\ell+1}\left(|D^\ell u|\right)(x).
\end{equation}
The proof is completed.
\end{proof}

\subsection{Gehring's lemma}\label{subsec:Ge}
Finally, we shall need a refined form of a theorem of Gehring \cite{Ge}.
\begin{lem}\label{lem:Gehring}
Let $\Omega_0 \subset \mathbb{R}^n$ be an open set, and let $f_1 \in L^1_{loc}(\Omega)$ and $f_2 \in L^{1 + \varepsilon_0}_{loc}(\Omega)$ for some $\varepsilon_0 > 0$, where $f_1$ and $f_2$ are nonnegative.
In addition, assume there exist $\kappa \in (0,1),\,A> 0$, $R_0>0$ and $\theta \in (0,1)$ such that
\begin{equation}\label{eq:lem-Gehring-reverse-Holder}
    \fint_{B_R} f_1 \leq A \left( \fint_{B_{3R}} f_1^\kappa \right)^\frac{1}{\kappa} 
    + \fint_{B_{3R}} f_2  + \theta \fint_{B_{3R}} f_1,
\end{equation}
whenever $B_{3R} \subset \subset \Omega$ and $R \le R_0$.
Then there exists $c^* = c^*(n,A,\varepsilon_0,\theta)>0$ such that 
if $ 0 < \varepsilon \leq \min \left\{ \frac{1-\kappa}{c^*}, \varepsilon_0 \right\}$, then
$f_1 \in L^{1 + \varepsilon}_{loc}(\Omega_0)$. 
Moreover, there exists $c = c(n,A,\varepsilon_0)$ such that for any $0 < \varepsilon \leq \min \left\{ \frac{1-\kappa}{c^*}, \varepsilon_0 \right\}$, it holds
\begin{equation}
    \left( \fint_{B_{R}} f_1^{1 + \varepsilon} \right)^\frac{1}{1 + \varepsilon} 
    \leq c\fint_{B_{3R}} f_1 
    +    c\left( \fint_{B_{3R}} f_2^{1 + \varepsilon} \right)^\frac{1}{1 + \varepsilon} 
\end{equation}
whenever $B_{3R} \subset \subset \Omega$ and $R \le R_0$. Furthermore, the constant $c^*$ can be chosen to be monotone increasing with respect to $\varepsilon_0$.
\end{lem}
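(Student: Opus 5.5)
We prove Lemma \ref{lem:Gehring} with the classical Calderón--Zygmund / level-set argument for Gehring's lemma, keeping track of how the constants depend on $\kappa$. Fix a ball $B_{R_1}$ with $B_{3R_1}\subset\subset\Omega$ and $R_1\le R_0$, and normalize so that $\fint_{B_{3R_1}} f_1 = 1$ (by homogeneity after dividing $f_1,f_2$ by $\fint f_1$). Consider the localized functions $g=f_1\chi_{B_{3R_1}}$ and $h=f_2\chi_{B_{3R_1}}$, and work on the family of subballs $B\subset B_{R_1}$ with radius below $R_1/6$, so that the hypothesis \eqref{eq:lem-Gehring-reverse-Holder} applies to each $B$ with its $3$-fold dilate inside $B_{3R_1}$.

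\textbf{Step 1: level-set inequality.} For $t\ge t_0$, with $t_0$ a large multiple of $\fint_{B_{3R_1}} f_1$, run a stopping-time (Vitali) argument on $E_t=\{x\in B_{R_1}: f_1(x)>t\}$. A.e. point of $E_t$ is the center of a maximal ball $B$ on which $\fint_B f_1\approx t$, while $\fint_{3B}f_1\le t$. Apply \eqref{eq:lem-Gehring-reverse-Holder} on $B$. Write $\fint_{3B} f_1^\kappa \le (\varepsilon' t)^\kappa + |3B|^{-1}\int_{3B\cap\{f_1>\varepsilon' t\}} f_1^\kappa$ and use $\theta<1$ to absorb the $\theta\fint_{3B}f_1$ term (it is at most $\theta t$). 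This bounds $t|B|$ by integrals of $f_1^\kappa t^{1-\kappa}$ over $\{f_1>\varepsilon' t\}$ and of $f_2$ over $\{f_2>\varepsilon' t\}$. Summing over a Vitali subfamily yields
\begin{equation}
\int_{E_t} f_1 \le c\, t^{1-\kappa}\int_{\{f_1>\varepsilon' t\}\cap B_{3R_1}} f_1^{\kappa} + c\int_{\{f_2>\varepsilon' t\}\cap B_{3R_1}} f_2 ,
\end{equation}
with $c=c(n,A,\theta)$ and $\varepsilon'=\varepsilon'(n,A,\theta)$.

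\textbf{Step 2: integrate in $t$ against $\varepsilon t^{\varepsilon-1}$.} Integrate the level-set inequality from $t_0$ to $\infty$ and use Fubini. The left side becomes $\int f_1(f_1^\varepsilon-t_0^\varepsilon)_+$. The $f_1^\kappa$ term gives $c\,\varepsilon\int f_1^\kappa \int_{t_0}^{f_1/\varepsilon'} t^{\varepsilon-\kappa}\,dt\le \frac{c\varepsilon}{1-\kappa+\varepsilon}\int f_1^{1+\varepsilon}$. This is the crucial point: the factor is $\lesssim \varepsilon/(1-\kappa)$, so it is at most $1/2$ exactly when $\varepsilon\le (1-\kappa)/c^*$. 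That is how the hypothesis $\varepsilon\le\frac{1-\kappa}{c^*}$ enters, and why $c^*$ does not depend on $\kappa$. The $f_2$ term is controlled by $\int f_2^{1+\varepsilon}$, which is finite since $\varepsilon\le\varepsilon_0$.

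\textbf{Step 3: absorption.} The main obstacle is justifying the absorption of $\frac12\int f_1^{1+\varepsilon}$, since a priori $f_1\notin L^{1+\varepsilon}$. I would handle this by truncation: replace $f_1$ by $\min\{f_1,k\}$ in the $t$-integration, i.e. integrate only over $t<k$, where all quantities are finite. The same chain of estimates holds for the truncated integrals, so we can absorb and then let $k\to\infty$ by monotone convergence. Undoing the normalization and using a covering argument to pass from $B_{R_1}$ to $B_{R}$ versus $B_{3R}$ gives the stated estimate with $c=c(n,A,\varepsilon_0)$, and hence $f_1\in L^{1+\varepsilon}_{loc}$. Finally, the claimed monotonicity of $c^*$ in $\varepsilon_0$ follows by choosing $c^*$ as a supremum over the relevant parameters. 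Any dependence on $\varepsilon_0$ enters only through the requirement $\varepsilon\le\varepsilon_0$ in the $f_2$ term, so enlarging $c^*$ is harmless.
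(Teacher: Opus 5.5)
Your skeleton (stopping-time balls, Vitali, a level-set inequality, integration against $\varepsilon t^{\varepsilon-1}$, truncation) matches the paper's, but two steps fail as written.

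\textbf{The $\kappa$-independence of $c^*$.} You bound $(\fint_{3B}f_1^\kappa)^{1/\kappa}\le t^{1-\kappa}\fint_{3B}f_1^\kappa$ and split at height $\varepsilon' t$. The low part then contributes $A t^{1-\kappa}(\varepsilon' t)^\kappa=A(\varepsilon')^\kappa t$. Absorbing this into $(1-\theta)t$ forces $\varepsilon'\approx\bigl(\frac{1-\theta}{4A}\bigr)^{1/\kappa}$, so $\varepsilon'$ is not a function of $(n,A,\theta)$ alone. In Step 2, integrating up to $f_1/\varepsilon'$ produces the factor $(\varepsilon')^{-(1-\kappa+\varepsilon)}\approx\bigl(\frac{4A}{1-\theta}\bigr)^{(1-\kappa+\varepsilon)/\kappa}$ in front of $\int f_1^{1+\varepsilon}$, which you silently put into $c$. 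This factor is unbounded as $\kappa\to0$, so the smallness condition you actually obtain is not of the form $\varepsilon\le(1-\kappa)/c^*$ with $c^*$ free of $\kappa$.

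The missing device is to raise the exponent before splitting. With $d=\frac{1+\kappa}{2}$, Jensen gives $(\fint_{3B_i}f_1^\kappa)^{1/\kappa}\le(\fint_{3B_i}f_1^d)^{1/d}\le\lambda^{1-d}\fint_{3B_i}f_1^d$. One then splits $f_1^d$ at height $\vartheta\lambda$ with $\vartheta=(4\tilde A+1)^{-1/d}$ and $\tilde A=A/(1-\theta)$. Since $d\ge\frac12$, you get $\vartheta\ge(4\tilde A+1)^{-2}$ uniformly in $\kappa$. Also $\frac{1}{1-d}=\frac{2}{1-\kappa}$, so the coefficient in front of the truncated $\int f_1^{1+\varepsilon}$ is $\lesssim(4\tilde A+1)^{1+2\varepsilon_0}\varepsilon/(1-\kappa)$, with a constant depending only on $n$ and $\tilde A$. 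This also shows that $\varepsilon_0$ enters $c^*$ through the bound on $\vartheta^{-(1-d+\varepsilon)}$ for $\varepsilon\le\varepsilon_0$, not only through the $f_2$ term as you say.

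\textbf{The absorption in Step 3.} Truncation does not address this. Your level-set inequality has $E_t\subset B_{R_1}$ on the left but integrals over the larger ball $B_{3R_1}$ (containing the triples of the stopping balls) on the right. After integrating in $t$ you therefore get $\int_{B_{R_1}}f_1\min\{f_1,k\}^\varepsilon\le\frac12\int_{B_{3R_1}}f_1\min\{f_1,k\}^\varepsilon+\cdots$. A term living on a bigger ball cannot be absorbed into the left-hand side, finite or not, and an unspecified covering argument does not change this.

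The paper resolves this by running the argument between arbitrary concentric radii $R\le r_1<r_2\le 3R$. Starting the levels at $\lambda_0=\frac{15^n}{\omega_n(r_2-r_1)^n}\int_{B_{3R}}f_1$ forces every stopping radius below $\frac{r_2-r_1}{15}$, hence $15B_i\subset B_{r_2}$. This gives $h(r_1)\le\frac12 h(r_2)+c\int_{B_{3R}}f_2^{1+\varepsilon}+c\,(r_2-r_1)^{-n\varepsilon}\bigl(\int_{B_{3R}}f_1\bigr)^{1+\varepsilon}$ for the bounded function $h(r)=\int_{B_r}f_1\min\{f_1,\vartheta t\}^\varepsilon$. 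The iteration Lemma \ref{lem:iteration} then removes $\frac12 h(r_2)$. You need this radius-dependent inequality plus an iteration (or alternatively a distance-to-the-boundary weight) to close the argument; with the fixed pair $B_{R_1}$, $B_{3R_1}$ it does not close.
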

Although the validity of this lemma has already been established (see \cite[Section 6.4]{Gi} for example), we will give a new proof, which is based on some covering argument (see \cite{KL, CoM2}) in the Appendix. The proof seems elementary in comparison with the available ones.

\section{Sobolev--Poincar\'e inequality for the double phase operator}\label{sec:SP}

The purpose of this section is to establish a Sobolev--Poincar\'e inequality associated with the double phase operator;\,we also provide the proof of Theorem~\ref{thm:sp-0}.
To this end, we first prove the result for the case $q < n$ (Theorem \ref{thm:sp}). 

For the rest of this section, we fix $B=B_R$. Moreover, for $\gamma \in (0,n)$ and $f \in L^1(B;\R^k)\,(k \in \N)$, we set
\begin{equation}
I_{\gamma}(f)(x) \coloneq \int_{B} \frac{|f(y)|}{|x-y|^{n-\gamma}}\, dy.
\end{equation}
$I_{\gamma}$ is the Riesz potential restricted on $B$. For the basic properties of the Riesz potential, see \cite[Chapter 1]{KLV} for example.

We begin with the following auxiliary lemma.
\begin{lem}\label{lem:beta}
Let $1 \le p \le q < n$ and set 
\begin{equation}\label{eq:beta}
\beta \coloneqq n\left(\frac{1}{p}-\frac{1}{q}\right)+1.
\end{equation}
Then the following assertions hold:
\begin{description} 
\item[(i)] $1 \le \beta< \frac{n}{p}$. Moreover, $\frac{np}{n-\beta p} = \frac{nq}{n-q}$.
\item[(ii)] $1+\frac{\alpha}{q}-\beta = \frac{n}{q}\left(1+\frac{\alpha}{n}-\frac{q}{p} \right)$ for $\alpha \in \R$.
\item[(iii)]Let $\alpha \in (0,\infty)$, $a \in \mathcal{Z}^{\alpha}(B)$ and $\frac{q}{p} \le 1+\frac{\alpha}{n}$. Then there exists $c=c(n,p,q,\alpha, [a]_{\alpha})>0$ such that
\begin{equation}
a(x)^{\frac{1}{q}}I_{1}(f)(x)
\le cI_1(a^{\frac{1}{q}}f)(x)
+ cR^{1+\frac{\alpha}{q}-\beta}I_{\beta}(f)(x)
\end{equation}
for any $x \in B$ and $f \in L^1(B)$.
\end{description}
\end{lem}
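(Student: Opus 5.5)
Parts (i) and (ii) are direct computations, and I will dispose of them first. For (i): since $p\le q$ we have $\frac1p-\frac1q\ge 0$, so $\beta\ge 1$. Moreover
\begin{equation}
\beta<\frac np \iff n\left(\frac1p-\frac1q\right)+1<\frac np \iff 1<\frac nq,
\end{equation}
which holds because $q<n$. For the exponent identity, I will compute
\begin{equation}
\frac{n-\beta p}{np}=\frac1p-\frac{\beta}{n}=\frac1p-\frac1p+\frac1q-\frac1n=\frac{n-q}{nq},
\end{equation}
and then invert. For (ii) I will simply expand: $1+\frac{\alpha}{q}-\beta=\frac{\alpha}{q}-\frac np+\frac nq=\frac nq\left(1+\frac{\alpha}{n}-\frac qp\right)$.

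For (iii) the plan mimics the case $\ell=1$ of Lemma \ref{lem:Hedberg-2}, with maximal functions replaced by Riesz potentials. Fix $x\in B$. For $y\in B$, the definition \eqref{eq:Zalpha} together with subadditivity of $t\mapsto t^{1/q}$ gives
\begin{equation}
a(x)^{\frac1q}\le [a]_\alpha^{\frac1q}\left(a(y)^{\frac1q}+|x-y|^{\frac{\alpha}{q}}\right).
\end{equation}
I will insert this into $a(x)^{1/q}I_1(f)(x)=\int_B a(x)^{1/q}|f(y)||x-y|^{1-n}\,dy$. The first piece is exactly $cI_1(a^{1/q}f)(x)$. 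The second piece is $c\int_B |f(y)|\,|x-y|^{1+\frac{\alpha}{q}-n}\,dy$, and I will rewrite it as
\begin{equation}
\int_B |f(y)|\,|x-y|^{1+\frac{\alpha}{q}-\beta}\,|x-y|^{\beta-n}\,dy .
\end{equation}

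The key point is the sign of the exponent. By (ii) and the hypothesis $\frac qp\le 1+\frac{\alpha}{n}$, the exponent $1+\frac{\alpha}{q}-\beta$ is nonnegative. Since $x,y\in B$ we have $|x-y|<2R$, hence $|x-y|^{1+\frac{\alpha}{q}-\beta}\le (2R)^{1+\frac{\alpha}{q}-\beta}$. The factor $2^{1+\frac{\alpha}{q}-\beta}$ depends only on $n,p,q,\alpha$, and (i) guarantees $\beta<n$, so $I_\beta$ is well defined. This yields the bound $cR^{1+\frac{\alpha}{q}-\beta}I_\beta(f)(x)$.

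The only delicate point is this nonnegativity of the exponent, which is precisely where the condition $\frac qp\le1+\frac{\alpha}{n}$ enters. I also have to make sure the argument requires no upper restriction on $\alpha$, unlike the approaches relying on $\alpha\le1$. Here no such restriction arises, since only the trivial bound $|x-y|\le 2R$ is used. When $\beta=1$ the estimate degenerates consistently, since then $1+\frac{\alpha}{q}-\beta=\frac{\alpha}{q}$ and $I_\beta=I_1$.
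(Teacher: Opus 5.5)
Your proposal is correct and follows essentially the same route as the paper. Parts (i) and (ii) are direct computations. For (iii), you split via $a(x)^{\frac{1}{q}}\le[a]_{\alpha}^{\frac{1}{q}}\bigl(a(y)^{\frac{1}{q}}+|x-y|^{\frac{\alpha}{q}}\bigr)$ and then bound $|x-y|^{1+\frac{\alpha}{q}-\beta}\le(2R)^{1+\frac{\alpha}{q}-\beta}$, where the exponent is nonnegative by (ii) together with $\frac{q}{p}\le 1+\frac{\alpha}{n}$.
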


\begin{proof}
\textbf{(i)}: note that
\begin{equation}
1 
= n\left(\frac{1}{p}-\frac{1}{p}\right) + 1
\le
\beta 
< n\left(\frac{1}{p}-\frac{1}{n}\right) + 1
= \frac{n}{p}
\end{equation} 
since $1 \le p \le q < n$.
Moreover, we have
\begin{align}
\frac{np}{n-\beta p}
&= \frac{np}{n-np\left(\frac{1}{p}-\frac{1}{q}\right)-p}=\frac{nq}{n-q}.
\end{align}

\noindent \textbf{(ii)}: it follows that
\begin{align}
1+\frac{\alpha}{q}-\beta
&= 1+\frac{\alpha}{q}-n\left(\frac{1}{p}-\frac{1}{q}\right)-1 \\
&= \frac{\alpha}{q}-n\left(\frac{1}{p}-\frac{1}{q}\right)  \\
&= \frac{n}{q} \left\{ \frac{\alpha}{n}- q\left(\frac{1}{p}-\frac{1}{q}\right) \right\}
= \frac{n}{q} \left( 1+\frac{\alpha}{n} - \frac{q}{p} \right).
\end{align}

\noindent \textbf{(iii)}: fix any $x \in B$ and $f \in L^1(B)$. By the definition of $\mathcal{Z}^{\alpha}(B)$ (see Definition \ref{def:Zalpha}), we have
\begin{align}\label{eq:beta-3-1}
&a(x)^{\frac{1}{q}}I_{1}(f)(x) \\
&= \int_{B} \frac{a(x)^{\frac{1}{q}}|f(y)|}{|x-y|^{n-1}}\ dy \\
&\le [a]_{\alpha}^{\frac{1}{q}}\int_{B} \frac{\left( a(y) + |x-y|^{\alpha} \right)^{\frac{1}{q}} |f(y)|}{|x-y|^{n-1}}\ dy \\
&\le [a]_{\alpha}^{\frac{1}{q}}\int_{B} \frac{a(y)^{\frac{1}{q}}|f(y)|}{|x-y|^{n-1}}\ dy
+ [a]_{\alpha}^{\frac{1}{q}}\int_{B}\frac{|x-y|^{\frac{\alpha}{q}}|f(y)|}{|x-y|^{n-1}}\ dy
\eqqcolon  [a]_{\alpha}^{\frac{1}{q}}(J_1 + J_2).
\end{align}

For $J_1$, we notice that
\begin{equation}\label{eq:beta-3-2}
J_1 = I_1(a^{\frac{1}{q}}f)(x).
\end{equation}

For $J_2$, since $1+\frac{\alpha}{q}-\beta \ge 0$ by \textbf{(ii)} and $\frac{q}{p} \le 1 + \frac{\alpha}{n}$, we derive
\begin{align}\label{eq:beta-3-3}
J_2 
&= \int_{B} \frac{|x-y|^{1+\frac{\alpha}{q}-\beta}|f(y)|}{|x-y|^{n-\beta}}\ dy \\
&\le  (2R)^{1+\frac{\alpha}{q}-\beta}\int_{B} \frac{|f(y)|}{|x-y|^{n-\beta}}\ dy
= (2R)^{1+\frac{\alpha}{q}-\beta}I_{\beta}(f)(x).
\end{align}

From \eqref{eq:beta-3-1}, \eqref{eq:beta-3-2} and \eqref{eq:beta-3-3}, we obtain the desired inequality.
\end{proof}

Here we recall the strong-type estimate of the Riesz potential. This lemma immediately follows from the whole space version \cite[Theorem 1.36]{KLV}.
\begin{lem}\label{lem:strong-type-c}
Let $1 < r < \infty$ and $f \in L^r(B)$. Moreover assume $\gamma \in \left(0,\frac{n}{r}\right)$. Then there exists $c=c(n,r,\gamma) >0$ such that
\begin{equation}
\|I_{\gamma}(f)\|_{L^{\frac{nr}{n-\gamma r}}(B)}
\le c(n,r,\gamma)\|f\|_{L^r(B)}.
\end{equation}
\end{lem}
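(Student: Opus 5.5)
The statement is Lemma \ref{lem:strong-type-c}. The plan is to reduce it to the whole-space Hardy--Littlewood--Sobolev inequality \cite[Theorem 1.36]{KLV} by extending $f$ by zero outside $B$.

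First, define $\tilde f \coloneqq f\chi_B$ on $\R^n$. Since $f \in L^r(B)$, we have $\tilde f \in L^r(\R^n)$ and $\|\tilde f\|_{L^r(\R^n)} = \|f\|_{L^r(B)}$. Let $\mathcal{I}_\gamma$ denote the whole-space Riesz potential,
\begin{equation}
\mathcal{I}_\gamma(g)(x) = \int_{\R^n} \frac{|g(y)|}{|x-y|^{n-\gamma}}\,dy .
\end{equation}
Then for every $x \in B$ we have $I_\gamma(f)(x) = \mathcal{I}_\gamma(\tilde f)(x)$, because the integrand vanishes outside $B$. Hence
\begin{equation}
\|I_\gamma(f)\|_{L^{\frac{nr}{n-\gamma r}}(B)} \le \|\mathcal{I}_\gamma(\tilde f)\|_{L^{\frac{nr}{n-\gamma r}}(\R^n)} .
\end{equation}

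Second, apply the whole-space strong-type estimate. The assumptions $1<r<\infty$ and $0<\gamma<\frac{n}{r}$, that is $\gamma r<n$, are exactly its hypotheses. It gives $\|\mathcal{I}_\gamma(\tilde f)\|_{L^{nr/(n-\gamma r)}(\R^n)} \le c(n,r,\gamma)\|\tilde f\|_{L^r(\R^n)}$. Combining this with the first step yields the claim.

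The argument contains no real obstacle. The only points to check are that the constant in the cited theorem depends only on $(n,r,\gamma)$, so in particular not on $R$, and that the absolute value in the definition of $I_\gamma$ matches the convention in \cite{KLV}. Both hold, since the whole-space estimate is scale invariant.
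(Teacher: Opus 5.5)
Your proposal is correct and follows the paper's approach: the paper simply says the lemma follows immediately from the whole-space estimate \cite[Theorem 1.36]{KLV}. Your zero extension $f\chi_B$, the pointwise identity with the whole-space potential on $B$, and the trivial bound of the $L^{nr/(n-\gamma r)}(B)$ norm by the norm on $\R^n$ are exactly the details behind that remark.
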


For convenience, we introduce the following two assumptions: 
\begin{equation}\label{assmpt:sp-1}
1 < p \le q < \infty,\quad
\alpha \in (0,\infty),\quad
a \in \mathcal{Z}^{\alpha}(B),\quad
\frac{q}{p} \le 1 + \frac{\alpha}{n}
\end{equation}
and
\begin{equation}\label{assmpt:sp-2}
\eta \in L^{\infty}(B),
\quad
0 \le \eta \le 1,
\quad
\|\eta\|_{L^1(B)} \ge \left| \frac{B}{2}\right|.
\end{equation}
The function $\eta$ will serve as a weight in what follows. 

Thanks to Lemma \ref{lem:beta}, we prove the following strong type estimate:
\begin{lem}\label{lem:strong-type}
Assume \eqref{assmpt:sp-1} and $q < n$. Moreover suppose $f \in L^p(B)$ satisfies $\int_{B} a|f|^q  < \infty$.
Then there exists $c=c(n,p,q,\alpha,[a]_{\alpha})$ such that
\begin{equation}
\|a^{\frac{1}{q}}I_{1}(f)\|_{L^{q^{*}}(B)} \le c\|a^{\frac{1}{q}}f\|_{L^q(B)} + cR^{1+\frac{\alpha}{q}-\beta}\|f\|_{L^p(B)}.
\end{equation}
Here $q^* = \frac{nq}{n-q}$ and $\beta$ is the number defined in \eqref{eq:beta}.
\end{lem}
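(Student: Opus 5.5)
The plan is to combine the pointwise inequality of Lemma \ref{lem:beta} \textbf{(iii)} with the strong-type estimate of Lemma \ref{lem:strong-type-c}, applied twice with different exponents. First, for every $x \in B$ Lemma \ref{lem:beta} \textbf{(iii)} gives
\begin{equation}
a(x)^{\frac{1}{q}}I_1(f)(x) \le cI_1(a^{\frac{1}{q}}f)(x) + cR^{1+\frac{\alpha}{q}-\beta}I_{\beta}(f)(x).
\end{equation}
We then take the $L^{q^*}(B)$ norm of both sides and use the triangle inequality (Minkowski). This reduces the claim to bounding separately $\|I_1(a^{\frac{1}{q}}f)\|_{L^{q^*}(B)}$ and $\|I_\beta(f)\|_{L^{q^*}(B)}$.

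For the first term we apply Lemma \ref{lem:strong-type-c} with $r=q$ and $\gamma=1$. This is admissible because $1<q<n$ gives $1 \in (0,\frac{n}{q})$, and because $a^{\frac1q}f \in L^q(B)$ by the hypothesis $\int_B a|f|^q<\infty$. Since $\frac{nq}{n-q}=q^*$, we obtain $\|I_1(a^{\frac1q}f)\|_{L^{q^*}(B)} \le c(n,q)\|a^{\frac1q}f\|_{L^q(B)}$.

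For the second term we apply Lemma \ref{lem:strong-type-c} with $r=p$ and $\gamma=\beta$. By Lemma \ref{lem:beta} \textbf{(i)} we have $\beta\in[1,\frac np)\subset(0,\frac np)$ and $\frac{np}{n-\beta p}=q^*$, so this yields $\|I_\beta(f)\|_{L^{q^*}(B)}\le c(n,p,q)\|f\|_{L^p(B)}$. The factor $R^{1+\frac\alpha q-\beta}$ is carried along unchanged. It is nonnegative by Lemma \ref{lem:beta} \textbf{(ii)} together with $\frac qp\le 1+\frac\alpha n$, although the sign is not actually needed at this stage.

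Combining the two bounds gives the statement with $c=c(n,p,q,\alpha,[a]_\alpha)$. The main point to check is measurability and finiteness, so that taking norms of the pointwise inequality is legitimate. Both right-hand terms are in $L^{q^*}(B)$ by the two strong-type estimates, so the left side is too, and nothing further is required. The only other thing to watch is that the constant from Lemma \ref{lem:strong-type-c} depends on $\gamma=\beta$. Since $\beta$ is a function of $n,p,q$, this dependence is harmless.
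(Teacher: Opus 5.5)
Your proposal is correct and follows the paper's proof: you take the $L^{q^*}(B)$ norm of the pointwise bound from Lemma \ref{lem:beta} \textbf{(iii)}, then apply Lemma \ref{lem:strong-type-c} once with $(r,\gamma)=(q,1)$ and once with $(r,\gamma)=(p,\beta)$, using $\frac{np}{n-\beta p}=q^*$ from Lemma \ref{lem:beta} \textbf{(i)}. Your checks of the admissibility conditions ($1<p$, $1<\frac{n}{q}$, $\beta<\frac{n}{p}$) are exactly what is needed.
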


\begin{proof}
By Lemma \ref{lem:beta} \textbf{(iii)}, we have
\begin{equation}\label{eq:stong-type-0}
\|a^{\frac{1}{q}}I_{1}(f)\|_{L^{q^{*}}(B)}
\le c\|I_{1}(a^{\frac{1}{q}}f)\|_{L^{q^{*}}(B)} 
+ cR^{1+\frac{\alpha}{q}-\beta}\|I_{\beta}f\|_{L^{q^{*}}(B)},
\end{equation}
where $c=c(n,p,q,\alpha,[a]_{\alpha})$.
Since $a^{\frac{1}{q}}f \in L^q(B)$, it follows that
\begin{equation}\label{eq:stong-type-1}
\|I_{1}(a^{\frac{1}{q}}f)\|_{L^{q^{*}}(B)}
\le c(n,q)\|a^{\frac{1}{q}}f\|_{L^{q}(B)}
\end{equation}
by Lemma \ref{lem:strong-type-c}.
Moreover, using Lemma \ref{lem:beta} \textbf{(i)} and Lemma \ref{lem:strong-type-c}, we have
\begin{equation}\label{eq:stong-type-2}
\|I_{\beta}f\|_{L^{q^{*}}(B)} 
= \|I_{\beta}f\|_{L^{\frac{np}{n-p\beta}}(B)} 
\le c(n,p,q)\|f\|_{L^{p}(B)}.
\end{equation}
Combining \eqref{eq:stong-type-0}, \eqref{eq:stong-type-1} and \eqref{eq:stong-type-2}, we obtain the desired inequality.
\end{proof}

Now we state the Sobolev--Poincar\'e inequality for the double phase operator for the case $q < n$. A version of this result in a more general setting can already be found in \cite{CD}. However, we emphasize that the dependence on the size of domain is explicit and the proof is elementary, see also Subsection~\ref{subsec:outline}.
\begin{thm}\label{thm:sp}
Assume \eqref{assmpt:sp-1} and \eqref{assmpt:sp-2}. Moreover, suppose $u \in W^{1,p}(B)$ satisfies 
\begin{equation}
\int_{B} a|Du|^q  < \infty
\quad
\text{and}
\quad
u_{B,\eta} = 0.
\end{equation}
Then there exists $c=c(n,p,q,\alpha, [a]_{\alpha})>0$ such that
\begin{equation}
\left( \fint_{B} a^{\frac{q^*}{q}}\left| \frac{u}{R} \right|^{q*} \right)^{\frac{1}{q^*}} 
\le c\left( \fint_{B} a|Du|^{q} \right)^{\frac{1}{q}}
+ cR^{\frac{\alpha}{q}} \left( \fint_{B} |Du|^{p} \right)^{\frac{1}{p}}.
\end{equation}
Here $q^* = \frac{nq}{n-q}$.
\end{thm}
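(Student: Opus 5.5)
The plan is to reduce the weighted Sobolev--Poincar\'e inequality to the strong type estimate of Lemma~\ref{lem:strong-type}. The reduction uses the pointwise potential representation of $u$ on the ball. We first invoke the classical estimate
\begin{equation}
|u(x)-u_B| \le c(n) I_1(|Du|)(x) \quad \text{for a.e.\ } x \in B .
\end{equation}
This holds for $u\in W^{1,1}(B)$ and is valid on balls; see \cite[Chapter 1]{KLV} or the Gilbarg--Trudinger lemma. Since $u_{B,\eta}=0$, we then write $|u(x)| \le |u(x)-u_B| + |u_B-u_{B,\eta}|$. As in the proof of Lemma~\ref{lem:Hedberg-1}, \eqref{assmpt:sp-2} gives
\begin{equation}
|u_B-u_{B,\eta}| \le 2\fint_B |u-u_B| .
\end{equation}
Hence $|u(x)| \le c\,I_1(|Du|)(x) + c\fint_B I_1(|Du|)$.

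Multiplying by $a(x)^{1/q}$ and taking the $L^{q^*}(B)$-norm, the first term is handled directly by Lemma~\ref{lem:strong-type} with $f=|Du|$. This is legitimate since $Du\in L^p(B)$ and $\int_B a|Du|^q<\infty$. It gives
\begin{equation}
\|a^{1/q} I_1(|Du|)\|_{L^{q^*}(B)} \le c\|a^{1/q}Du\|_{L^q(B)} + cR^{1+\frac{\alpha}{q}-\beta}\|Du\|_{L^p(B)} .
\end{equation}

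The constant term $c\,a(x)^{1/q}\fint_B I_1(|Du|)$ is the step needing care, since $a(x)$ sits outside the average. The plan is to use the $\mathcal{Z}^\alpha$ condition pointwise in the averaging variable $y$:
\begin{equation}
a(x)^{1/q} \le c\bigl(a(y)^{1/q} + R^{\alpha/q}\bigr), \quad x,y\in B,
\end{equation}
so that
\begin{equation}
a(x)^{1/q}\fint_B I_1(|Du|) \le c\fint_B a^{1/q} I_1(|Du|) + cR^{\alpha/q}\fint_B I_1(|Du|) .
\end{equation}
The first of these averages is bounded by H\"older by $|B|^{-1/q^*}\|a^{1/q}I_1(|Du|)\|_{L^{q^*}(B)}$, which was estimated above. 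For the second we use Fubini: $\int_B I_1(g)\le cR\int_B g$, so it is at most $cR^{1+\alpha/q}\fint_B|Du| \le cR^{1+\alpha/q}(\fint_B|Du|^p)^{1/p}$. Taking $L^{q^*}$ norms of constants just multiplies by $|B|^{1/q^*}$.

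It remains to normalize. We divide by $R$ and convert norms into averages, using $|B|\approx R^n$ and $\frac1{q^*}=\frac1q-\frac1n$. The $a|Du|^q$ term gives exactly $c(\fint_B a|Du|^q)^{1/q}$. The $p$-term gives a power of $R$ equal to
\begin{equation}
R^{1+\frac{\alpha}{q}-\beta}\cdot R^{\frac nq-1-\frac np+\frac np-\frac nq}\cdots
\end{equation}
Concretely, $R^{1+\frac{\alpha}{q}-\beta+\frac np-\frac{n}{q^*}-1} = R^{\frac{\alpha}{q}+\frac np-\frac nq+1-\beta} = R^{\alpha/q}$ by the definition \eqref{eq:beta} of $\beta$. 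So this term is $cR^{\alpha/q}(\fint_B|Du|^p)^{1/p}$, as claimed. The constant-term contributions scale the same way, since $R^{1+\alpha/q}$ divided by $R$ gives $R^{\alpha/q}$.

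The main obstacle is purely bookkeeping. One must check that this scaling works out exactly and that no assumption $R\le1$ is needed, so that the constant depends only on $(n,p,q,\alpha,[a]_\alpha)$. That is precisely what Lemma~\ref{lem:beta}(ii) and the exponent identity above guarantee. The condition $\frac{q}{p}\le 1+\frac{\alpha}{n}$ enters only through $1+\frac{\alpha}{q}-\beta\ge0$ in Lemma~\ref{lem:beta}(iii).
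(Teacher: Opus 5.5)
Your proof is correct and follows the paper's route: the Gilbarg--Trudinger potential bound, the correction $|u_B-u_{B,\eta}|\le 2\fint_B|u-u_B|$, Lemma~\ref{lem:strong-type}, and the same rescaling via the definition of $\beta$. The only difference is the mean-correction term. The paper avoids your second $\mathcal{Z}^{\alpha}$ splitting by observing that $\fint_B I_1(|Du|)\le c(n)R^{1-n}\int_B|Du|\le c(n)I_1(|Du|)(x)$ for every $x\in B$ (since $|x-z|<2R$), which gives $|u|\le c(n)\,I_1(|Du|)$ pointwise, so Lemma~\ref{lem:strong-type} is applied once and nothing else is needed.
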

\begin{rem}\label{rem:sp-p}
Note that $p > 1$. This condition necessitates an indirect approach in the proof of Lemma \ref{lem:SP}. 
\end{rem}

\begin{proof}
We first prove
\begin{equation}\label{eq:sp-1}
|u(x)| \le c(n)I_1(Du)(x)
\quad
\text{for a.e.\,}
x \in B.
\end{equation}
Noting that $u_{B,\eta}=0$, \cite[Lemma 7.16]{GT} and \eqref{assmpt:sp-2}, we have 
\begin{align}
|u(x)| 
&\le |u(x)-u_B| + |u_B-u_{B,\eta}| \\
&= c(n)I_1(|Du|)(x) + \left|\frac{1}{\|\eta\|_{L^1(B)}}\int_B (u_B - u(y))\eta \,dy \right| \\
&\le c(n)I_1(|Du|)(x) + 2\fint_{B}|u_B-u(y)|\,dy \\
&\le c(n)\left( I_1(|Du|)(x) + \fint_{B}I_1(|Du|)(y)\,dy\right)
\quad
\text{for a.e.\,}
x \in B.
\end{align}
It remains to estimate the second term. Fubini's theorem implies
\begin{align}
\fint_{B}I_1(|Du|)(y)\,dy
&= \fint_{B}\left(\int_{B}\frac{|Du(z)|}{|y-z|^{n-1}}\,dz\right)\,dy \\
&= \fint_{B}|Du(z)|\left(\int_{B}\frac{dy}{|y-z|^{n-1}}\right)\,dz \\
&\le \fint_{B}|Du(z)|\left(\int_{B(z,2R)}\frac{dy}{|y-z|^{n-1}}\right)\,dz \\
&\le c(n)\int_{B} \frac{|Du(z)|}{R^{n-1}}\,dz \\
&\le c(n)\int_B \frac{|Du(z)|}{|x-z|^{n-1}}\,dz = c(n)I_1(Du)(x)
\end{align}
for any $x \in B$. Therefore, we obtain \eqref{eq:sp-1}.

Hence, Lemma \ref{lem:strong-type} and \eqref{eq:sp-1} deduce 
\begin{align}\label{eq:sp-2}
&\|a^{\frac{1}{q}}u\|_{L^{q^{*}}(B)} \\
&\le c(n)\|a^{\frac{1}{q}}I_{1}(Du)\|_{L^{q^{*}}(B)}
\le c\|a^{\frac{1}{q}}Du\|_{L^q(B)} + cR^{1+\frac{\alpha}{q}-\beta}\|Du\|_{L^p(B)},
\end{align}
where $c=c(n,p,q,\alpha,[a]_{\alpha})$.

Finally, multiplying both sides of \eqref{eq:sp-2} by $ \frac{1}{R}\left( \frac{1}{|B|} \right)^{\frac{1}{q^*}}$, we obtain
\begin{align}\label{eq:sp-3}
\left( \fint_B a^{\frac{q^*}{q}}\left|\frac{u}{R}\right|^{q*} \right)^{\frac{1}{q^*}} 
&\le c\omega_n^{\frac{1}{n}}\left(\fint_{B} a|Du|^{q} \right)^{\frac{1}{q}} 
+ c\omega_n^{\frac{1}{p}-\frac{1}{q^*}}R^{\frac{\alpha}{q}}\left( \fint_{B} |Du|^{p} \right)^{\frac{1}{p}}.
\end{align}
Here we used
\begin{equation}
\left( \frac{1}{|B|} \right)^{\frac{1}{q^*}-\frac{1}{p}}R^{1+\frac{\alpha}{q}-\beta}
= (\omega_nR^n)^{\frac{1}{p}-\frac{1}{q^*}}R^{\frac{\alpha}{q}-n\left(\frac{1}{p}-\frac{1}{q}\right)}
=(\omega_n)^{\frac{1}{p}-\frac{1}{q^*}}R^{1+\frac{\alpha}{q}}.
\end{equation}
Estimate \eqref{eq:sp-3} is the desired one and the proof is completed.
\end{proof}

\begin{proof}[Proof of Theorem \ref{thm:sp-0}]
The conclusion follows from applying Theorem \ref{thm:sp} with $\eta \equiv 1$.
\end{proof}

Here we established the Sobolev--Poincar\'e inequality for the case $q < n$. 
Next, we turn to the proof of the general Sobolev--Poincar\'e inequality, which is the goal of this section.
Before entering the proof, we present the following lemma, which will also be used in Subsection~4.1.

\begin{lem}\label{lem:exponents-0}
Let $1 \le p \le q < \infty$ and $\alpha \in (0,\infty)$. If $\frac{q}{p} \le(\text{resp.}<)\,1 + \frac{\alpha}{n}$, then it holds
\begin{equation}\label{eq:exponents-0} 
\frac{\alpha}{q} - n\left( \frac{1}{(p_{\ell})^{*}} - \frac{1}{(q_{\ell})^{*}} \right) \ge (\text{resp.}>)\,0
\end{equation}
for each $\ell \in \{0,\ldots,m\}$. Here for $r \in \{p,q\}$,
\begin{equation}\label{eq:pell*-2} 
  (r_{\ell})^* \coloneqq
  \left\{ 
  \begin{alignedat}{2}   
    \frac{nr}{n - \ell r} \hspace{12pt} &\text{if}\ \ell r < n, \\
    \infty                        \hspace{25pt} &\text{otherwise}.
  \end{alignedat} 
  \right. 
\end{equation}
In particular, we denote $(r_1)^* = r^*$.
\end{lem}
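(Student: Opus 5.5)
The plan is a direct case analysis on $\ell$, depending on where $\ell p$ and $\ell q$ sit relative to $n$. Throughout I use the convention $1/(r_\ell)^* = 1/r - \ell/n$ when $\ell r<n$ and $1/(r_\ell)^*=0$ otherwise. The identity behind every case is
\begin{equation}
\frac{\alpha}{q} - n\left(\frac{1}{p}-\frac{1}{q}\right) = \frac{n}{q}\left(1+\frac{\alpha}{n}-\frac{q}{p}\right),
\end{equation}
which is Lemma \ref{lem:beta} \textbf{(ii)} with the extra $1$ removed. Its right-hand side is $\ge 0$ (resp. $>0$) exactly when $\frac{q}{p}\le$ (resp. $<$) $1+\frac{\alpha}{n}$.

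First, suppose $\ell q<n$. Since $p\le q$, we also have $\ell p<n$. Then
\begin{equation}
\frac{1}{(p_\ell)^*}-\frac{1}{(q_\ell)^*}=\frac1p-\frac{\ell}{n}-\frac1q+\frac{\ell}{n}=\frac1p-\frac1q,
\end{equation}
so the left side of \eqref{eq:exponents-0} is exactly the quantity in the displayed identity. This case includes $\ell=0$, where $(r_0)^*=r$, and the conclusion follows immediately.

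Second, suppose $\ell p\ge n$. Then $\ell q\ge n$ as well, both reciprocals vanish, and the left side equals $\alpha/q>0$. This gives the strict version, hence both versions.

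Third, in the mixed case $\ell p<n\le \ell q$, we have $1/(q_\ell)^*=0$. The left side becomes
\begin{equation}
\frac{\alpha}{q}-n\left(\frac1p-\frac{\ell}{n}\right)=\frac{\alpha}{q}-\frac{n}{p}+\ell .
\end{equation}
Because $\ell\ge n/q$, this is at least $\frac{\alpha}{q}-\frac{n}{p}+\frac{n}{q}$, which is the quantity in the displayed identity. So it is $\ge0$, resp. $>0$, by the same identity.

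The only point needing care is the mixed case. The key step there is replacing $\ell$ by its lower bound $n/q$, which preserves both the non-strict and the strict inequality. The rest is bookkeeping with the two-branch definition \eqref{eq:pell*-2}, and I expect no real obstacle.
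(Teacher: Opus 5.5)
Your proposal is correct and matches the paper's proof: the same three-way split ($\ell q<n$; $\ell p<n\le \ell q$; $\ell p\ge n$), the same reduction to $\frac{\alpha}{q}-n\left(\frac1p-\frac1q\right)=\frac{n}{q}\left(1+\frac{\alpha}{n}-\frac{q}{p}\right)$, and the same use of $\ell q\ge n$ in the mixed case. Your remark that this identity is purely algebraic (so invoking Lemma \ref{lem:beta} \textbf{(ii)} outside its hypothesis $q<n$ is harmless) is also accurate.
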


\begin{proof}
Firstly, let us consider the case $\ell q < n$. Then it follows
\begin{align}\label{eq:1}
&\frac{\alpha}{q} - n\left( \frac{1}{(p_{\ell})^{*}} - \frac{1}{(q_{\ell})^{*}} \right) \\
&=\frac{\alpha}{q} - n\left(\frac{n - \ell p}{np}  -  \frac{n - \ell q}{nq}\right) \\
&=\frac{\alpha}{q} - n\left( \frac{1}{p} - \frac{1}{q} \right)
 =\frac{n}{q}\left(1+\frac{\alpha}{n}-\frac{q}{p}\right) \ge (\text{resp.}>) \,0.
\end{align}

Next, we will consider the case $\ell p < n$ and $\ell q \ge n$.
Then we have
\begin{align}
&\frac{\alpha}{q} - \frac{n}{(p_{\ell})^{*}} \\
&= \frac{\alpha}{q} - \frac{n-\ell p}{p} \\
&= \frac{\alpha}{q} + \ell - \frac{n}{p} \\
&= \frac{n}{q}\left(\frac{\ell q}{n}+\frac{\alpha}{n}-\frac{q}{p}\right) 
\ge \frac{n}{q} \left( 1+\frac{\alpha}{n}-\frac{q}{p} \right) \ge (\text{resp.}>)\,0
\end{align}
by $\ell q \ge n$. If $\ell p \ge n$, \eqref{eq:exponents-0} is obvious. The proof is completed.
\end{proof}

Using Theorem \ref{thm:sp} and Lemma \ref{lem:exponents-0}, we obtain the following corollary:
\begin{cor}\label{cor:sp}
Assume \eqref{assmpt:sp-1} and \eqref{assmpt:sp-2}.
Moreover suppose $\ell \in \N$ and $u \in W^{\ell,p}(B)$ satisfies
\begin{equation}
\int_B a|D^\ell u|^q < \infty
\quad
\text{and}
\quad
(D^k u)_{B,\eta} = 0
\quad
\text{for each}
\quad
k \in \{0,\ldots,\ell-1\}.
\end{equation}
If $\ell q < n$, then there exists $c=c(n,\ell,p,q,r,\alpha,[a]_{\alpha})$ such that
\begin{equation}\label{eq:cor-sp-c}
\left(\fint_{B} a^{\frac{r}{q}}\left|\frac{u}{R^\ell}\right|^r \right)^{\frac{1}{r}}
\le c\left(\fint_{B} a|D^\ell u|^q \right)^{\frac{1}{q}} 
+ cR^{\frac{\alpha}{q}}\left(\fint_{B} |D^\ell u|^p \right)^{\frac{1}{p}}
\end{equation}
for any $r \in [1,(q_{\ell})^*]$. If $\ell q \ge n$, then the same conclusion holds for any $r \in [1,(q_\ell)^*)$. For the definition $(q_\ell)^*$, see \eqref{eq:pell*-2}.
In particular, if $\ell = 1$ and $r=q$, it holds that
\begin{equation}\label{eq:cor-sp-p}
\left(\fint_{B} a\left|\frac{u}{R}\right|^q \right)^{\frac{1}{q}}
\le c\left(\fint_{B} a|Du|^q \right)^{\frac{1}{q}} 
+ cR^{\frac{\alpha}{q}}\left(\fint_{B} |Du|^p \right)^{\frac{1}{p}}.
\end{equation}
\end{cor}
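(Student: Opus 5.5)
We prove the estimate by iterating Theorem \ref{thm:sp} one derivative at a time. At each step the roles of $(p,q,\alpha,a)$ are played by suitably modified quantities $(\tilde p,\tilde q,\tilde\alpha,b)$ with $b=a^{\tilde q/q}$. Two facts make this work. First, if $a\in\mathcal{Z}^{\alpha}(B)$ and $\theta\in(0,1]$, then $a^{\theta}\in\mathcal{Z}^{\alpha\theta}(B)$ with $[a^\theta]_{\alpha\theta}\le c([a]_\alpha,\theta)$, by the elementary inequality $(s+t)^\theta\le s^\theta+t^\theta$. Second, Theorem \ref{thm:sp} applied with $b=a^{\tilde q/q}$ and $\tilde\alpha=\alpha\tilde q/q$ produces exactly the weight $a^{\tilde q^*/q}$ on the left, the quantity $b|Dv|^{\tilde q}=a^{\tilde q/q}|Dv|^{\tilde q}$ on the right, and the factor $R^{\tilde\alpha/\tilde q}=R^{\alpha/q}$. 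Its hypothesis $\tilde q/\tilde p\le 1+\tilde\alpha/n$ rewrites as
\begin{equation}
\frac1{\tilde p}-\frac1{\tilde q}\le \frac{\alpha}{nq}.
\end{equation}

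\textbf{The case $\ell=1$.} If $q<n$, take $\tilde p=p$, $\tilde q=q$, $b=a$. Theorem \ref{thm:sp} gives the estimate with $r=q^*$, and Hölder's inequality in the form $(\fint_B a^{r/q}|u/R|^r)^{1/r}\le(\fint_B a^{q^*/q}|u/R|^{q^*})^{1/q^*}$ yields every $r\in[1,q^*]$.

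If $q\ge n$ and $r<\infty$ is given, pick $\tilde q<n$ with $\tilde q^*\ge r$, and set $\tilde p=\min\{p,\tilde q\}$.
\begin{itemize}
\item If $\tilde p=p$, then $\tilde q(1/p-\alpha/(nq))\le q(1/p-\alpha/(nq))=q/p-\alpha/n\le1$, so the ratio condition holds.
\item If $\tilde p=\tilde q$, the ratio condition is trivial.
\end{itemize}
Then apply Theorem \ref{thm:sp} to $(\tilde p,\tilde q,\alpha\tilde q/q,a^{\tilde q/q})$. Jensen's inequality gives $(\fint_B a^{\tilde q/q}|Du|^{\tilde q})^{1/\tilde q}\le(\fint_B a|Du|^q)^{1/q}$ and $(\fint_B|Du|^{\tilde p})^{1/\tilde p}\le(\fint_B|Du|^p)^{1/p}$. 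Finally, Hölder on the left lowers $\tilde q^*$ to $r$. Note that $u\in W^{1,p}$ together with $\int_B a|Du|^q<\infty$ implies the integrability hypotheses needed for the tilde exponents. Taking $r=q$ gives \eqref{eq:cor-sp-p}.

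\textbf{Induction on $\ell$.} Set $v_k=D^{\ell-k}u$, so each $v_k$ has zero $\eta$-mean by hypothesis (for $k\ge1$). The stage-$k$ estimate, which I would prove by induction on $k$, is
\begin{equation}
\Big(\fint_B a^{s_k/q}\Big|\frac{v_k}{R^{k}}\Big|^{s_k}\Big)^{\frac1{s_k}}+R^{\frac{\alpha}{q}}\Big(\fint_B\Big|\frac{v_k}{R^k}\Big|^{t_k}\Big)^{\frac1{t_k}}\le c\Big(\fint_B a|D^\ell u|^q\Big)^{\frac1q}+cR^{\frac\alpha q}\Big(\fint_B|D^\ell u|^p\Big)^{\frac1p}.
\end{equation}
Here $t_k=(p_k)^*$ and $s_k=(q_k)^*$ in the subcritical case; in the critical cases they are replaced by large finite exponents. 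To get the unweighted term, apply the classical Sobolev--Poincaré inequality, i.e.\ Theorem \ref{thm:sp} with $a\equiv1$, to $v_{k}$ with zero $\eta$-mean, passing from exponent $t_{k-1}$ to $t_k$. To get the weighted term, apply Theorem \ref{thm:sp} with $\tilde p=t_{k-1}$, $\tilde q=s_{k-1}$, $b=a^{s_{k-1}/q}$. Its right-hand side is controlled by the stage-$(k-1)$ bound, and the $R$-powers combine correctly since $R^{\alpha/q}$ is always the factor produced.

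The ratio condition now reads $n(1/(p_{k-1})^*-1/(q_{k-1})^*)\le\alpha/q$, which is exactly Lemma \ref{eq:exponents-0}, i.e.\ Lemma \ref{lem:exponents-0}. At the final stage, Hölder on the left gives all $r\le(q_\ell)^*$.

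\textbf{Main obstacle.} The main obstacle is the bookkeeping when some $(k-1)q\ge n$ or $(k-1)p\ge n$, where $(q_k)^*$ or $(p_k)^*$ is infinite. Here I would first replace the infinite or supercritical exponents at that stage by finite $\tilde t\le\tilde s<n$, keeping $1/\tilde t-1/\tilde s\le\alpha/(nq)$. This is possible because the difference of reciprocals can be made arbitrarily small or kept at the value $1/p-1/q$. I would then choose them large enough that the final exponent exceeds the prescribed $r<(q_\ell)^*$. This is precisely why only $r<(q_\ell)^*$ is claimed when $\ell q\ge n$.
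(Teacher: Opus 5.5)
Your proof is correct, and for $\ell q<n$ it is the paper's proof. It is an induction on the order: apply Theorem~\ref{thm:sp} to a derivative of $u$ with $(p,q,a,\alpha)$ replaced by $\bigl((p_{k-1})^*,(q_{k-1})^*,a^{(q_{k-1})^*/q},\alpha(q_{k-1})^*/q\bigr)$. The ratio condition comes from Lemma~\ref{lem:exponents-0}, and the unweighted term from the classical Sobolev--Poincar\'e inequality. One small fix is needed there. The exponent $\theta=(q_{k-1})^*/q$ is $\ge 1$, so you need $a^\theta\in\mathcal{Z}^{\alpha\theta}(B)$ also for $\theta\ge1$. This follows from $(s+t)^\theta\le 2^{\theta-1}(s^\theta+t^\theta)$; the subadditivity you quote covers only $\theta\le 1$.

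For $\ell q\ge n$ you take a different route. The paper splits into two subcases:
\begin{itemize}
\item if $\ell p\ge n$ or $r\le (p_\ell)^*$, it uses the crude bound $a^{1/q}\le c(\inf_B a)^{1/q}+cR^{\alpha/q}$ together with the unweighted Sobolev--Poincar\'e inequality;
\item otherwise it picks $s\in(p,q)$ with $(s_\ell)^*=r$, applies the already proved subcritical case to $(p,s,a^{s/q},\alpha s/q)$ as a black box, and finishes with Jensen.
\end{itemize}
Your choice $\tilde p=\min\{p,\tilde q\}$ is what lets you dispense with the first subcase. Truncating supercritical exponents below $n$ stage by stage keeps every step a first-order application of Theorem~\ref{thm:sp}. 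This treats all $r$ uniformly. It also works when $\ell\ge n$: there no $\tilde q\in(1,n/\ell)$ exists, so a single reduction to the subcritical case at order $\ell$ is impossible, and this is exactly the situation the paper's $\inf_B a$ argument absorbs.

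The price is bookkeeping that you only sketch. It becomes a complete argument once you carry the invariant $1<t_k\le s_k$, $1/t_k-1/s_k\le\alpha/(nq)$, and check three things:
\begin{itemize}
\item Replacing a pair with $s_k\ge n$ by $(\min\{t_k,\tilde s\},\tilde s)$, $\tilde s<n$, preserves the invariant because $\tilde s\le s_k$, and Jensen lets you lower exponents.
\item Passing from a pair $(t,s)$ with $s<n$ to $(t^*,s^*)$, via Theorem~\ref{thm:sp} and the classical inequality, preserves it because $1/t^*-1/s^*=1/t-1/s$.
\item Taking the truncation levels $\tilde s$ close to $n$ makes the final weighted exponent exceed any prescribed finite $r$.
\end{itemize}
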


\begin{proof}
We first suppose $\ell q < n$. It suffices to show the case $r = (q_{\ell})^*$. This is because the other case follows from Jensen's inequality. Argue by induction on $\ell$.
The case $\ell = 1$ coincides with Theorem \ref{thm:sp}.
Assume $\ell q < n$ for some $\ell \ge 2$ and the conclusion \eqref{eq:cor-sp-c} holds for $\ell-1$. Since we have $(\ell - 1)q < n$, the induction hypothesis yields
\begin{equation}\label{eq:cor-sp-1}
\left(\fint_{B} a^{\frac{(q_{\ell-1})^*}{q}}\left|\frac{Du}{R^{\ell-1}}\right|^{(q_{\ell-1})^*} \right)^{\frac{1}{(q_{\ell-1})^*}}
\le c\left(\fint_{B} a|D^\ell u|^q \right)^{\frac{1}{q}} 
+ cR^{\frac{\alpha}{q}}\left(\fint_{B} |D^\ell u|^p \right)^{\frac{1}{p}},
\end{equation}
where $c = c(n,\ell,p,q,\alpha,[a]_{\alpha})$.
On the other hand, from $\ell q < n$ and Lemma \ref{lem:exponents-0}, it follows that
\begin{equation}
(q_{\ell-1})^* < n,
\quad
\frac{(q_{\ell-1})^*}{(p_{\ell-1})^*} \le 1 + \frac{(q_{\ell-1})^*\alpha}{nq}.
\end{equation}
Moreover, Sobolev's embedding theorem, our assumption and \eqref{eq:cor-sp-1} yield
\begin{equation}
 u \in W^{1,(p_{\ell-1})^*}(B),\quad
 u_{B,\eta}= 0,\quad
 \int_B a^{\frac{(q_{\ell-1})^*}{q}}|Du|^{(q_{\ell-1})^*}  < \infty.
\end{equation}
Therefore, applying Theorem \ref{thm:sp} with 
$$
(p,q,a,\alpha) = \left( (p_{\ell-1})^*,(q_{\ell-1})^*,a^{\frac{(q_{\ell-1})^*}{q}},\frac{\alpha(q_{\ell-1})^*}{q} \right)
$$ 
and the classical Poincar\'e inequality, we deduce
\begin{align}\label{eq:cor-sp-2}
&\left(\fint_{B} a^{\frac{(q_\ell)^*}{q}}\left|\frac{u}{R^\ell}\right|^{(q_\ell)^*} \right)^{\frac{1}{(q_\ell)^*}} \\
&= \left(\fint_{B} [a^{\frac{(q_{\ell-1})^*}{q}}]^{\frac{((q_{\ell-1})^*)^*}{(q_{\ell-1})^*}} \left|\frac{u}{R^\ell}\right|^{((q_{\ell-1})^*)^*} \right)^{\frac{1}{((q_{\ell-1})^*)^*}} \\
&\le c\left(\fint_{B} a^{\frac{(q_{\ell-1})^*}{q}} \left|\frac{Du}{R^{\ell-1}}\right|^{(q_{\ell-1})^*} \right)^{\frac{1}{(q_{\ell-1})^*}}
+ cR^{\frac{\alpha}{q}}\left(\fint_{B} \left|\frac{Du}{R^{\ell-1}} \right|^{(p_{\ell-1})^*} \right)^{\frac{1}{(p_{\ell-1})^*}} \\
&\le c\left(\fint_{B} a^{\frac{(q_{\ell-1})^*}{q}} \left|\frac{Du}{R^{\ell-1}}\right|^{(q_{\ell-1})^*} \right)^{\frac{1}{(q_{\ell-1})^*}}
+ cR^{\frac{\alpha}{q}}\left(\fint_{B} |D^{\ell}u|^{p} \right)^{\frac{1}{p}},
\end{align}
where $c=c(n,p,q,\alpha,[a]_{\alpha})$.
Here we also used
\begin{equation}
( (q_{\ell-1})^{*} )^* = (q_{\ell})^*
\quad
\text{and}
\quad
[a^{\frac{(q_{\ell-1})^*}{q}}]_{\frac{\alpha(q_{\ell-1})^*}{q}} \le c(n,\ell,q) [a]_{\alpha}^{\frac{(q_{\ell-1})^*}{q}}.
\end{equation}
Combining \eqref{eq:cor-sp-1} and \eqref{eq:cor-sp-2}, we obtain the desired inequality for the case $\ell q < n$.

Next, let us consider the case $\ell q \ge n$. Fix $r \in [1,\infty)$.
Suppose first $\ell p \ge n$ or $r \le (p_{\ell})^{*}$. Observe that there exists $c=(q,\alpha,[a]_{\alpha})$ such that
\begin{equation}
a^{\frac{1}{q}} \le ca_m^{\frac{1}{q}} + cR^{\frac{\alpha}{q}}
\quad
\text{in }
B=B_R,
\end{equation}
where $a_m = \inf_{y \in B}a(y)$ since $a \in \mathcal{Z}^{\alpha}(B)$.
Then, Minkowski's inequality, Jensen's inequality and the standard Sobolev--Poincar\'e inequality yield that
\begin{align}
\left(\fint_{B} a^{\frac{r}{q}}\left|\frac{u}{R^\ell}\right|^r \right)^{\frac{1}{r}}
&\le c\left(\fint_{B} a_m^{\frac{r}{q}}\left|\frac{u}{R^\ell}\right|^r \right)^{\frac{1}{r}}
+ cR^{\frac{\alpha}{q}}\left(\fint_{B} \left|\frac{u}{R^\ell}\right|^r \right)^{\frac{1}{r}} \\
&\le c\left(\fint_{B} a_m^{\frac{p}{q}}|D^\ell u|^p \right)^{\frac{1}{p}}
+ cR^{\frac{\alpha}{q}}\left(\fint_{B} |D^\ell u|^p \right)^{\frac{1}{p}} \\
&\le c\left(\fint_{B} a|D^\ell u|^q \right)^{\frac{1}{q}}
+ cR^{\frac{\alpha}{q}}\left(\fint_{B} |D^\ell u|^p \right)^{\frac{1}{p}},
\end{align}
where $c=c(n,\ell,p,q,r,\alpha,[a]_{\alpha})$. Therefore, the conclusion is verified in this case.

Finally, we suppose $(p_{\ell})^{*} < r$ under the condition $\ell q \ge n$. Note that $p < q$ and we can take $s \in (p,q)$ such that $r = (s_{\ell})^{*}$. Moreover, we have
\begin{equation}
\ell s < n,
\quad
\frac{s}{p} < 1 + \frac{\alpha s}{nq}
\quad
\text{and}
\quad
\int_{B} \left( |D^\ell u|^p + a^{\frac{s}{q}}|D^\ell u|^s \right) < \infty.
\end{equation}
Therefore, applying \eqref{eq:cor-sp-c} with $(p,q,a,\alpha) = \left( p,s,a^{\frac{s}{q}},\frac{\alpha s}{q} \right)$ and Jensen's inequality, there exists $c=c(n,\ell,p,q,r,\alpha,[a]_{\alpha})$ such that
\begin{align}
\left(\fint_{B} a^{\frac{r}{q}}\left|\frac{u}{R^\ell}\right|^r \right)^{\frac{1}{r}}
&= \left(\fint_{B} [a^{\frac{s}{q}}]^{\frac{(s_{\ell})^{*}}{s}}\left|\frac{u}{R^\ell}\right|^{(s_{\ell})^{*}} \right)^{\frac{1}{(s_{\ell})^{*}}} \\
&\le c\left(\fint_{B} a^{\frac{s}{q}}|D^\ell u|^s \right)^{\frac{1}{s}} 
+ cR^{\frac{\alpha}{q}}\left(\fint_{B} |D^\ell u|^p \right)^{\frac{1}{p}} \\
&\le  c\left(\fint_{B} a|D^\ell u|^q \right)^{\frac{1}{q}} 
+ cR^{\frac{\alpha}{q}}\left(\fint_{B} |D^\ell u|^p \right)^{\frac{1}{p}}.
\end{align}
Here we also used $r = (s_{\ell})^{*}$ and $[a^{\frac{s}{q}}]_{\frac{\alpha s}{q}} \le [a]_{\alpha}^{\frac{s}{q}}$.
The proof is completed.
\end{proof}

\begin{rem}\label{rem:sp}
In order to verify the approximation argument in Subsection~\ref{subsec:construction}, we state some result on convergence following from Corollary \ref{cor:sp}. Assume \eqref{assmpt:sp-1}. Moreover suppose
$u \in W^{m,p}(B)$ satisfies $\int_B a|D^m u|^q < \infty$. Notice that there exists a unique polynomial $P$ such that 
\begin{equation}\label{eq:sp-P}
    \deg P \le m - 1,
    \quad
    (D^k u)_B = (D^k P)_B
    \quad
    \text{for each }
    k \in \{0,\ldots,m-1\}.
\end{equation}
Then, for each $\ell \in \{0,\ldots,m\}$ and any $r \in [1,(q_{m-\ell})^*)$,
Corollary \ref{cor:sp} with $(\eta,\ell,u)=(1,m-\ell,D^\ell u - D^\ell P)$ implies
\begin{align}\label{eq:sp-rem-1}
&\left(\fint_{B} a^{\frac{r}{q}}\left|\frac{D^\ell u}{R^{m-\ell}}\right|^r \right)^{\frac{1}{r}} \\
&\le \left(\fint_{B} a^{\frac{r}{q}}\left|\frac{D^\ell u - D^\ell P}{R^{m-\ell}}\right|^r \right)^{\frac{1}{r}}
+ \left(\fint_{B} a^{\frac{r}{q}}\left|\frac{D^\ell P}{R^{m-\ell}}\right|^r \right)^{\frac{1}{r}} \\
&\le c\left(\fint_{B} a|D^m u|^q \right)^{\frac{1}{q}} 
+ cR^{\frac{\alpha}{q}}\left(\fint_{B} |D^m u|^p \right)^{\frac{1}{p}}
+ \left(\fint_{B} a^{\frac{r}{q}}\left|\frac{D^\ell P}{R^{m-\ell}}\right|^r \right)^{\frac{1}{r}},
\end{align}
where $c=c(n,\ell,p,q,r,\alpha,[a]_{\alpha})$. 
We now estimate the third term on the right-hand side of \eqref{eq:sp-rem-1}.
Lemma \ref{lem:Pi} \textbf{(iii)} with $(B,\mathcal{B},\eta)=(B,B,1)$ yields that
\begin{equation}
|D^\ell P| 
\le c(m,n)\sum_{\mu = \ell}^{m-1} R^{\mu-\ell}|(D^\mu u)_B|
\le c(m,n)\sum_{\mu = \ell}^{m-1}  R^{\mu-\ell}\fint_B |D^\mu u|
\quad
\text{in }B.
\end{equation}
Therefore,
\begin{align} \label{eq:sp-rem-2}
&\left(\fint_{B} a^{\frac{r}{q}}\left|\frac{D^\ell P}{R^{m-\ell}}\right|^r \right)^{\frac{1}{r}} \\
&\le \|a\|_{L^{\infty}(B)}^{\frac{1}{q}}\left(\fint_{B} \left|\frac{D^\ell P}{R^{m-\ell}}\right|^r  \right)^{\frac{1}{r}} 
\le c\|a\|_{L^{\infty}(B)}^{\frac{1}{q}}\sum_{\mu = \ell}^{m-1}  R^{\mu-m}\fint_B |D^\mu u|.
\end{align}
Combining \eqref{eq:sp-rem-1} and \eqref{eq:sp-rem-2}, we obtain
\begin{align}
&\left(\fint_{B} a^{\frac{r}{q}}\left|\frac{D^\ell u}{R^{m-\ell}}\right|^r \right)^{\frac{1}{r}} \\
&\le c\left(\fint_{B} a|D^m u|^q \right)^{\frac{1}{q}} 
+ cR^{\frac{\alpha}{q}}\left(\fint_{B} |D^m u|^p \right)^{\frac{1}{p}}
+c(m,n)\|a\|_{L^{\infty}(B)}^{\frac{1}{q}}\sum_{\mu = \ell}^{m-1}  R^{\mu-m}\fint_B |D^\mu u|.
\end{align}
Hence, if there exists $\{u_j\} \subset W^{m,\infty}(B)$ such that
\begin{equation}
u_j \rightarrow u
\quad
\text{in }
W^{m,p}(B),
\quad
a^\frac{1}{q}D^m u_j \rightarrow a^{\frac{1}{q}}D^m u
\quad
\text{in }
L^{q}(B)
\quad 
\text{as }
j \rightarrow \infty,
\end{equation}
then it follows that
\begin{equation}\label{eq:rem-sp-conv}
a^{\frac{1}{q}}D^\ell u_j \rightarrow a^{\frac{1}{q}}D^\ell u
\quad
\text{in }
L^{r}(B)
\quad 
\text{as }
j \rightarrow \infty
\end{equation}
for each
$\ell \in \{0,\ldots,m\}$
and any
$r \in [1,(q_{m-\ell})^*)$.
\end{rem}

\section{Proof of main theorem}\label{sec:pf-of-main}
In this section, we prove the main theorem. For convenience, we abbreviate the expression for vector-valued functions, e.g., we denote $L^1(\Omega;\R^N)$ by $L^1(\Omega)$. Moreover, we also omit the dependence of the dimension $N$.
Let us suppose the same assumption of Theorem \ref{thm:main}, that is, we assume \eqref{assmpt:exponents}, \eqref{eq:coercivity}, \eqref{eq:growth}, \eqref{assumpt:functions} and \eqref{def:exponents} in the following. 

We divide the proof into five subsections:\,in Subsection~4.1, we set the exponents, which are needed by the definition of an appropriate truncated function. Next, in Subsection~4.2, we construct the truncated function by Whitney-type covering lemma. 
In Subsection~4.3, we provide estimates for the derivatives of the truncated function and verify its admissibility as a test function for \eqref{eq:main}.
After the above preparation, we deduce a Caccioppoli inequality and reverse H\"older inequality in Subsection~4.4.
Finally, in Subsection~4.5, we derive the conclusion, by applying Gehring's lemma. 

\subsection{Setting of exponents}
For each $\ell \in \{0,\ldots,m\}$ and $r \in \{p,q\}$, we define $\widehat{s_{r,\ell}},\,\widehat{t_{r,\ell}},\,\gamma_{r,\ell}$ and $\delta_0$. 
Recall \eqref{eq:Holder conjugate}, \eqref{eq:pell*}, \eqref{assumpt:functions} and \eqref{def:exponents}.
Firstly, we set
\begin{equation}\label{eq:hats-m}
    \widehat{s_{r,m}} \coloneqq \infty,
    \quad
    \widehat{t_{r,m}} \coloneqq r'\ (< t_{r,m}),
    \quad
    \gamma_{r,m} \coloneqq r.
\end{equation}

Next, let us consider the case $\ell \in \{0,\ldots,m-1\}$. Then, by \eqref{def:exponents}, \eqref{assmpt:exponents} and Lemma~\ref{lem:exponents-0}, we have
\begin{gather}
   \frac{1}{s_{r,\ell}} < 1 - \frac{1}{(r_{m-\ell})^*} - \frac{1}{r'},
   \quad
   \frac{1}{t_{r,\ell}} < 1 - \frac{1}{(r_{m-\ell})^*},
   \quad
   \frac{\alpha}{q} - n\left( \frac{1}{(p_{m-\ell})^*} - \frac{1}{(q_{m-\ell})^*} \right) > 0.
\end{gather}
Therefore, we can take $\gamma_{r,\ell} \in (r,(r_{m-\ell})^*)$ such that $\gamma_{p,\ell} \le \gamma_{q,\ell}$ and
\begin{gather}\label{eq:hats-1}
    \frac{1}{s_{r,\ell}} < 1 - \frac{1}{\gamma_{r,\ell}} - \frac{1}{r'},
    \quad
    \frac{1}{t_{r,\ell}} < 1 - \frac{1}{\gamma_{r,\ell}},
    \quad
    \frac{\alpha}{q} - n\left( \frac{1}{\gamma_{p,\ell}} - \frac{1}{\gamma_{q,\ell}} \right) > 0.
\end{gather}
Here, we define $\widehat{s_{r,\ell}} \in (1,s_{r,\ell})$ and $\widehat{t_{r,\ell}} \in (1,t_{r,\ell})$ by
\begin{equation}
  \frac{1}{\widehat{s_{r,\ell}}} \coloneqq 1 - \frac{1}{\gamma_{r,\ell}} - \frac{1}{r'},
  \quad
  \frac{1}{\widehat{t_{r,\ell}}} \coloneqq 1 - \frac{1}{\gamma_{r,\ell}}.
\end{equation}
From these definitions, for each $\ell \in \{0,\ldots,m\}$ and $r \in \{p,q\}$, it follows that
\begin{equation}\label{eq:Holder}
\frac{1}{\widehat{s_{r,\ell}}} + \frac{1}{\gamma_{r,\ell}} + \frac{1}{r'} = 1,
\quad
\frac{1}{\widehat{t_{r,\ell}}} + \frac{1}{\gamma_{r,\ell}} = 1.
\end{equation}
Moreover, we can pick up $\delta_0 \in \left( \frac{1}{p}, 1 \right)$ such that
\begin{equation}\label{eq:delta_0-0}
\frac{1}{\delta_0} < \beta,
\quad
\frac{\widehat{t_{r,\ell}}}{\delta_0} < t_{r,\ell}
\quad 
\text{for each }
\ell \in \{0,\ldots,m\}
\text{ and }
r \in \{p,q\},
\end{equation}
\begin{equation}\label{eq:delta_0-1}
    \frac{\widehat{s_{r,\ell}}}{\delta_0} < s_{r,\ell},
    \quad
     \frac{\gamma_{r,\ell}}{\delta_0} < ((r\delta_0)_{m-\ell})^{*}
    \quad
    \text{for\ each }
    \ell \in \{0,\ldots,m-1\}
    \text{ and }
    r \in \{p,q\},
\end{equation}
\begin{equation}\label{eq:delta_0-2}
   \frac{\alpha}{q} - n\left( \frac{1}{\gamma_{p,\ell}\delta_0} - \frac{\delta_0}{\gamma_{q,\ell}} \right) > 0
   \quad
    \text{for each }
    \ell \in \{0,\ldots,m\}.
\end{equation}

\subsection{Construction of truncated function}\label{subsec:construction}
Let us take $\delta \in \left[\frac{1 + \delta_0}{2}, 1 \right)$ to be determined later. 
Moreover, let $u \in W^{m,1}_{loc}(\Omega)$ be a very weak solution of \eqref{eq:main} with this $\delta$ (see Definition \ref{def:vws}).
Fix $\Omega_0 \subset\subset \widetilde{\Omega_0} \subset \subset \Omega$. For $z \in \R^k$ ($k \in \N$), we set
\begin{equation}\label{def:H_ell}
H_\ell (x,z) \coloneqq |z|^{\gamma_{p,\ell}} + a(x)^{\frac{\gamma_{q,\ell}}{q}}|z|^{\gamma_{q,\ell}}.
\end{equation}

For the first half of this subsection, we define ``good set''$E_j(\lambda)$ (see \eqref{eq:Elambda}). 
This set is a lower level set of some technical l.s.c.\,function, on which $u$ belongs to $W^{m,\infty}$. However, since a lower level set of a l.s.c.\,function is not Jordan measurable in general, we introduce approximation sequences in order to make $E_j(\lambda)$ to be Jordan measurable (see Lemma \ref{lem:Jordan}).

Firstly, by the fact $u$ is a very weak solution with $\delta$, \eqref{eq:delta_0-2} (with $\ell = m$) and $\delta > \delta_0$, we have
\begin{equation}\label{eq:H_m}
H_{m}(x,D^m u)=|D^m u|^p + a(x)|D^m u|^q \in L^{\delta}(\widetilde{\Omega_0})
\end{equation}
and 
\begin{align}\label{eq:delta_0-4}
\frac{q}{p} < 1 + \frac{\alpha \delta}{n} 
&\impliedby \frac{q}{p} < 1 + \frac{\alpha \delta_0}{n} \\
&\iff \frac{\alpha}{q}-n\left(\frac{1}{p\delta_0}-\frac{1}{q\delta_0}\right) > 0
\impliedby \frac{\alpha}{q}-n\left(\frac{1}{p \delta_0}-\frac{\delta_0}{q}\right) > 0.
\end{align}
From \eqref{eq:H_m} and \eqref{eq:delta_0-4}, we can apply a modified version of \cite[Lemma 13]{EsLM} with $(p,q,a,\alpha)=(p\delta,q\delta,a^{\delta},\alpha \delta)$ (note that $p\delta > p\delta_0 > 1$ since $\delta > \delta_0 \in \left(\frac{1}{p}, 1\right)$) and we can take a sequence $\{u_j\}_{j \in \N} \subset W^{m,\infty}(\widetilde{\Omega_0}) \cap C^{\infty}(\widetilde{\Omega_0})$ such that
\begin{equation}\label{eq:conv-u}
u_j \rightarrow u
\quad
\text{in }
W^{m,p\delta}(\widetilde{\Omega_0}),
\quad
a^{\frac{1}{q}}D^m u_j \rightarrow a^{\frac{1}{q}}D^m u
\quad
\text{in }
L^{q\delta}(\widetilde{\Omega_0})
\quad
\text{as }
j \rightarrow \infty
\end{equation}
and 
\begin{equation}\label{eq:conv-Hm}
H_m(x,D^m u_j)^{\delta} \rightarrow H_m(x,D^m u)^{\delta}
\quad
\text{in }
L^{1}(\widetilde{\Omega_0})
\quad
\text{as }
j \rightarrow \infty.
\end{equation}
Moreover, by \eqref{eq:delta_0-1}, \eqref{eq:delta_0-4}, Remark \ref{rem:sp} (with $(p,q,a,\alpha)=(p\delta_0,q\delta_0,a^{\delta_0},\alpha \delta_0)$) and a standard covering argument, as $j \rightarrow \infty$, we have for each $\ell \in \{0,\ldots,m-1\}$,
\begin{equation}\label{eq:conv-H_ell-0}
D^\ell u_j \rightarrow D^\ell u
\quad
\text{in }
L^{\frac{\gamma_{p,\ell}}{\delta_0}}(\Omega_0),
\quad
a^{\frac{1}{q}}D^\ell u_j \rightarrow a^{\frac{1}{q}}D^\ell u
\quad
\text{in }
L^{\frac{\gamma_{q,\ell}}{\delta_0}}(\Omega_0).
\end{equation}
In particular, for each $\ell \in \{0,\ldots,m-1\}$
\begin{equation}\label{eq:conv-H_ell}
H_\ell(x,D^\ell u_j) \rightarrow H_\ell(x,D^\ell u)
\quad
\text{in }
L^{\frac{1}{\delta_0}}(\Omega_0)
\quad
\text{as }
j \rightarrow \infty.
\end{equation}
Here we can take $R_0 \in \left(0,\frac{1}{2}\right)$ such that for any $R \in (0,R_0]$ and $\ell \in \{0,\ldots,m\}$, it holds that
\begin{equation}\label{eq:R_0}
R^{\frac{\alpha}{q}-n\left(\frac{1}{\gamma_{p,\ell}\delta_0}-\frac{1}{\gamma_{q,\ell}\delta_0}\right)}\left( \|M^{2\ell+1}(|D^\ell u|\chi_{\Omega_0})\|_{L^{\gamma_{p,\ell}\delta_0}(\Omega_0)}^{1-\frac{\gamma_{p,\ell}}{\gamma_{q,\ell}}} + 1\right)
\le 1.
\end{equation}
Indeed, for each $\ell \in \{0,\ldots,m\}$, it follows that $D^{\ell}u \in L^{\gamma_{p,\ell}\delta_0}(\Omega_0)$ and $D^{\ell}u \chi_{\Omega_0} \in L^{\gamma_{p,\ell}\delta_0}(\R^n)$ from \eqref{eq:conv-u} and $\delta > \delta_0$. Moreover, by $\gamma_{p,\ell}\delta_0 > 1$ (note $\gamma_{p,\ell}\delta_0 \ge p\delta_0 > 1$) and Lemma \ref{lem:Mfunct}, we obtain $M^{2\ell + 1}(|D^\ell u|\chi_{\Omega_0}) \in L^{\gamma_{p,\ell}\delta_0}(\R^n)$. Therefore, $\|M^{2\ell+1}(|D^\ell u|\chi_{\Omega_0})\|_{L^{\gamma_{p,\ell}\delta_0}(\Omega_0)}^{1-\frac{\gamma_{p,\ell}}{\gamma_{q,\ell}}}$ is finite. Since \eqref{eq:delta_0-2} implies $\frac{\alpha}{q}-n\left(\frac{1}{\gamma_{p,\ell}\delta_0}-\frac{1}{\gamma_{q,\ell}\delta_0}\right) > 0$, we can choose $R_0 \in \left(0,\frac{1}{2}\right)$ such that \eqref{eq:R_0} holds for any $R \in (0,R_0]$ and $\ell \in \{0,\ldots,m\}$.

We fix any $B_{3R} \subset \subset \Omega_0$ and $R \le R_0$. We introduce a cutoff function $\psi \in C^\infty_c(B_{3R})$ satisfying the following properties:
\begin{equation}\label{eq:psi}
    \chi_{B_{2R}} \leq \psi \leq \chi_{B_{3R}},
    \quad
    |D^\ell \psi| \leq c(m,n)R^{-\ell}
    \quad
    \text{for each }
    \ell \in \{0,\ldots,m\},
\end{equation}
where $\chi$ denotes the characteristic function.

For each $\ell \in \{0,\ldots,m\}$, we define
\begin{equation}
\beta_\ell \coloneqq  n\left( \frac{1}{\gamma_{p,\ell}\delta_0} - \frac{\delta_0}{\gamma_{q,\ell}} \right).
\end{equation} 
Then, \eqref{eq:delta_0-2} implies 
\begin{equation}\label{eq:conv-M_beta-0}
\beta_{\ell} \in \left( 0, \min\left\{\frac{n}{\gamma_{p,\ell}\delta_0},\frac{\alpha}{q}\right\}\right),
\,
\frac{n\gamma_{p,\ell}\delta_0}{n-\beta_\ell \gamma_{p,\ell}\delta_0} = \frac{\gamma_{q,\ell}}{\delta_0},
\,
\gamma_{p,\ell}\delta_0 > 1,
\,
D^\ell u \in L^{\gamma_{p,\ell}\delta_0}(\Omega_0)
\end{equation}
for each $\ell \in \{0,\ldots,m\}$.
Indeed, for each fixed $\ell \in \{0,\ldots,m\}$, it follows $ \beta_\ell > 0$ from $\gamma_{p,\ell} \le \gamma_{q,\ell}$ (recall the definition of $\gamma_{p,\ell}$ and $\gamma_{q,\ell}$).

Then, by the properties of the fractional maximal function \cite[Theorem 1.42]{KLV}, letting $j \rightarrow \infty$, we obtain
\begin{equation}\label{eq:conv-M_beta}
M_{\beta_{\ell}} \left( M^{2\ell+1}\left( D^\ell u_j\chi_{\Omega_0} \right) \right)^{\gamma_{q,\ell}}
\rightarrow
M_{\beta_{\ell}} \left( M^{2\ell+1}\left( D^\ell u\chi_{\Omega_0} \right) \right)^{\gamma_{q,\ell}}
\quad
\text{in } 
L^{\frac{1}{\delta_0}}(\Omega_0).
\end{equation}
Let us provide a detailed explanation about \eqref{eq:conv-M_beta}. First, by \eqref{eq:conv-u} and \eqref{eq:conv-H_ell-0}, we have
\begin{equation}
D^\ell u_j\chi_{\Omega_0} \rightarrow D^{\ell}u\chi_{\Omega_0}
\quad
\text{in }
L^{\gamma_{p,\ell}\delta_0}(\R^n)
\quad
\text{for each }
\ell \in \{0,\ldots,m\}.
\end{equation}
Therefore, the sublinearity of the uncentered maximal function and Lemma \ref{lem:Mfunct} (cf.\,\cite[Remark 1.16]{KLV}) yield
\begin{equation}\label{eq:similar argument}
M^{2\ell+1}\left( D^\ell u_j \cdot \chi_{\Omega_0} \right)
\rightarrow
M^{2\ell+1}\left( D^\ell u\cdot\chi_{\Omega_0} \right)
\quad
\text{in }
L^{\gamma_{p,\ell}\delta_0}(\R^n)
\quad
\text{as }
j \rightarrow \infty
\end{equation}
for each $\ell \in \{0,\ldots,m\}$.
Next, using the sublinearity of the fractional maximal function, \eqref{eq:conv-M_beta-0} and \cite[Theorem 1.42]{KLV}, we obtain \eqref{eq:conv-M_beta}.

On the other hand,  \eqref{eq:delta_0-0} and \eqref{assumpt:functions} imply
\begin{equation}\label{eq:f}
f_p + a(x)f_q,
\quad
h_{r,\ell}^{\widehat{t_{r,\ell}}} \in L^{\frac{1}{\delta_0}}(\Omega_0)
\quad
\text{for each }
\ell \in \{0,\ldots,m\}
\text{ and }
r \in \{p,q\}.
\end{equation}
In addition, \eqref{eq:delta_0-1} and \eqref{assumpt:functions} yield
\begin{equation}
g_{r,\ell}^{\widehat{s_{r,\ell}}} \in L^{\frac{1}{\delta_0}}(\Omega_0)
\quad
\text{for each }
\ell \in \{0,\ldots,m-1\}
\text{ and }
r \in \{p,q\}.
\end{equation}
Therefore, we can choose sequences of nonnegative functions $\{h_{r,\ell,j}\}_{j \in \N},\{g_{r,\ell,j}\}_{j \in \N}\subset C_c^{\infty}(\Omega_0)$ such that, as $j \rightarrow \infty$, 
\begin{equation}\label{eq:conv-g_{r,ell}}
h_{r,\ell,j}^{\widehat{t_{r,\ell}}} \rightarrow h_{r,\ell}^{\widehat{t_{r,\ell}}}
\quad
\text{in }
L^{\frac{1}{\delta_0}}(\Omega_0)
\quad
\text{for each }
\ell \in \{0,\ldots,m\}
\text{ and }
r \in \{p,q\}
\end{equation}
and
\begin{equation}\label{eq:conv-h_{r,ell}}
g_{r,\ell,j}^{\widehat{s_{r,\ell}}} \rightarrow g_{r,\ell}^{\widehat{s_{r,\ell}}}
\quad
\text{in }
L^{\frac{1}{\delta_0}}(\Omega_0)
\quad
\text{for each }
\ell \in \{0,\ldots,m-1\}
\text{ and }
r \in \{p,q\}.
\end{equation}

Here let us define
\begin{align}\label{eq:F_0} 
F_0 &\coloneqq
    \sum_{\ell = 0}^{m - 1} \left( g_{p,\ell}^{\widehat{s_{p,\ell}}} 
    + g_{q,\ell}^{\widehat{s_{q,\ell}}}  \right)
     + \sum_{\ell= 0}^{m} \left( h_{p,\ell}^{\widehat{t_{p,\ell}}} + h_{q,\ell}^{\widehat{t_{q,\ell}}} + M_{\beta_{\ell}} \left( M^{2\ell+1}\left( D^\ell u \cdot \psi \right) \right)^{\gamma_{q,\ell}} \right),\\
F_{0,j} &\coloneqq
     \sum_{\ell = 0}^{m - 1} \left( g_{p,\ell,j}^{\widehat{s_{p,\ell}}} + g_{q,\ell,j}^{\widehat{s_{q,\ell}}}  \right) 
     + \sum_{\ell= 0}^{m} \left( h_{p,\ell,j}^{\widehat{t_{p,\ell}}} + h_{q,\ell,j}^{\widehat{t_{q,\ell}}} + M_{\beta_{\ell}} \left( M^{2\ell+1}\left( D^\ell u_j \cdot \psi \right) \right)^{\gamma_{q,\ell}}\right),
\end{align}
where $\psi$ is defined in \eqref{eq:psi}.
Note that 
from \eqref{eq:psi}, \eqref{eq:conv-M_beta}, \eqref{eq:conv-g_{r,ell}}, \eqref{eq:conv-h_{r,ell}} and \eqref{eq:F_0}, it follows that
\begin{equation}\label{eq:conv-F_j}
F_0 \in L^{\frac{1}{\delta_0}}(\Omega_0), \quad
F_{0,j} \rightarrow F_{0}
\quad 
\text{in }
L^{\frac{1}{\delta_0}}(\Omega_0)
\quad
\text{as }
j \rightarrow \infty.
\end{equation}
Then we set
\begin{equation}\label{eq:g}
g \coloneqq \left\{ \sum_{\ell = 0}^{m} M^{2\ell+1} \left( H_{\ell}(x,D^\ell u)^{\delta_0}\psi \right) + F_0^{\delta_0} \right\}\psi,
\end{equation}
\begin{equation}\label{eq:g_j}
g_j \coloneqq  \left\{ \sum_{\ell = 0}^{m} M^{2\ell+1} \left( H_{\ell}(x,D^\ell u_j)^{\delta_0}\psi \right) + F_{0,j}^{\delta_0} \right\}\psi,
\end{equation}
\begin{equation}\label{eq:G}
G \coloneqq M(g)^{\frac{1}{\delta_0}},\,
G_j \coloneqq M(g_j)^{\frac{1}{\delta_0}}.
\end{equation}

Let us discuss the regularity and convergence of $g$ and $G$.
By \eqref{eq:conv-Hm}, \eqref{eq:conv-H_ell} and \eqref{eq:conv-F_j}, we have $g \in L^{\frac{\delta}{\delta_0}}(\R^n)$. Thus, $G$ is well-defined. Moreover, we notice  $g_j \rightarrow g$ in $L^{\frac{\delta}{\delta_0}}(\R^n)$ and $G_j^{\delta} \rightarrow G^{\delta}$ in $L^1(\R^n)$. Indeed, for each fixed $\ell \in \{0,\ldots,m\}$, it follows that
\begin{equation}
H_{\ell}(x,D^\ell u_j)^{\delta_0} \rightarrow H_{\ell}(x,D^{\ell} u)^{\delta_0}
\quad 
\text{in }
L^{\frac{\delta}{\delta_0}}(\Omega_0)
\quad
\text{as }
j \rightarrow \infty
\end{equation}
from \eqref{eq:conv-Hm} and \eqref{eq:conv-H_ell}.
Hence,
\begin{equation}
H_{\ell}(x,D^\ell u_j)^{\delta_0}\psi
\rightarrow
H_{\ell}(x,D^{\ell} u)^{\delta_0}\psi
\quad 
\text{in }
L^{\frac{\delta}{\delta_0}}(\R^n)
\quad
\text{as }
j \rightarrow \infty.
\end{equation}
Therefore, the sublinearity of the uncentered maximal function and Lemma \ref{lem:Mfunct} (recall the deduction of \eqref{eq:similar argument}) yield, as $j \rightarrow \infty$,
\begin{equation}\label{eq:similar argument-2}
M^{2\ell+1} \left( H_{\ell}(x,D^\ell u_j)^{\delta_0}\psi \right) \rightarrow M^{2\ell+1} \left( H_{\ell}(x,D^\ell u)^{\delta_0}\psi \right)
\quad 
\text{in }
L^{\frac{\delta}{\delta_0}}(\R^n).
\end{equation}
From \eqref{eq:conv-F_j}, it follows
\begin{equation}
g_j \rightarrow g
\quad 
\text{in }
L^{\frac{\delta}{\delta_0}}(\R^n).
\end{equation}
By the same argument of \eqref{eq:similar argument-2}, we obtain
\begin{equation}
M(g_j) \rightarrow M(g)
\quad 
\text{in }
L^{\frac{\delta}{\delta_0}}(\R^n)
\quad
\text{as }
j \rightarrow \infty.
\end{equation}
Therefore, it holds
\begin{equation}\label{eq:conv-G}
G_j^{\delta} 
= M(g_j)^{\frac{\delta}{\delta_0}}
\rightarrow 
M(g)^{\frac{\delta}{\delta_0}}
=G^{\delta}
\quad 
\text{in }
L^{1}(\R^n)
\quad
\text{as }
j \rightarrow \infty.
\end{equation}

\begin{rem}
The definition of $G$ slightly differs from \cite[(3.7)]{BBK} due to the use of the composition of the maximal function and the fractional maximal function. However, this difference is inessential, and an appropriate generalization of the function in \cite{BBK} is applicable here. On the other hand, unlike in \cite{BBK}, note that we establish the Jordan measurability of $E_j(\lambda)$ by virtue of the above approximation sequences (see Lemma \ref{lem:Jordan}). This property plays a crucial role in the proofs of Lemma \ref{lem:admissible} and Lemma \ref{lem:Caccioppoli}.
\end{rem}

In the above setting, we provide an upper estimate for $\fint_{B_{3R}}G^{\delta}$.
\begin{lem}\label{lem:G}
There exists $c = c(n,m,\delta_0)>0$ such that
\begin{equation}\label{eq:G-0}
    \fint_{B_{3R}} G^\delta \leq c\fint_{B_{3R}} \left( H_{m}(x,D^m u)^{\delta} + F^{\delta} \right),
\end{equation}
where
\begin{equation}\label{eq:F}
F \coloneqq F_0 + 1 + f_p + a(x)f_q + \sum_{\ell = 0}^{m-1} M^{2\ell+1} \left( H_{\ell}(x,D^\ell u)^{\delta_0}\chi_{\Omega_0} \right)^{\frac{1}{\delta_0}} \in L^{\frac{1}{\delta_0}}(\Omega_0).
\end{equation}
\end{lem}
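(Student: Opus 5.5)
We want to bound $\fint_{B_{3R}} G^\delta$ where $G^\delta = M(g)^{\delta/\delta_0}$ and $\delta/\delta_0>1$. The plan is to pass from $B_{3R}$ to all of $\R^n$ and use the $L^{\delta/\delta_0}$-boundedness of $M$ (Lemma \ref{lem:Mfunct}) repeatedly. The support of $g$ is in $B_{3R}$ thanks to the factor $\psi$. Since $\delta \ge \frac{1+\delta_0}{2}$, the exponent $s:=\delta/\delta_0$ lies in $[\frac{1+\delta_0}{2\delta_0},\frac{1}{\delta_0})$, a compact subset of $(1,\infty)$. By the continuity of $c(n,s)$ in $s$, the maximal-function constants are then bounded by some $c(n,\delta_0)$, uniformly in $\delta$.

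Step 1. Bound $\int_{B_{3R}} M(g)^{s} \le \int_{\R^n} M(g)^s \le c\int_{\R^n} g^s = c\int_{B_{3R}} g^s$.

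Step 2. Expand $g^s$. Since $g\le (\sum_\ell M^{2\ell+1}(H_\ell^{\delta_0}\psi) + F_0^{\delta_0})\chi_{B_{3R}}$, we get
$$g^s \le c(m,\delta_0)\Big(\sum_{\ell=0}^m M^{2\ell+1}(H_\ell(x,D^\ell u)^{\delta_0}\psi)^s + F_0^{\delta}\Big)\chi_{B_{3R}}.$$
The $F_0^\delta$ term is already dominated by $F^\delta$.

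Step 3a. For $\ell=m$, apply Lemma \ref{lem:Mfunct} $2m+1$ times on $\R^n$:
$$\int_{B_{3R}} M^{2m+1}(H_m^{\delta_0}\psi)^s \le \int_{\R^n}\cdots \le c\int_{B_{3R}} H_m(x,D^m u)^{\delta},$$
using $\psi\le\chi_{B_{3R}}$.

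Step 3b. For $\ell\le m-1$ we should not take the maximal function out to all of $\R^n$, because the target is $F^\delta$ pointwise, not $H_\ell$. Instead, use $\psi\le\chi_{\Omega_0}$ together with the monotonicity of $M$:
$$M^{2\ell+1}(H_\ell^{\delta_0}\psi)\le M^{2\ell+1}(H_\ell^{\delta_0}\chi_{\Omega_0}).$$
Hence $M^{2\ell+1}(H_\ell^{\delta_0}\psi)^{s} \le \big(M^{2\ell+1}(H_\ell^{\delta_0}\chi_{\Omega_0})^{1/\delta_0}\big)^{\delta} \le F^\delta$ pointwise on $B_{3R}\subset\Omega_0$. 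This term is built into the definition \eqref{eq:F} of $F$ precisely for this purpose.

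Step 4. Sum Steps 2–3, divide by $|B_{3R}|$, and collect constants depending only on $n,m,\delta_0$; this gives \eqref{eq:G-0}.

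Step 5. Verify $F\in L^{1/\delta_0}(\Omega_0)$. We have $F_0$ and $f_p+af_q$ in $L^{1/\delta_0}$ by \eqref{eq:conv-F_j} and \eqref{eq:f}, with $\beta>1/\delta_0$. Since $H_\ell^{\delta_0}\chi_{\Omega_0}\in L^{1/\delta_0}$ by \eqref{eq:conv-H_ell}, Lemma \ref{lem:Mfunct} with exponent $1/\delta_0>1$ gives that $M^{2\ell+1}(\cdot)\in L^{1/\delta_0}$, and then its $1/\delta_0$-th power lies in $L^1$. This last point needs care: the statement asks for $L^{1/\delta_0}$, not $L^1$. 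I expect the real use to be only $F^\delta\in L^1$, i.e. $F\in L^{\delta}$ locally. I would therefore check whether \eqref{eq:conv-H_ell} gives $H_\ell\in L^{1/\delta_0^2}$, which follows from the strict inequality $\gamma_{r,\ell}/\delta_0<((r\delta_0)_{m-\ell})^*$ in \eqref{eq:delta_0-1}, possibly after shrinking $\delta_0$. This integrability bookkeeping is the main obstacle; the estimate \eqref{eq:G-0} itself is routine.
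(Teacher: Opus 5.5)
Your derivation of \eqref{eq:G-0} is the paper's proof. You apply Lemma \ref{lem:Mfunct} with $s=\delta/\delta_0$, with the constant made uniform in $\delta$ by continuity in $s$. You expand $g^{s}$, iterate the maximal inequality $2m+1$ times on the $\ell=m$ term, and absorb the $\ell\le m-1$ terms and $F_0^{\delta}$ pointwise into $F^{\delta}$ using $\psi\le\chi_{B_{3R}}\le\chi_{\Omega_0}$. That part is correct.

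The gap is in Step 5, where you leave $F\in L^{1/\delta_0}(\Omega_0)$ unproved. This comes from an exponent slip, not a real obstacle.

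\eqref{eq:conv-H_ell} gives $H_\ell(x,D^\ell u)\in L^{1/\delta_0}(\Omega_0)$. Hence $H_\ell(x,D^\ell u)^{\delta_0}\chi_{\Omega_0}\in L^{1/\delta_0^2}(\R^n)$, not merely $L^{1/\delta_0}$. Applying Lemma \ref{lem:Mfunct} with exponent $1/\delta_0^2>1$, $2\ell+1$ times, yields exactly what is needed:
\[
\int_{\R^n}\Bigl(M^{2\ell+1}\bigl(H_\ell(x,D^\ell u)^{\delta_0}\chi_{\Omega_0}\bigr)^{\frac{1}{\delta_0}}\Bigr)^{\frac{1}{\delta_0}}
\le c\int_{\Omega_0}H_\ell(x,D^\ell u)^{\frac{1}{\delta_0}}<\infty .
\]
So you do not need $H_\ell\in L^{1/\delta_0^2}$. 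You also do not need to readjust $\delta_0$, which would in any case disturb all the choices made in Subsection 4.1. The remaining summands $F_0$, $1$ and $f_p+af_q$ are handled as you say by \eqref{eq:conv-F_j} and \eqref{eq:f}, using $\Omega_0\subset\subset\Omega$ and $1/\delta_0<\beta$.

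Your expectation that only $F^{\delta}\in L^1$ is used later is wrong. In the final Gehring step the paper takes $f_2=F^{\delta}\in L^{1+\varepsilon_0}_{loc}$ with $\varepsilon_0=\frac{1}{\delta_0}-1$. That requires $F\in L^{\delta/\delta_0}_{loc}$, so the higher integrability of $F$ asserted in the lemma is genuinely needed.
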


\begin{proof}
Applying Lemma \ref{lem:Mfunct} with $s = \frac{\delta}{\delta_0}$, we obtain
\begin{align}
    &\fint_{B_{3R}} G^\delta 
    = \fint_{B_{3R}} M(g)^\frac{\delta}{\delta_0} 
    \leq c_1 \left(n,\frac{\delta}{\delta_0} \right) \fint_{B_{3R}}g^\frac{\delta}{\delta_0} \\
    &\le  c_1 \left(n,\frac{\delta}{\delta_0} \right) \fint_{B_{3R}} \left( \sum_{\ell = 0}^{m} M^{2\ell+1} \left( H_{\ell}(x, D^\ell u)^{\delta_0} \chi_{B_{3R}} \right)  + F_0^{\delta_0} \right)^{\frac{\delta}{\delta_0}}.
\end{align}
Note that $1 < \frac{1 + \delta_0}{2\delta_0} \le \frac{\delta}{\delta_0} < \frac{1}{\delta_0}$ by $\delta \in \left[\frac{1+\delta_0}{2}, 1 \right)$, and that the constant $c(n,s)$ in Lemma \ref{lem:Mfunct} is continuous with respect to $s \in (1,\infty)$.
Therefore, we can take $c = c(n,\delta_0)$ such that
\begin{equation}\label{eq:delta_0-argument}
c_1 \left(n,\frac{\delta}{\delta_0} \right) \le c(n,\delta_0).
\end{equation}
Moreover, since
\begin{equation}
(a+b)^{\frac{\delta}{\delta_0}} 
\le 2^{\frac{\delta}{\delta_0}}(a^{\frac{\delta}{\delta_0}} + b^{\frac{\delta}{\delta_0}})
\le 2^{\frac{1}{\delta_0}}(a^{\frac{\delta}{\delta_0}} + b^{\frac{\delta}{\delta_0}})
\quad
\text{for }
a,b \ge 0,
\end{equation}
we derive
\begin{align}\label{eq:G-1}
\fint_{B_{3R}} G^\delta 
&\le c\fint_{B_{3R}} M^{2m+1} \left( H_{m}(x,D^m u)^{\delta_0}\chi_{B_{3R}} \right)^{\frac{\delta}{\delta_0}} \\
&+ c\fint_{B_{3R}} \left(  \sum_{\ell=0}^{m-1} M^{2\ell+1} \left( H_{\ell}(x,D^\ell u)^{\delta_0}\chi_{B_{3R}} \right)^{\frac{\delta}{\delta_0}} + F_0^{\delta} \right) \\
&= c(n,m,\delta_0)(I_1 + I_2).
\end{align}
Here we set
\begin{equation}
  I_1 \coloneqq \fint_{B_{3R}} M^{2m+1} \left( H_{m}(x,D^m u)^{\delta_0}\chi_{B_{3R}} \right)^{\frac{\delta}{\delta_0}}
\end{equation}
and
\begin{equation}
I_2 \coloneqq \fint_{B_{3R}} \left(  \sum_{\ell=0}^{m-1} M^{2\ell+1} \left( H_{\ell}(x,D^\ell u)^{\delta_0}\chi_{B_{3R}} \right)^{\frac{\delta}{\delta_0}} + F_0^{\delta} \right).
\end{equation}

For the estimate of $I_1$, applying Lemma \ref{lem:Mfunct} with $s=\frac{\delta}{\delta_0}$ and $2m+1$-times, we have
\begin{equation}\label{eq:G-2}
I_1 \le c\fint_{B_{3R}} H_{m}(x,D^m u)^{\delta}.
\end{equation}
Note that we can make the constant $c$, which appears in \eqref{eq:G-2}, independent of $\delta$ by the same argument of \eqref{eq:delta_0-argument}.

 Next we evaluate $I_2$.
From \eqref{eq:F}, it follows
\begin{equation}
\sum_{\ell=0}^{m-1} M^{2\ell+1} \left( H_{\ell}(x,D^\ell u)^{\delta_0}\chi_{B_{3R}} \right)^{\frac{\delta}{\delta_0}} + F_0^{\delta} \le (m+1)F^{\delta}.
\end{equation}
Therefore, we have
\begin{equation}\label{eq:G-3}
I_2 \le (m+1)\fint_{B_{3R}} F^{\delta}.
\end{equation}
Combining \eqref{eq:G-1}, \eqref{eq:G-2} and \eqref{eq:G-3}, we obtain \eqref{eq:G-0}.
Moreover, \eqref{eq:conv-H_ell}, \eqref{eq:f} and \eqref{eq:conv-F_j} imply $F \in L^{\frac{1}{\delta_0}}(\Omega_0)$. The proof is completed.
\end{proof}

With the above preparation in hand, we turn our attention to the definition of ``good set''$E_{j}(\lambda)$ and the proof of its properties.
By \eqref{eq:conv-G} and \eqref{eq:similar argument}, there exists $j_0 \in \N$ such that if $j \ge j_0$, then we have 
\begin{equation}\label{eq:j_0-1}
\left( \fint_{B_{3R}} G_j^\delta \right)^{\frac{1}{\delta}} \le \left( \fint_{B_{3R}} G^\delta \right)^{\frac{1}{\delta}}+ 1
\end{equation}
and 
\begin{equation}\label{eq:j_0-2}
\|M^{2\ell+1}(|D^\ell u_j|\chi_{\Omega_0})\|_{L^{\gamma_{p,\ell}\delta_0}(\Omega_0)}^{1-\frac{\gamma_{p,\ell}}{\gamma_{q,\ell}}}
\le \|M^{2\ell+1}(|D^\ell u|\chi_{\Omega_0})\|_{L^{\gamma_{p,\ell}\delta_0}(\Omega_0)}^{1-\frac{\gamma_{p,\ell}}{\gamma_{q,\ell}}} + 1
\end{equation}
for each $\ell \in \{0,\ldots,m\}$ . Here set
\begin{equation}\label{eq:Lambda0}
    \Lambda_0 \coloneqq 6^n\left( \fint_{B_{3R}} G^{\delta} \right)^{\frac{1}{\delta}}+6^n.
\end{equation}

Next, for $j \in \N$ and $\lambda > 0$, we define
\begin{equation}\label{eq:Elambda}
    E_j(\lambda) \coloneqq \{x \in \mathbb{R}^n : G_j(x) \leq \lambda\}.
\end{equation}
Then, $E_j(\lambda)^c$ is an open set by the lower semicontinuity of the maximal function. Moreover, we obtain the following lemma.
\begin{lem}\label{lem:Elambda}
For any $j \ge j_0$ and any $\lambda > \Lambda_0$, we have $E_j(\lambda)^c \subset B_{4R}$.
\end{lem}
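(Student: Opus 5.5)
We argue by contraposition: we show that every $x \notin B_{4R}$ satisfies $G_j(x) \le \Lambda_0$ for $j \ge j_0$, and hence $G_j(x) < \lambda$ whenever $\lambda > \Lambda_0$. Since $G_j = M(g_j)^{1/\delta_0}$, it suffices to bound $M(g_j)(x)$.

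The key structural fact is that $g_j$ carries the factor $\psi$, and by \eqref{eq:psi} we have $\operatorname{supp}\psi \subset B_{3R}$. So $g_j$ vanishes outside $B_{3R}$. Fix $x$ with $|x - x_0| \ge 4R$, where $x_0$ is the center of $B_{3R}$, and take any ball $B(y,r)$ containing $x$ that meets $B_{3R}$ (otherwise the average of $g_j$ over it is $0$). The ball contains $x$ and a point within distance $3R$ of $x_0$, so its diameter is at least $R$, i.e.\ $2r \ge R$. Hence $B(y,r) \subset B(x_0, 3R + 2r) \subset B(x_0, 8r)$, and together with $r \ge R/2$ this gives $|B(y,r)| \ge 6^{-n}|B_{3R}|$. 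Therefore
\begin{equation}
\fint_{B(y,r)} g_j \le \frac{1}{|B(y,r)|}\int_{B_{3R}} g_j \le 6^n \fint_{B_{3R}} g_j .
\end{equation}
Taking the supremum over such balls yields $M(g_j)(x) \le 6^n \fint_{B_{3R}} g_j$.

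Next we bound $\fint_{B_{3R}} g_j$ by $G_j$. Pointwise $g_j \le M(g_j)$ a.e., so $g_j \le G_j^{\delta_0}$. By Jensen (since $\delta_0 < \delta$),
\begin{equation}
\fint_{B_{3R}} g_j \le \left(\fint_{B_{3R}} G_j^{\delta}\right)^{\delta_0/\delta}.
\end{equation}
This gives $G_j(x) \le 6^{n/\delta_0} (\fint_{B_{3R}} G_j^\delta)^{1/\delta}$. Applying \eqref{eq:j_0-1} for $j \ge j_0$ then yields $G_j(x) \le 6^{n/\delta_0}\big((\fint_{B_{3R}} G^\delta)^{1/\delta} + 1\big)$.

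The main obstacle is the exponent bookkeeping. The constant obtained above is $6^{n/\delta_0}$ rather than the $6^n$ in the definition \eqref{eq:Lambda0} of $\Lambda_0$. To fix this, I will compare at the level of $G_j^{\delta}$ directly instead of passing through $\delta_0$. Using $g_j \le M(g_j)$ and H\"older,
\begin{equation}
M(g_j)(x) \le 6^n \fint_{B_{3R}} M(g_j) \le 6^n \left(\fint_{B_{3R}} M(g_j)^{\delta/\delta_0}\right)^{\delta_0/\delta},
\end{equation}
so $G_j(x) \le 6^{n/\delta_0}(\fint G_j^\delta)^{1/\delta}$ again. If the power cannot be lowered, I would simply interpret the constant in $\Lambda_0$ generously, i.e.\ replace $6^n$ by $6^{n/\delta_0}$. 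This is harmless for all later uses, since only $\Lambda_0 \lesssim (\fint G^\delta)^{1/\delta} + 1$ with a constant depending on $n$ and $\delta_0$ matters. I would also double-check whether the geometric ratio can be sharpened, with $r \ge R/2$ and a radius $\le 8r$ giving ratio $6^{-n}$ up to adjusting the enlargement.

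Once $G_j(x) \le \Lambda_0 < \lambda$ holds for every $x \notin B_{4R}$, we get $x \in E_j(\lambda)$, and hence $E_j(\lambda)^c \subset B_{4R}$.
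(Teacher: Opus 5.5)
Your argument is the paper's proof. Every ball containing $x\notin B_{4R}$ and meeting $B_{3R}\supset\mathrm{supp}\,g_j$ has radius at least $R/2$, which gives $M(g_j)(x)\le 6^n\fint_{B_{3R}}g_j$, and then $g_j\le M(g_j)$, Jensen's inequality and \eqref{eq:j_0-1} conclude. Your concern about the exponent is justified: the paper's step $\bigl(6^n\fint_{B_{3R}}g_j\bigr)^{1/\delta_0}\le 6^n\bigl(\fint_{B_{3R}}M(g_j)\bigr)^{1/\delta_0}$ silently drops the factor $6^{n(1/\delta_0-1)}>1$, so the constant that actually comes out is $6^{n/\delta_0}$. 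Your repair of enlarging $\Lambda_0$ accordingly is correct and harmless, since elsewhere $\Lambda_0$ enters only through $\Lambda_0>1$ and $\Lambda_0^\delta\le c(n,\delta_0)\bigl(\fint_{B_{3R}}G^\delta+1\bigr)$ in Lemma~\ref{lem:Caccioppoli}, whose constants are allowed to depend on $\delta_0$.
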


\begin{proof}
It suffices to show $\R^n\setminus B_{4R} \subset E_j(\lambda)$. Fix $x \in \mathbb{R}^n \setminus B_{4R}$. Then we have
\begin{equation}
    G_j(x) 
    = [M(g_j)(x)]^{\frac{1}{\delta_0}}
    = \left(\sup_{x \in B(x_0,r)} \frac{1}{|B(x_0,r)|} \int_{B(x_0,r)} g_j \right)^{\frac{1}{\delta_0}}.
\end{equation}
Since $x \in \mathbb{R}^n \setminus B_{4R}$, if
$x \in B(x_0,r)$ and $B(x_0,r) \cap B_{3R} \neq \emptyset$, then $r \ge \frac{R}{2}$. Indeed, take $y \in B(x_0,r) \cap B_{3R}$. Then we have
$2r > |x-x_0| + |x_0 - y| \ge |x-y| > R$. This implies $r \ge \frac{R}{2}$. Combining this inequality and the fact $g_j \equiv 0$ outside $B_{3R}$ (see \eqref{eq:psi} and \eqref{eq:g_j}), we obtain
\begin{equation}
\left(\sup_{x \in B(x_0,r)} \frac{1}{|B(x_0,r)|} \int_{B(x_0,r)} g_j  \right)^{\frac{1}{\delta_0}}
    \leq \left( 6^n \fint_{B_{3R}} g_j \right)^{\frac{1}{\delta_0}} 
    \leq 6^n \left( \fint_{B_{3R}} M(g_j) \right)^{\frac{1}{\delta_0}}.
\end{equation}
Moreover, by Jensen's inequality, \eqref{eq:j_0-1} and \eqref{eq:Lambda0}, we obtain
\begin{align}
    6^n\left( \fint_{B_{3R}} M(g_j) \right)^{\frac{1}{\delta_0}} 
    &\leq 6^n\left( \fint_{B_{3R}} M(g_j)^\frac{\delta}{\delta_0} \right)^{\frac{1}{\delta}} \\
    &= 6^n\left( \fint_{B_{3R}} G_j^\delta \right)^\frac{1}{\delta} \\
    &\le 6^n \left\{\left( \fint_{B_{3R}} G^\delta \right)^{\frac{1}{\delta}}+ 1\right\}
    = \Lambda_0 < \lambda.
\end{align}
Therefore, $\R^n \setminus B_{4R} \subset E_j(\lambda)$. This completes the proof.
\end{proof}

Next, we verify the Jordan measurability of $E_j(\lambda)$.
\begin{lem}\label{lem:Jordan}
There exists a null set $N \subset \R$ such that for any $j \in \N$ and for any $\lambda \in \R \setminus N$, $E_{j}(\lambda)$ is Jordan measurable, that is, the boundary $\partial(E_j(\lambda))$ is a null set.
\end{lem}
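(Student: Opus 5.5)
Our plan is to show that, for each fixed $j$, the function $G_j$ is continuous, and then to use the standard fact that level sets of a continuous function have null boundary except for countably many levels.

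\textbf{Step 1: continuity of $G_j$.} For fixed $j$, every ingredient of $g_j$ is built from smooth data. We have $u_j \in W^{m,\infty}(\widetilde{\Omega_0})\cap C^\infty(\widetilde{\Omega_0})$, $\psi\in C_c^\infty(B_{3R})$, $g_{r,\ell,j},h_{r,\ell,j}\in C_c^\infty(\Omega_0)$, and $a$ may be taken continuous by Remark \ref{rem:a}. Hence $D^\ell u_j\,\psi$ and $H_\ell(x,D^\ell u_j)^{\delta_0}\psi$ are bounded, compactly supported and uniformly continuous on $\R^n$, so they lie in $L^t\cap L^\infty$ for every $t$.

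We then apply Lemma \ref{lem:continuity} iteratively. Take $\beta=0$ for each application of $M$, which keeps uniform continuity; boundedness is preserved since $M f\le \|f\|_\infty$; and $L^t$ integrability is preserved by Lemma \ref{lem:Mfunct}. Take $\beta=\beta_\ell$ for the final $M_{\beta_\ell}$. This requires choosing $t$ so that $\beta_\ell\le n/t$, which is possible since $\beta_\ell<n$ by \eqref{eq:conv-M_beta-0}; the iterated maximal functions lie in every $L^t$, $t>1$, so we can pick $t$ with $\beta_\ell\le n/t$.

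Next, $M_{\beta_\ell}(\cdots)$ is bounded: the estimate in the proof of Lemma \ref{lem:continuity} gives the bound $\|f\|_{L^t}+\|f\|_\infty$. Raising to the power $\gamma_{q,\ell}$, taking the power $\delta_0$, summing, and multiplying by $\psi$ therefore gives a bounded, uniformly continuous, compactly supported $g_j$. One more application of Lemma \ref{lem:continuity} makes $M(g_j)$ uniformly continuous, and so is $G_j=M(g_j)^{1/\delta_0}$.

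The point needing care here is the composition of uniformly continuous maps with the powers $s\mapsto s^{\gamma}$ and $s\mapsto s^{\delta_0}$. This is fine because $s\mapsto s^{\gamma}$ is uniformly continuous on bounded sets of $[0,\infty)$ and $s\mapsto s^{\delta_0}$ is uniformly continuous on all of $[0,\infty)$.

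\textbf{Step 2: boundary of the level sets.} Since $G_j$ is continuous, $\{G_j<\lambda\}$ is open and $\{G_j\le\lambda\}$ is closed. Therefore
\[
\partial E_j(\lambda)\subset\{G_j=\lambda\}.
\]
The sets $\{G_j=\lambda\}$, $\lambda\in\R$, are pairwise disjoint, so at most countably many of them have positive measure. Otherwise some $\{G_j=\lambda\}\cap B_k$ would have measure $\ge 1/i$ for infinitely many $\lambda$, contradicting $|B_k|<\infty$.

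Let $N_j$ be this countable exceptional set and put $N=\bigcup_j N_j$. Then $N$ is countable, hence a null set in $\R$. For $\lambda\notin N$ we get $|\partial E_j(\lambda)|=0$ for all $j$.

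\textbf{Main obstacle.} We expect the main difficulty to be Step 1, specifically checking the hypotheses of Lemma \ref{lem:continuity} at each stage of the iterated maximal functions: $L^t\cap L^\infty$, uniform continuity, and $\beta\le n/t$. The countability argument in Step 2 is routine.
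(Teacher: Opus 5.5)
Your proposal is correct and follows the same route as the paper. Continuity of $G_j$ comes from iterating Lemma \ref{lem:continuity} (with $a$ taken continuous via Remark \ref{rem:a}), and then $\partial E_j(\lambda)\subset\{G_j=\lambda\}$ is null for all but countably many $\lambda$, with the exceptional sets united over $j$. You simply make explicit the checks the paper's three-line proof leaves implicit: boundedness and $L^t$-integrability at each iteration, the choice of $t$ with $\beta_\ell\le n/t$, composition with the powers, and the countability argument.
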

\begin{proof}
By Remark \ref{rem:a}, \eqref{eq:g_j}, \eqref{eq:psi} and Lemma \ref{lem:continuity}, $g_j$ is uniformly continuous and has compact support. Therefore, $G_j$ is also uniformly continuous in $\R^n$ by Lemma \ref{lem:continuity}. Since $E_j(\lambda)$ is a lower level set of $G_j$, and the lower level sets of a continuous function are Jordan measurable for almost every level, the assertion follows.
\end{proof}

This completes the definition of ``good set''$E_{j}(\lambda)$ and the proof of its properties.
For the last half of this subsection, we construct the truncated function, which is a fundamental tool of our proof. Let $j \ge j_0$ and $\lambda \in (\Lambda_0, \infty) \setminus N$ be fixed. For the rest of this subsection and the next subsection, we drop the subscript $j$, that is, we denote $u_j = u$, $E_j(\lambda) = E(\lambda)$, for example.

By Lemma \ref{lem:Elambda}, $E(\lambda)^c$ is a bounded open set.
Then, using a Whitney-type covering lemma \cite[Lemma C.1]{DRW}, 
we can construct a countable family of open balls $\{B_i\} = \{B(x_i,r_i)\}_{i \in \mathbb{N}}$ satisfying the following properties:
\begin{description}
  \item[(W1)] $\bigcup_{i \in \mathbb{N}} \frac{1}{2}B_i = E(\lambda)^c$.
  \item[\textbf{(W2)}] $r_i \leq R$ for any $i \in \mathbb{N}$.
  \item[\textbf{\textbf{(W3)}}] $8B_i \subset E(\lambda)^c$ and $16B_i \cap E(\lambda) \neq \emptyset$.
  \item[\textbf{(W4)}] If $B_i \cap B_j \neq \emptyset$ for any $i,j \in \mathbb{N}$, then $\frac{1}{2}r_j \leq r_i \leq 2r_j$.
  \item[(W5)]$\{\frac{1}{4}B_i \}_{i \in \mathbb{N}}$ is disjoint.
\end{description}
In addition, we define, for any $i \in \mathbb{N}$,
\begin{equation}\label{eq:Ai}
    A_i \coloneqq \left\{ j \in \mathbb{N} : \frac{3}{4}B_i \cap \frac{3}{4}B_j \neq \emptyset \right\}.
\end{equation}
\begin{description}
    \item[\textbf{(W6)}] There exists $c(n)>0$ such that $\sharp{A_i} < c(n)$ for any $i \in \mathbb{N}$.
    \item[\textbf{(W7)}] There exists $c(n)>0$ such that $\left|B_i \cap \frac{3}{4}B_j \right| \geq c(n)^{-1} \max \{ |B_i|,|B_j| \}$ for any $i \in \mathbb{N}$ and $j \in A_i$.
\end{description}
Furthermore, with the covering $\{\frac{3}{4}B_i\}$, we can construct a partition of unity $\{\psi_i\} \subset C_c^\infty(\mathbb{R}^n)$ in a standard way (see \cite[Chapter 6]{St} or \cite[pp.10-11]{DRW}, for example) such that
\begin{description}
    \item[\textbf{(P1)}] $\chi_{\frac{1}{2}B_i} \leq \psi_i \leq \chi_{\frac{3}{4}B_i}$ for every $i \in \mathbb{N}$.
    \item[(P2)] $|D^{\ell} \psi_i| \leq c(n,m)r_i^{-\ell}$ in $\mathbb{R}^n$ for every $i \in \mathbb{N}$ and $\ell \in \{0,\ldots,m\}$.
    \item[\textbf{(P3)}] $\sum_{j \in A_i} \psi_j = 1$ in $\frac{3}{4}B_i$ for every $i \in \mathbb{N}$.
\end{description}

In the following, we define the truncated function $v_\lambda$. Firstly, let us take $\eta \in C^\infty_c(B_{2R})$ such that
\begin{equation}\label{eq:eta}
    \chi_{B_R} \leq \eta \leq \chi_{B_{2R}},
    \quad
    |D^\ell \eta| \leq c(m,n)R^{-\ell}
    \quad
    \text{for each }
    \ell \in \{0,\ldots,m\}.
\end{equation} 
Then we define $v \in W^{m,p\delta}_{0}(B_{2R})$ by
\begin{equation}\label{eq:v}
v \coloneqq (u-P)\eta.
\end{equation}
Here $P$ is a unique polynomial satisfying the following properties:
\begin{equation}\label{eq:P}
    \deg P \le m - 1,
    \quad
    (\partial_\sigma u)_{B_{2R},\eta} 
    = (\partial_\sigma P)_{B_{2R},\eta}
    \quad
    \text{for each }
    \sigma \in S \setminus S_m.
\end{equation}
Finally, let $v_{\lambda}$ be the measurable function on $\R^n$ defined by
\begin{equation}\label{eq:vlambda}
    v_\lambda 
    \coloneqq v - \sum_{i \in \mathbb{N}}(v- P_i)\psi_i,
\end{equation}
where for $i \in \mathbb{N}$, $P_i$ is a unique polynomial such that
\begin{equation}\label{eq:Pi}
    \deg P_i \le m - 1,
    \quad
    (\partial_\sigma v)_{\frac{3}{4}B_i,\psi_i} 
    = (\partial_\sigma P_i)_{\frac{3}{4}B_i,\psi_i}
    \quad
    \text{for each }
    \sigma \in S \setminus S_m.
\end{equation}
Here let us give a few remarks on $v_{\lambda}$. 
From \textbf{(W3)} and \textbf{(P1)}, it follows that
\begin{equation}\label{eq:vlambda-1}
v_\lambda = v  \quad \text{in }E(\lambda).
\end{equation}
Moreover, by \textbf{(W1)} and \textbf{(P3)}, we have 
\begin{equation}\label{eq:vlambda-2}
v_{\lambda} \in C^{\infty}(E(\lambda)^c),
\quad
v_\lambda = \sum_{j \in \N} P_j\psi_j = \sum_{j \in A_i} P_j\psi_j
\quad 
\text{in }\frac{3}{4}B_i 
\quad
\text{for each }i \in \N.
\end{equation}

\begin{rem}
\textbf{(W2)} easily follows from the fact $E(\lambda)^c \subset B_{4R}$ and the proof of \cite[Lemma C.1]{DRW}.
\end{rem}

\begin{rem}
We adopt \textbf{(W7)} in place of the following stronger condition found in \cite[section 2.3]{DSSV}:
\begin{description}
\item[\textbf{(W7)}'] There exists $c(n)>0$ such that $\left|\frac{3}{4}B_i \cap \frac{3}{4}B_j \right| \geq c(n)^{-1} \max \{ |B_i|,|B_j| \}$ for any $i \in \mathbb{N}$ and $j \in A_i$.
\end{description}
This is because the proof of \textbf{(W7)'} is not explicitly provided therein, and \textbf{(W7)} is sufficient for our purpose.
\end{rem}

\begin{proof}[Proof of \rm{\textbf{(W7)}}]
The proof is obtained by a simple geometric observation and the property \textbf{(W4)}. We fix any $i \in \mathbb{N}$ and $j \in A_i$. If $\frac{3}{4}B_j \subset B_i$ or $B_i \subset \frac{3}{4}B_j$, then the conclusion is obvious by \textbf{(W4)}.
Therefore it suffices to consider the following case:
\begin{equation}\label{eq:w7-1}
\max\left\{r_i,\frac{3}{4}r_j\right\} < d \coloneqq |x_i - x_j| < \frac{3}{4}r_i + \frac{3}{4}r_j.
\end{equation}
Recall that $x_i$ and $x_j$ are the centers of $B_i$ and $B_j$, respectively.
Then we have
\begin{equation}\label{eq:w7-2}
B \left(z, \frac{\rho}{8} \right)  \subset B_i \cap \frac{3}{4}B_j,
\quad
\rho \ge \frac{1}{8}r_i,
\end{equation}
where
\begin{equation}
z = x_i + \frac{t(x_j - x_i)}{d},
\quad
\rho = r_i - t, 
\quad 
t = \frac{d + r_i - \frac{3}{4}r_j}{2}.
\end{equation}
Indeed, since $\rho = r_i - t = \frac{r_i-d+\frac{3}{4}r_j}{2}$, we obtain by \eqref{eq:w7-1}
\begin{equation}
  \frac{r_i}{8} 
  = \frac{r_i - (\frac{3}{4}r_i + \frac{3}{4}r_j) + \frac{3}{4}r_j}{2}
  < \rho
  < \frac{r_i - r_i +\frac{3}{4}r_j}{2} = \frac{3}{8}r_j.
\end{equation}
Moreover, for any $y \in B(z,\rho)$, we have 
$$
|y-x_i| \le |y-z|+|z-x_i| < \rho + t = r_i.
$$
Therefore, $B(z,\rho) \subset B_i$. 

On the other hand, for any $y \in B(z,\frac{\rho}{8})$ the triangle inequality implies
$$
|y-x_j| \le |y-z| + |z-x_j| < \frac{\rho}{8} + |z-x_j|.
$$
Now we estimate the second term in the last inequality. We first note that
$$
|z - x_j|= \left|(x_i-x_j) + \frac{t(x_j - x_i)}{d} \right|=|x_i-x_j|\left|1-\frac{t}{d}\right|=|d-t|.
$$ 
Moreover, \eqref{eq:w7-1} yields 
\begin{align}
0 < d-t &= \frac{d - r_i + \frac{3}{4}r_j}{2} \\
&< \frac{(\frac{3}{4}r_i + \frac{3}{4}r_j) - r_i + \frac{3}{4}r_j}{2} \\
&= \frac{-\frac{1}{4}r_i + \frac{3}{2}r_j}{2} \le \frac{-\frac{1}{8}r_j + \frac{3}{2}r_j}{2}=\frac{11}{16}r_j.
\end{align}
The last inequality follows from \textbf{(W4)}. Then we have 
$$
\frac{\rho}{8} + |z-x_j| < \frac{3}{64}r_j + \frac{11}{16}r_j = \frac{47}{64}r_j < \frac{3}{4}r_j.
$$
This implies $B \left(z, \frac{\rho}{8} \right)  \subset \frac{3}{4}B_j$.
From this inclusion, we obtain \eqref{eq:w7-2}.

From \eqref{eq:w7-2}, we derive
\begin{align}
  \left|B_i \cap \frac{3}{4}B_j\right| 
  \ge \left|B \left(z, \frac{\rho}{8} \right)\right|
  \ge \left( \frac{r_i}{64} \right)^n\omega_n \ge c(n)^{-1} \max \{ |B_i|,|B_j| \}.
\end{align}
In the last inequality we used \textbf{(W4)}. The proof is completed.
\end{proof}

We conclude this subsection with the following lemma, which is needed in Lemma \ref{lem:admissible}.
\begin{lem}\label{lem:v_lambda-3}
$v_{\lambda}$ is infinitely differentiable almost everywhere in $\R^n$ in the classical sense. Moreover, for each $\ell \in \N$, $D^\ell v$ is a measurable function in $\R^n$ and
\begin{equation}\label{eq:v_lambda-3-1}
  D^\ell v_{\lambda} = D^\ell v - \sum_{i \in \N} D^{\ell}\{ (v-P_i)\psi_i \}
  \quad
  \text{a.e.\,in }
  \R^n.
\end{equation}
\end{lem}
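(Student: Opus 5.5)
We have $u=u_j\in C^\infty\cap W^{m,\infty}(\widetilde{\Omega_0})$, $\eta\in C_c^\infty(B_{2R})$ and $P$ a polynomial. Hence $v=(u-P)\eta$, extended by zero outside $B_{2R}$, belongs to $C^\infty(\R^n)$. The plan is to split $\R^n$ into three pieces: the open set $E(\lambda)^c$, the interior of $E(\lambda)$, and the boundary $\partial E(\lambda)$. By Lemma \ref{lem:Jordan}, since $\lambda\notin N$, the boundary $\partial E(\lambda)$ is a null set. So it suffices to prove classical infinite differentiability together with formula \eqref{eq:v_lambda-3-1} on $E(\lambda)^c$ and on $\mathrm{int}\,E(\lambda)$ separately. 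Measurability of $D^\ell v_\lambda$ will then follow, since it agrees a.e. with a pointwise limit of measurable (in fact locally finite sums of smooth) functions.

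\emph{On the open set $E(\lambda)^c$.} By \textbf{(W1)} and \textbf{(P1)}, each point $x\in E(\lambda)^c$ lies in some $\frac12 B_i$, and $\frac34 B_i$ is a neighbourhood of $x$. By \textbf{(W6)}, only the indices $j\in A_i$ have $\psi_j\not\equiv 0$ on $\frac34B_i$. Hence on $\frac34 B_i$ the series in \eqref{eq:vlambda} is a finite sum of smooth functions $(v-P_j)\psi_j$, and it can be differentiated termwise. This gives \eqref{eq:v_lambda-3-1} there, and together with \eqref{eq:vlambda-2} it gives smoothness. The terms with $j\notin A_i$ vanish identically on this neighbourhood, so their derivatives are zero and the full series in \eqref{eq:v_lambda-3-1} is still well defined pointwise.

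\emph{On $\mathrm{int}\,E(\lambda)$.} By \textbf{(W3)} and \textbf{(P1)}, every $\psi_i$ is supported in $\frac34B_i\subset E(\lambda)^c$. Therefore on the open set $\mathrm{int}\,E(\lambda)$ all the $\psi_i$ vanish identically, and so do all their derivatives. Thus $v_\lambda=v$ on this open set, by \eqref{eq:vlambda-1}, so $v_\lambda$ is smooth there. Moreover $D^\ell v_\lambda=D^\ell v$, while every term $D^\ell\{(v-P_i)\psi_i\}$ is zero there, so \eqref{eq:v_lambda-3-1} holds.

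The only delicate point is the boundary $\partial E(\lambda)$. There $v_\lambda$ need not be differentiable, and the series of derivatives may fail to converge locally uniformly. This is exactly why the Jordan measurability from Lemma \ref{lem:Jordan} is needed: it makes the bad set negligible. I expect no further obstacle, except to note that the right-hand side of \eqref{eq:v_lambda-3-1} is measurable on $\R^n$. Each summand is smooth, and at every point outside $\partial E(\lambda)$ only finitely many summands are nonzero, so the series is an a.e. pointwise limit of its measurable partial sums.
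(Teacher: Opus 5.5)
Your proof is correct and follows essentially the same route as the paper. You split $\R^n$ into $\mathrm{int}\,E(\lambda)$, $E(\lambda)^c$ and the null set $\partial E(\lambda)$ (Lemma \ref{lem:Jordan}), use that every $\psi_i$ vanishes on $\mathrm{int}\,E(\lambda)$, and use that the sum is finite on each $\frac34 B_i$ by \textbf{(P1)} and \textbf{(W6)}; the only cosmetic difference is that you differentiate the locally finite sum in \eqref{eq:vlambda} directly, whereas the paper passes through $v_\lambda=\sum_{j\in A_i}P_j\psi_j$ and reinserts $v-\sum_{j\in A_i}v\psi_j=0$ from \textbf{(P3)}.
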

\begin{proof}
We fix any $\ell \in \N$. We first prove \eqref{eq:v_lambda-3-1}.
We denote by \rm{int}$E(\lambda)$ the interior of $E(\lambda)$. Recalling that $\lambda \in (\Lambda_0, \infty) \setminus N$ and Lemma \ref{lem:Jordan}, $E(\lambda)$ is Jordan measurable. Therefore, it suffices to show \eqref{eq:v_lambda-3-1} holds in both int$E(\lambda)$ and $E(\lambda)^c$.

We first consider the differentiability in int$E(\lambda)$. By \eqref{eq:vlambda-1} and $v \in C^{\infty}(\R^n)$ (recall $u=u_j$, \eqref{eq:conv-u} and \eqref{eq:v}), we have
\begin{equation}\label{eq:v_lambda-3-2}
D^\ell v_{\lambda} = D^\ell v
\quad
  \text{in }
  \text{int}E(\lambda).
\end{equation}
Moreover, by \textbf{(P1)} we obtain
\begin{equation} 
  D^\ell \{ (v-P_i)\psi_i \} = 0
  \quad
  \text{in }
  \text{int}E(\lambda)
\end{equation}
for any $i \in \N$. Therefore, we derive
\begin{equation}
  D^\ell v_{\lambda} = D^\ell v = D^\ell v - \sum_{i \in \N}D^\ell \{ (v-P_i)\psi_i \}
  \quad
  \text{in }
  \text{int}E(\lambda).
\end{equation}

Next, we verify \eqref{eq:v_lambda-3-1} holds in $E(\lambda)^c$. By \textbf{(W1)}, \textbf{(W3)} and \textbf{(P1)}, it suffices to show
\begin{equation}
  D^\ell v_{\lambda} = D^\ell v - \sum_{j \in \N} D^{\ell}\{ (v-P_j)\psi_j \}
  \quad
  \text{in }
  \frac{3}{4}B_i
\end{equation}
for any $i \in \N$. Fix any $i \in \N$. By \eqref{eq:vlambda-2} and \textbf{(W6)}, we obtain 
\begin{equation}\label{eq:v_lambda-3-3}
  D^\ell v = \sum_{j \in A_i} D^\ell (P_j \psi_j)
  \quad
  \text{in }
  \frac{3}{4}B_i.
\end{equation}
Moreover, \textbf{(P3)} implies
\begin{equation}
  v - \sum_{j \in A_i} v\psi_j = 0
  \quad
  \text{in }
  \frac{3}{4}B_i.
\end{equation}
Therefore, 
\begin{align}
   D^\ell v &= \sum_{j \in A_i} D^\ell (P_j \psi_j) +  D^\ell \left( v - \sum_{j \in A_i} v\psi_j \right) \\
   &= D^\ell v - \sum_{j \in A_i} D^\ell \{ (v-P_j)\psi_j \} \\
   &= D^\ell v - \sum_{j \in \N} D^\ell \{ (v-P_j)\psi_j \}
  \quad
  \text{in }
  \frac{3}{4}B_i.
\end{align}
In the last equality we used \textbf{(P1)}. The measurability of $D^\ell v$ follows from \eqref{eq:v_lambda-3-2}, \eqref{eq:v_lambda-3-3} and the Jordan measurability of $E(\lambda)$. The proof is completed.
\end{proof}

\subsection{Estimates of the truncated function}
The purpose of the first half of this subsection is to estimate the derivative of $v_\lambda$. 
To this aim, we will establish a further estimate concerning weighted mean value polynomial.
For the moment, we fix $i \in \mathbb{N}$ and $j \in A_i$.
There exists a unique polynomial $Q_i$ such that
\begin{equation}\label{eq:Qi}
    \deg Q_i \le m - 1,
    \quad
    (\partial_\sigma v)_{B_i,\psi_i} = (\partial_\sigma Q_i)_{B_i,\psi_i}
    \quad
    \text{for each } \sigma \in S \setminus S_m.
\end{equation}
Then we define $a_{\sigma,i}$ as the coefficient of $(x - x_i)^\sigma$ when expanding $Q_i$ with $x_i$ centered. 
Here we recall $x_i$ is the center of $B_i$.
Let us define
\begin{equation}\label{eq:bsigmai}
    b_{\sigma,i} 
    \coloneqq \sum_{\tau \in S \setminus S_m,\tau > \sigma}\frac{\tau !}{(\tau - \sigma)!}a_{\tau,i}(x - x_i)^{\tau - \sigma}
    = \partial_\sigma Q_i - \sigma!a_{\sigma,i}.
\end{equation}
Since Lemma \ref{lem:Pi} \textbf{(i)} (with $(B,\mathcal{B},\eta,x_0)=(B_i,B_i,\psi_i,x_i)$) implies
\begin{equation}
    a_{\sigma,i} = \frac{1}{\sigma!}\left( \partial_\sigma v 
  - \sum_{\tau \in S \setminus S_m,\tau > \sigma}\frac{\tau!}{(\tau - \sigma)!}a_{\tau,i}(x - x_i)^{\tau - \sigma}  \right)_{B_i,\psi_i}
\end{equation}
for each $\sigma \in S \setminus S_m$, we have

\begin{equation}\label{eq:asigmai}
a_{\sigma,i} = \frac{1}{\sigma!}(\partial_\sigma v - b_{\sigma,i})_{B_i,\psi_i}.
\end{equation}

Similarly, for each $\sigma \in S \setminus S_m$, let
$a_{\sigma,j,i}$ be the coefficient of $(x - x_i)^\sigma$ when expanding $P_j$ with $x_i$ centered.
For $\sigma \in S \setminus S_m $, we set
\begin{equation}\label{eq:bsigmaji}
    b_{\sigma,j,i} 
    \coloneqq \sum_{\tau \in S \setminus S_m,\tau > \sigma}\frac{\tau !}{(\tau - \sigma)!}a_{\tau,j,i}(x - x_i)^{\tau - \sigma}
    = \partial_\sigma P_j - \sigma!a_{\sigma,j,i}.
\end{equation}
By Lemma \ref{lem:Pi} \textbf{(i)} (with $(B,\mathcal{B},\eta,x_0)= \left(\frac{3}{4}B_j,B_i,\psi_j,x_i\right)$), we obtain
\begin{equation} \label{eq:asigmaji}
a_{\sigma,j,i} = \frac{1}{\sigma!}(\partial_\sigma v - b_{\sigma,j,i})_{\frac{3}{4}B_j,\psi_j}.
\end{equation}
Finally, for each $\ell \in \{0,\ldots,m\}$, we denote
\begin{equation}\label{eq:T_{ell,i}}
T_{\ell,i} \coloneqq \sup_{\widetilde{j} \in A_i}\fint_{B_{\widetilde{j}}}|D^\ell v|.
\end{equation}

With the above notation, we estimate the difference between $P_j$ and $Q_i$.
\begin{lem}\label{lem:Pioscilation}
There exists $c=c(n,m) > 0$ such that the following inequalities hold:
\begin{description}
  \item [(i)] $|a_{\sigma,j,i} - a_{\sigma,i}| \leq c r_i^{\ell - k}T_{\ell,i} $ 
  $\mathrm{for\ each}$ $\ell \in \{0,\ldots,m\},k \in \{0,\ldots,m-1\}$ $\mathrm{and}$ $\sigma \in S_k$.
\item [(ii)] $|\partial_\sigma P_j - \partial_\sigma Q_i| \leq cr_i^{\ell - k} T_{\ell,i}$ 
$\mathrm{in}$ $\frac{3}{4}B_i$ $\mathrm{for\ each}$ $\ell \in \{0,\ldots,m\},k \in \{0,\ldots,m-1\}$ $\mathrm{and}$ $\sigma \in S_k$.
\end{description} 
\end{lem}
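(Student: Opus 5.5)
The plan is to compare the two polynomials through their difference $P_j-Q_i$, which has degree at most $m-1$. The comparison is made on the set $B_i\cap\frac34B_j$, which by \textbf{(W7)} occupies a fixed proportion of $B_i$. By Taylor's formula at $x_i$, $\sigma!\,(a_{\sigma,j,i}-a_{\sigma,i})=\partial_\sigma(P_j-Q_i)(x_i)$. Hence \textbf{(i)} follows from \textbf{(ii)} evaluated at $x_i$, and it suffices to prove the pointwise bound
\begin{equation}
\sup_{\frac34B_i}|D^\mu(P_j-Q_i)|\le c\,r_i^{\ell-\mu}T_{\ell,i}
\quad\text{for each }\mu\in\{0,\ldots,m-1\},\ \ell\in\{0,\ldots,m\}.
\end{equation}
Throughout, by \textbf{(W4)} and $j\in A_i$, the radii $r_i,r_j$ are comparable, so $r_j^{s}\approx r_i^{s}$ for all relevant exponents $s$.

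\emph{Step 1 (pointwise bounds on the polynomials when $\ell\le\mu$).} Apply Lemma \ref{lem:Pi} \textbf{(iii)} to $Q_i$ with $(B,\mathcal B,\eta)=(B_i,B_i,\psi_i)$ and to $P_j$ with $(\frac34B_j,B_i,\psi_j)$. This gives, on $\frac34B_i$, $|D^\mu Q_i|\le c\sum_{\nu=\mu}^{m-1}r_i^{\nu-\mu}|(D^\nu v)_{B_i,\psi_i}|$, and similarly for $P_j$. For $\nu\ge\ell$, integrating by parts $\nu-\ell$ times against $\psi_i$ (respectively $\psi_j$), as in Remark \ref{rem:integration by parts}, gives $|(D^\nu v)_{B_i,\psi_i}|\le c\,r_i^{\ell-\nu}\fint_{B_i}|D^\ell v|$. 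This uses \textbf{(P2)} for the derivatives of $\psi_i$ and $\|\psi_i\|_{L^1}\gtrsim|B_i|$, which follows from \textbf{(P1)}. Therefore $|D^\mu P_j|+|D^\mu Q_i|\le c\,r_i^{\ell-\mu}T_{\ell,i}$ on $\frac34B_i$, and \textbf{(ii)} follows in this case by the triangle inequality.

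\emph{Step 2 (the case $\ell>\mu$).} Here I use Poincaré. The function $v-Q_i$ has vanishing $\psi_i$-weighted averages of $D^\kappa$ for every $\kappa\le m-1$. Iterating the Poincaré-type estimate behind Lemma \ref{lem:Hedberg-1} (averaging instead of taking maximal functions) therefore gives
\begin{equation}
\fint_{B_i}|D^\mu(v-Q_i)|\le c\,r_i^{\ell-\mu}\fint_{B_i}|D^\ell(v-Q_i)|.
\end{equation}
If $\ell=m$, then $D^\ell Q_i=0$. If $\ell\le m-1$, then $|D^\ell Q_i|\le c\fint_{B_i}|D^\ell v|$ by Step 1 with $\mu=\ell$. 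In either case the left-hand side is at most $c\,r_i^{\ell-\mu}T_{\ell,i}$. The same argument on $\frac34B_j$ with weight $\psi_j$ bounds $\fint_{\frac34B_j}|D^\mu(v-P_j)|$ in the same way. Consequently, on $K\coloneqq B_i\cap\frac34B_j$ we get
\begin{equation}
\frac{1}{|K|}\int_K|D^\mu(P_j-Q_i)|\le c\,r_i^{\ell-\mu}T_{\ell,i},
\end{equation}
where \textbf{(W7)} ensures $|K|\ge c^{-1}|B_i|$, so the integrals over $B_i$ and $\frac34B_j$ transfer to $K$ with only a constant loss.

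\emph{Step 3 (from an average on $K$ to a supremum on $\frac34B_i$).} Each component of $D^\mu(P_j-Q_i)$ is a polynomial of degree at most $m-1$. For a polynomial $\Pi$ of degree at most $m-1$ and any measurable $K\subset B_i$ with $|K|\ge\theta|B_i|$, a Remez-type inequality gives $\sup_{B_i}|\Pi|\le c(n,m,\theta)\frac1{|K|}\int_K|\Pi|$. To justify this, rescale $B_i$ to the unit ball and argue by compactness. If the inequality failed, there would be normalized polynomials $\Pi_k$ with $\sup_{B_1}|\Pi_k|=1$ and sets $K_k$, $|K_k|\ge\theta|B_1|$, with $\int_{K_k}|\Pi_k|\to0$. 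Pass to a uniformly convergent subsequence $\Pi_k\to\Pi\ne0$ (finite dimensions) and a weak-$*$ limit $\chi_{K_k}\rightharpoonup\rho$, with $0\le\rho\le1$ and $\int\rho\ge\theta|B_1|$. Then $\int\rho|\Pi|=0$, which contradicts the fact that the zero set of a nonzero polynomial is null. Combining this with Step 2 yields \textbf{(ii)} for $\ell>\mu$. Together with Step 1 this covers all $\ell\in\{0,\ldots,m\}$, and evaluating at $x_i$ gives \textbf{(i)}.

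I expect Step 3, the passage from an $L^1$ bound on the irregular set $K$ to a pointwise bound, to be the main obstacle. It is also the reason \textbf{(W7)} is needed at all. A secondary point to check carefully is that the weighted Poincaré iteration in Step 2 produces only constants depending on $(n,m)$.
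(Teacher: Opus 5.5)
Your proof is correct, but it runs in the opposite direction from the paper's: you prove \textbf{(ii)} directly and read off \textbf{(i)} at $x_i$, while the paper proves \textbf{(i)} first and deduces \textbf{(ii)} by expanding $P_j-Q_i$ around $x_i$.

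For $k\ge\ell$ the paper does what your Step~1 does: it bounds $a_{\sigma,j,i}$ and $a_{\sigma,i}$ separately via Lemma~\ref{lem:Pi}~\textbf{(ii)} and integration by parts against $\psi_j,\psi_i$.

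For $k<\ell$ the paper runs a downward induction on $k$ using the averaging formulas \eqref{eq:asigmai} and \eqref{eq:asigmaji}. It inserts the plain average over $\frac{3}{4}B_j\cap B_i$ into $\sigma!(a_{\sigma,j,i}-a_{\sigma,i})$ and splits into three terms:
\begin{itemize}
\item the middle term is controlled by the induction hypothesis on the higher-order coefficients, through $b_{\sigma,j,i}-b_{\sigma,i}$;
\item the two outer terms are controlled by \textbf{(W7)} and the weighted-mean Poincar\'e inequality, applied to $\partial_\sigma v-b_{\sigma,j,i}$ and $\partial_\sigma v-b_{\sigma,i}$ (whose derivatives up to the needed order have vanishing weighted means).
\end{itemize}
Because every coefficient is itself a weighted average, the paper never has to pass from integral information on the lens $K$ to a pointwise bound on a polynomial, and all constants remain explicit.

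You instead apply Poincar\'e to $v-Q_i$ and $v-P_j$ as whole functions. This gives an $L^1(K)$ bound on $D^\mu(P_j-Q_i)$, which you upgrade to a supremum on $B_i$ by a Remez-type norm equivalence for polynomials. That removes the induction and all the $b_{\sigma,i}$, $b_{\sigma,j,i}$ bookkeeping, at the price of a non-constructive constant from compactness. Your compactness argument is sound: weak-$*$ limit of $\chi_{K_k}$, uniform limit of the normalized $\Pi_k$, and the null zero set of a nonzero polynomial.

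If you want an elementary version of Step~3, note that the proof of \textbf{(W7)} already exhibits a ball $B(z,\rho/8)\subset K$ of radius at least $r_i/64$. Step~3 then reduces to the standard equivalence of norms for polynomials of bounded degree on balls of comparable size.
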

\begin{proof}
\noindent \textbf{(i)}:\,fix $\ell \in \{0,\ldots,m - 1\}$. Firstly, we assume $k \in \{\ell,\ldots,m - 1\}$. Using Lemma \ref{lem:Pi} \textbf{(ii)} (with $(B,\mathcal{B},x_0)=(\frac{3}{4}B_j,B_i,x_i)$ and $(B,\mathcal{B},x_0)=(B_i,B_i,x_i)$), we have
\begin{equation}
    |a_{\sigma,j,i} - a_{\sigma,i}| 
    \leq |a_{\sigma,j,i}| + |a_{\sigma,i}|
    \leq c\sum_{\mu = k}^{m - 1}r_j^{\mu - k}|(D^\mu v)_{\frac{3}{4}B_j,\psi_j}| + c\sum_{\mu = k}^{m - 1}r_i^{\mu - k}|(D^\mu v)_{B_i,\psi_i}|
\end{equation}
for any $\sigma \in S_k$.
Here we recall that $r_i$ and $r_j$ are the radius of $B_i$ and $B_j$, respectively. Moreover, by \textbf{(P1)}, \textbf{(P2)}, integration by parts, \textbf{(W4)} and \eqref{eq:T_{ell,i}}, we obtain
\begin{align}
r_j^{\mu - k}|(D^\mu v)_{\frac{3}{4}B_j,\psi_j}|
&\le cr_j^{\mu - k}|(D^\mu v \cdot \psi_j)_{\frac{3}{4}B_j}| \\
&\leq cr_j^{\mu - k}\fint_{\frac{3}{4}B_j}|D^\ell v||D^{\mu-\ell}\psi_j| \\
&\leq cr_j^{\ell - k}\fint_{\frac{3}{4}B_j}|D^\ell v| \\
&\leq cr_i^{\ell - k}\fint_{\frac{3}{4}B_j}|D^\ell v|
\le c(n,m)r_i^{\ell - k}T_{\ell,i}
\end{align}
for each $\mu \in \{k,\ldots,m-1\}$.
In the same way, we can get $r_i^{\mu - k}|(D^\mu v)_{B_i,\psi_i}| \leq c(n,m)r_i^{\ell - k}T_{\ell,i}$.
Therefore, we obtain
\begin{equation}
    |a_{\sigma,j,i} - a_{\sigma,i}| \leq cr_i^{\ell - k}T_{\ell,i}.
\end{equation}

Next, let us consider the case $k \in \{0,\ldots,\ell-1\}$.
We show by induction:\,we suppose the assertion holds for every $k \in \{\ell_0,\ldots,m-1\}$, where $\ell_0 \in \{1,\ldots,\ell\}$. 
Then, the case $k = \ell_0 - 1$ also holds. Indeed,
fix any $\sigma \in S_{\ell_0 - 1}$.
\eqref{eq:asigmai} and \eqref{eq:asigmaji} yield
\begin{align}\label{eq:Pioscilation-I's}
    \sigma!|a_{\sigma,j,i} - a_{\sigma,i}|
    &= |(\partial_\sigma v - b_{\sigma,j,i})_{\frac{3}{4}B_j,\psi_j} - (\partial_\sigma v - b_{\sigma,i})_{B_i,\psi_i}| \\
    &\leq |(\partial_\sigma v - b_{\sigma,j,i})_{\frac{3}{4}B_j,\psi_j} - (\partial_\sigma v - b_{\sigma,j,i})_{\frac{3}{4}B_j \cap B_i}|\\
    &+ |(\partial_\sigma v - b_{\sigma,j,i})_{\frac{3}{4}B_j \cap B_i} - (\partial_\sigma v - b_{\sigma,i})_{\frac{3}{4}B_j \cap B_i}| \\
    &+ |(\partial_\sigma v - b_{\sigma,i})_{\frac{3}{4}B_j \cap B_i} - (\partial_\sigma v - b_{\sigma,i})_{B_i,\psi_i}|
    \eqcolon I_1 + I_2 + I_3.
\end{align}

Firstly, we estimate $I_2$. From \eqref{eq:bsigmai}, \eqref{eq:bsigmaji} and the induction hypothesis, it follows that
\begin{align}\label{eq:Pioscilation-I_2}
   I_2 
   &\leq \fint_{\frac{3}{4}B_j \cap B_i} |b_{\sigma,j,i} - b_{\sigma,i}| \\
   &\leq c\sum_{\tau \in S \setminus S_m,\tau > \sigma} \fint_{\frac{3}{4}B_j \cap B_i}|a_{\tau,j,i} - a_{\tau,i}||x - x_i|^{\tau - \sigma} \\
   &\leq c\sum_{\tau \in S \setminus S_m,\tau > \sigma} \fint_{\frac{3}{4}B_j \cap B_i}r_i^{\ell - |\tau|}T_{\ell,i}r_i^{|\tau| - |\sigma|}
   \leq cr_i^{\ell - \ell_0 + 1}T_{\ell,i}.
\end{align}
Note that if $\tau \in S \setminus S_m$ and $\tau > \sigma$, then $\tau \in S_\mu$ for some $\mu \in \{\ell_0,\ldots,m-1\}$. This explains why we can use the induction hypothesis.

Next, we estimate $I_1$. Using \textbf{(W7)}, we have
\begin{align}
    I_1 &\leq \fint_{\frac{3}{4}B_j \cap B_i} |\partial_\sigma v - b_{\sigma,j,i} - (\partial_\sigma v - b_{\sigma,j,i})_{\frac{3}{4}B_j,\psi_j}| \\
        &\leq c\fint_{\frac{3}{4}B_j} |\partial_\sigma v - b_{\sigma,j,i} - (\partial_\sigma v - b_{\sigma,j,i})_{\frac{3}{4}B_j,\psi_j}| 
        = c(n)\fint_{\frac{3}{4}B_j}|H|.
\end{align}
Here we set
\begin{equation}
  H \coloneqq \partial_\sigma v - b_{\sigma,j,i} - (\partial_\sigma v - b_{\sigma,j,i})_{\frac{3}{4}B_j,\psi_j}.
\end{equation}
Clearly, $(H)_{\frac{3}{4}B_j,\psi_j} = 0$.
In addition, by \eqref{eq:bsigmaji} and \eqref{eq:Pi}, we have
\begin{align}\label{eq:Pioscilation-H}
(\partial_\tau H)_{\frac{3}{4}B_j,\psi_j}
&= (\partial_\tau\partial_{\sigma}v - \partial_\tau b_{\sigma,j,i})_{\frac{3}{4}B_j,\psi_j} \\
&= (\partial_{\sigma + \tau}v - \partial_{\sigma + \tau}P_j)_{\frac{3}{4}B_j,\psi_j} = 0
\quad
\text{for any } 
\tau \in \bigcup_{\mu = 1}^{\ell - \ell_0} S_\mu.
\end{align}
Note that if $\tau \in \bigcup_{\mu = 1}^{\ell - \ell_0} S_\mu$, then $\tau + \sigma = (\tau_1+\sigma_1, \cdots, \tau_n+\sigma_n) \in S \setminus S_m$.
Then, using the classical Poincar\'e inequality, we obtain
\begin{equation}
    \fint_{\frac{3}{4}B_j}|H| \leq r_j^{\ell - \ell_0 + 1}\fint_{\frac{3}{4}B_j}|D^{\ell - \ell_0 + 1}H|.
\end{equation}
Moreover, from \eqref{eq:bsigmaji} it follows
\begin{align}
|D^{\ell - \ell_0 + 1}H| 
&= |D^{\ell - \ell_0 + 1}(\partial_{\sigma}v - b_{\sigma,j,i})| \\
&=|D^{\ell - \ell_0 + 1}(\partial_{\sigma}v - \partial_{\sigma}P_j)|
\leq |D^{\ell}v| + |D^{\ell}P_j| \quad \text{in }\R^n.
\end{align}
Then \eqref{eq:integration by parts} (with $(B,\mathcal{B},\eta,x_0)=(\frac{3}{4}B_j,B_i,\psi_j,x_i)$) in Remark \ref{rem:integration by parts} yields 
$$ 
|D^{\ell}P_j| \leq c(n,m)\fint_{\frac{3}{4}B_j}|D^{\ell}v| \quad \text{in } \frac{3}{4}B_j.
$$
By this inequality and \eqref{eq:T_{ell,i}}, we deduce
\begin{equation}\label{eq:Pioscilation-I_1}
I_1
\le cr_i^{\ell - \ell_0 + 1}\fint_{\frac{3}{4}B_j} |D^{\ell}v|
\leq c(n,m)r_i^{\ell - \ell_0 + 1}T_{\ell,i}.
\end{equation}

We estimate $I_3$ similarly to $I_1$.
Indeed,  we have 
\begin{align}
    I_3 &\leq \fint_{\frac{3}{4}B_j \cap B_i} |\partial_\sigma v - b_{\sigma,i} - (\partial_\sigma v - b_{\sigma,i})_{B_i,\psi_i}| \\
        &\leq c\fint_{B_i} |\partial_\sigma v - b_{\sigma,i} - (\partial_\sigma v - b_{\sigma,i})_{B_i,\psi_i}| 
        = c(n)\fint_{B_i}|\widetilde{H}|
\end{align}
by \textbf{(W7)}.
Here we set
\begin{equation}
  \widetilde{H} \coloneqq \partial_\sigma v - b_{\sigma,i} - (\partial_\sigma v - b_{\sigma,i})_{B_i,\psi_i}.
\end{equation}
Observe $(\widetilde{H})_{B_i,\psi_i} = 0$.
Moreover, using \eqref{eq:bsigmai} and \eqref{eq:Qi}, we have
\begin{align}
(\partial_\tau \widetilde{H})_{B_i,\psi_i}
&= (\partial_\tau \partial_{\sigma}v - \partial_{\tau} b_{\sigma,i})_{B_i,\psi_i} \\
&= (\partial_{\sigma + \tau}v - \partial_{\sigma + \tau}Q_i)_{B_i,\psi_i} = 0
\quad
\text{for any } 
\tau \in \bigcup_{\mu = 1}^{\ell - \ell_0} S_\mu.
\end{align}
Therefore,
\begin{equation}
    \fint_{B_i}|\widetilde{H}| \leq r_i^{\ell - \ell_0 + 1}\fint_{B_i}|D^{\ell - \ell_0 + 1}\widetilde{H}|.
\end{equation}
Moreover, \eqref{eq:bsigmai} implies
\begin{align}
|D^{\ell - \ell_0 + 1}\widetilde{H}| 
&= |D^{\ell - \ell_0 + 1}(\partial_{\sigma}v - b_{\sigma,i})| \\
&=|D^{\ell - \ell_0 + 1}(\partial_{\sigma}v - \partial_{\sigma}Q_i)|
\leq |D^{\ell}v| + |D^{\ell}Q_i|\quad \text{in }\R^n,
\end{align}
and \eqref{eq:integration by parts} ($(B,\mathcal{B},\eta,x_0)=(B_i,B_i,\psi_i,x_i)$) in Remark \ref{rem:integration by parts} deduces
$$ 
|D^{\ell}Q_i| \leq c(n,m)\fint_{B_i}|D^{\ell}v| \quad \text{in } B_i.
$$
Hence,
\begin{equation}\label{eq:Pioscilation-I_3}
I_3 \leq cr_i^{\ell - \ell_0 + 1}T_{\ell,i}.
\end{equation}
From \eqref{eq:Pioscilation-I's}, \eqref{eq:Pioscilation-I_2}, \eqref{eq:Pioscilation-I_1} and \eqref{eq:Pioscilation-I_3}, the conclusion follows.

\noindent \textbf{(ii)}:\,fix $\ell \in \{0,\ldots,m\}, k \in \{0,\ldots,m-1\}$ and $\sigma \in S_k$.
Then, by \textbf{(i)} we have
\begin{align}
    |\partial_{\sigma}P_j - \partial_{\sigma}Q_i|
    &\leq c\sum_{\tau \in S \setminus S_m,\tau \geq \sigma}|a_{\tau,j,i} - a_{\tau,i}||(x - x_i)^{\tau - \sigma}|\\
    &\leq c\sum_{\tau \in S \setminus S_m,\tau \geq \sigma}r_i^{\ell - |\tau|}T_{\ell,i}r_i^{|\tau| - |\sigma|}
    \leq cr_i^{\ell - k}T_{\ell,i}
    \quad
    \text{in }
    \frac{3}{4}B_i.
\end{align}
The proof is completed.
\end{proof}

Now we estimate the derivative of the truncated function.
\begin{lem}\label{lem:derivative}
There exist $c_1=c(n,m)$ and $c_2=c(n,m,p,q,\alpha,[a]_{\alpha},\delta_0)$ such that
\begin{align}
    |D^k v_\lambda| 
    &\leq c_1 R^{\ell - k} \lambda^\frac{1}{\gamma_{p,\ell}}
    \quad
    \mathrm{in\,}
    E(\lambda)^c, \\
    a(x)^{\frac{1}{q}}|D^k v_\lambda| 
    &\leq c_2 R^{\ell - k} \lambda^\frac{1}{\gamma_{q, \ell}}
    \quad
    \mathrm{in\,}
    E(\lambda)^c \cap B_{2R}
\end{align}
for each $\ell \in \{0,\ldots,m\}$ and $k \in \{0,\ldots,\ell\}$.
\end{lem}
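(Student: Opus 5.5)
Fix $x \in E(\lambda)^c$; by \textbf{(W1)} we have $x \in \frac{1}{2}B_i$ for some $i$. By \eqref{eq:vlambda-2} and \textbf{(P3)},
\begin{equation}
D^k v_\lambda = \sum_{j \in A_i} D^k(P_j\psi_j) = D^k Q_i + \sum_{j\in A_i} D^k\bigl((P_j - Q_i)\psi_j\bigr)
\quad\text{in } \tfrac34 B_i,
\end{equation}
since $\sum_{j\in A_i} D^\mu\psi_j = 0$ for $\mu\ge1$ in $\frac34 B_i$. Expanding with Leibniz's rule and using \textbf{(P2)}, \textbf{(W4)}, \textbf{(W6)} and Lemma \ref{lem:Pioscilation} \textbf{(ii)}, each term is bounded by $c\, r_i^{-(k-\mu)} r_i^{\ell-\mu} T_{\ell,i}$. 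Remark \ref{rem:integration by parts} gives $|D^k Q_i| \le c\fint_{B_i}|D^k v|$ for $k\le m-1$. For $k\le \ell$ I also want this in the form $r_i^{\ell-k}T_{\ell,i}$; I intend to combine the $k$-th order estimate with a Poincaré step, or simply keep both forms. This yields
\begin{equation}
|D^k v_\lambda(x)| \le c\, r_i^{\ell-k}\, T_{\ell,i} + c\sum_{\mu=k}^{\ell} r_i^{\mu-k}\,\sup_{j\in A_i}\fint_{B_j}|D^\mu v|,
\end{equation}
so everything reduces to bounding averages $\fint_{B_j}|D^\mu v|$ with $\mu\le \ell$ by $c\,r_j^{\ell-\mu}\lambda^{1/\gamma_{p,\ell}}$, up to powers of $R$. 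Since $r_i\le R$ by \textbf{(W2)}, we have $r_i^{\ell-k}\le R^{\ell-k}$.

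For the $p$-part, use \textbf{(W3)}: $16B_j\cap E(\lambda)\neq\emptyset$, so some $y$ has $G(y)=M(g)^{1/\delta_0}(y)\le\lambda$. Write $v=(u-P)\eta$ and expand $D^\mu v$ by Leibniz. By Lemma \ref{lem:useful-formula} on $B_{2R}$, and since $|D^\nu\eta|\le cR^{-\nu}$, we get
\begin{equation}
|D^\mu v| \le c\sum_{\nu\le\mu} R^{\nu-\mu}|D^\nu u - D^\nu P| \le c\, M_{B_{2R}}^{2\mu+1}(|D^\mu u|).
\end{equation}
Here I use $D^\mu u$ rather than $D^\ell u$, matching $g$, which contains $M^{2\mu+1}(H_\mu(\cdot,D^\mu u)^{\delta_0}\psi)$. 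Jensen's inequality then gives
\begin{equation}
\fint_{B_j}|D^\mu v| \le \Bigl(\fint_{16B_j}\bigl(M^{2\mu+1}(H_\mu^{\delta_0}\psi)\bigr)\Bigr)^{1/(\gamma_{p,\mu}\delta_0)} \le M(g)(y)^{1/(\gamma_{p,\mu}\delta_0)} \le \lambda^{1/\gamma_{p,\mu}},
\end{equation}
after checking that $M_B(|f|)^{\gamma\delta_0}\le M(|f|^{\gamma\delta_0})$, which holds by Jensen since $\gamma\delta_0>1$, and iterating. This gives the bound with $\lambda^{1/\gamma_{p,\ell}}$ when $k=\ell$. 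For $k<\ell$, the bound $R^{\ell-k}\lambda^{1/\gamma_{p,\ell}}$ needs only $T_{\ell,i}$ terms, so I choose the decomposition with $\ell$ as the Poincaré level, bounding $D^kQ_i$ by iterated Poincaré from level $\ell$. Since $\lambda>\Lambda_0\ge 1$, mixed exponents can be absorbed.

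For the $a$-part, multiply by $a(x)^{1/q}$. For $x\in B_{2R}$ and points of $B_j\subset 16B_j$, the $\mathcal Z^\alpha$ condition gives $a(x)^{1/q}\le c(a(z)^{1/q}+r_j^{\alpha/q})$, which is the mechanism behind Lemma \ref{lem:Hedberg-2}. I will apply Lemma \ref{lem:Hedberg-2} with $\beta=\beta_\ell$ (valid by \eqref{eq:conv-M_beta-0}) to $a^{1/q}M^{2\ell+1}_{B_{2R}}(|D^\ell u|)$. This splits the estimate into $M^{2\ell+1}(a^{1/q}|D^\ell u|)$, controlled by $H_\ell^{\delta_0}$ inside $g$ and giving $\lambda^{1/\gamma_{q,\ell}}$, plus $M_{\beta_\ell}(M^{2\ell}(\cdot))$, controlled by the $F_0$ term $M_{\beta_\ell}(M^{2\ell+1}(D^\ell u\psi))^{\gamma_{q,\ell}}\le g^{1/\delta_0}$-type, which gives $\lambda^{1/\gamma_{q,\ell}}$ as well.

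The main obstacle is the residual cross term, where $a^{1/q}$ hits the Poincaré/polynomial part evaluated on balls of radius $r_j$. There I will use $r_j^{\alpha/q}\fint|D^\ell v|\le r_j^{\alpha/q}\lambda^{1/\gamma_{p,\ell}}$ together with a Lebesgue-norm bound $\lambda\lesssim r_j^{-n/(\gamma_{p,\ell}\delta_0)}\|\cdot\|$, interpolating as $\lambda^{1/\gamma_{p,\ell}}=\lambda^{1/\gamma_{q,\ell}}\lambda^{1/\gamma_{p,\ell}-1/\gamma_{q,\ell}}$. The choice of $R_0$ in \eqref{eq:R_0} is made precisely so that this prefactor is at most $1$. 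This exponent bookkeeping, together with \eqref{eq:j_0-2} for the approximants $u_j$, is where I expect the real work to lie.
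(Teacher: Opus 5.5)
\textbf{There is a genuine gap in how you handle the lower-order (mean-value) part when $k<\ell$.}

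Your splitting $D^k v_\lambda = D^kQ_i+\sum_{j\in A_i}D^k\bigl((P_j-Q_i)\psi_j\bigr)$ is correct. The second sum is also fine: it is at most $c\,r_i^{\ell-k}T_{\ell,i}$ by Lemma \ref{lem:Pioscilation}. The problem is $D^kQ_i$. By Lemma \ref{lem:Pi} it is controlled by $\sum_{\mu=k}^{m-1} r_i^{\mu-k}|(D^\mu v)_{B_i,\psi_i}|$. For $k\le\mu<\ell$ these are mean values of lower derivatives of $v$ on the small ball $B_i$, and a mean value is not controlled by higher derivatives on the same ball. For instance, if $v$ equals a nonzero polynomial of degree $<\ell$ near $\bigcup_{\tilde j\in A_i}B_{\tilde j}$, then $T_{\ell,i}=0$. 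So ``iterated Poincar\'e from level $\ell$'' fails, and so does your intermediate target $\fint_{B_j}|D^\mu v|\lesssim r_j^{\ell-\mu}\lambda^{1/\gamma_{p,\ell}}$.

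Your fallback also fails. Bounding $|D^\mu v|$ by $M^{2\mu+1}_{B_{2R}}(|D^\mu u|)$ produces terms $r_i^{\mu-k}\lambda^{1/\gamma_{p,\mu}}$, e.g.\ $\lambda^{1/\gamma_{p,k}}$ when $\mu=k$. These cannot be absorbed into $R^{\ell-k}\lambda^{1/\gamma_{p,\ell}}$ using only $\lambda>\Lambda_0\ge 1$: the power of $R<1$ goes the wrong way, and nothing ties $\lambda$ to $R$.

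\textbf{The missing idea: do the Poincar\'e step once, on $B_{2R}$ at scale $R$, and always at level $\ell$.} Since $v=(u-P)\eta$ with $P$ the weighted mean value polynomial on $B_{2R}$, Lemma \ref{lem:useful-formula} applied with $\ell$ (not $\mu$), together with \eqref{eq:eta}, gives
\[
\sum_{\mu=0}^{\ell}\frac{|D^\mu v|}{R^{\ell-\mu}}\le c\,M^{2\ell+1}_{B_{2R}}\bigl(|D^\ell u|\bigr)\chi_{B_{2R}}\quad\text{a.e.}
\]
This is exactly why the lemma carries $R^{\ell-k}$ rather than $r_i^{\ell-k}$. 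With it:
\begin{itemize}
\item for $k\le\mu\le\ell$, \textbf{(W2)} gives $r_j^{\mu-k}|(D^\mu v)_{\frac34B_j,\psi_j}|\le cR^{\ell-k}\fint_{\frac34B_j}R^{\mu-\ell}|D^\mu v|$;
\item for $\mu>\ell$, integrate by parts against $\psi_j$ down to level $\ell$.
\end{itemize}
The same works for $Q_i$ on $B_i$. Every polynomial term is then at most $cR^{\ell-k}\fint M^{2\ell+1}_{B_{2R}}(|D^\ell u|)\chi_{B_{2R}}$. Your Jensen/$G(y_j)\le\lambda$ argument at level $\ell$ then gives $\lambda^{1/\gamma_{p,\ell}}$; since $g$ contains the level-$\ell$ term for every $\ell$, there was no reason to drop to level $\mu$. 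The paper bounds $\partial_\sigma P_j$ this way directly when no derivative falls on $\psi_j$, and uses $Q_i$ only otherwise. Your $Q_i$-splitting works equally well once $D^kQ_i$ is treated this way.

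\textbf{The $a$-part.} Your use of Lemma \ref{lem:Hedberg-2} with $\beta_\ell$ matches the paper, but the cross term must interpolate the average
\[
A=\fint M^{2\ell+1}_{B_{2R}}(|D^\ell u|)\chi_{B_{2R}},
\]
not $\lambda$. Write
\[
r_i^{\alpha/q}A=\bigl(r_i^{\alpha/q}A^{1-\gamma_{p,\ell}/\gamma_{q,\ell}}\bigr)A^{\gamma_{p,\ell}/\gamma_{q,\ell}} .
\]
Bound the second factor by $c\lambda^{1/\gamma_{q,\ell}}$. Bound the first by Jensen in $L^{\gamma_{p,\ell}\delta_0}$ combined with \eqref{eq:R_0} and \eqref{eq:j_0-2}. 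The norm appearing in \eqref{eq:R_0} controls $A$, not $\lambda$.
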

\begin{proof}
Fix $\ell \in \{0,\ldots,m\}$ and $k \in \{0,\ldots,\ell\}$.
By \textbf{(W1)}, it suffices to show that 
\begin{align}
|D^k v_\lambda| &\leq c(n,m) R^{\ell - k}\lambda^\frac{1}{\gamma_{p,\ell}}\  \text{in}\ \frac{3}{4}B_i, \\
a(x)^{\frac{1}{q}}|D^k v_\lambda| &\leq c(n,m,p,q,\alpha,[a]_{\alpha},\delta_0) R^{\ell - k}\lambda^\frac{1}{\gamma_{q,\ell}}\  \text{in}\ \frac{3}{4}B_i\cap B_{2R}
\end{align}
for every $i \in \mathbb{N}$. 

We fix any $i \in \mathbb{N}$.
Since $v_\lambda = \sum_{j \in A_i} P_j\psi_j$ in $\frac{3}{4}B_i$ (see \eqref{eq:vlambda-2}), 
we have
\begin{equation}
    \partial_\sigma v_\lambda 
    = \sum_{j \in A_i} \sum_{\sigma_1 + \sigma_2 = \sigma} \partial_{\sigma_1} P_j \partial_{\sigma_2}\psi_j 
    = \sum_{\sigma_1 + \sigma_2 = \sigma}\sum_{j \in A_i} \partial_{\sigma_1} P_j \partial_{\sigma_2}\psi_j
\end{equation}
for each $\sigma \in S_k$. Note that we used the fact $\sharp A_i < \infty$ (see \textbf{(W6)}) to justify the interchange of differentiation and summation, as well as the order of summation. Therefore we obtain
\begin{equation}
    |\partial_\sigma v_\lambda| \leq \sum_{\sigma_1 + \sigma_2 
    = \sigma} \left|\sum_{j \in A_i} \partial_{\sigma_1} P_j \partial_{\sigma_2}\psi_j \right|
    \quad
    \mathrm{in\,}
    \frac{3}{4}B_i.
\end{equation}
Fix $\sigma_1$ and $\sigma_2$ such that $\sigma = \sigma_1 + \sigma_2$. We only have to show the following two inequalities:
\begin{align}\label{eq:derivative-onlyhaveto}
    \left|\sum_{j \in A_i} \partial_{\sigma_1} P_j \partial_{\sigma_2}\psi_j \right|
    &\leq c(n,m)R^{\ell - k}\lambda^\frac{1}{\gamma_{p,\ell}}
    \quad
    \mathrm{in\,}
    \frac{3}{4}B_i, \\
    a(x)^{\frac{1}{q}}\left|\sum_{j \in A_i} \partial_{\sigma_1} P_j \partial_{\sigma_2}\psi_j \right|
    &\leq c(n,m,p,q,\alpha,[a]_{\alpha},\delta_0)R^{\ell - k}\lambda^\frac{1}{\gamma_{q,\ell}}
    \quad
    \mathrm{in\,}
    \frac{3}{4}B_i \cap B_{2R}.
\end{align}

\noindent \textbf{Case 1 ($\sigma_2 = 0$)}:\,then $\sigma_1 = \sigma$ and it follows that
\begin{align}\label{eq:case1-1}
    \left|\sum_{j \in A_i} \partial_{\sigma_1} P_j \partial_{\sigma_2}\psi_j\right| 
    &\le \sum_{j \in A_i} |\partial_{\sigma} P_j| \\
    &\leq  \sharp{A_i} \sup_{j \in A_i} |D^k P_j|
    \leq c(n)\sup_{j \in A_i} |D^k P_j|
    \quad \text{in }\R^n.
\end{align} 
Here we used \textbf{(P1)} and \textbf{(W6)}. 

We show
\begin{equation}\label{eq:case1-2}
  |D^k P_j| \le c(n,m)\fint_{\frac{3}{4}B_j}M_{B_{2R}}^{2\ell+1}\left(D^\ell u\right)\chi_{B_{2R}}
  \quad
  \text{in }\frac{3}{4}B_i
\end{equation}
for any $j \in A_i$. Let us take any $j \in A_i$.
If $k=m$, \eqref{eq:case1-2} is obvious by \eqref{eq:Pi}. Therefore, we may assume $k<m$. Firstly, Lemma \ref{lem:Pi} \textbf{(iii)} with $(B,\mathcal{B}) = (\frac{3}{4}B_j,\frac{3}{4}B_i)$ implies
\begin{equation}\label{eq:case1-3}
    |D^k P_j|
    \leq c\sum_{\mu = k}^{m - 1}r_j^{\mu - k} |(D^{\mu}v)_{\frac{3}{4}B_j,\psi_j}| 
    \quad
    \text{in }\frac{3}{4}B_i.
\end{equation}
We fix $\mu \in \{k,\ldots,m - 1\}$. If $\mu \leq \ell$, then it holds that
\begin{align}\label{eq:case1-4}
    r_j^{\mu - k}|(D^{\mu}v)_{\frac{3}{4}B_j,\psi_j}|
    &= r_j^{\mu - k} \left| \frac{1}{\|\psi_j\|_{L^1 \left( \frac{3}{4}B_j \right)}}\int_{\frac{3}{4}B_j} D^{\mu}v \cdot \psi_j \right| \\
    &= c(n)r_j^{\mu - k} \left| \fint_{\frac{3}{4}B_j} D^{\mu}v \cdot \psi_j \right| \\
    &\leq cr_j^{\mu - k} \fint_{\frac{3}{4}B_j} |D^{\mu}v| \\
    &\leq cR^{\mu - k} \fint_{\frac{3}{4}B_j} |D^{\mu}v|
    = c(n)R^{\ell - k} \fint_{\frac{3}{4}B_j} \left| \frac{D^{\mu}v}{R^{\ell - \mu}} \right|
\end{align}
by \eqref{eq:weight-mean}, \textbf{(P1)} and \textbf{(W2)}. On the other hand, if $\mu > \ell$, then we obtain 
\begin{align}\label{eq:case1-5}
    r_j^{\mu - k}|(D^{\mu}v)_{\frac{3}{4}B_j,\psi_j}|
    &\le c(n)r_j^{\mu - k}|(D^{\mu}v \cdot \psi_j)_{\frac{3}{4}B_j}| \\
    &\le  cr_j^{\mu - k} \fint_{\frac{3}{4}B_j} |D^{\ell}v||D^{\mu-\ell}\psi_j| \\
    &\leq cr_j^{\ell - k} \fint_{\frac{3}{4}B_j} |D^{\ell}v|
    \leq c(n,m)R^{\ell - k}\fint_{\frac{3}{4}B_j} |D^{\ell}v|
\end{align}
by \textbf{(P2)} and integration by parts. From \eqref{eq:case1-3}, \eqref{eq:case1-4} and \eqref{eq:case1-5}, we derive
\begin{equation}\label{eq:case1-6}
  |D^k P_j| 
  \le c(n,m)R^{\ell - k}\sum_{\mu = 0}^{\ell}\fint_{\frac{3}{4}B_j} \left| \frac{D^{\mu}v}{R^{\ell - \mu}} \right|
  \quad
  \text{in }\frac{3}{4}B_i.
\end{equation}
In addition, \eqref{eq:eta}, \eqref{eq:v} and Lemma \ref{lem:useful-formula} yield
\begin{align}\label{eq:est-T_{ell,i}}
\sum_{\mu=0}^{\ell}\left| \frac{D^{\mu}v}{R^{\ell - \mu}} \right| 
&\le c\sum_{\mu=0}^{\ell}\sum_{\mu_1=0}^{\mu}\frac{|D^{\mu_1}u - D^{\mu_1}P||D^{\mu-\mu_1}\eta|}{R^{\ell - \mu}} \\
&\le c\sum_{\mu=0}^{\ell}\sum_{\mu_1=0}^{\mu}\frac{|D^{\mu_1}u - D^{\mu_1}P|}{R^{\ell - \mu_1}}\chi_{B_{2R}} \\
&\leq c\sum_{\mu = 0}^{\ell}\left| \frac{D^\mu u - D^\mu P}{R^{\ell - \mu}} \right|\chi_{B_{2R}} \\
&\le c(n,m)M_{B_{2R}}^{2\ell+1}\left(D^\ell u\right)\chi_{B_{2R}}
\quad
\text{a.e. }
x \in \R^n.
\end{align}
Therefore, \eqref{eq:case1-2} follows from \eqref{eq:case1-6} and \eqref{eq:est-T_{ell,i}}.
 
By \eqref{eq:derivative-onlyhaveto}, \eqref{eq:case1-1} and \eqref{eq:case1-2}, it suffices to show that
\begin{align}\label{eq:derivative-case1}
\fint_{\frac{3}{4}B_j}M_{B_{2R}}^{2\ell+1}\left(|D^\ell u|\right)\chi_{B_{2R}}
&\leq c(n,m)\lambda^\frac{1}{\gamma_{p,\ell}}
\quad
\text{in }
\frac{3}{4}B_i,
\end{align}
\begin{align}\label{eq:derivative-case1-a}
&a(x)^{\frac{1}{q}} \fint_{\frac{3}{4}B_j}M_{B_{2R}}^{2\ell+1}\left(|D^\ell u|\right)(y)\chi_{B_{2R}}(y)\, dy\\
&\leq c(n,m,p,q,\alpha,[a]_{\alpha},\delta_0)\lambda^\frac{1}{\gamma_{q,\ell}}
\quad
\text{in }
\frac{3}{4}B_i \cap B_{2R}
\end{align}
for any $j \in A_i$. 

We fix any $j \in A_i$. Note that we can take $y_j \in 16B_j \cap E(\lambda)$ by \textbf{\textbf{(W3)}}. We first show \eqref{eq:derivative-case1}. Applying Jensen's inequality iteratively, we get
\begin{align}\label{eq:derivative-case1-1}
    &\fint_{\frac{3}{4}B_j} M_{B_{2R}}^{2\ell + 1}\left(|D^\ell u|\right)\chi_{B_{2R}} \\
    &\le  c\fint_{16B_j} M^{2\ell+1}\left(|D^\ell u|\chi_{B_{2R}}\right)\chi_{B_{2R}} \\
    &\le  c \left( \fint_{16B_j} M^{2\ell+1}\left(|D^\ell u|\chi_{B_{2R}}\right)^{\gamma_{p,\ell}\delta_0}\chi_{B_{2R}} \right)^{\frac{1}{\gamma_{p,\ell}\delta_0}}  \\
    &\le c \left( \fint_{16B_j} M^{2\ell+1}\left(|D^\ell u|^{\gamma_{p,\ell}\delta_0}\chi_{B_{2R}}\right)\chi_{B_{2R}} \right)^{\frac{1}{\gamma_{p,\ell}\delta_0}}  \\
    &\le c \left( \fint_{16B_j} g \right)^{\frac{1}{\gamma_{p,\ell}\delta_0}} 
    \leq  c[M(g)(y_j)]^{\frac{1}{\gamma_{p,\ell}\delta_0}} 
    = c[G(y_j)]^{\frac{1}{\gamma_{p,\ell}}}
    \le c(n)\lambda^{\frac{1}{\gamma_{p,\ell}}}.
\end{align}
Here we used \eqref{eq:g_j}, \eqref{eq:G} and \eqref{eq:Elambda}. This implies \eqref{eq:derivative-case1}.

Next we prove \eqref{eq:derivative-case1-a}.
Fix any $x \in \frac{3}{4}B_i \cap B_{2R}$. Then we have
\begin{align}\label{eq:derivative-case1-a-J's}
&a(x)^{\frac{1}{q}} \fint_{\frac{3}{4}B_j}M_{B_{2R}}^{2\ell+1}\left(|D^\ell u|\right)(y)\chi_{B_{2R}}(y)\,dy \\
&\le c\fint_{\frac{3}{4}B_j} a(y)^{\frac{1}{q}}M_{B_{2R}}^{2\ell+1}\left(|D^\ell u|\right)(y)\chi_{B_{2R}}(y)\,dy  \\
&+ c\fint_{\frac{3}{4}B_j} |x-y|^{\frac{\alpha}{q}}M_{B_{2R}}^{2\ell+1}\left(|D^\ell u|\right)(y)\chi_{B_{2R}}(y)\,dy 
\eqqcolon c(q,\alpha,[a]_{\alpha})(J_1 + J_2).
\end{align}

For $J_1$, note that $u = u_j \in W^{m,\infty}(B_{2R})$, $0 < \beta_{\ell} < \min\left\{n,\frac{\alpha}{q}\right\}$ and $2R \le 1$ (see \eqref{eq:conv-u}, \eqref{eq:R_0} and \eqref{eq:conv-M_beta-0}, respectively). Then Lemma \ref{lem:Hedberg-2} yields
\begin{align}\label{eq:derivative-case1-a-J_1's}
J_1
&\le c\fint_{\frac{3}{4}B_j} M_{B_{2R}}^{2\ell+1}(a^{\frac{1}{q}}|D^\ell u|)(y)\chi_{B_{2R}}(y)\,dy \\
&+ c\fint_{\frac{3}{4}B_j} M_{\beta_\ell}\left(M^{2\ell}\left(|D^\ell u|\chi_{B_{2R}}\right)\right)(y)\chi_{B_{2R}}(y)\,dy \\
&\le c\fint_{16B_j} M^{2\ell+1}(a^{\frac{1}{q}}|D^\ell u|\chi_{B_{2R}})(y)\chi_{B_{2R}}(y)\,dy \\
&+ c\fint_{16B_j} M_{\beta_\ell}(M^{2\ell}(|D^\ell u|\chi_{B_{2R}}))(y)\chi_{B_{2R}}(y)\,dy \\
&\eqqcolon c(n,m,p,q,\alpha,[a]_{\alpha},\delta_0)(J_{11} + J_{12}).
\end{align}
 
For $J_{11}$, we estimate in the same way as \eqref{eq:derivative-case1-1}. Indeed, we obtain
\begin{align}\label{eq:derivative-case1-a-J_11}
J_{11}
&\le  \left( \fint_{16B_j} M^{2\ell+1}\left(a^{\frac{1}{q}}|D^\ell u|\chi_{B_{2R}}\right)^{\gamma_{q,\ell}\delta_0} (y)\chi_{B_{2R}}(y)\,dy\right)^{\frac{1}{\gamma_{q,\ell}\delta_0}}  \\
&\le \left( \fint_{16B_j} M^{2\ell+1}\left(a^{\frac{\gamma_{q,\ell}\delta_0}{q}}|D^\ell u|^{\gamma_{q,\ell}\delta_0}\chi_{B_{2R}}\right)(y)\chi_{B_{2R}}(y)\,dy \right)^{\frac{1}{\gamma_{q,\ell}\delta_0}}  \\
&\le \left( \fint_{16B_j} g(y)\,dy \right)^{\frac{1}{\gamma_{q,\ell}\delta_0}} \\
&\leq  [M(g)(y_j)]^{\frac{1}{\gamma_{q,\ell}\delta_0}} 
= [G(y_j)]^{\frac{1}{\gamma_{q,\ell}}}
\le \lambda^{\frac{1}{\gamma_{q,\ell}}}
\end{align}
by \eqref{eq:g_j}, \eqref{eq:G} and \eqref{eq:Elambda}.

For $J_{12}$, using Jensen's inequality again, we get
\begin{align}\label{eq:derivative-case1-a-J_12}
J_{12}
&\le \left( \fint_{16B_j} M_{\beta_\ell}(M^{2\ell}|D^\ell u|\chi_B)(y)^{\gamma_{q,\ell}\delta_0}\chi_{B_{2R}}(y)\,dy \right)^{\frac{1}{\gamma_{q,\ell}\delta_0}}\\
&\le \left( \fint_{16B_j} F_0(y)\,dy \right)^{\frac{1}{\gamma_{q,\ell}\delta_0}}
\le \left( \fint_{16B_j} g(y)\,dy \right)^{\frac{1}{\gamma_{q,\ell}\delta_0}}\\
&\le [M(g)(y_j)]^{\frac{1}{\gamma_{q,\ell}\delta_0}} 
= [G(y_j)]^{\frac{1}{\gamma_{q,\ell}}}
\le \lambda^{\frac{1}{\gamma_{q,\ell}}}.
\end{align}
Here we note that \eqref{eq:F_0}, \eqref{eq:g_j}, \eqref{eq:G} and \eqref{eq:Elambda}.
From \eqref{eq:derivative-case1-a-J_1's}, \eqref{eq:derivative-case1-a-J_11} and \eqref{eq:derivative-case1-a-J_12}, we derive
\begin{equation}\label{eq:derivative-case1-a-J_1}
J_1 \le c(n,m,p,q,\alpha,[a]_{\alpha},\delta_0)\lambda^{\frac{1}{\gamma_{q,\ell}}}.
\end{equation}
 
Next we estimate $J_{2}$.
Since $j \in A_i$, we can take $z \in \frac{3}{4}B_j \cap \frac{3}{4}B_i$. Then we have
\begin{align}
|x-y|^{\frac{\alpha}{q}}
&\le (|x-z| + |z-y|)^{\frac{\alpha}{q}} \\
&\le \left( \frac{3}{4}r_i + \frac{3}{4}r_j \right)^{\frac{\alpha}{q}} 
\le c(q,\alpha)r_i^{\frac{\alpha}{q}}
\quad
\text{for any }
y \in \frac{3}{4}B_j
\end{align}
by \textbf{(W4)}.
Therefore,
\begin{align}\label{eq:derivative-case1-a-J_2's}
J_2 
&\le cr_i^{\frac{\alpha}{q}}\fint_{\frac{3}{4}B_j} M_{B_{2R}}^{2\ell+1}\left(|D^\ell u|\right)(y)\chi_{B_{2R}}(y)\,dy \\
&\le  c(q,\alpha)r_i^{\frac{\alpha}{q}}\left(\fint_{\frac{3}{4}B_j} M_{B_{2R}}^{2\ell+1}\left(|D^\ell u|\right)(y)\chi_{B_{2R}}(y)\,dy \right)^{1-\frac{\gamma_{p,\ell}}{\gamma_{q,\ell}}} \\
&\times \left(\fint_{\frac{3}{4}B_j} M_{B_{2R}}^{2\ell+1}\left(|D^\ell u|\right)(y)\chi_{B_{2R}}(y)\,dy \right)^{\frac{\gamma_{p,\ell}}{\gamma_{q,\ell}}}.
\end{align}
For the second factor of the right-hand side in \eqref{eq:derivative-case1-a-J_2's}, we obtain
\begin{equation}\label{eq:derivative-case1-a-J_21}
\left(\fint_{\frac{3}{4}B_j} M_{B_{2R}}^{2\ell+1}\left(|D^\ell u|\right)(y)\chi_{B_{2R}}(y)\,dy \right)^{\frac{\gamma_{p,\ell}}{\gamma_{q,\ell}}} 
\le c(n)\lambda^{\frac{1}{\gamma_{q,\ell}}}
\end{equation}
by \eqref{eq:derivative-case1-1}.
Moreover, for the first factor of the right-hand side in \eqref{eq:derivative-case1-a-J_2's}, from Jensen's inequality and \textbf{(W2)}, it follows that
\begin{align}\label{eq:derivative-case1-a-J_22}
&r_i^{\frac{\alpha}{q}}\left(\fint_{\frac{3}{4}B_j} M_{B_{2R}}^{2\ell+1}\left(|D^\ell u|\right)(y)\chi_{B_{2R}}(y)\,dy \right)^{1-\frac{\gamma_{p,\ell}}{\gamma_{q,\ell}}} \\
&\le r_i^{\frac{\alpha}{q}}\left(\fint_{\frac{3}{4}B_j} M^{2\ell+1}\left(|D^\ell u| \chi_{B_{2R}}\right)(y)^{\gamma_{p,\ell}\delta_0}\chi_{B_{2R}}(y)\,dy \right)^{\frac{1}{\gamma_{p,\ell}\delta_0}-\frac{1}{\gamma_{q,\ell}\delta_0}} \\
&\le r_i^{\frac{\alpha}{q}-n\left(\frac{1}{\gamma_{p,\ell}\delta_0}-\frac{1}{\gamma_{q,\ell}\delta_0}\right)}\|M^{2\ell+1}(|D^\ell u|\chi_{\Omega_0})\|_{L^{\gamma_{p,\ell}\delta_0}(\Omega_0)}^{1-\frac{\gamma_{p,\ell}}{\gamma_{q,\ell}}} \\
&\le R^{\frac{\alpha}{q}-n\left(\frac{1}{\gamma_{p,\ell}\delta_0}-\frac{1}{\gamma_{q,\ell}\delta_0}\right)}\|M^{2\ell+1}(|D^\ell u|\chi_{\Omega_0})\|_{L^{\gamma_{p,\ell}\delta_0}(\Omega_0)}^{1-\frac{\gamma_{p,\ell}}{\gamma_{q,\ell}}} 
\le 1.
\end{align}
In the last inequality, we used \eqref{eq:R_0} and \eqref{eq:j_0-2}.
Using \eqref{eq:derivative-case1-a-J_2's}, \eqref{eq:derivative-case1-a-J_21} and \eqref{eq:derivative-case1-a-J_22}, we obtain
\begin{align}\label{eq:derivative-case1-a-J_2}
J_2 \le c(n,q,\alpha)\lambda^{\frac{1}{\gamma_{q,\ell}}}.
\end{align}
Hence, we obtain \eqref{eq:derivative-case1-a} from \eqref{eq:derivative-case1-a-J's}, \eqref{eq:derivative-case1-a-J_1}, and \eqref{eq:derivative-case1-a-J_2}. This completes the proof of \textbf{Case 1}.

\noindent \textbf{Case 2 ($\sigma_2 \neq 0$)}:\,since $1 = \sum_{j \in A_i}\psi_j$ in $\tfrac{3}{4}B_i$ by \textbf{(P3)}, we have
$0 = \sum_{j \in A_i}\partial_{\sigma_2}\psi_j$ in $\tfrac{3}{4}B_i$. We also note that $|\sigma_1| < k \le m$ since $\sigma_2 \neq 0$.
Then we have
\begin{align}\label{eq:case2-1}
    \left|\sum_{j \in A_i} \partial_{\sigma_1} P_j \partial_{\sigma_2}\psi_j\right|
    &= \left|\sum_{j \in A_i} (\partial_{\sigma_1} P_j - \partial_{\sigma_1} Q_i) \partial_{\sigma_2}\psi_j\right| \\
    &\leq c\sum_{j \in A_i} r_j^{-|\sigma_2|}r_i^{\ell - |\sigma_1|}T_{\ell,i} \\
    &\leq c\sup_{j \in A_i} r_j^{\ell-k}T_{\ell,i} 
    \leq c(n,m)R^{\ell - k}T_{\ell,i}
    \quad
    \text{in }
    \frac{3}{4}B_i
\end{align}
by Lemma \ref{lem:Pioscilation}, \textbf{(W4)}, \textbf{(W6)} and \textbf{(W2)}.
Then \eqref{eq:est-T_{ell,i}} yields
\begin{equation}\label{eq:case2-2}
  T_{\ell,i} 
  = \sup_{j \in A_i}\fint_{B_j}|D^\ell v|
  \le c(n,m)\sup_{j \in A_i}\fint_{B_j}M_{B_{2R}}^{2\ell+1}\left(|D^\ell u|\right)\chi_{B_{2R}}.
\end{equation}
From \eqref{eq:derivative-onlyhaveto}, \eqref{eq:case2-1} and \eqref{eq:case2-2}, it is sufficient to show that
\begin{align}
\fint_{B_j}M_{B_{2R}}^{2\ell+1}\left(|D^\ell u|\right)\chi_{B_{2R}} &\leq c\lambda^\frac{1}{\gamma_{p,\ell}} 
\quad
\text{in }
\frac{3}{4}B_i, \\
a(x)^{\frac{1}{q}} \fint_{B_j} M_{B_{2R}}^{2\ell+1}\left(|D^\ell u|\right)(y)\chi_{B_{2R}}(y)\, dy
&\leq c\lambda^\frac{1}{\gamma_{q,\ell}}
\quad
\text{in }
\frac{3}{4}B_i \cap B_{2R}
\end{align}
for any $j \in A_i$.
These two inequalities are obtained by repeating the proofs of
\eqref{eq:derivative-case1} and \eqref{eq:derivative-case1-a},
with $B_j$ in place of $\frac{3}{4}B_j$.
The proof is completed.
\end{proof}

This completes the estimates for the derivatives of $v_{\lambda}$. Next, we verify the admissibility of $v_{\lambda}$. To this end, we present the following lemma, which provides estimates for the difference between $v$ and $P_i$.
\begin{lem}\label{lem:oscilation}
There exists a constant $c=c(n,m)$ such that 
\begin{equation}
    \fint_{\frac{3}{4}B_i} |D^\ell v - D^\ell P_i| 
    \leq c {r_i}^{m-\ell} \lambda^\frac{1}{p}
\end{equation}
for any $i \in \N$ and each $\ell \in \{0,\ldots,m\}$.
\end{lem}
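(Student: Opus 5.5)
The plan is to reduce the estimate to a higher-order Poincaré inequality on $\frac34 B_i$ and then control $D^m v$ through the good set, exactly as in the proof of \eqref{eq:derivative-case1-1}. Fix $i\in\N$ and $\ell\in\{0,\ldots,m\}$, and set $w\coloneqq v-P_i$. By \eqref{eq:Pi}, $(\partial_\sigma w)_{\frac34B_i,\psi_i}=0$ for every $\sigma\in S\setminus S_m$. Moreover $D^m w=D^m v$ because $\deg P_i\le m-1$.

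First I treat $\ell<m$. Apply Lemma \ref{lem:Hedberg-1} to $D^\ell w$ on the ball $\frac34B_i$ with weight $\psi_i$ and order $m-\ell$; the averaging hypothesis holds by the vanishing weighted means. Alternatively, iterate the classical Poincaré inequality exactly as was done for $H$ in the proof of Lemma \ref{lem:Pioscilation}. The hypothesis \eqref{assmpt:He-1} holds: by \textbf{(P1)} we have $0\le\psi_i\le1$ and $\psi_i\ge\chi_{\frac12B_i}$, so $\|\psi_i\|_{L^1}\ge c(n)^{-1}|\frac34B_i|$. The constant $\frac12$ in \eqref{assmpt:He-1} is replaced by $c(n)^{-1}$, which changes only the constants. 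This yields
\begin{equation}
\fint_{\frac34B_i}|D^\ell v-D^\ell P_i|\le c(n,m)\,r_i^{m-\ell}\fint_{\frac34B_i}|D^m v|.
\end{equation}
When $\ell=m$ we have $D^mP_i=0$ and the same bound holds trivially. So in all cases it remains to show $\fint_{\frac34B_i}|D^mv|\le c\lambda^{1/p}$.

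Next I bound $|D^m v|$. Since $v=(u-P)\eta$, the Leibniz rule together with \eqref{eq:eta} and Lemma \ref{lem:useful-formula} (applied with $\ell=m$ on $B_{2R}$) gives the analogue of \eqref{eq:est-T_{ell,i}}:
\begin{equation}
|D^mv|\le c\sum_{\mu=0}^m\frac{|D^\mu u-D^\mu P|}{R^{m-\mu}}\chi_{B_{2R}}\le c\,M^{2m+1}_{B_{2R}}(|D^mu|)\chi_{B_{2R}}.
\end{equation}
Then I follow the chain in \eqref{eq:derivative-case1-1} with $\ell=m$. Recall $\gamma_{p,m}=p$ and $p\delta_0>1$. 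Enlarging $\frac34B_i$ to $16B_i$, which meets $E(\lambda)$ by \textbf{(W3)}, and applying Jensen with exponent $p\delta_0$, I use that $M^{2m+1}(|D^mu|^{p\delta_0}\chi_{B_{2R}})\le M^{2m+1}(H_m(x,D^mu)^{\delta_0}\psi)\le g$ on $B_{2R}$. This gives
\begin{equation}
\fint_{\frac34B_i}|D^mv|\le c\Big(\fint_{16B_i}g\Big)^{\frac1{p\delta_0}}\le c\,M(g)(y_i)^{\frac1{p\delta_0}}=c\,G(y_i)^{1/p}\le c\lambda^{1/p},
\end{equation}
for some $y_i\in16B_i\cap E(\lambda)$.

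The main point to check is the Jensen step that moves the power $p\delta_0$ inside the iterated maximal function. I intend to use $(Mf)^s\le M(f^s)$ for $s\ge1$, iterated $2m+1$ times, which is justified since $p\delta_0>1$. The rest is bookkeeping: $c$ depends only on $(n,m)$, because the constant from the weight normalization is dimensional and the maximal-function steps are pointwise.
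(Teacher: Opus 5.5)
Your proposal is correct and follows the paper's proof: a Poincaré inequality using the vanishing weighted means \eqref{eq:Pi}, then the Leibniz rule with Lemma \ref{lem:useful-formula} to bound $|D^m v|$ by $M^{2m+1}_{B_{2R}}(|D^m u|)\chi_{B_{2R}}$, then the chain \eqref{eq:derivative-case1-1} with $\ell=m$ and $\gamma_{p,m}=p$. For the first step, rely on the iterated classical Poincaré route you list as the alternative, not on Lemma \ref{lem:Hedberg-1}: that lemma gives only a pointwise bound by $M^{2(m-\ell)}_{\frac34 B_i}(|D^m v|)$, and the average of this iterated maximal function is not controlled by $c(n,m)\fint_{\frac34B_i}|D^m v|$, since $M$ is unbounded on $L^1$.
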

\begin{proof}
Fix $\ell \in \{0,\ldots,m\}$ and  $i \in \mathbb{N}$. The classical (1,1)-Poincar\'e's inequality and \eqref{eq:Pi} yield that
\begin{equation}
    \fint_{\frac{3}{4}B_i} |D^\ell v - D^\ell P_i| 
    \le c{r_i}^{m - \ell}\fint_{\frac{3}{4}B_i} |D^m v|.
\end{equation} 
Moreover, using \eqref{eq:v} and Lemma \ref{lem:useful-formula}, we obtain
\begin{align}\label{eq:oscilation-1}
    &\fint_{\frac{3}{4}B_i} |D^m v| \\
    &\leq c\sum_{k = 0}^{m} \fint_{\frac{3}{4}B_i} \left| \frac{D^k u - D^k P}{R^{m-k}} \right|\chi_{B_{2R}}
    \le  c\fint_{\frac{3}{4}B_i} M_{B_{2R}}^{2m+1}(|D^m u|)\chi_{B_{2R}}.
\end{align}
For the remaining part, argue in the same way as in \eqref{eq:derivative-case1-1}. The proof is completed.
\end{proof}

Finally, we establish the regularity of $v_\lambda$. This allows us to take the product of $v_{\lambda}$ with an appropriate cutoff function as a test function for \eqref{eq:main}.
\begin{lem}\label{lem:admissible}
The function $v_\lambda$ belongs to $W^{m,\infty}(B_{2R})$.
\end{lem}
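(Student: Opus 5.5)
We aim to show that $v_\lambda \in W^{m,\infty}(B_{2R})$, i.e.\ that the classical a.e.\ derivatives from Lemma \ref{lem:v_lambda-3} are bounded and are also the weak derivatives. Boundedness is direct. In $E(\lambda)$ we have $v_\lambda = v$, and $v=(u_j-P)\eta \in W^{m,\infty}$ because $u=u_j\in W^{m,\infty}(\widetilde{\Omega_0})\cap C^\infty$. In $E(\lambda)^c$, Lemma \ref{lem:derivative} with $\ell=k$ gives $|D^k v_\lambda|\le c_1\lambda^{1/\gamma_{p,k}}$. Lemma \ref{lem:Elambda} gives $E(\lambda)^c\subset B_{4R}$ bounded, and $v$ has compact support in $B_{2R}$, so all $D^k v_\lambda$, $k\le m$, lie in $L^\infty(\mathbb{R}^n)$. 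The real issue is that the pointwise derivatives are weak derivatives, i.e.\ no singular part appears across $\partial E(\lambda)$.

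The main tool is the Jordan measurability from Lemma \ref{lem:Jordan}, which makes $\partial E(\lambda)$ a null set. The plan has three steps.

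First, we show that $v_\lambda$ is continuous on $\mathbb{R}^n$ and that the same holds for each $D^k v_\lambda$ with $k\le m-1$. Inside $E(\lambda)^c$ continuity is trivial, because $v_\lambda$ is smooth there by \eqref{eq:vlambda-2}. At a point $x_0\in\partial E(\lambda)$, take $x\in\frac34 B_i$ close to $x_0$. Then $r_i\lesssim \mathrm{dist}(x,E(\lambda))\le |x-x_0|$ by \textbf{(W3)}. Using \eqref{eq:v_lambda-3-1} we write
\[
D^k v_\lambda - D^k v = -\sum_{j\in A_i} D^k\{(v-P_j)\psi_j\}.
\]
Each term is bounded via the Leibniz rule, \textbf{(P2)}, and the estimate $|D^\mu v - D^\mu P_j|\lesssim r_j^{m-\mu}\lambda^{1/p}$. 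This estimate holds pointwise on $\frac34 B_j$, not only on average: since $v$ is smooth, we upgrade Lemma \ref{lem:oscilation} by the Poincar\'e/Hedberg estimate of Lemma \ref{lem:Hedberg-1}, or more simply by Taylor's formula with $\|D^m v\|_\infty$. This yields
\[
|D^k v_\lambda - D^k v|(x)\lesssim \sum_\mu r_i^{-(k-\mu)} r_i^{m-\mu}\le c\,r_i^{m-k}\to 0
\]
for $k<m$. As $v$ is smooth, $D^k v_\lambda(x)\to D^k v(x_0)$.

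Second, we deduce from this that $D^{m-1}v_\lambda$ is Lipschitz. It is continuous on $\mathbb{R}^n$ and $C^\infty$ on the open sets $\mathrm{int}E(\lambda)$ and $E(\lambda)^c$. Its classical gradient exists off the null set $\partial E(\lambda)$ and is bounded by the first paragraph. A continuous function with these properties is Lipschitz with distributional gradient equal to the a.e.\ gradient. To prove this, check it on segments: the $\mathcal{H}^1$-measure of the intersection of a segment with $\partial E(\lambda)$ vanishes for a.e.\ parallel segment by Fubini. On such a line the function is absolutely continuous, since it is continuous and $C^1$ off a null closed set with bounded derivative, provided we also use that on the line the complement of $\partial E(\lambda)$ is a countable union of open intervals. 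This is the ACL characterization. Iterating downward, or more directly applying the ACL characterization to each $D^k v_\lambda$ with $k\le m-1$, shows that the weak derivatives up to order $m$ coincide with the classical ones from Lemma \ref{lem:v_lambda-3}. Hence $v_\lambda\in W^{m,\infty}(B_{2R})$.

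The main obstacle is the ACL step. On a segment $L$, the set $L\cap\partial E(\lambda)$ is null but may be uncountable, like a Cantor set. Bounded derivative on the complement does not alone rule out a Cantor-function-type jump. To handle this, we will instead prove directly that $D^{m-1}v_\lambda$ is Lipschitz with constant $c\lambda^{1/p}$. For two points $x,y$: if the segment $[x,y]$ avoids $E(\lambda)^c$, use smoothness of $v$ directly. Otherwise, pick the points $x',y'$ of $\partial E(\lambda)\cap[x,y]$ nearest to $x$ and $y$, respectively. Estimate $|D^{m-1}v_\lambda(x)-D^{m-1}v_\lambda(x')|$ through the Whitney balls, using the first-step bound $|D^{m-1}v_\lambda-D^{m-1}v|\lesssim r_i\lesssim|x-x'|$. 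Treat $y,y'$ the same way. Handle the middle piece $x',y'$, which lies in $E(\lambda)$ at its endpoints, by $v_\lambda=v$ there and the Lipschitz bound on $D^{m-1}v$. This gives a global Lipschitz bound, so Rademacher's theorem finishes the proof.
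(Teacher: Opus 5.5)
Your argument is correct in substance, but it does not follow the paper's route.

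\textbf{The paper's route.} The paper never works pointwise. It invokes Campanato's characterization of Lipschitz functions and verifies
\[
\fint_{B(z,r)}\Bigl|D^\ell v_\lambda-(D^\ell v_\lambda)_{B(z,r)}\Bigr|\le c\,r\,R^{m-\ell-1}\lambda^{\frac{1}{p}}
\]
for every ball and every $\ell\le m-1$. Balls contained in $E(\lambda)^c$ are handled by the mean value theorem and Lemma~\ref{lem:derivative}. For balls meeting $E(\lambda)$, the oscillation is split into two parts. The first is $D^\ell v_\lambda-D^\ell v$, controlled by the averaged estimate of Lemma~\ref{lem:oscilation} and the Whitney geometry: every $\frac34B_j$ meeting $B(z,r)$ has $r_j<\frac27 r$, and the disjoint balls $\frac14B_j$ lie in $B(z,\frac{10}{7}r)$. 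The second is the mean oscillation of $D^\ell v$, controlled by Poincar\'e and $G\le\lambda$ at a point of $B(z,r)\cap E(\lambda)$. Because only averages enter, the paper gets a Lipschitz bound depending only on $\lambda$ and $R$.

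\textbf{Your route.} Your two-point argument uses three ingredients: the bound $|D^k v_\lambda-D^k v|\lesssim r_i^{m-k}$ on $\frac34B_i$ with $r_i<\frac17\mathrm{dist}(x,E(\lambda))$; comparison at the nearest points of $\partial E(\lambda)$ on the segment; and smoothness of $v$ for the middle piece. This avoids Campanato's theorem and is more elementary. However, it is purely qualitative, since it rests on $\|D^m v\|_{L^\infty}<\infty$, i.e.\ on the smoothness of the approximant $u_j$. That is enough here: the lemma only asserts membership, and the quantitative bounds used in Lemma~\ref{lem:Caccioppoli} come from Lemma~\ref{lem:derivative}, not from this lemma.

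\textbf{Points to fix when you write it out.}
\begin{enumerate}
\item The pointwise bound $|D^\mu v-D^\mu P_j|\lesssim r_j^{m-\mu}\lambda^{1/p}$ on $\frac34B_j$ does not follow from Lemma~\ref{lem:oscilation} via Lemma~\ref{lem:Hedberg-1}. That route only yields iterated maximal functions of $|D^m v|$, which are not controlled by $\lambda$ at points of $E(\lambda)^c$. What you do have is the same bound with $\|D^m v\|_{L^\infty}$ in place of $\lambda^{1/p}$: take the Taylor polynomial $T$ at $x_j$ and apply Lemma~\ref{lem:Pi}~(iii) to $v-T$. So drop the claim that the Lipschitz constant is $c\lambda^{1/p}$.
\item Your first version of Step~2 is false, as you noticed yourself. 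Continuity plus a bounded derivative off a closed null set does not give Lipschitz or ACL; the Cantor function is a counterexample. Keep only the direct two-point argument. It works precisely because $D^{m-1}v_\lambda-D^{m-1}v$ vanishes on $E(\lambda)$ and is $O(\mathrm{dist}(\cdot,E(\lambda)))$ off it.
\item Add the case of a segment lying entirely in $E(\lambda)^c$, handled by the mean value theorem with Lemma~\ref{lem:derivative}.
\item Run the same argument for every $k\le m-1$, using $r_i^{m-k}\le R^{m-k-1}r_i$ from \textbf{(W2)}. Then Rademacher's theorem identifies all weak derivatives with the a.e.\ derivatives of Lemma~\ref{lem:v_lambda-3}.
\end{enumerate}
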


\begin{proof}
The proof proceeds along the same lines as \cite[Lemma 3.6]{BBK} with the exception of the derivative order.
However, for the sake of completeness, we provide the full proof. By the property of Campanato's space (see \cite[Theorem 2.9]{Gi}, for example), it suffices to show that there exists $c=c(n,m)$ such that
\begin{equation}\label{eq:admissible-desired}
    \fint_{B(z,r)} \left|\frac{D^{\ell} v_\lambda - (D^{\ell} v_\lambda)_{B(z,r)}}{r}\right| 
    \leq cR^{m-\ell-1}\lambda^\frac{1}{p}
\end{equation}
for any $B(z,r) \subset \mathbb{R}^n$ and each $\ell \in \{0,\ldots,m-1\}$.
We fix any $B(z,r) \subset \mathbb{R}^n$ and $\ell \in \{0,\ldots,m-1\}$.
We consider the following two cases:

\noindent \textbf{Case 1 ($B(z,r) \subset E(\lambda)^c$)}:\,fix any $y \in B(z,r)$. It follows that
\begin{align}
  |D^{\ell}v_\lambda(y) - (D^{\ell}v_\lambda)_{B(z,r)}| 
  &\leq \fint_{B(z,r)}|D^{\ell}v_\lambda(y) - D^{\ell}v_\lambda(x)|\ dx \\
  &\leq \fint_{B(z,r)} \left( \int_0^1 |D^{\ell + 1} v_\lambda(x + t(y - x))||y - x|\ dt\right)\ dx \\
  &\le 2r\sup_{\widetilde{x} \in B(z,r)} |D^{\ell + 1} v_\lambda(\widetilde{x})| \\
  &\leq 2r\sup_{\widetilde{x} \in E(\lambda)^c} |D^{\ell + 1} v_\lambda(\widetilde{x})| 
  \leq c(n,m)rR^{m-\ell -1}\lambda^\frac{1}{p}
\end{align}
from \eqref{eq:vlambda-2}, the fundamental theorem of calculus, and Lemma \ref{lem:derivative} (with $(\ell,k)=(m,\ell+1)$). Therefore,
\begin{equation}
     \fint_{B(z,r)} \left|\frac{D^{\ell} v_\lambda - (D^{\ell} v_\lambda)_{B(z,r)}}{r}\right| 
    \leq c(n,m)R^{m-\ell-1}\lambda^\frac{1}{p}.
\end{equation}

\noindent \textbf{Case 2 ($B(z,r) \not\subset E(\lambda)^c$)}:\,note that
\begin{align}
    &\fint_{B(z,r)}|D^{\ell} v_\lambda - (D^{\ell} v_\lambda)_{B(z,r)}| \\
    &\le \fint_{B(z,r)}|D^{\ell} v_\lambda - (D^{\ell} v)_{B(z,r)}|+\fint_{B(z,r)}|(D^{\ell} v)_{B(z,r)} - (D^{\ell} v_{\lambda})_{B(z,r)}| \\
    &\leq 2 \fint_{B(z,r)}|D^{\ell} v_\lambda - (D^{\ell} v)_{B(z,r)}| \\
    &\leq 2 \fint_{B(z,r)}|D^{\ell} v_\lambda - D^{\ell} v| 
    + 2 \fint_{B(z,r)}|D^{\ell} v - (D^{\ell} v)_{B(z,r)}|. 
\end{align}
Then we obtain
\begin{align}\label{eq:admissible-J's}
    &\fint_{B(z,r)} \left|\frac{D^{\ell} v_\lambda - (D^{\ell} v_\lambda)_{B(z,r)}}{r}\right| \\
    &\leq 2\fint_{B(z,r)} \left|\frac{D^{\ell} v_\lambda - D^{\ell} v}{r}\right| 
    + 2\fint_{B(z,r)} \left|\frac{D^{\ell} v - (D^{\ell} v)_{B(z,r)}}{r}\right|
    \eqcolon 2(J_1 + J_2).
\end{align}

Firstly, we estimate $J_2$.
Notice that we can take $x \in B(z,r) \cap E(\lambda)$ since $B(z,r) \not\subset E(\lambda)^c$.
Then, using the classical Poincar\'e inequality and estimating as \eqref{eq:oscilation-1} and \eqref{eq:derivative-case1-1}, 
we deduce that
\begin{align}\label{eq:admissible-J_2}
    J_2 
    &\leq c\fint_{B(z,r)}|D^{\ell + 1} v| \\
    &\leq c\sum_{k=0}^{\ell+1} \fint_{B(z,r)}\left| \frac{D^k u - D^k P}{R^{\ell+1-k}}  \right|\chi_{B_{2R}} \\
    &\le cR^{m-\ell-1} \sum_{k=0}^{m} \fint_{B(z,r)}\left| \frac{D^k u - D^k P}{R^{m-k}} \right|\chi_{B_{2R}} \\
    &\le cG(x)^{\frac{1}{p}}
    \le c(n,m)R^{m-\ell-1}\lambda^\frac{1}{p}
    \quad
    \text{in }
    B(z,r).
\end{align}
The estimate of $J_2$ is completed.

Next, we estimate $J_1$. 
By Lemma \ref{lem:v_lambda-3}, we have
\begin{align}
    D^{\ell} v_\lambda
    &= D^{\ell} v - \sum_{j \in \N}D^{\ell} \left\{(v - P_j)\psi_j\right\}
    \quad 
    \text{a.e.\,in }
    B(z,r).
\end{align}
Set $D \coloneq \{j:B(z,r) \cap \frac{3}{4}B_j \neq \emptyset \}$ and we obtain
\begin{align}\label{eq:lem_admissible_J1_1}
    J_1 &= \fint_{B(z,r)} \left|\frac{\sum_{j \in \N}D^{\ell} \left\{(v - P_j)\psi_j\right\}}{r}\right|\\
        &\leq \sum_{j \in D}\sum_{k = 0}^{\ell}\fint_{B(z,r)} \left|\frac{D^k v - D^k P_j}{r}\right||D^{\ell - k}\psi_j| \\
        &\leq \frac{c}{|B(z,r)|}\sum_{j \in D}\sum_{k = 0}^{\ell} r^{-1} r_j^{-\ell + k}\int_{B(z,r)\cap\frac{3}{4}B_j} |D^k v - D^k P_j| \\
        &\leq \frac{c}{|B(z,r)|}\sum_{j \in D}\left| \frac{3}{4}B_j \right|\sum_{k = 0}^{\ell} r^{-1} r_j^{-\ell + k}\fint_{\frac{3}{4}B_j}|D^k v - D^k P_j|  \\
        &\leq \frac{c\lambda^\frac{1}{p}}{|B(z,r)|}\sum_{j \in D}\left| \frac{3}{4}B_j \right|\sum_{k = 0}^{\ell} r^{-1} r_j^{m - \ell}
\end{align}
by \textbf{(P1)}, \textbf{(P2)} and Lemma \ref{lem:oscilation}. Fix any $j \in D$. \textbf{(W3)} implies $8r_j \leq \mathrm{dist}(x_j,E(\lambda))$.
In addition, since $j \in D$, there exists $y_j \in \frac{3}{4}B_j \cap B(z,r)$. Therefore, recalling $x \in B(z,r) \cap E(\lambda)$, we have 
\begin{equation}
    8r_j \leq \mathrm{dist}(x_j,E(\lambda)) \leq |x_j - x| \leq |x_j - y_j| + |y_j - x| < r_j + 2r,
\end{equation}
which yields
 \begin{equation}\label{eq:radius}
    \frac{7}{2}r_j < r.
\end{equation}
From \eqref{eq:lem_admissible_J1_1}, \eqref{eq:radius}, $\ell \le m-1$ and \textbf{(W2)}, it follows that
\begin{equation}\label{eq:lem_admissible_J2_1}
    J_1
    \le \frac{cr_{j}^{m - \ell - 1}\lambda^\frac{1}{p}}{|B(z,r)|}\sum_{j \in D}\left| \frac{3}{4}B_j \right|
    \leq \frac{cR^{m - \ell - 1}\lambda^\frac{1}{p}}{|B(z,r)|}\sum_{j \in D}\left| \frac{3}{4}B_j \right|.
\end{equation}
By \eqref{eq:radius} we have 
\begin{align}
    \frac{1}{4}B_j \subset \frac{3}{4}B_j &=  B \left( x_j, \frac{3}{4}r_j \right) \\
    &\subset B \left(z,|z - x_j| + \frac{3}{4}r_j\right) \subset B \left(z, r + \frac{3}{2}r_j\right) \subset B\left(z,\frac{10}{7}r\right).
\end{align} 
Therefore, \textbf{(W5)} yields
\begin{equation}
    \sum_{j \in D} \left| \frac{3}{4}B_j \right| 
    = 3^n \sum_{j \in D} \left| \frac{1}{4}B_j \right| 
    \leq 3^n \left| B \left( z,\frac{10}{7}r \right) \right|.
\end{equation}
From this inequality and \eqref{eq:lem_admissible_J2_1}, we derive
\begin{equation}\label{eq:admissible-J_1}
    J_1 \leq cR^{m - \ell - 1}\lambda^\frac{1}{p}.
\end{equation}

Combining \eqref{eq:admissible-J's}, \eqref{eq:admissible-J_2} and \eqref{eq:admissible-J_1}, we obtain the desired inequality \eqref{eq:admissible-desired}.
\end{proof}

\subsection{Reverse H\"older's inequality}\label{subsec:rh}
The purpose of this subsection is to prove a reverse H\"older inequality. 
We first prove a Caccioppoli inequality. This inequality is obtained by substituting the product of the truncated function $v_{\lambda}$ with an appropriate cutoff function into \eqref{eq:main} (see Lemma \ref{lem:admissible}). From this Caccioppoli inequality and the Sobolev--Poincar\'e inequality developed in Section \ref{sec:SP}, we derive the reverse H\"older inequality.

Before the proof, let us give a few remarks. Firstly, in the preceding subsections, we omitted the subscript $j$ for the sake of simplicity in notation. Hereafter, we reintroduce this subscript $j$: we will distinguish between $u_j$ and $u$, $E_{j}(\lambda)$ and $E(\lambda)$ and similar pairs. In what follows, let $v_j$ and $v_{j,\lambda}$ correspond to $v$ and $v_{\lambda}$ defined in \eqref{eq:v} and \eqref{eq:vlambda}, respectively. That is,
\begin{equation}\label{eq:v_j}
  v_j = (u_j - P_j)\eta,\quad
  v_{j,\lambda} = v_j - \sum_{i \in \N}(v_j - P_{j,i})\psi_i,
\end{equation}
where $P_j$ and $P_{j,i}\,(i \in \mathbb{N})$ are the unique polynomials such that
\begin{equation}\label{eq:P_j}
    \deg P_j \le m - 1,
    \quad
    (\partial_\sigma u_j)_{B_{2R},\eta} 
    = (\partial_\sigma P_j)_{B_{2R},\eta}
    \quad
    \text{for each }
    \sigma \in S \setminus S_m,
\end{equation}
\begin{equation}\label{eq:P_{j,i}}
    \deg P_{j,i} \le m - 1,
    \quad
    (\partial_\sigma v_j)_{\frac{3}{4}B_i,\psi_i} 
    = (\partial_\sigma P_{j,i})_{\frac{3}{4}B_i,\psi_i}
    \quad
    \text{for each }
    \sigma \in S \setminus S_m
\end{equation}
(see also \eqref{eq:P} and \eqref{eq:Pi}).
Now, let $P$ be the unique polynomial satisfying
\begin{equation}\label{eq:P-re}
    \deg P \le m - 1,
    \quad
    (\partial_\sigma u)_{B_{2R},\eta} 
    = (\partial_\sigma P)_{B_{2R},\eta}
    \quad
    \text{for each }
    \sigma \in S \setminus S_m.
\end{equation}

Next, we recall some formulas on level sets.
Let $B \subset \mathbb{R}^n$ be a measurable set 
and $h \in L^1(B)$ be a nonnegative function.
If $r > 0$, then it holds that
\begin{equation}\label{eq:lcr-1}
    h(x)^r = r\int_{0}^{\infty}\mu^{r - 1} \chi_{\{y \in B : h(y) > \mu\}}(x)\ d\mu
    \quad
    \text{for a.e.\,}
    x \in B.
\end{equation}
On the other hand, if $r < 0$, then we have
\begin{equation}\label{eq:lcr-2}
    h(x)^r = -r\int_{0}^{\infty}\mu^{r - 1} \chi_{\{y \in B: h(y) \leq \mu\}}(x)\ d\mu
    \quad
    \text{for a.e.\,}
    x \in B
    \text{ with }
    h(x) > 0.
\end{equation}

Then we have the following Caccioppoli inequality.
\begin{lem}\label{lem:Caccioppoli}
There exist constants $c>0$ and $\delta_1 \in \left[ \frac{1+\delta_0}{2},1 \right)$ such that if $\delta \in [\delta_1,1)$, then it holds
\begin{align}\label{eq:Caccioppoli}
    &\fint_{B_{R}} H_m (x, D^{m} u)^\delta \\
    &\leq \frac{1}{2}\fint_{B_{3R}} H_m (x, D^{m} u)^\delta
    + c \sum_{\ell = 0}^{m - 1}\fint_{B_{2R}}  H_m\left(x, \frac{D^{\ell} u - D^{\ell}P}{R^{m-\ell}}\right)^{\delta}
    + c\fint_{B_{3R}} F^\delta.
\end{align}
Here $c$ and $\delta_1$ depend only on $n,m,p,q,\alpha,[a]_{\alpha},\nu$ and $\delta_0$.
\end{lem}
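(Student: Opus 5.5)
We follow Lewis' scheme as adapted in \cite{BBK}. Fix $j\ge j_0$ and $\lambda\in(\Lambda_0,\infty)\setminus N$, and test \eqref{eq:main} with $\varphi=v_{j,\lambda}\zeta$, where $\zeta\in C_c^\infty(B_{2R})$ is a cutoff with $\chi_{B_R}\le\zeta\le\chi_{B_{3R/2}}$ and $|D^k\zeta|\le cR^{-k}$. By Lemma \ref{lem:admissible}, $v_{j,\lambda}\zeta\in W^{m,\infty}_0(B_{2R})$, and by density it is admissible, because $A_\sigma(x,u,\dots,D^mu)\in L^1$ under the growth condition \eqref{eq:growth} and the integrability of $u$. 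We split $B_{2R}$ into $E_j(\lambda)$ and $E_j(\lambda)^c$. On $E_j(\lambda)\cap B_R$ we have $v_{j,\lambda}=v_j$ and $\zeta\equiv1$, so $D^mv_{j,\lambda}=D^mu_j+(\text{lower-order terms in }u_j-P_j)$. Letting $j\to\infty$, using \eqref{eq:conv-u}, \eqref{eq:conv-H_ell} and $E_j(\lambda)\to E(\lambda)$ (Jordan measurability together with the convergence $G_j\to G$), we obtain an identity in which the principal part is $\int_{E(\lambda)\cap B_R}\sum_{\sigma\in S_m}A_\sigma\cdot\partial_\sigma u$. By \eqref{eq:coercivity}, this principal part is bounded below by $\nu\int_{E(\lambda)\cap B_R}H_m(x,D^mu)-\nu\int(f_p+af_q)$.

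We then bound all remaining terms by \eqref{eq:growth}, H\"older's inequality with the exponents $\widehat{s_{r,\ell}},\gamma_{r,\ell},r'$ and $\widehat{t_{r,\ell}},\gamma_{r,\ell}$ fixed in \eqref{eq:Holder}, and Young's inequality. On $E(\lambda)^c$ we use Lemma \ref{lem:derivative}, which gives $|D^kv_\lambda|\lesssim R^{m-k}\lambda^{1/p}$ and $a^{1/q}|D^kv_\lambda|\lesssim R^{m-k}\lambda^{1/q}$, and the same bounds with $\gamma_{r,\ell}$ at lower orders. These yield terms of the form $\int_{E(\lambda)^c\cap B_{2R}}\lambda^{1/p}(|D^mu|^{p-1}+g\dots)$ and their $q$-analogues. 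On $E(\lambda)$, the lower-order terms $D^\ell(u-P)/R^{m-\ell}$ and the coefficient functions are absorbed by Young's inequality into $H_m$ of the differences and $F$.

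Following \cite{Le}, we multiply by $\lambda^{-1-\varepsilon}$ with $\varepsilon=1-\delta$ and integrate in $\lambda$ over $(\Lambda_0,\infty)$, using the layer-cake formulas \eqref{eq:lcr-1} and \eqref{eq:lcr-2}. On $E(\lambda)$ this produces $\frac1\varepsilon\int H_m\,G^{-\varepsilon}$. On $E(\lambda)^c$ we get $\int_{B_{4R}}\lambda^{1/p-1-\varepsilon}\cdots$, integrated up to $G$, which gives $\frac{c}{1-1/p+\varepsilon}\int|D^mu|^{p-1}G^{1/p-\varepsilon}$. The key point is that these constants do not blow up as $\varepsilon\to0$. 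Since $H_m\le cG$ and by Young's inequality, these terms are bounded by $\int G^{1-\varepsilon}=\int G^\delta$, and Lemma \ref{lem:G} controls this by $\fint_{B_{3R}}(H_m^\delta+F^\delta)$.

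The main obstacle is extracting $\fint_{B_R}H_m^\delta$ with the small factor $\frac12$ in front of $\fint_{B_{3R}}H_m^\delta$. To do this we write $\frac1\varepsilon\int_{B_R}H_mG^{-\varepsilon}\ge\frac1\varepsilon\int_{B_R}H_m^{\delta}\cdot(\text{factor})$, using $G\ge H_m^{\delta_0/\delta_0}$-type lower bounds, which hold a.e.\ because $g\ge H_m^{\delta_0}\psi$ and Lebesgue differentiation gives $G\ge H_m$. The term $\frac1\varepsilon\int_{B_R}H_m(H_m^{-\varepsilon}-G^{-\varepsilon})$ is handled as in \cite{BBK}. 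Multiplying through by $\varepsilon$, every term on the bad set carries a factor $\varepsilon\cdot c$, which is at most $\frac12$ once $\delta\ge\delta_1$ is close to $1$; this fixes $\delta_1$. The contribution of the $\lambda$-range $(0,\Lambda_0]$ is bounded by $\Lambda_0^{\delta}\lesssim\fint G^\delta+1$, which is absorbed into the $F^\delta$ term because $F\ge1$. Finally, the lower-order differences are collected into the middle term $\sum_\ell\fint H_m(x,(D^\ell u-D^\ell P)/R^{m-\ell})^\delta$.
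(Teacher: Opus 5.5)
\paragraph{Overall route and order of limits.}
Your architecture matches the paper's (Lewis' scheme as in \cite{BBK}). You test with the truncation times a cutoff, split into $E_j(\lambda)$ and its complement, use Lemma \ref{lem:derivative} on the bad set, integrate against $\lambda^{\delta-2}$ over $(\Lambda_0,\infty)$, then apply coercivity, Young's inequality and Lemma \ref{lem:G}. The real difference is the order of limits. You let $j\to\infty$ at fixed $\lambda$. The paper first integrates in $\lambda$ at fixed $j$, so the level sets disappear into the weight $G_{j,\Lambda_0}^{\delta-1}$, and only then lets $j\to\infty$ by dominated convergence via \eqref{eq:Lebesgue}. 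Your order can work, but not for the reason you give. Jordan measurability is what lets one split the tested equation (Lemma \ref{lem:v_lambda-3}); it does not give $E_j(\lambda)\to E(\lambda)$. For that convergence you need three things:
\begin{itemize}
\item a subsequence with $G_j\to G$ a.e.;
\item the fact that $|\{G=\lambda\}|=0$ for all but countably many $\lambda$;
\item a $j$-uniform majorant on $E_j(\lambda)$, which comes from $G_j\le\lambda$ there together with \eqref{eq:useful-formula}.
\end{itemize}

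\paragraph{The gap: extracting $\int_{B_R}H_m^\delta$.}
The step you call the main obstacle fails as written. Since $G\ge H_m$, you get $H_mG^{-\varepsilon}=H_m^{\delta}(H_m/G)^{\varepsilon}\le H_m^{\delta}$. A lower bound on $G$ therefore gives an upper bound on $\int_{B_R}H_mG^{-\varepsilon}$, and the factor $(H_m/G)^{\varepsilon}$ has no positive lower bound. Deferring the correction term $\int_{B_R}H_m(H_m^{-\varepsilon}-G^{-\varepsilon})$ to \cite{BBK} leaves the crux unproved. The paper handles it by splitting $B_R$ along $U_R=\{H_m\le\tau G_{\Lambda_0}\}$:
\[
\int_{B_R}H_m^\delta\le\tau^\delta\int_{B_{2R}}G_{\Lambda_0}^\delta+\tau^{\delta-1}\int_{B_R}H_mG_{\Lambda_0}^{\delta-1}.
\]
It then inserts \eqref{eq:Caccioppoli-nc} for the last integral and takes $\tau=1-\delta$. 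This keeps $\tau^{\delta-1}<e^{1/e}$ bounded while $\tau^\delta\to0$. Lemma \ref{lem:G}, together with $\Lambda_0^\delta\lesssim\fint_{B_{3R}}G^\delta+1$ and $F\ge1$, then yields the factor $\frac12$.

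\paragraph{Further points to add.}
Young's inequality on the good set produces a term $\varepsilon'G$ on the whole support of the cutoff, not only on $B_R$. It cannot be absorbed into the principal term on $B_R$; it must be carried as $\varepsilon'\int_{B_{2R}}G^\delta$ and routed through Lemma \ref{lem:G}. Passing from $\int H_m\bigl(x,(D^\ell u-D^\ell P)/R^{m-\ell}\bigr)G_{\Lambda_0}^{\delta-1}$ to the $\delta$-th power needs \eqref{eq:useful-formula-2}. The bad-set $\lambda$-integral gives the constant $1/(1/p-\varepsilon)$, not $1/(1-1/p+\varepsilon)$. That integral lives on $B_{2R}$, where the weighted bound of Lemma \ref{lem:derivative} holds, not on $B_{4R}$.
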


\begin{proof}
We divide the proof into four steps.

\noindent \textbf{Step 1}:\,In order to apply the results of the preceding subsections, fix any $\lambda \in (\Lambda_0,\infty) \setminus N$ and $j \ge j_0$ (see Lemma \ref{lem:Jordan} for $N$, and \eqref{eq:j_0-1} and \eqref{eq:j_0-2} for $j_0$). Lemma \ref{lem:admissible} and \eqref{eq:eta} allow us to choose $\varphi = v_{j,\lambda}$ as a test function for \eqref{eq:main}. Noting that $v_j=v_{j,\lambda}$ in $E_j(\lambda)$ by \eqref{eq:vlambda-1} and $D^{\ell} v_{j} = D^{\ell} v_{j,\lambda}$ a.e.\,in $E_{j}(\lambda)$ (see Lemma \ref{lem:v_lambda-3}), we obtain
\begin{align}\label{eq:step1-1}
\sum_{\sigma \in S} \int_{B_{2R} \cap E_j(\lambda)} A_\sigma \cdot \partial_\sigma(v_j \eta)
&= -\sum_{\sigma \in S} \int_{B_{2R} \cap E_j(\lambda)^c} A_\sigma \cdot \partial_\sigma(v_{j,\lambda} \eta).
\end{align}
Here we write $A_{\sigma} = A_\sigma(x,u,\ldots,D^m u)$ for brevity.
By \eqref{eq:hats-m} and \eqref{eq:G}, we have
\begin{gather}\label{eq:usefulful}
|D^m u| \leq G^{\frac{1}{\gamma_{p,m}}}=G^{\frac{1}{p}},\
a(x)^{\frac{1}{q}}|D^m u| \leq  G^{\frac{1}{\gamma_{q,m}}}=G^{\frac{1}{q}},\\
\ h_{r,\ell} \leq G^{\frac{1}{\widehat{t_{r,\ell}}}},\
g_{r,\ell} \leq G^{\frac{1}{\widehat{s_{r,\ell}}}}
\end{gather}
for a.e.\,$x \in B_{2R}$ and each $\ell \in \{0,\ldots,m\}$ and $r \in \{p,q\}$. Here note that we consider $s_{r,m} = \infty$ and $g_{r,m}\equiv1$ by the remark following \eqref{def:exponents}.
Then, for each $\sigma \in S_{\ell}$ and each $\ell \in \{0,\ldots,m\}$, the growth condition \eqref{eq:growth}, \eqref{eq:eta} and \eqref{eq:usefulful} imply
\begin{align}\label{eq:similar}
\left| A_\sigma \cdot \partial_\sigma(v_{j,\lambda} \eta)\right|
&\leq c( g_{p,\ell}|D^m u|^{p - 1} + h_{p,\ell}) \sum_{k=0}^{\ell}\left|\frac{D^{k}v_{j,\lambda}}{R^{\ell-k}} \right| \\
&+ c( g_{q,\ell}a(x)^{\frac{q-1}{q}}|D^m u|^{q-1} + h_{q,\ell} )\sum_{k=0}^{\ell}a(x)^\frac{1}{q}\left|\frac{D^{k}v_{j,\lambda}}{R^{\ell-k}} \right| \\
&\le c( G^{\frac{1}{\widehat{s_{p,\ell}}} + \frac{1}{p'}} + G^{\frac{1}{\widehat{t_{p,\ell}}}} ) \lambda^{\frac{1}{\gamma_{p,\ell}}}
+ c(G^{\frac{1}{\widehat{s_{q,\ell}}} + \frac{1}{q'}} + G^{\frac{1}{\widehat{t_{q,\ell}}}})\lambda^{\frac{1}{\gamma_{q,\ell}}} \\
&= cG^{1-\frac{1}{\gamma_{p,\ell}}}\lambda^{\frac{1}{\gamma_{p,\ell}}}
+ cG^{1-\frac{1}{\gamma_{q,\ell}}}\lambda^{\frac{1}{\gamma_{q,\ell}}}
\quad
\text{in }B_{2R} \cap E_{j}(\lambda)^c.
\end{align}
Here we also used Lemma \ref{lem:derivative} and \eqref{eq:Holder}. Therefore, from \eqref{eq:step1-1} and \eqref{eq:similar}, we obtain
\begin{align}\label{eq:Caccippoli-last}
&\sum_{\sigma \in S} \int_{B_{2R} \cap E_j(\lambda)} A_\sigma \cdot \partial_\sigma(v_j \eta) \\
&\le c(m)\sum_{\ell=0}^{m} \int_{B_{2R} \cap E_j(\lambda)^c} \left( G^{1-\frac{1}{\gamma_{p,\ell}}}\lambda^{\frac{1}{\gamma_{p,\ell}}}
+ G^{1-\frac{1}{\gamma_{q,\ell}}}\lambda^{\frac{1}{\gamma_{q,\ell}}} \right)
\end{align}
for any $\lambda \in (\Lambda_0,\infty) \setminus N$.

\noindent \textbf{Step 2}:\,Multiplying both sides of \eqref{eq:Caccippoli-last} by $(1-\delta)\lambda^{\delta-2}$ and integrating from $\Lambda_0$ to $\infty$, we obtain
\begin{align}\label{eq:Caccioppoli-I's}
I_1
&\coloneqq \sum_{\sigma \in S}(1 - \delta)\int_{\Lambda_0}^{\infty} \lambda^{\delta - 2} \left( \int_{B_{2R}\cap E_j(\lambda)} A_\sigma \cdot \partial_\sigma(v_j \eta) \,dx\right)\ d\lambda \\
&\le c(m)\sum_{\ell=0}^{m} (1 - \delta)\int_{\Lambda_0}^{\infty} \lambda^{\delta - 2} \\
&\times \left\{ \int_{B_{2R}\cap E_j(\lambda)^c} \left( G^{1-\frac{1}{\gamma_{p,\ell}}}\lambda^{\frac{1}{\gamma_{p,\ell}}}
+ G^{1-\frac{1}{\gamma_{q,\ell}}}\lambda^{\frac{1}{\gamma_{q,\ell}}} \right) \,dx \right\}\ d\lambda 
\eqqcolon I_2.
\end{align}

First, we transform $I_1$. Let us denote $G_{j,\Lambda_0} = \max\{G_j,\Lambda_0\}$.
Using Fubini's theorem and \eqref{eq:lcr-2} (with $h = G_{j,\Lambda_0}$), we have
\begin{align}\label{eq:Caccioppoli-I_1}
&I_1 \\
&= \sum_{\sigma \in S}\int_{B_{2R}} A_\sigma \cdot \partial_\sigma(v_j \eta) \left\{  (1-\delta)\int_{\Lambda_0}^{\infty} \lambda^{\delta - 2}\chi_{\{y \in B_{2R} : G_j(y) \leq \lambda\}}(x) \ d\lambda \right\}\,dx \\
    &= \sum_{\sigma \in S}\int_{B_{2R}} A_\sigma \cdot \partial_\sigma(v_j \eta) \left\{  (1-\delta)\int_{0}^{\infty} \lambda^{\delta - 2}\chi_{\{y \in B_R : G_{j,\Lambda_0}(y) \leq \lambda\}}(x) \ d\lambda \right\}\ dx \\
    &= \sum_{\sigma \in S}\int_{B_{2R}} A_\sigma \cdot \partial_\sigma(v_j \eta) G_{j,\Lambda_0}^{\delta - 1} \,dx.
\end{align}

Next, we estimate $I_2$. Here we may assume 
\begin{equation}\label{eq:Caccioppoli-delta0}
\delta_0 - 1 +\frac{1}{\gamma_{r,\ell}} \ge 1 - \delta_0
\end{equation}
for each $\ell \in \{0,\ldots,m\}$ and $r \in \{p,q\}$.
Therefore, for each $\ell \in \{0,\ldots,m\}$, 
Fubini's Theorem and \eqref{eq:lcr-1} (with $h = G_j$ and $r = \delta +\frac{1}{\gamma_{p,\ell}} - 1$) yield
\begin{align}\label{eq:Caccioppoli-I_{2,p,ell}}
    I_{2,p,\ell}
    &\coloneqq (1 - \delta)\int_{\Lambda_0}^{\infty} \lambda^{\delta +\frac{1}{\gamma_{p,\ell}} - 2} \left(  \int_{B_{2R}\cap E_j(\lambda)^c} G^{1 - \frac{1}{\gamma_{p,\ell}}} \ dx \right)\ d\lambda \\
    &\leq (1 - \delta)\int_{0}^{\infty} \lambda^{\delta +\frac{1}{\gamma_{p,\ell}} - 2} \left(  \int_{B_{2R}\cap E_j(\lambda)^c} G^{1 - \frac{1}{\gamma_{p,\ell}}} \ dx \right)\ d\lambda \\
    &= \int_{B_{2R}} G^{1 - \frac{1}{\gamma_{p,\ell}}} \left\{  (1-\delta)\int_{0}^{\infty} \lambda^{\delta +\frac{1}{\gamma_{p,\ell}} - 2}\chi_{\{y \in B_{2R} : G_j(y) > \lambda\}} \ d\lambda \right\}\ dx \\
    &= \frac{1-\delta}{\delta - 1 +\frac{1}{\gamma_{p,\ell}}} \int_{B_{2R}} G^{1 - \frac{1}{\gamma_{p,\ell}}}\cdot G_j^{\delta - 1 + \frac{1}{\gamma_{p,\ell}}} \\
    &\le c(\delta_0)(1-\delta)\int_{B_{2R}} \max\{G,G_j\}^{\delta}.
\end{align}
In the last inequality, we used \eqref{eq:Caccioppoli-delta0}.
Similarly, we have
\begin{align}\label{eq:Caccioppoli-I_{2,q,ell}}
 I_{2,q,\ell} 
 &\coloneqq (1 - \delta)\int_{\Lambda_0}^{\infty} \lambda^{\delta +\frac{1}{\gamma_{q,\ell}} - 2} \left(  \int_{B_{2R}\cap E_j(\lambda)^c} G^{1 - \frac{1}{\gamma_{q,\ell}}} \ dx \right)\ d\lambda \\
 &\le c(\delta_0)(1-\delta)\int_{B_{2R}} \max\{G,G_j\}^{\delta}.
\end{align}
From \eqref{eq:Caccioppoli-I's}, \eqref{eq:Caccioppoli-I_{2,p,ell}} and \eqref{eq:Caccioppoli-I_{2,q,ell}}, we obtain 
\begin{align}\label{eq:Caccioppoli-I_2}
I_2 
= c(m)\sum_{\ell=0}^{m} (I_{2,p,\ell} + I_{2,q,\ell})
\le c(m,\delta_0)(1-\delta)\sum_{\ell=0}^{m}\int_{B_{2R}} \max\{G,G_j\}^{\delta}.
\end{align}

Therefore, \eqref{eq:Caccioppoli-I's}, \eqref{eq:Caccioppoli-I_1} and \eqref{eq:Caccioppoli-I_2} yield that
\begin{equation}\label{eq:Caccioppoli-last-2}
\sum_{\sigma \in S} \int_{B_{2R}} A_\sigma \cdot \partial_\sigma(v_j \eta) G_{j,\Lambda_0}^{\delta - 1}
\le c\cdot(1-\delta)\int_{B_{2R}} \max\{G,G_j\}^{\delta}.
\end{equation}

\noindent \textbf{Step 3}:\,Next, we apply Lebesgue's dominated convergence theorem to
\eqref{eq:Caccioppoli-last-2} with respect to $j$. Since the right-hand side is controlled  by \eqref{eq:conv-G}, it remains to estimate the integrand on the left-hand side.

First, observe that
\begin{align}\label{eq:useful-formula}
\left|\frac{D^{k} u_j - D^{k}P_j}{R^{\ell - k}}\right| &\leq c(n,m)G_j^{\frac{1}{\gamma_{p,\ell}}}, \\
\, a(x)^{\frac{1}{q}}\left|\frac{D^{k} u_j - D^{k}P_j}{R^{\ell - k}}\right| &\leq c(n,m,p,q,\alpha,[a]_{\alpha})G_j^{\frac{1}{\gamma_{q,\ell}}}
\end{align}
for a.e.\,$x \in B_{2R}$, each $\ell \in \{0,\ldots,m\}$ and $k \in \{0,\ldots,\ell\}$.

Indeed, fix $\ell \in \{0,\ldots,m\}$ and $k \in \{0,\ldots,\ell\}$. By Lemma \ref{lem:useful-formula} we have
\begin{equation}
\left|\frac{D^{k} u_j(x) - D^{k}P_j(x)}{R^{\ell - k}}\right| \le c(n,m)M_{B_{2R}}^{2\ell+1}(|D^\ell u_j|)(x)
\quad
\text{for a.e.\,}
x \in B_{2R}.
\end{equation}
Then, Jensen's inequality implies
\begin{align}
 M_{B_{2R}}^{2\ell+1}(|D^\ell u_j|)(x)
 &\le M^{2\ell+1}(|D^\ell u_j|\chi_{B_{2R}})(x) \\
 &\le [M^{2\ell+1}(|D^\ell u_j|^{\gamma_{p,\ell}\delta_0}\chi_{B_{2R}})(x)]^{\frac{1}{\gamma_{p,\ell}\delta_0}} \\
 &\le g_j(x)^{\frac{1}{\gamma_{p,\ell}\delta_0}}
 \le [M(g_j)(x)]^{\frac{1}{\gamma_{p,\ell}\delta_0}} \\
 &\le G_j(x)^{\frac{1}{\gamma_{p,\ell}}}
 \quad
\text{for a.e.\,}
x \in B_{2R}.
\end{align}
Here we used \eqref{eq:g_j} and \eqref{eq:G}.

Next we prove the second inequality in \eqref{eq:useful-formula}. By \eqref{eq:R_0} and \eqref{eq:conv-M_beta-0}, we have $2R \le 1$, $\beta_{\ell} \in \left( 0, \min\left\{\frac{n}{\gamma_{p,\ell}\delta_0},\frac{\alpha}{q}\right\} \right)$
and $D^\ell u_j \in L^{\gamma_{p,\ell}\delta_0}(B_{2R})$. Thus, we apply Lemma \ref{lem:Hedberg-2} to obtain
\begin{align}
&a(x)^{\frac{1}{q}}M_{B_{2R}}^{2\ell+1}(|D^\ell u_j|)(x) \\
&\le cM^{2\ell+1}(a^{\frac{1}{q}}|D^\ell u_j|\chi_{B_{2R}})(x)
+ cM_{\beta_\ell}(M^{2\ell+1}(|D^\ell u_j|\chi_{B_{2R}}))(x) \\
&\le c[M^{2\ell+1}(a^{\frac{\gamma_{q,\ell}\delta_0}{q}}|D^\ell u_j|^{\gamma_{q,\ell}\delta_0}\chi_{B_{2R}})(x)]^{\frac{1}{\gamma_{q,\ell}\delta_0}}
+ cM_{\beta_\ell}(M^{2\ell+1}(|D^\ell u_j|\chi_{B_{2R}}))(x) \\
&\le  cg_j(x)^{\frac{1}{\gamma_{q,\ell}\delta_0}} \le cG_j(x)^{\frac{1}{\gamma_{q,\ell}}}
\quad
\text{for a.e.\,}
x \in B_{2R}.
\end{align}
We also used \eqref{eq:g_j} and \eqref{eq:G}.

Similarly to \eqref{eq:useful-formula}, we also note that
\begin{align}\label{eq:useful-formula-2}
\left|\frac{D^{k} u - D^{k}P}{R^{\ell - k}}\right| &\leq c(n,m)G^{\frac{1}{\gamma_{p,\ell}}}, \\
\, a(x)^{\frac{1}{q}}\left|\frac{D^{k} u - D^{k}P}{R^{\ell - k}}\right| &\leq c(n,m,p,q,\alpha,[a]_{\alpha})G^{\frac{1}{\gamma_{q,\ell}}}
\end{align}
for a.e.\,$x \in B_{2R}$, each $\ell \in \{0,\ldots,m\}$ and $k \in \{0,\ldots,\ell\}$.

Taking into account \eqref{eq:growth}, \eqref{eq:usefulful} and \eqref{eq:useful-formula}, we obtain 
\begin{align}
|A_\sigma|| \partial_\sigma(v_j \eta)|
&\leq c( g_{p,\ell}|D^m u|^{p - 1} + h_{p,\ell})\sum_{k=0}^{\ell}\left|\frac{D^{k}u_j - D^kP_j}{R^{\ell-k}} \right|   \\
&+ c( g_{q,\ell}a(x)^{\frac{q-1}{q}}|D^m u|^{q-1} + h_{q,\ell} )\sum_{k=0}^{\ell}a(x)^\frac{1}{q}\left|\frac{D^{k}u_j - D^kP_j}{R^{\ell-k}} \right|\\
&\le c( G^{\frac{1}{\widehat{s_{p,\ell}}} + \frac{1}{p'}} + G^{\frac{1}{\widehat{t_{p,\ell}}}} ) G_j^{\frac{1}{\gamma_{p,\ell}}}
+ c(G^{\frac{1}{\widehat{s_{q,\ell}}} + \frac{1}{q'}} + G^{\frac{1}{\widehat{t_{q,\ell}}}})G_j^{\frac{1}{\gamma_{q,\ell}}} \\
&= c(n,m)\left( G^{1-\frac{1}{\gamma_{p,\ell}}}G_j^{\frac{1}{\gamma_{p,\ell}}}
+ G^{1-\frac{1}{\gamma_{q,\ell}}}G_j^{\frac{1}{\gamma_{q,\ell}}} \right)
\quad
\text{for a.e.\,}
x \in B_{2R}
\end{align}
for each $\ell \in \{0,\ldots,m\}$ and $\sigma \in S_\ell$. Hence, 
\begin{equation}\label{eq:Lebesgue}
|A_\sigma|| \partial_\sigma(v_j \eta)|G_j^{\delta-1} \le c(n,m)\max\{G,G_j\}^{\delta}
\quad
\text{for a.e.\,}
x \in B_{2R}
\end{equation}
for each $\ell \in \{0,\ldots,m\}$ and $\sigma \in S_\ell$.

Finally, we verify
\begin{equation}\label{eq:conv-v}
  v_j \rightarrow v
  \quad
  \text{for a.e.\,in }
  B_{2R},
\end{equation}
where
\begin{equation}\label{eq:v-re}
  v \coloneqq (u-P)\eta.
\end{equation}
For the definition of $P$, see \eqref{eq:P-re}.
Indeed, since $u_j$ converges to $u$ a.e.\,in $B_{2R}$ by \eqref{eq:conv-u}, we only need to show $P_j$ converges to $P$ pointwise in $B_{2R}$. We denote the center of $B_{2R}$ by $x_0$. Moreover, for each $\sigma \in S \setminus S_m$, let $a_{\sigma}$ and $a_{\sigma,j}$ be the coefficients of $(x - x_i)^\sigma$ when expanding $P$ and $P_j$ with $x_0$ centered, respectively. Then it suffices to show that $a_{\sigma,j}$ converges to $a_{\sigma}$ for any $\sigma \in S \setminus S_m$. 

Argue by induction. First we consider the case $\sigma \in S_{m-1}$. Then by \eqref{eq:conv-u} we have 
$$
a_{\sigma, j} = \frac{1}{\sigma !}(\partial_{\sigma}u_j)_{B_{2R},\eta} \rightarrow \frac{1}{\sigma !}(\partial_{\sigma}u)_{B_{2R},\eta} = a_{\sigma}
$$ 
as $j \rightarrow \infty$. Note that the proof is completed if $m=1$.

Next, assume there exists $\ell \in \{1,\ldots,m-1\}$ such that  $a_{\sigma,j}$ converges to $a_{\sigma}$ for each $\sigma \in \bigcup_{k=\ell}^{m-1}S_{k}$. Let us fix any $\sigma \in S_{\ell-1}$. By Lemma \ref{lem:Pi} \textbf{(i)}, \eqref{eq:conv-u} and the induction hypothesis, we obtain
\begin{align}
a_{\sigma,j} 
  &= \frac{1}{\sigma!}\left\{(\partial_\sigma u_j)_{B_{2R},\eta} 
  - \sum_{\tau \in S \setminus S_m,\tau > \sigma}\frac{\tau!}{(\tau - \sigma)!}a_{\tau,j}\left((x - x_0)^{\tau - \sigma}\right)_{B_{2R},\eta} \right\} \\
  &\rightarrow \frac{1}{\sigma!}\left\{(\partial_\sigma u)_{B_{2R},\eta} 
  - \sum_{\tau \in S \setminus S_m,\tau > \sigma}\frac{\tau!}{(\tau - \sigma)!}a_{\tau}\left((x - x_0)^{\tau - \sigma}\right)_{B_{2R},\eta} \right\}
  = a_{\sigma}
\end{align}
as $j \rightarrow \infty$. This completes the proof of \eqref{eq:conv-v}.

Combining \eqref{eq:conv-v}, \eqref{eq:Lebesgue} and \eqref{eq:conv-G}, we apply Lebesgue's dominated convergence theorem to \eqref{eq:Caccioppoli-last-2} and obtain
\begin{equation}\label{eq:Caccioppoli-last-3}
\sum_{\sigma \in S} \int_{B_{2R}} A_\sigma \cdot \partial_\sigma(v \eta)G_{\Lambda_0}^{1-\delta}
\le c(m,\delta_0)\cdot(1-\delta)\int_{B_{2R}}  G^{\delta}
\end{equation}
as $j \rightarrow \infty$.
Here we set $G_{\Lambda_0} \coloneqq \max\{G,\Lambda_0\}$.

\noindent \textbf{Step 4}:\,We derive the conclusion from \eqref{eq:Caccioppoli-last-3}.
Now we divide the left-hand side of \eqref{eq:Caccioppoli-last-3} in the following way:
 \begin{align}
    &\sum_{\sigma \in S} \int_{B_{2R}} A_\sigma \cdot \partial_\sigma(v \eta)G_{\Lambda_0}^{\delta-1} \\
    &= \sum_{\sigma \in S_m} \int_{B_{2R}} A_\sigma \partial_\sigma u \eta^2 G_{\Lambda_0}^{\delta-1} \\
    &+ \sum_{\sigma \in S_m} \int_{B_{2R}} A_\sigma (\partial_\sigma(v\eta) - \partial_\sigma u \eta^2)G_{\Lambda_0}^{\delta-1}\\
    &+\sum_{\sigma \in S \setminus S_m} \int_{B_{2R}} A_\sigma \partial_\sigma(v\eta)G_{\Lambda_0}^{\delta-1}
    \eqqcolon J_0 - J_1 - J_2.
 \end{align}
By this equality and \eqref{eq:Caccioppoli-last-3}, we have
\begin{equation}\label{eq:lem_substitute_J}
J_0 \le J_1 + J_2 + c \cdot (1-\delta)\int_{B_{2R}}  G^{\delta}.
\end{equation}
(In order to obtain \eqref{eq:lem_substitute_J}, the boundedness of $J_1$ and $J_2$ are needed. 
The boundedness of $J_1$ and $J_2$ follows from \eqref{eq:lem_substitute_J1} and \eqref{eq:lem_substitute_J2}.)

First we provide a lower estimate of $J_0$. The coercivity \eqref{eq:coercivity} implies
\begin{align}\label{eq:lem_substitute_J0}
    J_0 
   &\geq \nu \left( \int_{B_{2R}} H_m(x,D^m u) \eta^{2} G_{\Lambda_0}^{\delta - 1} -  \nu \int_{B_{R}} ( f_p + a f_q )G_{\Lambda_0}^{\delta-1} \right) \\
   &\geq \nu \int_{B_{R}} H_m(x,D^m u)G_{\Lambda_0}^{\delta-1} - \nu \int_{B_{2R}} (f_p + af_q) \\
  &\ge \nu \int_{B_{R}} H_m(x,D^m u)G_{\Lambda_0}^{\delta-1} - \nu \int_{B_{2R}} F.
\end{align}
Here we used \eqref{eq:eta}, $\Lambda_0 > 1$ (see \eqref{eq:Lambda0}), $\delta - 1 < 0$ and \eqref{eq:F}.

Next, we derive an upper estimate of $J_1$.
By \eqref{eq:v-re} and \eqref{eq:P-re}, we have 
\begin{align}\label{eq:veta-0}
    |\partial_\sigma(v\eta) - \partial_\sigma u \eta^2| 
    &= |\partial_\sigma((u-P)\eta^{2}) - \partial_\sigma u \eta^{2}| \\
    &= \left|\sum_{\sigma_1 + \sigma_2 = \sigma}\partial_{\sigma_1}(u-P)\partial_{\sigma_2}(\eta^2) - \partial_\sigma u \eta^{2}\right| \\
    &= \left|\sum_{\sigma_1 + \sigma_2 = \sigma, \sigma_1 < \sigma}\partial_{\sigma_1}(u-P)\partial_{\sigma_2}(\eta^2)\right| \\
    &\leq c(n,m)\sum_{\ell = 0}^{m - 1} \left|\frac{D^{\ell} u - D^{\ell} P}{R^{m - \ell}}\right| \quad \text{in }B_{2R}
\end{align}
for each $\sigma \in S_m$ by \eqref{eq:v-re} and \eqref{eq:eta}.
Using \eqref{eq:veta-0} and the growth condition $\eqref{eq:growth}$, we obtain
\begin{align}\label{eq:Caccioppoli-J_1}
   J_1 
    &\leq c \int_{B_{2R}} \{ (|D^m u|^{p - 1} + h_{p,m}) \\
    &+ a^{\frac{1}{q}}(a^{\frac{q-1}{q}}|D^m u|^{q - 1} + h_{q,m}) \}|\partial_\sigma(v\eta) - \partial_\sigma u \eta^2|G_{\Lambda_0}^{\delta-1}\,dx \\
    &\le c \int_{B_{2R}} \{ (|D^m u|^{p - 1} + h_{p,m}) \\
    &+ a^{\frac{1}{q}}(a^{\frac{q-1}{q}}|D^m u|^{q - 1} + h_{q,m}) \}\sum_{\ell = 0}^{m - 1} \left|\frac{D^{\ell} u - D^{\ell} P}{R^{m - \ell}}\right| G_{\Lambda_0}^{\delta - 1}\,dx \\
    &= c(n,m) \int_{B_{2R}} I G_{\Lambda_0}^{\delta - 1}.
\end{align}
Here we set
\begin{equation}
  I \coloneqq \{ (|D^m u|^{p - 1} + h_{p,m}) 
    + a^{\frac{1}{q}}(a^{\frac{q-1}{q}}|D^m u|^{q - 1} + h_{q,m}) \}\sum_{\ell = 0}^{m - 1} \left|\frac{D^{\ell} u - D^{\ell} P}{R^{m - \ell}}\right|.
\end{equation}
\eqref{eq:usefulful} and Young's inequality yield for any $\varepsilon > 0$,
\begin{align}\label{eq:Caccioppoli-J_1-I}
I 
&= ( |D^m u|^{p - 1} + h_{p,m}) \cdot \sum_{\ell = 0}^{m-1} \left|\frac{D^{\ell} u - D^{\ell}P}{R^{m - \ell}}\right| \\
&+ (a(x)^{\frac{q-1}{q}}|D^m u|^{q-1} + h_{q,m}) \cdot \sum_{\ell = 0}^{m-1}a(x)^\frac{1}{q}\left|\frac{D^{\ell} u - D^{\ell}P}{R^{m - \ell}}\right| \\
&\leq 2G^{\frac{1}{p'}} \cdot \sum_{\ell = 0}^{m-1} \left|\frac{D^{\ell} u - D^{\ell}P}{R^{m - \ell}}\right|+ 2
G^{\frac{1}{q'}} \cdot \sum_{\ell = 0}^{m-1}a(x)^\frac{1}{q}\left|\frac{D^{\ell} u - D^{\ell}P}{R^{m - \ell}}\right| \\
&\le \varepsilon G + c(\varepsilon)\sum_{\ell = 0}^{m-1} \left( \left|\frac{D^{\ell} u - D^{\ell}P}{R^{m - \ell}}\right|^p + a(x) \left|\frac{D^{\ell} u - D^{\ell}P}{R^{m - \ell}}\right|^q  \right) \\
&= \varepsilon G + c(\varepsilon)\sum_{\ell = 0}^{m-1} H_m \left(x, \frac{D^{\ell} u - D^{\ell}P}{R^{m - \ell}} \right).
\end{align}

Combining \eqref{eq:Caccioppoli-J_1} and \eqref{eq:Caccioppoli-J_1-I}, we obtain 
\begin{align}\label{eq:lem_substitute_J1}
J_1
&\le \varepsilon \int_{B_{2R}} G \cdot G_{\Lambda_0}^{\delta-1} + c(\varepsilon)\sum_{\ell = 0}^{m-1}\int_{B_{2R}} H_m \left(x, \frac{D^{\ell} u - D^{\ell}P}{R^{m - \ell}} \right) \cdot G_{\Lambda_0}^{\delta-1} \\
&\le \varepsilon \int_{B_{2R}} G^{\delta} + c(\varepsilon)\sum_{\ell = 0}^{m-1}\int_{B_{2R}} H_m \left(x, \frac{D^{\ell} u - D^{\ell}P}{R^{m - \ell}} \right)^{\delta}.
\end{align}
In the last inequality, we used $\delta - 1 < 0$ and \eqref{eq:useful-formula-2}.

Next we evaluate $J_2$. Observe that for each $\ell \in \{0,\ldots,m\}$
\begin{equation}\label{eq:veta}
    |D^{\ell}(v\eta)|
    =|D^{\ell}((u-P)\eta^{2})|
     \leq c(n,m)\sum_{k = 0}^{\ell} \left|\frac{D^k u - D^k P}{R^{\ell - k}}\right|
\end{equation}
by \eqref{eq:v-re}.
Using this inequality and the growth condition \eqref{eq:growth}, we have
\begin{align}\label{eq:Caccioppoli-J_2}
    J_2
    &\leq  c\int_{B_{2R}} \sum_{\ell = 0}^{m - 1} \{ (g_{p,\ell}|D^m u|^{p - 1} + h_{p,\ell}) \\
    &+ a^{\frac{1}{q}}(g_{q,\ell}a^{\frac{q-1}{q}}|D^m u|^{q - 1} + h_{q,\ell}) \}|D^{\ell} (v\eta)|G_{\Lambda_0}^{\delta-1} \\
    &\leq c\int_{B_{2R}} \sum_{\ell = 0}^{m - 1}\sum_{k = 0}^{\ell} \{ (g_{p,\ell}|D^m u|^{p - 1} + h_{p,\ell}) \\
    &+ a^{\frac{1}{q}}(g_{q,\ell}a^{\frac{q-1}{q}}|D^m u|^{q - 1} + h_{q,\ell}) \}\left|\frac{D^k u - D^k P}{R^{\ell - k}}\right|G_{\Lambda_0}^{\delta-1} \\
    &\eqqcolon c\int_{B_{2R}} \sum_{\ell = 0}^{m - 1}\sum_{k = 0}^{\ell}I_{\ell,k}G_{\Lambda_0}^{\delta-1}.
\end{align}
From \eqref{eq:Holder}, \eqref{eq:useful-formula-2} and Young's inequality, it follows that for any $\varepsilon > 0$,
\begin{align}\label{eq:Caccioppoli-J_2-I_{ell,k}}
I_{\ell,k}
&= g_{p,\ell} \cdot |D^m u|^{p - 1}\left|\frac{D^{k} u - D^{k}P}{R^{\ell - k}}\right| \\
&+ g_{q,\ell} \cdot a(x)^{\frac{q-1}{q}}|D^m u|^{q-1}a(x)^\frac{1}{q}\left|\frac{D^{k} u - D^{k}P}{R^{\ell - k}}\right| \\
    &+ h_{p,\ell} \cdot \left|\frac{D^{k} u - D^{k}P}{R^{\ell - k}}\right|
    +  h_{q,\ell} \cdot a^{\frac{1}{q}}\left|\frac{D^{k} u - D^{k}P}{R^{\ell - k}}\right| \\
    &\leq g_{p,\ell} \cdot G^{\frac{1}{p'} + \frac{1}{\gamma_{p,\ell}}} 
    + g_{q,\ell} \cdot G^{\frac{1}{q'} + \frac{1}{\gamma_{q,\ell}}}
    + h_{p,\ell} \cdot G^{\frac{1}{\gamma_{p,\ell}}}
    + h_{q,\ell} \cdot G^{\frac{1}{\gamma_{q,\ell}}} \\
    &\leq \varepsilon G + c(\varepsilon) \left( g_{p,\ell}^{\widehat{s_{p,\ell}}} + g_{q,\ell}^{\widehat{s_{q,\ell}}} + h_{p,\ell}^{\widehat{t_{p,\ell}}} + h_{q,\ell}^{\widehat{t_{q,\ell}}} \right)
\end{align}
for each $\ell \in \{0,\ldots,m-1\}$ and $k \in \{0,\ldots,\ell\}$. Therefore, \eqref{eq:Caccioppoli-J_2}, \eqref{eq:Caccioppoli-J_2-I_{ell,k}} and \eqref{eq:F} yield
\begin{align}\label{eq:lem_substitute_J2}
	J_2 
	\le \varepsilon \int_{B_{2R}} G \cdot G_{\Lambda_0}^{\delta-1}
    + c(\varepsilon) \int_{B_{2R}} F \cdot G_{\Lambda_0}^{\delta-1}
	\le \varepsilon \int_{B_{2R}} G^{\delta}
    + c(\varepsilon) \int_{B_{2R}} F
\end{align}
for any $\varepsilon > 0$. Here note that $\delta-1 < 0$ and $G_{\Lambda_0} > 1$.

Combining \eqref{eq:lem_substitute_J}, \eqref{eq:lem_substitute_J0}, \eqref{eq:lem_substitute_J1} and \eqref{eq:lem_substitute_J2}, we obtain for any $\varepsilon > 0$,
\begin{align}\label{eq:Caccioppoli-nc}
    &\int_{B_{R}} H_m(x,D^m u)G_{\Lambda_0}^{\delta-1} \\
    &\leq c\{ \varepsilon + (1-\delta) \} \int_{B_{2R}} G^\delta \\
    &+ c(\varepsilon) \sum_{\ell = 0}^{m-1} \int_{B_{2R}}H_m \left(x, \frac{D^{\ell} u - D^{\ell}P}{R^{m - \ell}} \right)^{\delta} 
    + c(\varepsilon)\int_{B_{2R}} F \\
    &\le c\{ \varepsilon + (1-\delta) \} \int_{B_{2R}} G^\delta + c(\varepsilon)|B_{2R}|K,
\end{align}
where we set
\begin{equation}\label{eq:Caccioppoli-K}
  K \coloneqq \sum_{\ell = 0}^{m-1} \fint_{B_{2R}}H_m \left(x, \frac{D^{\ell} u - D^{\ell}P}{R^{m - \ell}} \right)^{\delta} + \fint_{B_{3R}} F.
\end{equation}
In order to estimate the left-hand side of \eqref{eq:Caccioppoli-nc}, following the proof of \cite[Lemma 3.9]{BBK},  let us define
\begin{equation}
    U_R \coloneqq \{x \in B_R : H_m(x,D^m u) \leq \varepsilon G_{\Lambda_0}(x) \}.
\end{equation}
Then we have for any $\varepsilon > 0$,
\begin{align}
    \int_{B_{R}} H_m(x,D^m u)^{\delta}  
    &= \int_{U_R} H_m(x,D^m u)^{\delta} + \int_{B_{R} \setminus U_R} H_m(x,D^m u)\left[ H_m(x,D^m u) \right]^{\delta - 1} \\
    &\leq \varepsilon^\delta \int_{U_R} G_{\Lambda_{0}}^\delta + \varepsilon^{\delta - 1}\int_{B_{R} \setminus U_R} H_m(x,D^m u)G_{\Lambda_0}^{\delta - 1} \\
    &\leq  \varepsilon^\delta \int_{B_{2R}} G_{\Lambda_{0}}^\delta + \varepsilon^{\delta - 1}\int_{B_{R}} H_m(x,D^m u)G_{\Lambda_0}^{\delta - 1}.
\end{align}
From this inequality, \eqref{eq:Caccioppoli-nc}, \eqref{eq:Lambda0}, Lemma \ref{lem:G} and \eqref{eq:Caccioppoli-K}, we derive for any $\varepsilon > 0$,
\begin{align}
    \fint_{B_{R}} H_m(x,D^m u)^{\delta}
    &\leq c \varepsilon^{\delta} \fint_{B_{2R}} G_{\Lambda_{0}}^\delta
    + \varepsilon^{\delta - 1}\fint_{B_{R}} H_m(x,D^m u)G_{\Lambda_0}^{\delta - 1}  \\ 
    &\leq  c\left( \varepsilon^\delta + \varepsilon^{\delta - 1}(1-\delta) \right)\fint_{B_{2R}} G_{\Lambda_{0}}^\delta
    + c(\varepsilon,\delta) K \\
    &\le c\left( \varepsilon^\delta + \varepsilon^{\delta - 1}(1-\delta) \right) \left( \fint_{B_{3R}} G^\delta + \Lambda_0^{\delta} \right)
    + c(\varepsilon,\delta) K \\
    &\le c\left( \varepsilon^\delta + \varepsilon^{\delta - 1}(1-\delta) \right)\fint_{B_{3R}} H(x,D^m u)^\delta
    + c(\varepsilon,\delta) K.
\end{align}
Set $\varepsilon = 1 - \delta$ and note that 
\begin{equation}
\sup_{0<t<1}(1 - t)^{t - 1} < e^{\frac{1}{e}}
\quad
\text{and}
\quad
\lim_{t \rightarrow 1}(1 - t)^t = 0.
\end{equation}
Then we can choose $\delta_1 \in (0,1)$ such that the conclusion \eqref{eq:Caccioppoli} holds for any $\delta \in [\delta_1,1)$. The proof is completed.
\end{proof}
\begin{rem}
Note that $v_{j,\lambda} \equiv 0$ outside $B_{4R}$ by Lemma \ref{lem:Elambda}, \eqref{eq:vlambda-1} and \eqref{eq:eta}. Therefore, provided that $B_{4R} \subset \subset \Omega$, we could choose $\varphi = v_{j, \lambda}$ instead of $v_{j,\lambda}\eta$ , as in \cite{Le} and \cite[Section 12.3]{KLV}. However, this would require a technical modification of $G$.
\end{rem}

Using Corollary \ref{cor:sp}, we estimate the second term in the right hand side of the Caccioppoli inequality \eqref{eq:Caccioppoli}.
\begin{lem}\label{lem:SP}
There exist constants $c=c(n,m,p,q,\alpha,[a]_{\alpha})>0$ and $\widehat{\delta} = \widehat{\delta}(n,p,q) \in (0,1)$ such that
\begin{equation}
\sum_{\ell=0}^{m-1}\fint_{B_{2R}} H_m \left(x,\frac{D^\ell u - D^\ell P}{R^{m-\ell}} \right)
\le c\left(\fint_{B_{2R}} H(x,D^m u)^{\widehat{\delta}} \right)^{\frac{1}{\widehat{\delta}}}.
\end{equation}
\end{lem}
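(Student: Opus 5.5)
The plan is to apply Corollary \ref{cor:sp} with exponents $(p\widehat{\delta},q\widehat{\delta})$ in place of $(p,q)$, weight $a^{\widehat{\delta}}\in\mathcal{Z}^{\alpha\widehat{\delta}}$, and the ``Sobolev'' exponent $r$ chosen so that the left-hand side exactly produces the $p$- and $q$-powers in $H_m$. Here $u$ is the original very weak solution and $P$ is the polynomial from \eqref{eq:P-re}. Fix $\ell\in\{0,\ldots,m-1\}$ and put $w\coloneqq D^\ell u-D^\ell P$ on $B=B_{2R}$ with weight $\eta$ from \eqref{eq:eta}. The weight satisfies \eqref{assmpt:sp-2}, since $0\le\eta\le1$ and $\|\eta\|_{L^1(B_{2R})}\ge|B_R|$ (up to a harmless dimensional constant in the assumption $\|\eta\|\ge|B|/2$; if needed I would re-choose $\eta$ with $\eta=1$ on $B_{3R/2}$). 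By \eqref{eq:P-re}, $(D^k w)_{B_{2R},\eta}=0$ for $k\le m-\ell-1$, and $D^{m-\ell}w=D^m u$.

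First, I treat the $p$-part. The classical Sobolev--Poincar\'e inequality with vanishing weighted means (the $a\equiv0$ case of the iteration in Corollary \ref{cor:sp}, or directly Lemma \ref{lem:Hedberg-1} together with the fractional-integral bounds) gives
\[
\Big(\fint_{B_{2R}}\Big|\frac{w}{R^{m-\ell}}\Big|^{p}\Big)^{1/p}\le c\Big(\fint_{B_{2R}}|D^m u|^{p\widehat\delta}\Big)^{1/(p\widehat\delta)},
\]
provided $p\le ((p\widehat\delta)_{m-\ell})^*$. This holds once $p\widehat\delta\ge \frac{np}{n+p}$, that is, $\widehat\delta\ge\frac{n}{n+p}$ (with strict inequality if $(m-\ell)p\widehat\delta\ge n$). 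I also need $p\widehat\delta>1$, which is assumed in Theorem \ref{thm:sp} (see Remark \ref{rem:sp-p}). This is why $\widehat\delta$ depends on $n,p,q$.

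Second, for the $q$-part I apply Corollary \ref{cor:sp} with $(p,q,a,\alpha,\ell,r)$ replaced by $(p\widehat\delta,q\widehat\delta,a^{\widehat\delta},\alpha\widehat\delta,m-\ell,q\widehat\delta\cdot\frac{1}{\widehat\delta})$. The left-hand side then reads $\big(\fint a\,|w/R^{m-\ell}|^{q}\big)^{1/q}$, because $a^{\widehat\delta\cdot r/(q\widehat\delta)}=a$ for $r=q$. The admissibility requirements are as follows. First, $\frac{q\widehat\delta}{p\widehat\delta}=\frac qp\le1+\frac{\alpha\widehat\delta}{n}$; this holds for $\widehat\delta$ close to $1$ since $\frac qp<1+\frac\alpha n$ strictly. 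However, this makes $\widehat\delta$ depend on $\alpha$ too, so I may need to allow that dependence or use $\delta_0$ from \eqref{eq:delta_0-4}. Second, $q\le ((q\widehat\delta)_{m-\ell})^*$, which holds if $\widehat\delta\ge\frac n{n+q}$. The result is a bound by $c(\fint a^{\widehat\delta}|D^m u|^{q\widehat\delta})^{1/(q\widehat\delta)}+cR^{\alpha/q}(\fint|D^m u|^{p\widehat\delta})^{1/(p\widehat\delta)}$. Raising to the power $q$ and using $R\le1$ together with Jensen, the term $R^{\alpha}(\fint|D^mu|^{p\widehat\delta})^{q/(p\widehat\delta)}$ is dominated by $(\fint H^{\widehat\delta})^{q/(q\widehat\delta)}$ only after absorbing the exponent mismatch $q/p>1$. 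This is the \emph{main obstacle}. I would handle it exactly as in the standard double phase argument: write $(\fint|D^mu|^{p\widehat\delta})^{q/(p\widehat\delta)}=(\fint|D^mu|^{p\widehat\delta})^{1/\widehat\delta}\cdot(\fint|D^mu|^{p\widehat\delta})^{(q-p)/(p\widehat\delta)}$, and control the second factor by $R^{-n(q-p)/(p\widehat\delta)}\|D^m u\|_{L^{p\widehat\delta}}^{(q-p)}$. Since $\alpha-n(q-p)/(p\widehat\delta)\ge0$ for $\widehat\delta$ near $1$, the power of $R$ is nonnegative, but the norm constant depends on $u$. To avoid that dependence, I would instead use the reduction already in \eqref{eq:R_0}: for $R\le R_0$ the factor $R^{\alpha-\dots}\|\cdot\|^{q-p}$ is bounded by $1$.

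Finally, I sum over $\ell$, add the $p$- and $q$-parts, and use $|D^mu|^{p\widehat\delta}+a^{\widehat\delta}|D^mu|^{q\widehat\delta}\le 2H_m(x,D^mu)^{\widehat\delta}$. The exponents $1/\widehat\delta$ versus $q/(q\widehat\delta)$ match, which gives the claimed inequality.
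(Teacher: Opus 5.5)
Your argument is sound, but it takes a different route from the paper. As you suspected, it proves the lemma only with $\widehat\delta=\widehat\delta(n,p,q,\alpha)$, not with $\widehat\delta(n,p,q)$ as stated.

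The paper does not scale both exponents by a common factor. It first reduces every order $\ell\le m-1$ to order $m-1$ by iterating \eqref{eq:cor-sp-p} together with the $(p,p)$-Poincar\'e inequality; this produces $J_1,J_2,J_3$. It then applies Corollary~\ref{cor:sp} once, at order one with $r=q$, to the Sobolev-lowered pair $(\widehat{p_*},\widehat{q_*})$, essentially $(\frac{np}{n+p},\frac{nq}{n+q})$, with weight $a^{\widehat{q_*}/q}\in\mathcal{Z}^{\alpha\widehat{q_*}/q}$. For that pair the balance condition $\frac{\widehat{q_*}}{\widehat{p_*}}\le 1+\frac{\alpha\widehat{q_*}}{nq}$ is \emph{equivalent} to $\frac qp\le 1+\frac\alpha n$. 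Hence $\widehat\delta=\max\{\widehat{p_*}/p,\widehat{q_*}/q\}$ depends only on $(n,p,q)$; for instance it equals $\frac{n}{n+p}$ when $\frac{np}{n+p}>1$. The price is a three-case definition that keeps $\widehat{p_*}>1$ when $p\le\frac{n}{n-1}$, plus the lower-order integrability \eqref{eq:conv-H_ell-0} needed to run the iteration.

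Your common scaling $(p\widehat\delta,q\widehat\delta,a^{\widehat\delta},\alpha\widehat\delta)$, applied directly at order $m-\ell$, is shorter. It avoids both the case analysis and the reduction, and it needs only the integrability supplied by the very weak solution property. However, its balance condition $\frac qp\le1+\frac{\alpha\widehat\delta}{n}$ forces $\widehat\delta$ to depend on $\alpha$ and to approach $1$ as $\frac qp\to1+\frac\alpha n$. This is harmless for Theorem~\ref{thm:main}, whose $\delta$ depends on $\alpha$ anyway.

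You must state $\widehat\delta\le\delta_0$ explicitly; the paper imposes the same condition in \eqref{eq:SP-delta_0}. It does two jobs:
\begin{itemize}
\item It lets Jensen pass from $L^{p\widehat\delta}$ to $L^{p\delta_0}$, so that \eqref{eq:R_0} with $\ell=m$ bounds $R^{\alpha}\bigl(\fint_{B_{2R}}|D^mu|^{p\widehat\delta}\bigr)^{(q-p)/(p\widehat\delta)}$ by a constant. This needs strict inequality in the balance condition, which holds at $\delta_0$.
\item It keeps $\kappa=\widehat\delta/\delta\le\frac{2\delta_0}{1+\delta_0}<1$ in the Gehring step.
\end{itemize}
Since \eqref{eq:delta_0-4} already gives $\frac qp<1+\frac{\alpha\delta_0}{n}$ and $p\delta_0>1$, the choice $\widehat\delta=\delta_0$ meets every requirement of your argument. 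You only need to enlarge $\delta_0$ so that $\delta_0\ge\frac{n}{n+p}$, which all conditions in Subsection~4.1 allow since each of them only improves as $\delta_0$ increases.

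Two minor points. First, the constant $\frac12$ in \eqref{assmpt:sp-2} only enters the constants in Theorem~\ref{thm:sp} and Corollary~\ref{cor:sp}, so there is no need to re-choose $\eta$. Re-choosing it would also change $P$ in \eqref{eq:P-re}, and taking $\eta=1$ on $B_{3R/2}$ would not give the ratio $\frac12$ for $n\ge3$ anyway. Second, for the $p$-part the relevant special case of Corollary~\ref{cor:sp} is $a\equiv1$, $p=q$, not $a\equiv0$.
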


\begin{proof}
Set $p_* = \max \left\{1,\frac{np}{n+p}\right\}$ and $q_*  = \max \left\{1,\frac{nq}{n+q}\right\}$.
For each $\ell \in \{0,\ldots,m-1\}$, we have
\begin{equation}
D^\ell u \in L^p(\Omega_0),
\quad
a^{\frac{1}{q}}D^\ell u \in L^q(\Omega_0),
\quad
\frac{q}{p} < 1 + \frac{\alpha}{n} 
\end{equation}
by \eqref{eq:conv-H_ell-0} and \eqref{assmpt:exponents}.
Fix $\ell \in \{0,\ldots,m-1\}$. Recalling \eqref{eq:P-re}, we apply \eqref{eq:cor-sp-p} and the standard $(p,p)$-Poincar\'e inequality iteratively to obtain
\begin{align}\label{eq:SP-J's}
&\fint_{B_{2R}} H_m \left(x,\frac{D^\ell u - D^\ell P}{R^{m-\ell}} \right) \\
&= \fint_{B_{2R}} \left|\frac{D^\ell u -D^\ell P}{R^{m-\ell}}\right|^p
+ \fint_{B_{2R}} a\left|\frac{D^\ell u -D^\ell P}{R^{m-\ell}}\right|^q \\
&\le c\fint_{B_{2R}} \left|\frac{D^{m-1} u -D^{m-1} P}{R}\right|^p
+ c\fint_{B_{2R}} a\left|\frac{D^{m-1} u -D^{m-1} P}{R}\right|^q \\
&+ cR^{\alpha} \left(\fint_{B_{2R}} \left|\frac{D^{m-1} u -D^{m-1} P}{R}\right|^p \right)^{\frac{q}{p}} \\
&\eqqcolon c(n,m,p,q,\alpha)\left(J_1+J_2+J_3\right).
\end{align}

For convenience, we first estimate $J_2$. Let us define $(\widehat{p_*},\widehat{q_*})$ in order to apply Corollary \ref{cor:sp} with parameters $(p,q,a,\alpha)=(\widehat{p_*},\widehat{q_*},a^{\frac{\widehat{q_*}}{q}},\frac{\alpha \widehat{q_*}}{q})$ for $J_2$ (see Remark \ref{rem:sp-p} also). If $(q_* \ge)\,p_* > 1$, we simply set $(\widehat{p_*},\widehat{q_*}) = (p_*,q_*)$. If $q_* > 1$ and $p_*=1$, then we define $(\widehat{p_*},\widehat{q_*}) =  \left( \min\left\{ \frac{1+p}{2}, q_*\right\},q_* \right)$. Finally, if $q_*\,(=p_*)=1$, then we set $(\widehat{p_*},\widehat{q_*}) = \left(\frac{1+p}{2},\min\left\{ \frac{1+q}{2},\left(1+\frac{\alpha}{nq}\right)\frac{1+p}{2} \right\}\right)$. From this definition, we have
\begin{equation}\label{eq:SP-expo-1}
1 < \widehat{p_*} \le \max \left\{\frac{1+p}{2}, p_* \right\} < p,
\quad
\widehat{p_*} \le \widehat{q_*} \le \max \left\{\frac{1+q}{2}, q_* \right\} < q,
\end{equation}
and
\begin{equation}\label{eq:SP-expo-2}
\frac{\widehat{q_*}}{\widehat{p_*}} \le 1 + \frac{\alpha \widehat{q_*}}{nq}.
\end{equation}
Since \eqref{eq:SP-expo-1} is obvious, let us check the inequality \eqref{eq:SP-expo-2}. If $q_*\,(=p_*)=1$, we have
\begin{equation}
\frac{\widehat{q_*}}{\widehat{p_*}}
\le 1 + \frac{\alpha}{nq} \le 1 + \frac{\alpha \widehat{q_*}}{nq}.
\end{equation}

Next let us consider the case $(q_* \ge)\,p_* > 1$.
Recall the third condition of \eqref{assmpt:exponents}:
\begin{equation}
\frac{q}{p} < 1 + \frac{\alpha}{n}.
\end{equation}
Then observe
$\widehat{p_*}=p_* = \frac{np}{n+p},\, \widehat{q_*}=q_* = \frac{nq}{n+q}$, and  
\begin{align}
\frac{\widehat{q_*}}{\widehat{p_*}} \le 1 + \frac{\alpha \widehat{q_*}}{nq}
&\iff \frac{q_*}{p_*} \le 1 + \frac{q_{*}\alpha}{nq} \\
&\iff \frac{1}{p_*} \le \frac{1}{q_*} + \frac{\alpha}{nq} \\
&\iff \frac{n+p}{np} \le  \frac{n+q}{nq} + \frac{\alpha}{nq} \\
&\iff \frac{1}{p} \le \frac{1}{q} + \frac{\alpha}{nq}
\iff \frac{q}{p} \le 1 + \frac{\alpha}{n}.
\end{align}
 
Next, we consider the case $q_* > 1$ and $p_*=1$. Then one can see that
$\widehat{p_*} > p_* = 1,\, \widehat{q_*}=q_* = \frac{nq}{n+q}$ and 
\begin{align}
\frac{\widehat{q_*}}{\widehat{p_*}} \le 1 + \frac{\alpha \widehat{q_*}}{nq}
&\impliedby q_* \le 1 + \frac{\alpha q_*}{nq} \\
&\iff 1 \le \frac{1}{q_*} + \frac{\alpha}{nq} \\
&\iff 1 \le \frac{n+q}{nq} + \frac{\alpha}{nq} \\
&\iff q \le \frac{n+q}{n} + \frac{\alpha}{n} \\
&\iff \frac{(n-1)q}{n} \le 1 + \frac{\alpha}{n}
\impliedby \frac{q}{p} \le 1 + \frac{\alpha}{n}.
\end{align}
The last implication follows from the assumption $p \le \frac{n}{n-1} \iff p_*=1$. The proof of \eqref{eq:SP-expo-2} is completed.
 
By \eqref{eq:SP-expo-1}, we may assume
\begin{equation}\label{eq:SP-delta_0}
\widehat{p_*} \le p \delta_0
\quad
\text{and}
\quad
\widehat{q_*} \le q \delta_0.
\end{equation}
From \eqref{eq:SP-delta_0} and the fact $u$ is a very weak solution with $\delta\,(> \delta_0)$, it follows
\begin{equation}
\int_{B_{2R}} \left( |D^m u|^{\widehat{p_*}} + a^{\frac{\widehat{q_*}}{q}} |D^m u|^{\widehat{q_*}} \right)< \infty.
\end{equation}
By this inequality, \eqref{eq:SP-expo-1} and $q \le (q_*)^* \le (\widehat{q_*})^*$, we can apply Corollary \ref{cor:sp} with $(p,q,a,\alpha,r)=(\widehat{p_*},\widehat{q_*},a^{\frac{\widehat{q_*}}{q}},\frac{\alpha \widehat{q_*}}{q},q)$ and obtain
\begin{align}\label{eq:SP-J_2's}
J_2
&= \fint_{B_{2R}} [a^{\frac{\widehat{q_*}}{q}}]^{\frac{q}{\widehat{q_*}}}\left|\frac{D^{m-1} u -D^{m-1} P}{R}\right|^q  \\
&\le c\left(\fint_{B_{2R}} a^{\frac{\widehat{q_*}}{q}}|D^m u|^{\widehat{q_*}} \right)^{\frac{q}{\widehat{q_*}}} 
+ cR^{\alpha}\left(\fint_{B_{2R}} |D^m u|^{\widehat{p_*}} \right)^{\frac{q}{\widehat{p_*}}} \\
&\eqqcolon c(J_{21} + J_{22}),
\end{align}
where $c=c(n,m,p,q,\alpha,[a]_{\alpha})$.
Here we used $[a^{\frac{\widehat{q_*}}{q}}]_{\frac{\alpha \widehat{q_*}}{q}} \le [a]_{\alpha}$.
 
We first estimate $J_{21}$. 
Set $\widehat{\delta} = \max \left\{ \frac{\widehat{p_*}}{p}, \frac{\widehat{q_*}}{q} \right\} \in (0,1)$.
Then \eqref{def:H_ell} and Jensen's inequality imply
\begin{align}\label{eq:SP-J_21}
J_{21}
&= \left(\fint_{B_{2R}} \left( a|D^m u|^q \right)^{\frac{\widehat{q_*}}{q}} \right)^{\frac{q}{\widehat{q_*}} } \\
&\le \left(\fint_{B_{2R}} \left( a|D^m u|^q \right)^{\widehat{\delta}} \right)^{\frac{1}{\widehat{\delta}}} 
\le \left(\fint_{B_{2R}} H(x,D^m u)^{\widehat{\delta}} \right)^{\frac{1}{\widehat{\delta}}}.
\end{align}
 
On the other hand, for $J_{22}$, we have 
\begin{align}\label{eq:SP-J_22}
J_{22}
&= R^{\alpha}\left(\fint_{B_{2R}} |D^m u|^{\widehat{p_*}} \right)^{\frac{q-p}{\widehat{p_*}}}\left(\fint_{B_{2R}} |D^m u|^{\widehat{p_*}} \right)^{\frac{p}{\widehat{p_*}}}.
\end{align}
For second factor of \eqref{eq:SP-J_22}, similarly to \eqref{eq:SP-J_21}, we get
\begin{equation}\label{eq:SP-J_22-0}
\left(\fint_{B_{2R}} |D^m u|^{\widehat{p_*}} \right)^{\frac{p}{\widehat{p_*}}}
= \left(\fint_{B_{2R}} (|D^m u|^p)^{\frac{\widehat{p_*}}{p}} \right)^{\frac{p}{\widehat{p_*}}}
\le \left( \fint_{B_{2R}} H(x,D^m u)^{\widehat{\delta}} \right)^{\frac{1}{\widehat{\delta}}}.
\end{equation}
Moreover, for the first factor of \eqref{eq:SP-J_22}, using $\widehat{p_*} \le p \delta_0$ (see \eqref{eq:SP-delta_0}), Jensen's inequality and \eqref{eq:R_0}, it follows
\begin{align}\label{eq:SP-J_22-1}
R^{\alpha}\left(\fint_{B_{2R}} |D^m u|^{\widehat{p_*}} \right)^{\frac{q-p}{\widehat{p_*}}}
&\le R^{\alpha}\left(\fint_{B_{2R}} |D^m u|^{p\delta_0} \right)^{\frac{q-p}{p\delta_0}} \\
&\le R^{\alpha-n\frac{q-p}{p\delta_0}}\|D^m u\|_{L^{p\delta_0}(\Omega_0)}^{q-p} \\
&= \left( R^{\frac{\alpha}{q}-n\left(\frac{1}{p\delta_0}-\frac{1}{q\delta_0}\right)}\|D^m u\|_{L^{p\delta_0}(\Omega_0)}^{1-\frac{p}{q}} \right)^q
\le 1.
\end{align}
Therefore, \eqref{eq:SP-J_22}, \eqref{eq:SP-J_22-0} and \eqref{eq:SP-J_22-1} yield
\begin{equation}\label{eq:SP-J_22-2}
J_{22}
\le \left(\fint_{B_{2R}} H(x,D^m u)^{\widehat{\delta}} \right)^{\frac{1}{\widehat{\delta}}}.
\end{equation}

From \eqref{eq:SP-J_2's}, \eqref{eq:SP-J_21}, \eqref{eq:SP-J_22-2} and Jensen's inequality, we derive
\begin{equation}\label{eq:SP-J_2}
J_2
\le c(n,m,p,q,\alpha,[a]_{\alpha})\left(\fint_{B_{2R}} H(x,D^m u)^{\widehat{\delta}} \right)^{\frac{1}{\widehat{\delta}}}.
\end{equation}
The estimate of $J_2$ is completed.

For $J_1$, from $p_* \le \widehat{p_*}$ (see \eqref{eq:SP-expo-1}), the classical Sobolev--Poincar\'e inequality  and \eqref{eq:SP-J_22-0}, it follows that
\begin{equation}\label{eq:SP-J_1}
J_1
\le c\left(\fint_{B_{2R}} |D^m u|^{\widehat{p_*}} \right)^{\frac{p}{\widehat{p_*}}} 
\le \left(\fint_{B_{2R}} H(x,D^m u)^{\widehat{\delta}} \right)^{\frac{1}{\widehat{\delta}}}.
\end{equation}
 
Finally, for $J_3$, applying the classical Sobolev--Poincar\'e inequality and \eqref{eq:SP-J_22-2}, we obtain
\begin{align}\label{eq:SP-J_3}
J_3 
&\le cR^{\alpha}\left(\fint_{B_{2R}} |D^m u|^{\widehat{p_*}} \right)^{\frac{q}{\widehat{p_*}}} \\
&= c(n,m,p,q)J_{22}
\le c(n,m,p,q)\left(\fint_{B_{2R}} H(x,D^m u)^{\widehat{\delta}} \right)^{\frac{1}{\widehat{\delta}}}.
\end{align}
Hence, from \eqref{eq:SP-J's}, \eqref{eq:SP-J_1}, \eqref{eq:SP-J_2} and \eqref{eq:SP-J_3}, we conclude
\begin{equation}
\sum_{\ell=0}^{m-1}\fint_{B_{2R}} H_m \left(x,\frac{D^\ell u - D^\ell P}{R^{m-\ell}} \right)
\le c\left(\fint_{B_{2R}} H(x,D^m u)^{\widehat{\delta}} \right)^{\frac{1}{\widehat{\delta}}}.
\end{equation}
The proof is completed.
\end{proof}

\begin{rem}
In Lemma \ref{lem:SP}, we used Corollary \ref{cor:sp}. Alternatively, we could employ \cite[Corollary 7.2.6]{HH}, which is derived within the framework of Sobolev--Musielak spaces.
\end{rem}

Combining the last two lemmas, we immediately obtain the following reverse H\"older inequality.
\begin{lem}\label{lem:Reverse-Holder}
There exists $c=c(n,m,p,q,\alpha,[a]_{\alpha},\nu,\delta_0)>0$ such that if $\delta \in [\delta_1,1)$, then
\begin{align}\label{eq:RH-c}
    &\fint_{B_{R}} H_m (x, D^{m} u)^\delta \\
    &\leq c\left( \fint_{B_{3R}}   H_m\left(x, D^{m} u \right)^{\widehat{\delta}} \right)^{\frac{\delta}{\widehat{\delta}}}
    + c\fint_{B_{3R}} F^\delta + \frac{1}{2}\fint_{B_{3R}} H_m (x, D^{m} u)^\delta.
\end{align}
Here $\delta_1$ is the constant in Lemma \ref{lem:Caccioppoli} and $\widehat{\delta}$ is the constant in Lemma \ref{lem:SP}.
\end{lem}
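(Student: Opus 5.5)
The plan is to combine Lemma \ref{lem:Caccioppoli} with Lemma \ref{lem:SP}. The only real issue is that the middle term of the Caccioppoli inequality carries the power $\delta$ inside the integral, whereas Lemma \ref{lem:SP} controls the full integral with power $1$.

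First, fix $\delta \in [\delta_1,1)$ and apply Lemma \ref{lem:Caccioppoli}. This gives the first-order terms $\frac12\fint_{B_{3R}}H_m(x,D^mu)^\delta$ and $c\fint_{B_{3R}}F^\delta$ directly, which are exactly two of the three terms on the right of \eqref{eq:RH-c}. It remains to handle
\[
\sum_{\ell=0}^{m-1}\fint_{B_{2R}} H_m\Bigl(x,\frac{D^\ell u-D^\ell P}{R^{m-\ell}}\Bigr)^{\delta}.
\]
Since $\delta<1$, Jensen's inequality bounds each summand by $\bigl(\fint_{B_{2R}} H_m(x,\frac{D^\ell u-D^\ell P}{R^{m-\ell}})\bigr)^{\delta}$. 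Using $(\sum_\ell a_\ell)^\delta\le \sum_\ell a_\ell^\delta \le m\,(\sum_\ell a_\ell)^\delta$, the whole sum is at most $m$ times the $\delta$-th power of the sum appearing in Lemma \ref{lem:SP}.

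Next, invoke Lemma \ref{lem:SP}. It bounds that sum by $c\bigl(\fint_{B_{2R}}H_m(x,D^mu)^{\widehat\delta}\bigr)^{1/\widehat\delta}$, where $H$ there is $H_m$. Raising to the power $\delta$ yields $c\bigl(\fint_{B_{2R}}H_m(x,D^mu)^{\widehat\delta}\bigr)^{\delta/\widehat\delta}$. Finally, pass from $B_{2R}$ to $B_{3R}$ using $|B_{3R}|/|B_{2R}|=(3/2)^n$. This gives $\fint_{B_{2R}}(\cdot)\le (3/2)^n\fint_{B_{3R}}(\cdot)$, so the factor $(3/2)^{n\delta/\widehat\delta}$ is absorbed into $c$; it is bounded uniformly because $\delta/\widehat\delta\le 1/\widehat\delta$ and $\widehat\delta$ depends only on $n,p,q$.

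The main point to check is that Lemma \ref{lem:SP} is applicable on $B_{2R}$ with $R\le R_0$, using the same $P$ from \eqref{eq:P-re}. This is the setting in which the lemma was proved, so no new smallness condition arises. It also has to be confirmed that the constants do not depend on $\delta$, which holds since $c$ in Lemma \ref{lem:Caccioppoli} depends only on $n,m,p,q,\alpha,[a]_\alpha,\nu,\delta_0$. Collecting the terms gives \eqref{eq:RH-c}.
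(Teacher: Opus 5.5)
Your proposal is correct and follows the paper's proof essentially step for step. The paper likewise applies Lemma~\ref{lem:Caccioppoli}, uses Jensen's inequality to pull the power $\delta$ outside each integral, and then invokes Lemma~\ref{lem:SP}. It finishes by enlarging $B_{2R}$ to $B_{3R}$ at the cost of a factor $(3/2)^{n\delta/\widehat{\delta}}\le(3/2)^{n/\widehat{\delta}}$, which is absorbed into $c$.
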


\begin{proof}
By Lemma \ref{lem:Caccioppoli}, we have
\begin{align}\label{eq:rh-2-1}
    \fint_{B_{R}} H_m (x, D^{m} u)^\delta 
    &\leq \frac{1}{2}\fint_{B_{3R}} H_m (x, D^{m} u)^\delta \\
    &+ c \sum_{\ell = 0}^{m - 1}\fint_{B_{2R}}  H_m\left(x, \frac{D^{\ell} u - D^{\ell}P}{R^{m-\ell}}\right)^{\delta}
    + c\fint_{B_{3R}} F^\delta,
\end{align}
where $c=c(n,m,p,q,\alpha,[a]_{\alpha},\nu,\delta_0)$.

It remains to estimate the the second term in the right-hand side. Using Jensen's inequality and Lemma \ref{lem:SP}, we obtain
\begin{align}\label{eq:rh-2-2}
&\sum_{\ell=0}^{m-1}\fint_{B_{2R}}  H_m\left(x, \frac{D^{\ell} u - D^{\ell}P}{R^{m-\ell}}\right)^{\delta} \\
&\le \sum_{\ell=0}^{m-1}\left(\fint_{B_{2R}}  H_m\left(x, \frac{D^{\ell} u - D^{\ell}P}{R^{m-\ell}}\right) \right)^{\delta} \\
&\le c\left(\fint_{B_{2R}}  H_m\left(x, D^m u \right)^{\widehat{\delta}} \right)^{\frac{\delta}{\widehat{\delta}}} \\
&\le c\left(\frac{3}{2}\right)^{\frac{n\delta}{\widehat{\delta}}}\left(\fint_{B_{3R}}  H_m\left(x, D^m u \right)^{\widehat{\delta}} \right)^{\frac{\delta}{\widehat{\delta}}} \\
&\le c(n,m,p,q,\alpha,[a]_{\alpha})\left(\frac{3}{2}\right)^{\frac{n}{\widehat{\delta}}}\left(\fint_{B_{3R}}  H_m\left(x, D^m u \right)^{\widehat{\delta}} \right)^{\frac{\delta}{\widehat{\delta}}}.
\end{align}
Combining \eqref{eq:rh-2-1} and \eqref{eq:rh-2-2}, we arrive at the conclusion.
\end{proof}

\subsection{Conclusion via Gehring's lemma}
\begin{proof}[Proof of Theorem \ref{thm:main}]
We apply Lemma \ref{lem:Gehring} to the reverse H\"older inequality \eqref{eq:RH-c}. Let $\delta \in [\delta_1,1)$, $f_1 = H_m(x,D^m u)^{\delta}$ and $f_2 = F^\delta$. By \eqref{eq:F}, note that $f_1 \in L^{1}_{loc}(\Omega_0)$ and $f_2 \in L^{1 + \varepsilon_0}_{loc}(\Omega_0)$, where $\varepsilon_0 = \frac{1}{\delta_0}-1$. By Lemma \ref{lem:Reverse-Holder}, there exists $c=c(n,m,p,q,\alpha,[a]_{\alpha},\nu,\delta_0) > 0$ such that
\begin{equation}
     \fint_{B_{R}} f_1
     \leq c \left( \fint_{B_{3R}} f_1^\frac{\widehat{\delta}}{\delta}\right)^\frac{\delta}{\widehat{\delta}}
    + c\fint_{B_{3R}} f_2 + \frac{1}{2}\fint_{B_{3R}} f_1
\end{equation}
for any $B_{3R} \subset \subset \Omega_0$ with $R \le R_0$. Therefore, Lemma \ref{lem:Gehring} deduces that there exists $c^*=c^*(n,m,p,q,\alpha,[a]_{\alpha},\nu,\delta_0)$ such that  $f_1$ belongs to $L^{1 + \varepsilon}_\text{loc}(\Omega_0)$ for any $0 < \varepsilon \leq \min \left\{ \frac{1-\frac{\widehat{\delta}}{\delta}}{c^*}, \varepsilon_0 \right\}$.
We have $\delta \geq \delta_1 \geq \frac{1+\delta_0}{2}$ and we may assume $(1 >)\,\delta_0 \ge \widehat{\delta}$. Then we notice that
\begin{equation}
\frac{1-\frac{\widehat{\delta}}{\delta}}{c^*} \geq \frac{1-\frac{2\widehat{\delta}}{1+\delta_0}}{c^*} > 0.
\end{equation}
If we choose $\delta_2 \in [\delta_1,1)$ such that 
$$
\min \left\{\frac{1-\frac{2\widehat{\delta}}{1+\delta_0}}{c^*}, \varepsilon_0 \right\} \ge \frac{1}{\delta \delta_2}-1
\quad
\text{for any }
\delta \in [\delta_2,1),
$$
then $f_1 \in L^{\frac{1}{\delta \delta_2}}_\text{loc}(\Omega_0)$, that is, $H_m(x,D^m u) \in  L^{\frac{1}{\delta_2}}_\text{loc}(\Omega_0)$.
The proof of Theorem \ref{thm:main} is completed.
\end{proof}

\begin{proof}[Proof of Corollary \ref{cor:main}]
Under the assumption of Corollary \ref{cor:main}, we can conclude Lemma \ref{lem:Reverse-Holder}. However, we notice that $F \in L^1_{loc}(\Omega_0)$ since $f_p + a(x)f_q \in L^1_{loc}(\Omega)$ in this setting. Let $\delta \in [\delta_1,1)$, $f_1 = H_m(x,D^m u)^{\delta}$ and $f_2 = F^\delta$. Note that $f_1 \in L^{1}_{loc}(\Omega_0)$ and $f_2 \in L^{1 + \widehat{\varepsilon_0}}_{loc}(\Omega_0)$, where $\widehat{\varepsilon_0} = \frac{1}{\delta}-1$. By $\delta > \delta_0$, we have $\widehat{\varepsilon_0} < \frac{1}{\delta_0}-1$. Together with this, Lemma \ref{lem:Gehring} implies there exists $\widehat{c^*}=\widehat{c^*}(n,m,p,q,\alpha,[a]_{\alpha},\nu,\delta_0)$ such that
$f_1 \in L^{1 + \varepsilon}_\text{loc}(\Omega_0)$ for any $0 < \varepsilon \leq \min \left\{ \frac{1-\frac{\widehat{\delta}}{\delta}}{\widehat{c^*}}, \widehat{\varepsilon_0} \right\}$.
Note that $\delta \geq \delta_1 \geq \frac{1+\delta_0}{2}$ and we may assume $(1 >)\,\delta_0 \ge \widehat{\delta}$. Then one can see that
\begin{equation}
\frac{1-\frac{\widehat{\delta}}{\delta}}{\widehat{c^*}} \geq \frac{1-\frac{2\widehat{\delta}}{1+\delta_0}}{\widehat{c^*}} > 0.
\end{equation}
If we choose $\delta_3 \in [\delta_1,1)$ such that 
$$
\frac{1-\frac{2\widehat{\delta}}{1+\delta_0}}{\widehat{c^*}} \ge \widehat{\varepsilon_0} = \frac{1}{\delta}-1
\quad
\text{for any }
\delta \in [\delta_3,1),
$$
then we conclude $f_1 \in L^{1+\widehat{\varepsilon_0}}_\text{loc}(\Omega_0)$ and $H_m(x,D^m u) \in  L^{1}_\text{loc}(\Omega_0)$. The proof of Corollary \ref{cor:main} is completed.
\end{proof}

\section*{Acknowledgements}
The author is grateful to Professor Kentaro Fujie for his helpful comments and encouragement throughout this work.

\bibliographystyle{abbrv}  
\bibliography{main}        

\appendix
\section{Gehring's lemma}
As mentioned in Subsection \ref{subsec:Ge}, we provide an elementary proof of Lemma \ref{lem:Gehring} in this appendix.

Let us begin with the following classical iteration lemma:
\begin{lem} \label{lem:iteration}
Let $0 < R_0 < R_1 < \infty$  
and $h:[R_0,R_1] \rightarrow \mathbb{R}$ be a nonnegative and bounded function.
Moreover, let $\tau \in (0,1)$ and $C_1,\,C_2,\,\gamma \ge 0$ be fixed constants and 
assume that 
\begin{equation}
    h(s) \leq \tau h(t) + C_1 + \frac{C_2}{(t - s)^\gamma}
\end{equation}
holds for all $R_0 \leq s < t \leq R_1$.
Then there exists a positive constant $c = c(\tau, \gamma)$ such that
\begin{equation}
    h(R_0) \leq \frac{C_1}{1-\tau} + \frac{cC_2}{(R_1 - R_0)^\gamma}.
\end{equation}
Moreover, we can take $c(\tau,\gamma) = \sum_{i=0}^{\infty} \tau^{i} \left\{ (i+1)(i+2) \right \}^{\gamma}$.
\end{lem}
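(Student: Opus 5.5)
The plan is the standard geometric iteration. Fix $R_0\le s<t\le R_1$ as in Lemma \ref{lem:iteration} and build an increasing sequence of radii $\rho_0=R_0<\rho_1<\rho_2<\cdots$ converging to $R_1$. The gaps will be chosen so that the partial products of $\tau$ beat the growth of $(\rho_{i+1}-\rho_i)^{-\gamma}$.

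The explicit constant $c(\tau,\gamma)=\sum_i \tau^i\{(i+1)(i+2)\}^\gamma$ suggests the gaps
\begin{equation}
\rho_{i+1}-\rho_i=\frac{R_1-R_0}{(i+1)(i+2)},
\end{equation}
since $\sum_{i\ge0}\frac{1}{(i+1)(i+2)}=1$. With this choice $\rho_i=R_0+(R_1-R_0)\bigl(1-\frac{1}{i+1}\bigr)$, so $\rho_i\in[R_0,R_1)$ and $\rho_i\to R_1$.

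Applying the hypothesis with $(s,t)=(\rho_i,\rho_{i+1})$ gives
\begin{equation}
h(\rho_i)\le \tau h(\rho_{i+1})+C_1+\frac{C_2\{(i+1)(i+2)\}^\gamma}{(R_1-R_0)^\gamma}.
\end{equation}
Iterating this $k$ times from $i=0$ yields
\begin{equation}
h(R_0)\le \tau^k h(\rho_k)+C_1\sum_{i=0}^{k-1}\tau^i+\frac{C_2}{(R_1-R_0)^\gamma}\sum_{i=0}^{k-1}\tau^i\{(i+1)(i+2)\}^\gamma .
\end{equation}

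Next let $k\to\infty$. Because $h$ is bounded on $[R_0,R_1]$ and $\tau<1$, the term $\tau^k h(\rho_k)\le \tau^k\sup h$ tends to $0$. The first sum converges to $\frac{1}{1-\tau}$. The second series converges, since its terms grow only polynomially in $i$ while being damped by $\tau^i$ (ratio test). This produces exactly the claimed bound with the stated $c(\tau,\gamma)$.

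I do not expect a real obstacle here. The only point needing care is that the boundedness of $h$ is what kills the remainder $\tau^k h(\rho_k)$; without it the iteration would not close. The remaining checks are that the chosen $\rho_i$ stay strictly increasing inside the interval and that the weighted series is finite.
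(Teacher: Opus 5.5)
Your proposal is correct and follows the paper's proof essentially verbatim: the same radii $\rho_i=R_0+\bigl(1-\frac{1}{i+1}\bigr)(R_1-R_0)$ with gaps $\frac{R_1-R_0}{(i+1)(i+2)}$, the same $k$-fold iteration, and the same limit $k\to\infty$ using the boundedness of $h$. The extra justifications you supply (the ratio test for the weighted series and $\tau^k\sup h\to 0$) are exactly what the paper leaves implicit.
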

\begin{proof}
Although this lemma is well known (see \cite[Lemma 6.1]{Gi} or \cite[Lemma 4.3]{HaL}, for example), we provide a proof here to make the constants appearing in the conclusion more explicit.
For each $k \in \N \cup \{0\}$, we set
$$
r_k \coloneqq R_0 + \left(1-\frac{1}{k+1}\right)(R_1-R_0).
$$ 
Then we have 
\begin{equation}\label{eq:iteration-3}
  h(R_0) \le \tau^kh(r_k) + C_1\sum_{j=0}^{k-1}\tau^j + C_2\sum_{j=0}^{k-1}\frac{\tau^j}{(r_{j+1}-r_j)^{\gamma}}
\end{equation}
for each $k \in \N$ by a simple iteration. Since 
\begin{align}
r_{j+1} - r_j 
&= R_0 + \left(1-\frac{1}{j+2}\right)(R_1-R_0) \\
&- \left\{ R_0 + \left(1-\frac{1}{j+1}\right)(R_1-R_0) \right\} \\
&= \left(\frac{1}{j+1} - \frac{1}{j+2}\right)(R_1 - R_0) = \frac{R_1 - R_0}{(j+1)(j+2)}
\quad
\text{for each }
j \in \N \cup \{0\},
\end{align}
letting $k \rightarrow \infty$ in \eqref{eq:iteration-3}, we obtain the desired inequality.
\end{proof}

Using Vitali's covering lemma, layer cake representation formula and Lemma \ref{lem:iteration},
we obtain the following
\begin{lem}\label{lem:m_Gehring}
Let $\Omega \subset \mathbb{R}^n$ be an open set, and let
$f \in L^1_{loc}(\Omega)$ and $g \in L^{1 + \varepsilon_0}_{loc}(\Omega)$ for some $\varepsilon_0 > 0$
, where $f$ and $g$ are nonnegative.
In addition, assume there exist $\kappa \in (0,1)$, $A > 0$ and $R_0 > 0$ such that
\begin{equation}\label{eq:Gehring_asuumpt}
\fint_{B_{R}} f
\le A\left( \fint_{B_{3R}} f^\kappa  \right)^{\frac{1}{\kappa}} + \fint_{B_{3R}} g,
\end{equation}
for any ball $B_{3R} \subset \subset \Omega$ such that $R \le R_0$ and
\begin{equation}
\fint_{B_{3R}} f \leq \fint_{B_{R}} f.
\end{equation}
Then there exists $c^* = c^*(n,A,\varepsilon_0)>0$ 
such that 
if $0 < \varepsilon \leq \min \left\{ \frac{1-\kappa}{c^*}, \varepsilon_0 \right\}$, 
$f \in L^{1 + \varepsilon}_{loc}(\Omega)$. 
Moreover, there exists $c = c(n,A,\varepsilon_0)$ such that for any $0 < \varepsilon \leq \min \left\{ \frac{1-\kappa}{c^*}, \varepsilon_0 \right\}$, it holds
\begin{equation}
    \left( \fint_{B_{R}} f^{1 + \varepsilon} \right)^\frac{1}{1 + \varepsilon} 
    \leq c\fint_{B_{3R}} f 
    +    c\left( \fint_{B_{3R}} g^{1 + \varepsilon} \right)^\frac{1}{1 + \varepsilon} 
\end{equation}
whenever $B_{3R}\subset \subset \Omega$ and $R \le R_0$. Furthermore, the constant $c^*$ can be chosen to be monotone increasing with respect to $\varepsilon_0$.
\end{lem}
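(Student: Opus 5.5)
We follow the covering-argument proof of Gehring's lemma (as in \cite{KL, CoM2}): a good-$\lambda$/level-set estimate for $f$, the layer cake formula, and Lemma \ref{lem:iteration} to absorb terms. Fix $B_{3R_1}\subset\subset\Omega$ with $R_1\le R_0$ (after a harmless rescaling, balls of radius comparable to $R_1$ centred in $B_{R_1}$ are admissible), and radii $R_1\le s<t\le 2R_1$. For $x\in B_s$ and $\rho\le (t-s)/c(n)$, set
\begin{equation}
\Phi(x,\rho)\coloneqq \fint_{B(x,\rho)} f ,\qquad \lambda_0\coloneqq \Big(\tfrac{c(n)R_1}{t-s}\Big)^{n}\fint_{B_{3R_1}} f .
\end{equation}
For $\lambda>\lambda_0$ and a Lebesgue point $x\in B_s$ with $f(x)>\lambda$, we have $\Phi(x,\rho)>\lambda$ for small $\rho$ and $\Phi(x,\rho)\le\lambda$ for $\rho\approx (t-s)/c$. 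Hence there is a maximal $\rho_x$ with $\Phi(x,\rho_x)\ge\lambda$ and $\Phi(x,r)<\lambda$ for $r\in(\rho_x,(t-s)/c]$. In particular $\fint_{B(x,3\rho_x)}f\le\lambda\le\fint_{B(x,\rho_x)}f$ (taking $3\rho_x$ in the admissible range, otherwise the doubling step is trivial). This is precisely the case in which hypothesis \eqref{eq:Gehring_asuumpt} may be invoked; the extra condition in the lemma is designed so that only these stopping balls are used. By Vitali we extract disjoint $B(x_i,\rho_i)$ with $\{f>\lambda\}\cap B_s\subset\bigcup B(x_i,5\rho_i)\subset B_t$.

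On each stopping ball, \eqref{eq:Gehring_asuumpt} gives
\begin{equation}
\lambda \le A\Big(\fint_{B(x_i,3\rho_i)} f^\kappa\Big)^{1/\kappa}+\fint_{B(x_i,3\rho_i)} g .
\end{equation}
We split $f^\kappa$ at level $\eta\lambda$ and $g$ at level $\eta\lambda$. Using $\fint f\le\lambda$ on $B(x_i,3\rho_i)$ and $\kappa<1$, the small part of $f^\kappa$ contributes at most $(\eta\lambda)^{\kappa}$. Since $(\eta\lambda)^\kappa\cdot\lambda^{1-\kappa}$ raised appropriately is $\le \eta^\kappa\lambda$, we choose $\eta=\eta(A)$ small and absorb. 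This yields
\begin{equation}
|B(x_i,\rho_i)|\lesssim \frac1\lambda\int_{B(x_i,3\rho_i)\cap\{f>\eta\lambda\}}f^{\kappa}\lambda^{1-\kappa}+\frac1\lambda\int_{B(x_i,3\rho_i)\cap\{g>\eta\lambda\}}g .
\end{equation}
The constant in the first term blows up like $(\cdot)^{1/\kappa}$, so to track $\kappa$ we keep the factor $\lambda^{1-\kappa}$ explicit. Summing over $i$ and using $\int_{\{f>\lambda\}\cap B_s}f\le \lambda|\{f>\lambda\}\cap B_s|+\int_{\{f>\lambda\}\cap B_s} f$ in the standard way, we obtain for $\lambda>\lambda_0$
\begin{equation}
\int_{B_s\cap\{f>\lambda\}} f\le c\,\lambda^{1-\kappa}\int_{B_t\cap\{f>\eta\lambda\}} f^{\kappa}+c\int_{B_t\cap\{g>\eta\lambda\}} g .
\end{equation}

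Next we multiply by $\varepsilon\lambda^{\varepsilon-1}$ and integrate over $(\lambda_0,\infty)$ using \eqref{eq:lcr-1}. To keep all quantities finite we work with the truncation $f_k=\min\{f,k\}$, which satisfies the same level-set inequality for $\lambda<k$, and let $k\to\infty$ at the end. The $f^\kappa$ term produces
\begin{equation}
\frac{c\,\varepsilon}{\varepsilon+1-\kappa}\int_{B_t} f^{1+\varepsilon},
\end{equation}
and the $g$ term produces $c\int_{B_t}g^{1+\varepsilon}$, with $\eta^{-\varepsilon}$ bounded by the restriction $\varepsilon\le\varepsilon_0$. Altogether,
\begin{equation}
\int_{B_s} f^{1+\varepsilon}\le \frac{c\,\varepsilon}{\varepsilon+1-\kappa}\int_{B_t} f^{1+\varepsilon}+\lambda_0^{\varepsilon}\int_{B_{3R_1}}f+c\int_{B_t}g^{1+\varepsilon}.
\end{equation}
If $\varepsilon\le(1-\kappa)/c^*$ with $c^*=2c$, the prefactor is at most $\tau<1$. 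Applying Lemma \ref{lem:iteration} with $\gamma=n\varepsilon$ removes the $B_t$ term and gives the estimate with constants depending on $(n,A,\varepsilon_0)$. The dependence on $\varepsilon_0$ enters through $\eta^{-\varepsilon_0}$, which increases in $\varepsilon_0$, so $c^*$ can be taken monotone in $\varepsilon_0$.

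I expect the main obstacle to be the sharp dependence $\varepsilon\sim 1-\kappa$. This requires keeping the factor $\lambda^{1-\kappa}f^\kappa$ rather than crudely bounding $f^\kappa$ by $f$, so that the layer-cake integral gives $\varepsilon/(\varepsilon+1-\kappa)$ and not a $\kappa$-uniform constant. Once that holds, the remaining steps are routine.
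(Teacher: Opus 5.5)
\textbf{The absorption step does not give a $\kappa$-independent $c^*$.} Your architecture is the paper's: stopping balls with $\fint_{B(x,3\rho_x)}f\le\lambda=\fint_{B(x,\rho_x)}f$, Vitali, a level-set inequality keeping $\lambda^{1-\kappa}$ explicit, the layer-cake formula, and Lemma~\ref{lem:iteration}. The trouble is in absorption. Write $B_i=B(x_i,\rho_i)$. After $A(\fint_{3B_i}f^\kappa)^{1/\kappa}\le A\lambda^{1-\kappa}\fint_{3B_i}f^\kappa$, the part of $f^\kappa$ below $\eta\lambda$ contributes $A\eta^\kappa\lambda$. Absorbing this forces $\eta\le(4A)^{-1/\kappa}$ when $A>\frac14$, so $\eta$ is \emph{not} $\eta(A)$.

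This propagates into the layer-cake step. There the $f$-term carries the factor $\eta^{-(1-\kappa+\varepsilon)}$, not just $\eta^{-\varepsilon}$; you only tracked the latter, which appears in the $g$-term. With your $\eta$ this factor is $(4A)^{(1-\kappa+\varepsilon)/\kappa}$, which blows up as $\kappa\to0$. So your argument gives $\varepsilon\le(1-\kappa)/c^*$ only with $c^*$ depending on $\kappa$, not the $c^*(n,A,\varepsilon_0)$ of the statement. Keeping $\lambda^{1-\kappa}$ explicit does not cure this.

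The missing idea is to raise the exponent before splitting. The paper uses Jensen to pass to $d=\frac{1+\kappa}{2}$:
\[
\Bigl(\fint_{3B_i}f^\kappa\Bigr)^{1/\kappa}\le\Bigl(\fint_{3B_i}f^d\Bigr)^{1/d}\le\lambda^{1-d}\fint_{3B_i}f^d .
\]
It then splits $f^d$ at $\theta\lambda$ with $\theta=(4A+1)^{-1/d}\ge(4A+1)^{-2}$. Since $1-d=\frac{1-\kappa}{2}$, the coefficient of the reabsorbed term is at most $2c_1(4A+1)^{1+2\varepsilon_0}\varepsilon/(1-\kappa)$ with $c_1=2\cdot5^nA$. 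This gives $c^*=4c_1(4A+1)^{1+2\varepsilon_0}$, independent of $\kappa$ and increasing in $\varepsilon_0$. Equivalently, you could replace $\kappa$ by $\max\{\kappa,\frac12\}$ at the outset, which is allowed by Jensen and costs only a factor $2$ in $1-\kappa$.

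\textbf{Two bookkeeping steps also fail as written.}

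First, the covering. You apply Vitali to $\{B(x,\rho_x)\}$, so only the $B_i$ are disjoint, while your per-ball estimate has its right-hand side on $3B_i$. These enlargements need not have bounded overlap, so summing over $i$ does not produce an integral over $B_t$. Apply Vitali to $\{B(x,3\rho_x)\}$ instead and cover $\{f>\lambda\}\cap B_s$ by the $15B_i$. You must then ensure the stopping bound $\fint_{B(x,\rho)}f\le\lambda$ holds for all $\rho\in[\rho_x,15\rho_x]$, so that $\int_{15B_i}f\le15^n\lambda|B_i|$. The paper does this by taking $\rho_x<(r_2-r_1)/15$ with the bound valid up to $r_2-r_1$. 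Your remark that otherwise ``the doubling step is trivial'' does not address this.

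Second, the truncation. $\min\{f,k\}$ does not inherit your level-set inequality: truncating shrinks the right-hand side too, and the reverse H\"older hypothesis is not available for the truncated function. Instead, integrate in $\lambda$ only up to $t$ and work with $h(r)=\int_{B_r}f\min\{f,\theta t\}^\varepsilon$, using
\[
f^d\min\{\theta^{-1}f,t\}^{1-d+\varepsilon}\le\theta^{-(1-d+\varepsilon)}f\min\{f,\theta t\}^\varepsilon .
\]
Since $h$ is bounded, Lemma~\ref{lem:iteration} applies, and you let $t\to\infty$ at the end.
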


\begin{proof}[Proof of \rm{Lemma \ref{lem:Gehring}}]
The proof is based on \cite[Section 4]{BBK}. Although the argument therein is carried out under a restricted setting, we note that the proof is general enough to establish Gehring's lemma.

Fix any $B_{3R} \subset \subset \Omega$ such that $R \le R_0$. Take $r_1, r_2 > 0$ 
such that $R \leq r_1 < r_2 \leq 3R$. 
In addition, $B_{r_1}$ and $B_{r_2}$ have the same center of $B_{3R}$.
Observe that
\begin{equation}\label{eq:rho}
\frac{r_2-r_1}{15} \le R \le R_0.
\end{equation}
For any $x \in B_{r_1}$ and any $\rho \in (0, r_2 - r_1)$, 
we define  
\begin{equation}
\Psi(x,\rho) \coloneqq \fint_{B(x,\rho)} f.
\end{equation}
Then, for any $x \in B_{r_1}$ 
and any $\rho \in [\frac{r_2 - r_1}{15},r_2 - r_1)$, we have
\begin{equation}\label{eq:Gehring_lambda0}
\Psi(x,\rho) 
= \frac{1}{|B(x,\rho)|}\int_{B(x,\rho)} f \\
\leq \frac{15^n}{\omega_n (r_2 - r_1)^n}\int_{B_{3R}} f.
\end{equation}
Let us define
\begin{equation}\label{eq:Gehring_lambda0_2}
\lambda_0 \coloneqq \frac{15^n}{\omega_n (r_2 - r_1)^n}\int_{B_{3R}} f
\end{equation}
and fix any $\lambda > 0$ such that
\begin{equation}\label{eq:Gehring_lambda}
\lambda > \lambda_0.
\end{equation}
Here we set for $r \in \{r_1, r_2\}$ and $\mu > 0$,
\begin{equation}
E(r,\mu) \coloneq \{ x \in B_r : f(x) > \mu\}.
\end{equation}
Then we can take a null set $N$ such that
for each $x \in E(r_1,\lambda) \setminus N$ there exists $\rho_x \in \left( 0,\frac{r_2 - r_1}{15} \right)$ satisfying
\begin{equation}\label{eq:Gehring_exit}
    \Psi(x,\rho_x) = \lambda
\end{equation}
and
\begin{equation}\label{eq:Gehring_exit_2}
    \Psi(x,\rho) 
    \leq \lambda
    \quad
    \text{for any }
    \rho \in [\rho_x, r_2 - r_1).
\end{equation}

Indeed, Lebesgue's differentiation theorem and the definition of $E(r_1, \lambda)$ yield that
\begin{equation}\label{eq:Gehring_1}
\lim_{\rho \rightarrow 0} \Psi(x,\rho) = f(x) > \lambda
\quad
\text{for a.e.\,}
x \in E(r_1,\lambda).
\end{equation}
Here, we fix any $x \in E(r_1,\lambda)$ such that the last inequality holds.
From \eqref{eq:Gehring_lambda0}, \eqref{eq:Gehring_lambda0_2} and \eqref{eq:Gehring_lambda}, it follows that
\begin{equation}\label{eq:Gehring_2}
\Psi(x,\rho) \le \lambda_0 < \lambda
\quad
\text{for any }
\rho \in \left[\frac{r_2 - r_1}{15},r_2 - r_1 \right).
\end{equation}
Since the function $\rho \mapsto \Psi(x,\rho)$ is continuous in $(0,r_2 - r_1)$,
\begin{equation}
\left\{\rho \in \left( 0,\frac{r_2 - r_1}{15} \right) : \Psi(x,\rho) = \lambda \right\}
\end{equation}
is not empty set by \eqref{eq:Gehring_1}, \eqref{eq:Gehring_2} and the mean value theorem.
Let us define
\begin{equation}
\rho_x \coloneqq \sup \left\{\rho \in \left( 0,\frac{r_2 - r_1}{15} \right) : \Psi(x,\rho) = \lambda \right\}
\end{equation}
and this $\rho_x$ satisfies the desired properties.

Since the family $\{B(x,3\rho_x)\}_{x \in E(r_1, \lambda) \setminus N}$ 
covers $E(r_1, \lambda) \setminus N$, by Vitali's covering lemma, there exists a countable subfamily
$\{3B_i\} = \{B(x_i,3\rho_{x_i})\}_{i \in \mathbb{N}}$ satisfying
\begin{equation}\label{eq:Gehring_covering}
 \{3B_i\}\text{ is dijoint}
\quad
\text{ and }
\quad
    E(r_1, \lambda) \setminus N \subset \bigcup_{i \in \mathbb{N}} 15B_i.
\end{equation}


Fix any $i \in N$. By \eqref{eq:Gehring_exit} and \eqref{eq:Gehring_exit_2}, we have
\begin{equation}\label{eq:Gehring_exit-i}
    \Psi(x_i,\rho_{x_i}) = \fint_{B_i}f = \lambda
\end{equation}
and
\begin{equation}\label{eq:Gehring_exit_2-i}
    \Psi(x_i,\rho) 
    = \fint_{B(x_i,\rho)}f
    \leq \lambda
    \quad
    \text{for any }
    \rho \in [\rho_{x_i}, r_2 - r_1).
\end{equation}
In particular, it follows that
\begin{equation}
  \fint_{3B_i} f \le \lambda = \fint_{B_i} f.
\end{equation}
Therefore, we apply \eqref{eq:Gehring_asuumpt} to obtain
\begin{equation}\label{eq:Gehring_{f,g}}
    \fint_{B_i} f 
    \leq A \left( \fint_{3B_i} f^{\kappa} \right)^\frac{1}{\kappa} 
    + \fint_{3B_i} g.
\end{equation}
Let us define
\begin{equation}\label{eq:Gehring-d}
d = \frac{1+\kappa}{2}.
\end{equation}
Since $\kappa \le d \le 1$, Jensen's inequality yields that
\begin{align}\label{eq:Gehring_f}
\left( \fint_{3B_i} f^{\kappa} \right)^\frac{1}{\kappa}
&\leq \left( \fint_{3B_i} f^{d} \right)^\frac{1}{d} \\
&= \left( \fint_{3B_i} f^d \right)^{\frac{1}{d} - 1} \cdot \left( \fint_{3B_i} f^d \right) \\
&\leq \left( \fint_{3B_i} f \right)^{1 - d} \cdot \left( \fint_{3B_i} f^d \right) \\
&= \Psi(x_i,3\rho_{x_i})^{1 - d} \cdot \left( \fint_{3B_i} f^d \right)
\leq \lambda^{1 - d}\fint_{3B_i} f^d.
\end{align}
In the last inequality, we used \eqref{eq:Gehring_exit_2-i}.
In addition, we have
\begin{align}\label{eq:Gehring_f_2}
    \fint_{3B_i} f^d
    &= \frac{1}{|3B_i|}\int_{3B_i\cap E(r_2, \theta \lambda)^c} f^d 
    + \frac{1}{|3B_i|}\int_{3B_i \cap E(r_2, \theta \lambda)} f^d \\
    &\leq \frac{|3B_i\cap E(r_2, \theta \lambda)^c|}{|3B_i|}(\theta \lambda)^d 
    + \frac{1}{|3B_i|}\int_{3B_i \cap E(r_2, \theta \lambda)} f^d \\
    &\leq \theta^{d} \lambda^d + \frac{1}{|3B_i|}\int_{3B_i \cap E(r_2, \theta \lambda)} f^d
\end{align}
for any $\theta > 0$.
Therefore, combining \eqref{eq:Gehring_f} and \eqref{eq:Gehring_f_2}, we derive
\begin{equation}\label{eq:Gehring_f_3}
\left( \fint_{3B_i} f^{\kappa} \right)^\frac{1}{\kappa}
\leq \lambda^{1 - d}\fint_{3B_i} f^d \leq \theta^d \lambda 
+ \frac{\lambda^{1 -d}}{|3B_i|}\int_{3B_i \cap E(r_2, \theta \lambda)} f^d.
\end{equation}

Next, we estimate the second integral in \eqref{eq:Gehring_{f,g}}. 
We set
\begin{equation}
\mathcal{E}(r,\mu) \coloneqq \{ x \in B_r : g(x) > \mu\}
\end{equation}
for $r \in \{r_1, r_2\}$ and $\mu > 0$.
The similar calculation with \eqref{eq:Gehring_f_2} yields that
\begin{align}\label{eq:Gehring_g}
\fint_{3B_i} g 
&= \frac{1}{|3B_i|}\int_{3B_i\cap \mathcal{E} \left(r_2, \frac{\lambda}{4} \right)^c} g + \frac{1}{|3B_i|}\int_{3B_i \cap \mathcal{E} \left(r_2, \frac{\lambda}{4} \right)} g \\
&\leq \frac{|3B_i\cap \mathcal{E} \left(r_2, \frac{\lambda}{4} \right)^c|}{|3B_i|}\frac{\lambda}{4} + \frac{1}{|3B_i|}\int_{3B_i \cap \mathcal{E} \left(r_2, \frac{\lambda}{4} \right)} g \\
&\leq \frac{\lambda}{4} + \frac{1}{|3B_i|}\int_{3B_i \cap \mathcal{E} \left(r_2, \frac{\lambda}{4} \right)} g.
\end{align}

Combining \eqref{eq:Gehring_exit-i}, \eqref{eq:Gehring_{f,g}}, \eqref{eq:Gehring_f_3} and \eqref{eq:Gehring_g}, we obtain
\begin{align}\label{eq:Ge-lambda}
\lambda 
&= \Psi(x_i,\rho_{x_i}) \\
&= \fint_{B_i} f \\
&\leq  A\left(\fint_{3B_i} f^{\kappa} \right)^\frac{1}{\kappa} + \fint_{3B_i} g \\
&\leq \left( A\theta^{d} + \frac{1}{4} \right)\lambda + \frac{A\lambda^{1 -d}}{|3B_i|}\int_{3B_i \cap E(r_2, \theta \lambda)} f^d 
+ \frac{1}{|3B_i|}\int_{3B_i \cap \mathcal{E} \left(r_2, \frac{\lambda}{4} \right)} g
\end{align}
for any $\theta > 0$.
Let us define 
\begin{equation}\label{eq:Gehring-theta}
\theta \coloneqq \left( \frac{1}{4A+1} \right)^{\frac{1}{d}}.
\end{equation}
Since $A\theta^{d} + \frac{1}{4} \le \frac{1}{2}$, we have
\begin{equation}\label{eq:Ge-lambda-2}
\lambda \le \frac{2A\lambda^{1 -d}}{|3B_i|}\int_{3B_i \cap E(r_2, \theta \lambda)} f^d 
+ \frac{2}{|3B_i|}\int_{3B_i \cap \mathcal{E} \left(r_2, \frac{\lambda}{4} \right)} g.
\end{equation}
Noting that $\fint_{15B_i} f = \Psi(x_i,15\rho_{x_i}) \le \lambda$ by \eqref{eq:Gehring_exit_2-i}, \eqref{eq:Ge-lambda-2} implies
\begin{equation}\label{eq:Gehring_sum}
\int_{15B_i} f \le 2 \cdot 5^n A\lambda^{1 -d}\int_{3B_i \cap E(r_2, \theta \lambda)} f^d 
+ 2 \cdot 5^n \int_{3B_i \cap \mathcal{E} \left(r_2, \frac{\lambda}{4} \right)} g.
\end{equation}

Here we set $c_1 \coloneqq 2 \cdot 5^nA$ and $c_2 \coloneqq 2 \cdot 5^n$.
Summing \eqref{eq:Gehring_sum} over $i \in \mathbb{N}$ and using \eqref{eq:Gehring_covering}, we obtain 
\begin{align}\label{eq:Gehring_sum_2}
    \int_{E(r_1, \lambda)} f
    &= \int_{E(r_1, \lambda) \setminus N} f \\
    &\leq \sum_{i \in \mathbb{N}} \int_{15B_i} f \\
    &\leq \sum_{i \in \mathbb{N}} \left( c_1\lambda^{1 -d}\int_{3B_i \cap E(r_2, \theta \lambda)} f^d 
    + c_2\int_{3B_i \cap \mathcal{E} \left(r_2, \frac{\lambda}{4} \right)} g  \right) \\
    &\leq   c_1\lambda^{1 -d}\int_{E(r_2, \theta \lambda)} f^d 
    + c_2\int_{\mathcal{E} \left(r_2, \frac{\lambda}{4} \right)} g.
\end{align} 

Fix any $t > \lambda_0$ and $0 < \varepsilon \le \varepsilon_0$. 
Multiplying both sides of \eqref{eq:Gehring_sum_2} by $\varepsilon \lambda^{\varepsilon-1}$ and integrating to from $\lambda_0$ to $t$, we have
\begin{align}\label{eq:Ge-J's}
J_0 
&\coloneqq \varepsilon \int_{\lambda_0}^{t} \lambda^{\varepsilon - 1} \left( \int_{E(r_1,\lambda)} f(x)\ dx \right)\ d\lambda  \\
&\leq c_1\varepsilon \int_{\lambda_0}^{t} \lambda^{-d+\varepsilon} \left( \int_{E(r_2,\theta\lambda)} f(x)^d\ dx \right)\ d\lambda \\
&+ c_2\varepsilon \int_{\lambda_0}^{t} \lambda^{\varepsilon-1} \left( \int_{\mathcal{E} \left(r_2, \frac{\lambda}{4} \right)} g(x)\ dx \right)\ d\lambda 
\eqqcolon c_1 J_1 + c_2 J_2.
\end{align}

Firstly, we estimate $J_0$. We divide it into $J_{01}$ and $J_{02}$:
\begin{align}\label{eq:Ge-J_0's}
J_0 
&= \varepsilon \int_{0}^{t} \lambda^{\varepsilon - 1} \left( \int_{E(r_1,\lambda)} f(x)\ dx \right)\ d\lambda \\
    &- \varepsilon\int_{0}^{\lambda_0} \lambda^{\varepsilon - 1} \left( \int_{E(r_1,\lambda)} f(x)\ dx \right)\ d\lambda 
\eqqcolon J_{01} - J_{02}.
\end{align}

For $J_{01}$, then Fubini's theorem and \eqref{eq:lcr-1} imply
\begin{align}
J_{01} 
&= \int_{B_{r_1}} f(x) \left(\varepsilon \int_{0}^{t} \lambda^{\varepsilon - 1}\chi_{ \left\{ B_{r_1}:f > \lambda \right\} }(x) \ d\lambda  \right)\ dx \\
&= \int_{B_{r_1}} f(x) \left(\varepsilon \int_{0}^{\infty} \lambda^{\varepsilon - 1}\chi_{ \left\{ B_{r_1}:\min\{ f, t\}> \lambda \right\}}(x) \ d\lambda  \right)\ dx \\
&=  \int_{B_{r_1}} f(x) \min \left\{f(x), t \right\}^\varepsilon dx.
\end{align}
Moreover, since $\theta \le 1$, we have
\begin{equation}\label{eq:Ge-J_01}
J_{01} 
= \int_{B_{r_1}} f \min \left\{f, t \right\}^\varepsilon 
\ge \int_{B_{r_1}} f \min\left\{f, \theta t \right\}^\varepsilon 
= h(r_1),
\end{equation}
where
\begin{equation}\label{eq:Ge-h}
h(r) \coloneqq \int_{B_{r}} f \min\left\{f, \theta t \right\}^\varepsilon
\quad
\text{for }
r \in [R,3R].
\end{equation}

On the other hand, for $J_{02}$, recall \eqref{eq:Gehring_lambda0} and we obtain
\begin{align}\label{eq:Ge-J_02}
J_{02} 
&\leq \varepsilon\int_{0}^{\lambda_0} \lambda^{\varepsilon - 1} \left( \int_{B_{3R}} f(x)\ dx \right)\ d\lambda \\
&= \lambda_0^\varepsilon \int_{B_{3R}} f(x)\ dx \\
&= \left( \frac{15^n}{\omega_n (r_2 - r_1)^n}\int_{B_{3R}} f \right)^{\varepsilon}\int_{B_{3R}} f(x)\ dx \\
&= \frac{15^{n\varepsilon}}{\omega_n^{\varepsilon} (r_2 - r_1)^{n\varepsilon}}\left( \int_{B_{3R}} f(x)\ dx \right)^{1+\varepsilon}
= \frac{M}{(r_2 - r_1)^{n\varepsilon}},
\end{align}
where
\begin{equation}
  M \coloneqq \frac{15^{n\varepsilon}}{\omega_n^{\varepsilon}}\left( \int_{B_{3R}} f(x)\ dx \right)^{1+\varepsilon}.
\end{equation}

Therefore, \eqref{eq:Ge-J_0's}, \eqref{eq:Ge-J_01} and \eqref{eq:Ge-J_02} yield that
\begin{equation}\label{eq:Gehring-J_0}
J_0 
\geq  h(r_1) - \frac{M}{(r_2 - r_1)^{n\varepsilon}}.
\end{equation}
The estimate of $J_0$ is completed.

We now estimate $J_1$. Using Fubini's theorem and \eqref{eq:lcr-1} (with $r=1-d+\varepsilon$), we obtain
\begin{align}\label{eq:Ge-J_1-0}
J_1 &= \varepsilon \int_{\lambda_0}^{t} \lambda^{-d+\varepsilon} \left( \int_{E(r_2,\theta\lambda)} f(x)^d\ dx \right)\ d\lambda  \\
&\leq \varepsilon \int_{0}^{t} \lambda^{-d+\varepsilon} \left( \int_{E(r_2,\theta\lambda)} f(x)^d\ dx \right)\ d\lambda  \\
&= \int_{B_{r_2}} f(x)^d \left(\varepsilon \int_{0}^{t} \lambda^{-d+\varepsilon}\chi_{\{ B_{r_2} : f > \theta \lambda \}}(x) \ d\lambda  \right)\ dx \\
&= \int_{B_{r_2}} f(x)^d \left(\varepsilon \int_{0}^{\infty} \lambda^{(1-d+\varepsilon)-1}\chi_{ \left\{ B_{r_2} \min\{\theta^{-1}f, t\}> \lambda \right\} }(x) \ d\lambda  \right)\ dx \\
&=  \frac{\varepsilon}{1-d+\varepsilon}\int_{B_{r_2}} f(x)^d \min\left\{\theta^{-1}f(x), t \right\}^{1-d+\varepsilon} dx.
\end{align}
By $d < 1$ we have
\begin{equation}
f^d \min\left\{\theta^{-1}f, t \right\}^{1-d+\varepsilon}
= \theta^{-1+d-\varepsilon}f^d\min\left\{f, \theta t \right\}^{1-d+\varepsilon}
\le \theta^{-1+d-\varepsilon} f \min\left\{f, \theta t \right\}^{\varepsilon}.
\end{equation}
Therefore, \eqref{eq:Ge-J_1-0} implies
\begin{equation}\label{eq:Ge-J_1}
J_1 
\le \frac{\theta^{-1+d-\varepsilon}\varepsilon}{1-d+\varepsilon}\int_{B_{r_2}}  f \min\left\{f, \theta t \right\}^{\varepsilon}
= \frac{\theta^{-1+d-\varepsilon}\varepsilon}{1-d+\varepsilon}h(r_2).
\end{equation}
This completes the estimate of $J_1$.

Finally, we evaluate $J_2$. As for $J_1$, we get 
\begin{align}\label{eq:Gehring-J_2}
J_2 
&\leq \varepsilon \int_{0}^{\infty} \lambda^{\varepsilon - 1} \left( \int_{\mathcal{E} \left(r_2, \frac{\lambda}{4} \right)} g(x)\ dx \right)\ d\lambda \\
&= \int_{B_{r_2}} g(x) \left(\varepsilon \int_{0}^{\infty} \lambda^{\varepsilon - 1}\chi_{ \left\{ B_{r_2}:g > \frac{\lambda}{4} \right\} }(x) \ d\lambda  \right)\ dx \\
&\leq 4^{\varepsilon}\int_{B_{3R}} g(x)^{1 + \varepsilon}\ dx
\eqqcolon  L.
\end{align}

Combining \eqref{eq:Ge-J's}, \eqref{eq:Gehring-J_0}, 
\eqref{eq:Ge-J_1} and \eqref{eq:Gehring-J_2}, we obtain
\begin{equation}\label{eq:Ge-nc}
h(r_1)
\le \frac{c_1\theta^{-1+d-\varepsilon}\varepsilon}{1-d+\varepsilon}h(r_2)
+ c_2L + \frac{M}{(r_2 - r_1)^{n\varepsilon}}.
\end{equation}
In order to apply Lemma \ref{lem:iteration} with this inequality,
we now estimate the coefficient of $h(r_2)$.
By \eqref{eq:Gehring-d} and \eqref{eq:Gehring-theta}, we have
\begin{equation}
\frac{1}{1-d} = \frac{2}{1-\kappa}
\end{equation}
and
\begin{equation}
\theta^{-1+d-\varepsilon} 
= (4A+1)^{\frac{1+\varepsilon}{d}-1} 
\le (4A+1)^{2(1+\varepsilon)-1}
\le (4A+1)^{1+2\varepsilon_0}.
\end{equation}
Therefore,
\begin{equation}
\frac{c_1\theta^{-1+d-\varepsilon}\varepsilon}{1-d+\varepsilon}
\le \frac{2c_1(4A+1)^{1+2\varepsilon_0}\varepsilon}{1-\kappa}.
\end{equation}
Moreover, we choose $\varepsilon > 0$ such that
\begin{equation}
\varepsilon \le \frac{1-\kappa}{4c_1(4A+1)^{1+2\varepsilon_0}}
\ 
\left( \iff \frac{2c_1(4A+1)^{1+2\varepsilon_0}\varepsilon}{1-\kappa} \le \frac{1}{2} \right).
\end{equation}
Note that, as the proof below shows, we may take $c^* = 4c_1(4A+1)^{1+2\varepsilon_0}$. In this case, $c^*$ is monotonically increasing with respect to $\varepsilon_0$.

Then by \eqref{eq:Ge-nc} we have
\begin{equation}
h(r_1)
\le \frac{1}{2}h(r_2)
+ c_2L + \frac{M}{(r_2 - r_1)^{n\varepsilon}}.
\end{equation}
Since the function $h$ is bounded, applying Lemma \ref{lem:iteration} with this inequality, we deduce
\begin{equation}
h(R) \le 2c_2L + \frac{\widehat{c}(n,\varepsilon)M}{R^{n\varepsilon}}.
\end{equation}
That is,
\begin{equation}
\int_{B_{R}} f \min\left\{f, \theta t \right\}^\varepsilon
\le \frac{45^{n\varepsilon}\widehat{c}(n,\varepsilon)}{|B_{3R}|^{\varepsilon}}\left( \int_{B_{3R}} f \right)^{1+\varepsilon}
+ 2\cdot4^{\varepsilon}c_2\int_{B_{3R}} g^{1+\varepsilon},
\end{equation}
where $\widehat{c}(n,\varepsilon) = \sum_{i = 0}^{\infty} 2^{-i}\{(i+1)(i+2)\}^{n\varepsilon}$.
First, letting $t \rightarrow \infty$, we obtain
\begin{equation}
  \int_{B_{R}} f^{1 + \varepsilon}
\le \frac{45^{n\varepsilon}\widehat{c}(n,\varepsilon)}{|B_{3R}|^{\varepsilon}}\left( \int_{B_{3R}} f \right)^{1+\varepsilon}
+ 2\cdot4^{\varepsilon}c_2\int_{B_{3R}} g^{1+\varepsilon}.
\end{equation}
Next, multiplying both sides by $\frac{1}{|B_R|} = \frac{3^n}{|B_{3R}|}$, it follows that
\begin{equation}
\fint_{B_{R}} f^{1+\varepsilon}
\le  3^n 45^{n\varepsilon}\widehat{c}(n,\varepsilon)\left( \fint_{B_{3R}} f \right)^{1+\varepsilon}
+ 3^{n+1}\cdot4^{\varepsilon}c_2\fint_{B_{3R}} g^{1+\varepsilon}.
\end{equation}
Finally, raising both sides to the power $\frac{1}{1+\varepsilon}$, we derive
\begin{equation}
\left( \fint_{B_{R}} f^{1+\varepsilon} \right)^{\frac{1}{1 + \varepsilon}}
\le
 \widehat{c}(n,\varepsilon_0)\fint_{B_{3R}} f +
c(n)\left( \fint_{B_{3R}} g^{1 + \varepsilon} \right)^{\frac{1}{1+\varepsilon}}.
\end{equation}
Here we used
$$
\left( \widehat{c}(n,\varepsilon)  \right)^{\frac{1}{1+\varepsilon}}
\le \widehat{c}(n,\varepsilon) 
\le \widehat{c}(n,\varepsilon_0).
$$
The proof is completed.
\end{proof}

\end{document}